\newcommand{\shrinkmargins}[1]{
  \addtolength{\textheight}{#1\topmargin}
  \addtolength{\textheight}{#1\topmargin}
  \addtolength{\textwidth}{#1\oddsidemargin}
  \addtolength{\textwidth}{#1\evensidemargin}
  \addtolength{\topmargin}{-#1\topmargin}
  \addtolength{\oddsidemargin}{-#1\oddsidemargin}
  \addtolength{\evensidemargin}{-#1\evensidemargin}
  }
\DeclareMathOperator{\Hom}{Hom}
\DeclareMathOperator{\Isom}{Isom}
\DeclareMathOperator{\detpm}{det_{\pm}}
\DeclareMathOperator{\tdet}{\overline{\detpm}}
\DeclareMathOperator{\GL}{GL}
\DeclareMathOperator{\End}{End}
\DeclareMathOperator{\eend}{end}
\DeclareMathOperator{\Spec}{Spec}
\DeclareMathOperator{\Spf}{Spf}
\DeclareMathOperator{\Pic}{Pic}
\DeclareMathOperator{\Gal}{Gal}
\DeclareMathOperator{\Aut}{Aut}
\DeclareMathOperator{\proj}{proj}
\DeclareMathOperator{\gl}{gl}
\DeclareMathOperator{\pic}{pic}
\DeclareMathOperator{\loc}{loc}
\DeclareMathOperator{\Tot}{Tot}
\DeclareMathOperator{\Cotor}{Cotor}
\DeclareMathOperator{\Map}{Map}
\DeclareMathOperator*{\colim}{colim}
\DeclareMathOperator*{\hofib}{hofib}
\DeclareMathOperator*{\cofib}{cofib}
\DeclareMathOperator*{\hoeq}{hoeq}
\newcommand{\Sdet}{S \langle \det \rangle}
\newcommand{\Sdetold}{S \langle \det \rangle}
\newcommand{\field}[1]{\mathbb{#1}}
\newcommand{\Q}{\field{Q}}
\newcommand{\Z}{\field{Z}}
\newcommand{\N}{\field{N}}
\newcommand{\F}{\field{F}}
\newcommand{\W}{\field{W}}
\newcommand{\R}{\field{R}}
\newcommand{\C}{\field{C}}
\newcommand{\G}{\field{G}}
\newcommand{\SGn}{S\G_n^{\pm}}
\newcommand{\bS}{\field{S}}
\newcommand{\OO}{\mathcal{O}}
\newcommand{\id}{\mbox{id}}
\newcommand{\im}{{\rm im \,}}
\newcommand{\m}{\mathfrak{m}}
\newcommand{\ubar}{\overline{u}}
\newcommand{\mup}{\mu_{p^\infty}}
\newcommand{\beq}{\begin{displaymath}}
\newcommand{\eeq}{\end{displaymath}}
\newcommand{\beqn}{\begin{equation}}
\newcommand{\eeqn}{\end{equation}}
\theoremstyle{plain}
\newtheorem{thm}{Theorem}[section]
\newtheorem{prop}[thm]{Proposition}
\newtheorem{cor}[thm]{Corollary}
\newtheorem{lem}[thm]{Lemma}
\theoremstyle{definition}
\newtheorem{defn}[thm]{Definition}
\newtheorem{exmp}[thm]{Example}
\newtheorem{exmps}[thm]{Examples}
\theoremstyle{remark}
\newtheorem{rem}[thm]{Remark}
\title{A higher chromatic analogue of the image of J}
\author{Craig Westerland}
\begin{document}

\bibliographystyle{amsalpha}

\maketitle

\begin{abstract}

We prove a higher chromatic analogue of Snaith's theorem which identifies the K-theory spectrum as the localisation of the suspension spectrum of $\C P^\infty$ away from the Bott class; in this result, higher Eilenberg-MacLane spaces play the role of $\C P^\infty = K(\Z, 2)$.  Using this, we obtain a partial computation of the part of the Picard-graded homotopy of the $K(n)$-local sphere indexed by powers of a spectrum which for large primes is a shift of the Gross-Hopkins dual of the sphere.  Our main technical tool is a $K(n)$-local notion generalising complex orientation to higher Eilenberg-MacLane spaces.  As for complex-oriented theories, such an orientation produces a one-dimensional formal group law as an invariant of the cohomology theory.  As an application, we prove a theorem that gives evidence for the chromatic redshift conjecture.

\end{abstract}

\section{Introduction}

The stable homotopy groups of a space $X$ are defined as the colimit 
$$\pi_j^S(X) = \lim_{m \to \infty} \pi_{j+m} (\Sigma^m X) = \lim_{m \to \infty} \pi_j (\Omega^m \Sigma^m X) = \pi_j(QX).$$
where $QX = \varinjlim \Omega^m \Sigma^m X$.  

The J-homomorphism $J: \pi_j(O) \to \pi_j^S(S^0)$ may be regarded as a first approximation to the stable homotopy groups of $S^0$; here $O$ denotes the infinite orthogonal group $\varinjlim O(m)$.  It is induced in homotopy by the limit over $m$ of maps
$$J_m: O(m) \to \Omega^m S^m,$$
where for a matrix $M \in O(m)$ regarded as a linear transformation $M: \R^m \to \R^m$, $J_m(M) = M\cup \{\infty\}: S^m \to S^m$.  There is an analogous function from the infinite unitary group $U$; $J_U: U \to QS^0$ is given by composition with the forgetful map $U \to O$.  The homotopy groups of the domains are computable via Bott periodicity, and are 
$$\begin{array}{|c|c|c|c|c|c|c|c|c|}
\hline
j \bmod 8 & 0 & 1 & 2 & 3 & 4 & 5 & 6 & 7 \\
\hline
\pi_j (O) & \Z / 2 & \Z / 2 & 0 & \Z & 0 & 0 & 0 & \Z \\
\hline
\pi_j (U) & 0 & \Z & 0 & \Z & 0 & \Z & 0 & \Z \\
\hline
\end{array}$$

The work of Adams \cite{adams} shows that for an odd prime $p$, in dimensions $3 \bmod 4$, the $p$-torsion of the image in $\pi_j^S(S^0)$ of the cyclic group $\pi_j(O)$ is isomorphic to $\Z/ p^{k+1}$ when we can write $j+1 = 2(p-1)p^k m$ with $m$ coprime to $p$.  When $j$ cannot be written in this form, the $p$-torsion in the image of $J$ is zero.  If we are working away from $p=2$, this computation may be done using $U$ in place of $O$.  A computation of the 2-torsion in the image of $J$ follows from Quillen's proof of the Adams conjecture \cite{quillen}.

As this result indicates, the burden of computation in stable homotopy theory often encourages one to work locally at a prime $p$.  That is, one computes the $p$-power torsion in $\pi_*^S(X)$, and then assembles the results together into an integral statement.  One of the deeper insights of homotopy theorists in the last half century is that one may, following Bousfield \cite{bousfield}, do such computations local to any \emph{cohomology theory} $E^*$ to get more refined computations.  Morava's extraordinary K-theories $K(n)$ and E-theories $E_n$ \cite{morava} have proven particularly suited to this purpose; see, e.g., \cite{mrw, hopkins_smith}.

When $n=1$, $K(1)$ is identified with (a split summand of) mod $p$ K-theory.  The fact that $\pi_*(U) = \pi_{*+1}(K)$ for $*>0$ suggests that Adams' computation of the $p$-torsion in the image of $J$ is related to $K(1)$-local homotopy theory.  This is in fact the case; the localisation map
$$\pi_*^S(S^0) \to \pi_*(L_{K(1)} S^0)$$
carries $\im J$ isomorphically onto the codomain in positive degrees.  

One substantial difference between the stable homotopy category and its $K(n)$-local variant is the existence of exotic invertible elements.  In the stable homotopy category, the only spectra which admit inverses with respect to the smash product are spheres; thus the \emph{Picard group} of equivalence classes of such spectra is isomorphic to $\Z$.  In contrast, the Picard group of the $K(n)$-local category, $\Pic_n$, includes $p$-complete factors as well as torsion (see, e.g., \cite{hms, ghmr_pic}).

Our main result is a computation of part of the \emph{Picard graded} homotopy of the $K(n)$-local sphere analogous to the image of $J$ computation.  Throughout this paper $p$ will denote an odd prime; when localising with respect to $K(n)$, the prime $p$ is implicitly used.  We will use the notation $S: = L_{K(n)} S^0$ for the $K(n)$-local sphere and $A \otimes B := L_{K(n)} (A \wedge B)$ for the $K(n)$-local smash product.

\begin{thm} \label{main_intro_thm}

Let $\ell \in \Z$, and write $\ell=p^k m$, where $m$ is coprime to $p$.  Then the group $[\Sdet^{\otimes \ell(p-1)}, L_{K(n)} S^1]$ contains a subgroup isomorphic to $\Z / p^{k+1}$.  Furthermore, if $n^2<2p-3$, there is an exact sequence
$$0 \to \Z / p^{k+1} \to [\Sdet^{\otimes \ell(p-1)}, L_{K(n)} S^1] \to N_{k+1} \to 0$$
where $N_{k+1} \leq \pi_{-1}(S)$ is the subgroup of $p^{k+1}$-torsion elements.

\end{thm}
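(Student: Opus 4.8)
The plan is to model the argument on the classical image-of-$J$ computation, with $\Sdet$ playing the role that $S^2 = \Sigma^\infty \C P^\infty / \mathrm{(lower cells)}$ plays in the $K(1)$-local story, and with the higher chromatic orientation theory developed earlier in the paper supplying the analogue of the Bott element. First I would set up a ``$J$-spectrum'' or fiber-sequence input: the orientation of $K(n)$-locally produces a one-dimensional formal group law, and via Snaith-type identification the localisation of the suspension spectrum of the relevant Eilenberg-MacLane space away from the orientation class is $K(n)$-locally equivalent to $S = L_{K(n)}S^0$; the powers $\Sdet^{\otimes \ell(p-1)}$ then index a filtration of maps into $S^1$ whose associated graded is controlled by the formal group. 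The key point is that $\Sdet$ is an invertible object in $\Pic_n$ which, for large $p$, is (a shift of) the Gross-Hopkins dual of the sphere, so $[\Sdet^{\otimes \ell(p-1)}, L_{K(n)}S^1]$ is a Picard-graded homotopy group that receives a ``$J$-type'' class.

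Next I would produce the subgroup $\Z/p^{k+1}$ unconditionally. This should come from an explicit construction: a stable map built from the $\ell(p-1)$-st power of the orientation/Bott class, whose order is computed by reducing to the $E_n$-cohomology (or $K(n)$-homology) of the relevant Eilenberg-MacLane space and tracking the $p$-adic valuation of the relevant coefficient in the formal group law's $[\ell(p-1)]$-series. The appearance of $p^{k+1}$ when $\ell = p^k m$ is exactly the statement that the logarithm of this formal group law (a height-$n$ analogue of the multiplicative one) has the same $p$-adic denominators in the relevant degree as in the classical case — this is where the number $p-1$ enters, just as in Adams's theorem where $j+1 = 2(p-1)p^k m$. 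I expect this step to go through by a direct computation with the formal group law coming from the orientation, using the earlier sections' identification of that formal group.

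Then, under the hypothesis $n^2 < 2p-3$, I would upgrade this to the four-term exact sequence. This inequality is the standard ``large primes / sparseness'' condition (cf. the vanishing line in the $K(n)$-local Adams–Novikov or descent spectral sequence for $E_n$ over the Morava stabilizer group, whose cohomological dimension is $n^2$) guaranteeing that the relevant spectral sequence computing $[\Sdet^{\otimes \ell(p-1)}, S^1]$ collapses with no room for differentials or extensions beyond the two contributing lines. Concretely: one line contributes the $\Z/p^{k+1}$ just constructed (the ``image of $J$'' part), and there is a boundary map to $\pi_{-1}(S)$ whose image I would identify with the $p^{k+1}$-torsion subgroup $N_{k+1}$ by comparing with the known structure of $\pi_{-1}(S) = \pi_{-1}(L_{K(n)}S^0)$; exactness on the left is the statement that the constructed class is not killed, which follows from the unconditional lower bound. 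The main obstacle is precisely this last identification — showing the cokernel is exactly $N_{k+1}$ and not something smaller — which requires knowing enough about $\pi_{-1}(S)$ and about how the Picard-grading shift by $\Sdet^{\otimes \ell(p-1)}$ acts on it; the sparseness hypothesis $n^2 < 2p-3$ is what makes this tractable by forcing the descent spectral sequence into a range where the answer is determined by $H^0$ and $H^1$ alone.
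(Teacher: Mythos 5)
Your high-level plan is in the right territory (Snaith identification, sparseness from cohomological dimension $n^2$, a boundary into $\pi_{-1}$), but it contains a conceptual error that blocks the key step, and the mechanism producing $\Z/p^{k+1}$ is not the one the paper uses.

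The Snaith theorem (Theorem \ref{intro_snaith_thm} / Corollary \ref{snaith_cor}) identifies $L_{K(n)}\Sigma^\infty K(\Z_p,n+1)_+[\rho_n^{-1}]$ with $E_n^{h\SGn}=:R_n$, \emph{not} with $L_{K(n)}S^0$. The $K(n)$-local sphere is the \emph{further} homotopy fixed point spectrum for the residual $\Z_p^\times\cong\G_n/\SGn$-action on $R_n$; concretely, after splitting off the Teichm\"uller part, $S\simeq\hofib\bigl(\psi^q-1\colon R_n^{h\mu_{p-1}}\to R_n^{h\mu_{p-1}}\bigr)$ where $q=g^{p-1}$ generates $(1+p\Z_p)^\times$. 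This fibre sequence is exactly the higher chromatic $J$-spectrum sequence and is the engine of the whole proof. If you thought the Snaith localisation \emph{were} $S$, there would be no residual Adams operations $\psi^k$ and no such fibre sequence, so the argument you describe cannot get off the ground as written.

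With that fibre sequence in hand, the unconditional $\Z/p^{k+1}$ does not come from an explicit stable map or from the $[\ell(p-1)]$-series of the formal group law. It arises in the long exact sequence in $[\Sdet^{\otimes\ell(p-1)},-]$: Corollary \ref{rep_cor} (proved via the universality of $R_n$ among multiplicative $n$-oriented theories) splits $\pi_{G^\ast}R_n$ as $\Z_p[\rho^{\pm1}]$ plus a complement, with $\psi^g$ acting on the $\Z_p$-summand of $[\Sdet^{\otimes i},R_n]$ by multiplication by $g^i$. Passing to $\mu_{p-1}$-fixed points and taking $i=\ell(p-1)$, the map $\psi^q-1$ acts on that $\Z_p$-summand by $q^\ell-1$; since $v_p(q^\ell-1)=k+1$ when $\ell=p^km$ and $p$ is odd, the cokernel is $\Z/p^{k+1}$, which injects into $[\Sdet^{\otimes\ell(p-1)},S^1]$. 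So the role of the orientation/formal group material is to establish the split $\Z_p$-summand and the $\psi^g$-action on it, not to supply a coefficient in a $[\ell(p-1)]$-series. Your sparseness step for $n^2<2p-3$ (using the descent spectral sequence for $\G_n^1$ of cohomological dimension $n^2$, and $G$-periodicity to reduce to $\pi_{-m}(R_n^{h\mu_{p-1}})$ for small $m$) is broadly correct, as is the idea that the boundary lands in a summand of $\pi_{-1}(S)$ with image the $p^{k+1}$-torsion — but both depend on first having the correct fibre sequence and the $\Z_p$-summand argument above.
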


This is proved as Corollary \ref{final_cor}.  Here, $\Sdet \in \Pic_n$ was introduced by Goerss et al. in \cite{ghmr_pic}; it is defined below.  When $n=1$ and $p>2$, $\Sdet$ may be identified as $L_{K(1)}S^2$, and so this result recovers the constructive part of the classical image of $J$ computation.  More generally, $\Sdet$ may be identified as a shift of the Brown-Comenetz dual of the $n^{\rm th}$ monochromatic layer of the sphere spectrum if $\max\{2n+2, n^2 \} < 2(p-1)$ (see \cite{gross-hopkins}).  This identification fails for small primes; see, e.g. \cite{goerss_henn} for the case $n=2$ and $p=3$.

\subsection{The invertible spectrum $\Sdet$}

Morava's $E$-theories are Landweber exact cohomology theories $E_n$ associated to the universal deformation of the Honda formal group $\Gamma_n$ from $\F_{p^n}$ to $\W(\F_{p^n})[[u_1, \dots, u_{n-1}]][u^{\pm 1}]$.  When $n=1$, $E_1$ is precisely $p$-adic $K$-theory.  The Goerss-Hopkins-Miller theorem \cite{gh, gh2} equips the spectrum $E_n$ with an action of the \emph{Morava stabiliser group} 
$$\G_n := \Gal(\F_{p^n}/\F_p) \ltimes \Aut(\Gamma_n)$$
which lifts the defining action in homotopy.  The work of Devinatz-Hopkins \cite{dev_hop}, Davis \cite{davis, davis_iterated}, and Behrens-Davis \cite{bd} then allows one to define continuous\footnote{All homotopy fixed point spectra considered in this article will be of the continuous sort.} homotopy fixed point spectra with respect to closed subgroups of $\G_n$ in a consistent way.  The homotopy fixed point spectrum of the full group is the $K(n)$-local sphere: $E_n^{h\G_n} \simeq L_{K(n)} S^0$.  

The automorphism group $\Aut(\Gamma_n)$ is known to be the group of units of an order of a rank $n^2$ division algebra over $\Q_p$; the determinant of the action by  left multiplication defines a homomorphism $\det: \Aut(\Gamma_n) \to \Z_p^{\times}$.   Extend this to a homomorphism 
$$\detpm: \G_n \to \Z_p^{\times}$$
by sending the Frobenius generator of $\Gal(\F_{p^n}/\F_p)$ to $(-1)^{n-1}$.  We will write $\SGn$ for the kernel of this map.  We may define the homotopy fixed point spectrum $E_n^{h\SGn}$ for the restricted action of this subgroup.  We note: in many references, such as \cite{gross-hopkins, ghmr_pic}, a different extension $\det:\G_n \to \Z_p^{\times}$ is used; there $\Gal(\F_{p^n}/\F_p)$ is in the kernel of $\det$.  In an earlier version of this article, we incorrectly used this older form of $\det$; we thank Charles Rezk for clarifying this point.  Nonetheless, the homotopy fixed point spectra for the kernels of $\det$ and $\detpm$ are equivalent (though not as ring spectra); see section \ref{old_R_n_section}.

The spectrum $E_n^{h\SGn}$ retains a residual action of $\Z_p^{\times} = \G_n / \SGn$; for an element $k \in \Z_p^{\times}$, we will write the associated map as 
$$\psi^k: E_n^{h\SGn} \to E_n^{h\SGn}.$$
The reader is encouraged to think of these automorphisms as analogues of Adams operations.  Noting that $\Z_p^{\times} = \mu_{p-1} \times (1+p\Z_p)^{\times}$ is topologically cyclic with generator $g = \zeta_{p-1}(1+p)$, we define $F_\gamma$ as the homotopy fibre of 
$$\psi^g-\gamma: E_n^{h\SGn} \to E_n^{h\SGn}$$
for any $\gamma \in \Z_p^{\times}$.  These spectra are always invertible, and in fact the construction $\gamma \mapsto F_\gamma$ defines a homomorphism $\Z_p^{\times} \to \Pic_n$.  When $\gamma =1$, the associated homotopy fibre defines the homotopy fixed point spectrum for the action of $\Z_p^{\times}$, and so
$$F_1 = (E_n^{h\SGn})^{h\Z_p^{\times}} \simeq E_n^{h\G_n} \simeq L_{K(n)} S^0$$
In contrast, one defines $\Sdet := F_g$. 

\subsection{A Snaith theorem}

The K-theory spectrum admits a remarkable description due to Snaith \cite{snaith}.  He shows that the natural inclusion of $\C P^\infty$ into $BU \times \Z$ localises to an equivalence
$$\Sigma^\infty \C P^\infty_+ [\beta^{-1}] \simeq K.$$
Here the \emph{Bott map} $\beta: S^2 \to \Sigma^\infty \C P^\infty_+$ is a reduced, stable form of the inclusion $\C P^1 \to \C P^\infty$.

\begin{thm} \label{intro_snaith_thm}

There is a map $\rho_n: \Sdet \to L_{K(n)} \Sigma^\infty K(\Z_p, n+1)_+$ and an equivalence of $E_\infty$-ring spectra
$$L_{K(n)} \Sigma^\infty K(\Z_p, n+1)_+[\rho_n^{-1}] \to E_n^{h\SGn}.$$

\end{thm}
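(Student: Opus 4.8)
The plan is to transpose Snaith's argument to the $K(n)$-local setting, with the paper's $K(n)$-local orientations playing the role of complex orientations. An orientation of $E_n^{h\SGn}$ relative to $K(\Z_p, n+1)$ amounts, in particular, to an infinite loop map $K(\Z_p, n+1)\to\GL_1(E_n^{h\SGn})$ --- equivalently, after $K(n)$-localisation, to a map of spectra $\Sigma^{n+1}H\Z_p\to\gl_1(E_n^{h\SGn})$ --- and since $K(\Z_p, n+1)$ is a grouplike $E_\infty$-space, the adjunction between $\Sigma^\infty(-)_+$ and $\GL_1$ turns such a datum into a map of $E_\infty$-ring spectra
$$\theta_n\colon L_{K(n)}\Sigma^\infty K(\Z_p, n+1)_+ \longrightarrow E_n^{h\SGn},$$
the analogue of the Snaith map $\Sigma^\infty\C P^\infty_+\to K$ classifying the universal line bundle. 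So the first task is to produce this orientation; this is precisely the job of the paper's main technical tool, and it is the step where the Ravenel-Wilson computation of the Morava (co)homology of Eilenberg-MacLane spaces enters, underpinning both the existence of the orientation and the one-dimensional formal group law it carries.

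\textbf{The higher Bott class.} Next I would construct $\rho_n$ and verify that $\theta_n$ sends it to a unit. The target $E_n^{h\SGn}$ carries the residual $\Z_p^\times$-action implemented by the operations $\psi^k$; on $\Sigma^\infty K(\Z_p, n+1)_+$ there is a parallel action of $\Z_p^\times=\Aut(\Z_p)$ through the coefficient group, and one arranges $\theta_n$ to be equivariant for these. Since $\Sdet=F_g$ is the homotopy fibre of $\psi^g-g$ on $E_n^{h\SGn}$, performing the same homotopy-fibre construction on the source --- using the fundamental-class map $S^{n+1}\to K(\Z_p, n+1)$, on whose image $g$ acts by a scalar, to split off a twisted sphere --- yields the map $\rho_n\colon\Sdet\to L_{K(n)}\Sigma^\infty K(\Z_p, n+1)_+$ together with a factorisation of $\theta_n\circ\rho_n$ through an invertible summand of $E_n^{h\SGn}$. (The precise scalar by which $g$ acts on that bottom cell is what forces the use of $\detpm$ in place of the older $\det$.) In particular $\rho_n$ maps to a unit, so the localisation $L_{K(n)}\Sigma^\infty K(\Z_p, n+1)_+[\rho_n^{-1}]\to E_n^{h\SGn}$ is defined, and it is automatically a map of $E_\infty$-rings.

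\textbf{Proving the equivalence.} Finally, both sides being $K(n)$-local, it suffices to show that this map induces an isomorphism on completed $E_n$-homology compatibly with the $\G_n$-action, i.e.\ a Morava-module isomorphism. By Devinatz-Hopkins the right-hand side has $(E_n)^\vee_* E_n^{h\SGn}\cong\Map^c(\G_n/\SGn, E_{n*})=\Map^c(\Z_p^\times, E_{n*})$ with its natural diagonal $\G_n$-action. For the left-hand side, Ravenel-Wilson shows $K(\Z_p, n+1)$ has even, pro-free $E_n$-homology and gives a formal-group-theoretic description of $(E_n)^\vee_* K(\Z_p, n+1)$; inverting $\rho_n$ then has the same effect as in Snaith's original argument, where inverting $\beta$ converts the formal multiplicative group into $\Gm$ and $K^0 K$ into the ring of functions on its automorphisms, and here it produces $\Map^c(\Z_p^\times, E_{n*})$ on which $\G_n$ acts through $\detpm$ (and on coefficients as usual) --- matching the right-hand side. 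I expect this last step to be the main obstacle: the Ravenel-Wilson input, together with the precise bookkeeping of the $\G_n$-action on the Bott-inverted $E_n$-homology --- making the determinant twist visible, and in particular getting the Frobenius contribution $(-1)^{n-1}$ right --- is the delicate heart of the argument, whereas the $E_\infty$-structure comes along for free since every map in sight is one of $E_\infty$-ring spectra.
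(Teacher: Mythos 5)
Your high-level strategy — produce an $E_\infty$-ring map out of $L_{K(n)}\Sigma^\infty K(\Z_p,n+1)_+$, invert a class, and certify the result by a Morava-module computation — is the right shape of argument, and the paper does ultimately rely on Ravenel-Wilson together with the $\G_n$-action via $\detpm$. But your proposal has two genuine gaps, one of which is a concrete error.

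\textbf{The construction of $\rho_n$.} You propose to build $\rho_n$ by ``performing the same homotopy-fibre construction on the source,'' using the fundamental-class map $S^{n+1}\to K(\Z_p,n+1)$. This fails for $n>1$. By Ravenel-Wilson, $K(n)_*\,K(\Z_p,n+1)$ is concentrated in degrees $2p^kg(n)$ with $g(n)=\tfrac{p^n-1}{p-1}$; in particular the ``bottom cell'' of $L_{K(n)}\Sigma^\infty K(\Z_p,n+1)_+$ sits in degree $2g(n)$, which is strictly larger than $n+1$ once $n\ge 2$. The fundamental class therefore maps to zero in $K(n)$-homology, and there is no twisted sphere to split off. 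Moreover, the homotopy fibre of $\psi^g-g$ on $X_n=L_{K(n)}\Sigma^\infty K(\Z_p,n+1)_+$ itself is \emph{not} invertible: the $\Z_p^\times$-action on the Morava module $C(\Z_p,E_{n*})$ is not free (the orbits $p^j\Z_p^\times$ and $\{0\}$ all appear), so the kernel of $\psi^g-g$ is much bigger than rank one. The paper avoids both problems by first splitting a rank-one piece $Z$ out of $L_{K(n)}\Sigma^\infty K(\Z/p,n)_+$, composing with the Bockstein to get $\alpha\colon Z\to X_n$, and \emph{inverting $\alpha$} to replace $C(\Z_p,-)$ by $C(\Z_p^\times,-)$. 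Only on $R_n=X_n[\alpha^{-1}]$ is the fibre $G=\hofib(\psi^g-g)$ invertible; and $\rho_n\colon G\to X_n$ is then produced as a lift of $\delta\colon G\to X_n[\alpha^{-1}]$ up the localization telescope, using compactness of $G$. Note also a dependency issue: $n$-orientations are \emph{defined} via $\rho_n$, so they cannot be used to construct it — you have this the wrong way around.

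\textbf{The map to $E_n^{h\SGn}$.} The adjunction between $\Sigma^\infty(-)_+$ and $\GL_1$ applied to $\varphi_n$ gives an $E_\infty$ map to $E_n$, not to $E_n^{h\SGn}$; and the ``Morava module iso $\Rightarrow$ equivalence'' step presupposes that you already have an $E_\infty$-ring map between $R_n$ and $E_n^{h\SGn}$. Producing one is a real piece of work that you elide. Since $\SGn$ is \emph{not} finite index in $\G_n$, Devinatz--Hopkins realizes $E_n^{h\SGn}$ as a colimit of $E_n^{hU}$ over open $U\supset\SGn$, each modeled by a fibrant cosimplicial $E_\infty$-algebra (the $K(n)$-local $E_n$-Adams resolution). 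The paper correspondingly writes $R_n\simeq\hocolim_j R_n^j$ with $R_n^j=\hofib(1-\psi^{g^{(p-1)p^j}})$, verifies via Goerss--Hopkins obstruction theory — specifically, the contractibility of the cotangent complex for $\F_{p^n}\to C(\Z_p^\times,\F_{p^n})$, which holds because Frobenius is an isomorphism — that the $E_\infty$ mapping spaces $\Map_{E_\infty}(R_n^j,E_n^{hU_j})$ have contractible components, and inductively chooses compatible equivalences. This colimit/obstruction-theoretic step is the technical heart of the proof and your sketch does not account for it; noting that Morava modules match is not a substitute for it.
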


We will refer to the equivalent ring spectra of the theorem as $R_n$.  One inverts the map $\rho_n$ in the same fashion as for normal homotopy elements of a ring spectrum, via a telescope construction.  A consequence of this theorem is that the homotopy of $R_n$ is $\Sdet$-periodic.  When $n=1$, $R_1$ is simply $p$-adic K-theory, and this result is familiar as Bott periodicity.  Indeed, the map $\rho_1$ is the $K(1)$-localisation of $\beta$, and so we refer to $\rho_n$ as a \emph{higher Bott map}.  

The proof of this theorem uses the computations of Ravenel-Wilson \cite{rw} of the Morava K-theories of Eilenberg-MacLane spaces, as well as the $E_\infty$ obstruction theory developed by Goerss-Hopkins in \cite{gh}.  It is proven as Corollary \ref{snaith_cor}, below.

\subsection{Higher orientation for $K(n)$-local cohomology theories}

The map $\beta$ is traditionally used to define the notion of complex orientation for a cohomology theory; the associated formal group law is an important invariant of the cohomology theory, and central to the chromatic approach to homotopy theory.  We use the map $\rho_n$ to give a similar notion:

\begin{defn}

An \emph{$n$-orientation} of a $K(n)$-local ring spectrum $E$ is a class $x \in E^{\Sdet}(K(\Z_p, n+1))$ with the property that $\rho_n^*(x) \in E^{\Sdet}(\Sdet) = \pi_0 E$ is a unit.  

\end{defn}

Here, for an element $A \in \Pic_n$ and a space $X$ the notation $E^A(X)$ indicates the group
$$E^A(X) = [\Sigma^\infty X_+, A \otimes E];$$
this yields $E^m(X)$ when $A = S^m$.  Grading the associated groups over all of $\Pic_n$, we define
$$E^\bigstar(X) := \bigoplus_{A \in \Pic_n} E^A(X).$$

As for complex orientation, we show in Theorem \ref{orient_thm} that an $n$-orientation of $E$ gives a ring isomorphism $E^\bigstar(K(\Z_p, n+1)) \cong E^\bigstar[[x]]$.  The multiplication in $K(\Z_p, n+1)$ then yields a formal group law $F(x, y) \in E^\bigstar[[x, y]]$.  An $n$-oriented cohomology theory is said to be \emph{multiplicative} if its associated formal group law is of the form $F(x, y) = x + y + txy$ for a unit $t \in E^\bigstar$.

\begin{thm}

$R_n$ is the universal (i.e., initial) multiplicative $n$-oriented cohomology theory.

\end{thm}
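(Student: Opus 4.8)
The plan is to show that $R_n$ represents the functor assigning to a $K(n)$-local $E_\infty$-ring spectrum $E$ the set of its multiplicative $n$-orientations, and hence is the initial such cohomology theory. First I would unwind what a multiplicative $n$-orientation is: by the orientation theorem (Theorem \ref{orient_thm}), an $n$-orientation of $E$ amounts to a class $x \in E^{\Sdet}(K(\Z_p,n+1))$ restricting to a unit along $\rho_n$, giving $E^\bigstar(K(\Z_p,n+1)) \cong E^\bigstar[[x]]$; multiplicativity forces the formal group law coming from the H-space structure of $K(\Z_p,n+1)$ to be $F(x,y) = x+y+txy$ for a unit $t$. The key observation is that, just as the multiplicative formal group law is the one pulled back from $\Gm$, such an $x$ is equivalent to a multiplicative map out of the free $E_\infty$-ring on the relevant Thom-like spectrum — and inverting $\rho_n$ exactly imposes the condition that $\rho_n^*(x)$ be invertible. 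So the claim is that $\Hom_{E_\infty}(R_n, E)$ is naturally in bijection with the set of multiplicative $n$-orientations of $E$.

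The main step is to produce the natural bijection. Starting from an $E_\infty$-map $f: R_n \to E$, the universal $n$-orientation of $R_n$ (which exists because $R_n \simeq L_{K(n)}\Sigma^\infty K(\Z_p,n+1)_+[\rho_n^{-1}]$ carries the tautological class $x_{\mathrm{univ}}$ coming from the identity, and $\rho_n^* x_{\mathrm{univ}}$ is a unit in $\pi_0 R_n$ by construction of the telescope) pushes forward to an $n$-orientation of $E$; one checks it is multiplicative because the formal group law is natural and $R_n$'s own formal group law is multiplicative — this last point should follow from the computation of $\pi_\bigstar R_n$, or more robustly from the fact established in Theorem \ref{intro_snaith_thm} that $R_n \simeq E_n^{h\SGn}$ together with the Snaith-type identification forcing $F_{R_n}$ to be of the form $x+y+txy$. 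Conversely, given a multiplicative $n$-orientation $x$ of $E$, the class $x \in E^{\Sdet}(K(\Z_p,n+1)) = [\Sigma^\infty K(\Z_p,n+1)_+, \Sdet \otimes E]$ is the underlying data of an $E_\infty$-map $L_{K(n)}\Sigma^\infty_+ K(\Z_p,n+1) \to E$ — here I would invoke the Goerss-Hopkins $E_\infty$-obstruction theory (as in the proof of Theorem \ref{intro_snaith_thm}) to promote the multiplicative datum $x$, a strictly commutative-up-to-the-formal-group-law class, to an actual $E_\infty$-map, the point being that the source is the free $K(n)$-local $E_\infty$-ring on $\Sigma^\infty K(\Z_p,n+1)_+$ in an appropriate sense once one remembers the H-space structure. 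Since $\rho_n^* x$ is a unit, this map factors (uniquely, as a map of $E_\infty$-rings) through the localisation $R_n = L_{K(n)}\Sigma^\infty K(\Z_p,n+1)_+[\rho_n^{-1}]$, by the universal property of inverting an element in $E_\infty$-rings.

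Then I would verify the two constructions are mutually inverse: composing "take the universal orientation then restrict" with "build the map from an orientation" recovers $x$ because the universal orientation of $R_n$ restricts along $\Sigma^\infty_+ K(\Z_p,n+1) \to R_n$ to the tautological $x_{\mathrm{univ}}$, and naturality of $x_{\mathrm{univ}}$ under $f$ gives back the original orientation; in the other direction, uniqueness in the universal property of the localisation shows the $E_\infty$-map is determined by its restriction, which is $x$. This yields $\Hom_{E_\infty}(R_n, E) \cong \{\text{mult. } n\text{-orientations of } E\}$ naturally, and since $R_n$ itself carries such an orientation (the universal one), $R_n$ is initial among multiplicative $n$-oriented cohomology theories. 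I expect the main obstacle to be the rigidification step: showing that the multiplicative-orientation datum, which a priori lives in homotopy, determines a point-set $E_\infty$-map out of $L_{K(n)}\Sigma^\infty_+ K(\Z_p,n+1)$ — i.e., that this spectrum has the expected universal property as a free/Thom $E_\infty$-ring and that the relevant Goerss-Hopkins obstruction groups vanish or are controlled, exactly as in the proof of the Snaith theorem. Once that is in hand, the localisation step and the formal bijection are routine.
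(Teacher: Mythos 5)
Your proposal aims at something strictly stronger than what the theorem states, and in doing so creates a gap that you flag but do not fill.

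The theorem claims that $R_n$ is initial among multiplicative $n$-oriented cohomology theories, where ``initial'' means: for any multiplicative $n$-oriented $E$ there is a unique orientation-preserving \emph{homotopy} ring map $R_n \to E$. The paper's proof (modelled on Spitzweck--{\O}stv{\ae}r) works entirely at the level of homotopy classes of ring maps and never invokes $E_\infty$-structure on the morphisms; indeed the paper has an explicit footnote to exactly this effect. Your plan instead is to exhibit a natural bijection $\Hom_{E_\infty}(R_n, E) \cong \{\text{mult.\ }n\text{-orientations of }E\}$, which would require promoting the orientation datum to an actual $E_\infty$-map via Goerss--Hopkins obstruction theory. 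You correctly identify this rigidification as the main obstacle, but you do not carry it out, and there is no reason to expect the obstruction groups to vanish for an arbitrary target $E$ (the paper's Theorem~\ref{e_infty_maps_thm} does such a vanishing computation, but only for the very specific target $E_n$, using properties of its Morava module). So as written, your argument has a hole where the hard analysis would go — and the hole isn't one you need to dig in the first place.

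Separately, and more concretely, your proposal never explains how to pass from an orientation class to a multiplicative map out of $X_n$. This is the heart of the matter, and the paper's move is a small but essential algebraic trick: given a multiplicative orientation $v \in E^G(X_n)$ with $F(v,w) = v + w + tvw$, one sets $\phi := 1 + tv \in E^0(X_n)$. Then $m^*(\phi) = 1 + tF(v,w) = (1+tv)(1+tw) = \phi \otimes \phi$, so $\phi$ is a homotopy ring map $X_n \to E$ \emph{precisely because} the FGL is multiplicative. One checks $\phi_*(\rho) = t\cdot\rho^*(v)$ is a unit, so $\phi$ factors through $R_n = X_n[\rho^{-1}]$; and then one verifies the resulting $\Phi$ is orientation-preserving and unique. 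Your proposal gestures at ``the class $x$ is the underlying data of an $E_\infty$-map'' but omits the $1+tv$ construction, which is exactly what converts a multiplicative orientation into an exponential/grouplike map suitable for composing with the localisation. Without it, the passage from ``orientation'' to ``ring map out of $X_n$'' is simply asserted. So: drop the $E_\infty$ framing, work with homotopy ring maps, and build $\phi = 1+tv$ explicitly — the rest then really is routine, as you say.
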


This is Theorem \ref{s_o_thm} below.  Our proof is modelled on an argument of Spitzweck-{\O}stv{\ae}r \cite{spitz_ost} which gives a version of Snaith's theorem in the motivic setting.

This theorem allows us to identify $\Z_p$ as a summand of $\pi_0(R_n)$, and hence of $[\Sdet^{\otimes j}, R_n]$ for every $j$ by Theorem \ref{intro_snaith_thm}.  Furthermore, it implies that the action of the Adams operation $\psi^g$ on this summand is via the $j^{\rm th}$ power of the identity character.  Theorem \ref{main_intro_thm} follows from these facts by the long exact sequence in $[\Sdet^{\otimes j}, -]$-groups associated to a fibre sequence
$$\xymatrix{S \ar[r]^\eta & R_n \ar[r]^{\psi^g-1} & R_n}$$
which presents $S \simeq E_n^{h\G_n} = R_n^{h\Z_p^{\times}}$ as the homotopy equaliser of $\psi^g$ and $1$ (see section \ref{invert_section} for details).

\subsection{Algebraic K-theory and the chromatic redshift conjecture}

For an $A_\infty$ ring spectrum $A$, the space $\GL_1(A)$ of units (the invertible components in $\Omega^\infty A$) admits a delooping $B\GL_1(A)$.  If $A$ is in fact an $E_\infty$ ring spectrum, its multiplication equips $B\GL_1(A)$ with the structure of an infinite loop space.  We will write $K(A)$ for the algebraic $K$-theory spectrum of $A$.  There is a map of $E_\infty$ ring spectra
$$i: \Sigma^\infty B\GL_1(A)_+ \to K(A)$$
whose adjoint is given by the inclusion of $A$-lines into all cell $A$-modules (see \cite{abghr} for the construction on the level of $\infty$-categories; then \cite{em} gives the map of $E_\infty$ ring spectra).

In \cite{sati_wes}, with Hisham Sati, we constructed an $E_\infty$ map $\varphi_n: K(\Z, n+1) \to \GL_1 E_n$.  Delooping once and composing with $i$, we obtain a map of $E_\infty$ ring spectra 
$$i \circ B\varphi_n: \Sigma^\infty K(\Z, n+2)_+ \to K(E_n).$$
We may localise both the domain and range of this map with respect to $K(n+1)$; it is natural to ask about the behaviour of the composite map
$$\beta_{n+1} = i \circ B \varphi_n \circ \rho_{n+1} : \Sdet \to L_{K(n+1)} K(E_n)$$
(we note that the domain is the $K(n+1)$-local $\Sdet$).  

Our methods are not suitable to construct such a map for the case $n=0$ but it is instructive to consider this setting nonetheless.  We interpret $E_0$ as singular cohomology with $\Q_p$ coefficients; the resulting K-theory spectrum is the algebraic K-theory of $\Q_p$.  The map $\beta_1$, were it to exist, would be of the form $S^2 \to L_{K(1)} K(\Q_p)$; it should be considered as the image in the $K(1)$-local category of the \emph{Bott element}\footnote{This class doesn't exist unless $\mup$ is adjoined to $\Q_p$; however, its $p-1^{\rm st}$ power does exist in $K(\Q_p)$.  Alternatively, $\beta_1$ exists after smashing with $M\Z / p$.  See section \ref{redshift_section} for a similar phenomenon in our setting.} (considered in e.g., \cite{thomason} and \cite{mitchell}) in the $p$-adic algebraic $K$-theory of $\Q_p$
. 
\begin{thm} \label{red_thm}

For $p>3$, multiplication by $\beta_{n+1} \in \pi_{\Sdet} L_{K(n+1)}K(E_{n})$ is an equivalence.  Therefore
$$L_{K(n+1)}K(E_{n}) \simeq L_{K(n+1)}K(E_{n})[\beta_{n+1}^{-1}],$$
and so the map $i \circ B\varphi_n$ makes $L_{K(n+1)} K(E_n)$ an $R_{n+1}$-algebra spectrum.

\end{thm}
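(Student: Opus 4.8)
The plan is to deduce the theorem formally from the height-$(n+1)$ Snaith presentation of Theorem~\ref{intro_snaith_thm}, reducing everything to the statement that $\beta_{n+1}$ is a unit in $K(n+1)_\bigstar K(E_n)$, and then to attack that statement by dévissage. I would begin by $K(n+1)$-localising the $E_\infty$-ring map $i\circ B\varphi_n : \Sigma^\infty K(\Z,n+2)_+ \to K(E_n)$. Since $\Z\to\Z_p$ induces an equivalence $L_{K(n+1)}\Sigma^\infty K(\Z,n+2)_+ \simeq L_{K(n+1)}\Sigma^\infty K(\Z_p,n+2)_+$ — the two Eilenberg--MacLane spaces have the same mod-$p$ homology, hence the same $K(n+1)$-homology — this produces an $E_\infty$-ring map $f : A \to B$, with $A := L_{K(n+1)}\Sigma^\infty K(\Z_p,n+2)_+$ and $B := L_{K(n+1)}K(E_n)$, carrying the higher Bott class $\rho_{n+1}\in\pi_{\Sdet}A$ to $\beta_{n+1}\in\pi_{\Sdet}B$. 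As inverting $\rho_{n+1}$ is a smashing localisation of $A$-modules,
\[
B[\beta_{n+1}^{-1}] \;\simeq\; B\otimes_A A[\rho_{n+1}^{-1}] \;\simeq\; B\otimes_A R_{n+1},
\]
which is manifestly an $R_{n+1}$-algebra with structure map induced by $i\circ B\varphi_n$; so all three assertions of the theorem follow the moment the localisation map $B\to B[\beta_{n+1}^{-1}]$ is shown to be an equivalence. Because $\Sdet\in\Pic_{n+1}$ is invertible, this is the same as $\beta_{n+1}$ being a unit of $\pi_\bigstar B$, and since a map of $K(n+1)$-local spectra is an equivalence as soon as it is one after smashing with the mod-$p$ Moore spectrum, it suffices to prove that $\beta_{n+1}$ maps to a unit in $K(n+1)_\bigstar K(E_n)$ (cf. the footnote preceding the theorem).

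For this last point I would resolve $K(E_n)$ by dévissage. The Bott class $u\in\pi_2 E_n$ presents $E_n$ as $\tau_{\ge 0}E_n[u^{-1}]$, while $\tau_{\ge 0}E_n/u\simeq H\pi_0 E_n$; since $\tau_{\ge 0}E_n$ is a regular $E_\infty$-ring, the localisation and dévissage theorems for connective $K$-theory (as in the Blumberg--Mandell sequence $K(\Z)\to K(ku)\to K(KU)$) yield a cofibre sequence
\[
K(\pi_0 E_n) \longrightarrow K(\tau_{\ge 0}E_n) \longrightarrow K(E_n).
\]
For $n\ge 1$ — the case $n=0$ being excluded, as noted, from these methods — the $K(n+1)$-localisation of the algebraic $K$-theory of the discrete regular local ring $\pi_0 E_n = \W(\F_{p^n})[[u_1,\dots,u_{n-1}]]$ vanishes; one checks this by further dévissage along the regular parameters $(p,u_1,\dots,u_{n-1})$, using the Quillen--Lichtenbaum theorem on the generic fibres and Quillen's computation of $K(\F_{p^n})$ on the closed point (the higher-chromatic counterpart of the observation, recalled before the theorem, that $L_{K(1)}K(\Q_p)$ is already Bott-periodic~\cite{thomason}). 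Hence $L_{K(n+1)}K(E_n)\simeq L_{K(n+1)}K(\tau_{\ge 0}E_n)$, and one is reduced to producing, on the $K(n+1)$-local $K$-theory of the connective ring $\tau_{\ge 0}E_n$, a $v_{n+1}$-periodicity operator and identifying it, up to a unit, with $\beta_{n+1}$ — the step I would carry out by a $THH$/$TC$ analysis of $\tau_{\ge 0}E_n$ in the style of Ausoni--Rognes.

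The main obstacle is precisely this final identification: $\beta_{n+1}$ is manufactured from the higher-Eilenberg--MacLane machinery and a priori lives in the exotic grading $\Sdet$, so one must match it, after $K(n+1)$-localisation and the dévissage above, with the concrete $v_{n+1}$-self-map and show the latter acts invertibly — which is the heart of the redshift phenomenon for $K(E_n)$. I expect the hypothesis $p>3$ to be used here, forcing the vanishing of the obstruction groups (equivalently the collapse of the descent spectral sequence) that control both the $E_\infty$-refinement of $\varphi_n$ from section~\ref{fixed_section} and the comparison just described, exactly as the numerical bound $n^2<2p-3$ is used elsewhere in the paper. A more economical, if less illuminating, route to the same conclusion would be to verify the unit condition after the single base change $A\to R_{n+1}$ and then appeal to the universal property of $R_{n+1}$ as the initial multiplicative $(n+1)$-oriented theory (Theorem~\ref{s_o_thm}, via the orientation-to-formal-group-law construction of Theorem~\ref{orient_thm}).
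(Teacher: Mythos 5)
Your formal setup is on target: once $\beta_{n+1}$ is known to act invertibly on $K(n+1)$-homology, the rest of the theorem follows as you say, and since invertibility of a self-map of a $K(n+1)$-local spectrum can be checked after smashing with $M(p)$, it does suffice to work modulo $p$. However, your proposed route to the key step --- dévissage via the Blumberg--Mandell localisation sequence $K(\pi_0 E_n)\to K(\tau_{\ge 0}E_n)\to K(E_n)$, vanishing of $L_{K(n+1)}K(\pi_0E_n)$, and a $THH/TC$ analysis of $\tau_{\ge 0}E_n$ in the style of Ausoni--Rognes --- is precisely the approach the paper explicitly declines to take. The introduction states outright: ``In contrast to the intricate computational methods of \cite{ausoni}, our proof of the periodicity of $\beta_{n+1}$ is achieved by detecting it modulo $p$ as multiplication by an invertible element of the Picard-graded homotopy of the $K(n+1)$-local Moore spectrum.'' None of the facts you invoke (dévissage for $\tau_{\ge 0}E_n$, vanishing of the relevant localised $K$-theory of $\pi_0E_n$, the matching of $\beta_{n+1}$ with an explicit $v_{n+1}$-periodicity operator) is established in the paper, and each would be a substantial result in its own right.

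The paper's actual argument is a short formal trick, and it is worth understanding because it explains a caveat the author emphasises. Proposition \ref{moore_prop} produces a map $v:\Sdet\to Z\otimes M(p)$ that becomes an equivalence after smashing with $M(p)$: mod $p$, the topological generator $g\in\Z_p^{\times}$ agrees with the root of unity $\zeta\in\mu_{p-1}$, so the fibre $\Sdet=F_g$ and the Picard element $Z\simeq F_\zeta$ coincide after tensoring with $M(p)$. By Lemma \ref{split_1_lem} one has $Z^{\otimes p-1}\simeq S$, so $\Sdet^{\otimes p-1}\otimes M(p)\simeq M(p)$. The intermediate Proposition then shows that $\beta_{n+1}^{p-1}$, after smashing with $M(p)$, factors as multiplication of the unit of $L_{K(n+1)}K(E_n)$ by the invertible class $v^{p-1}$; hence multiplication by $\beta_{n+1}^{p-1}$ is an $(E_{n+1}\otimes M(p))_\bigstar$-isomorphism, and so a $K_*$-isomorphism, and so an equivalence. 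Note that this argument is valid even if $L_{K(n+1)}K(E_n)$ is the zero spectrum --- which is exactly why the paper immediately follows the theorem with the warning that it does not prove nonvanishing. Your dévissage approach, if it could be carried through, would establish nonvanishing as a byproduct, and so cannot be what the paper does. Finally, the hypothesis $p>3$ is not used for vanishing of Goerss--Hopkins obstruction groups or sparseness of a descent spectral sequence, as you guess; it is used only so that the mod-$p$ Moore spectrum $M(p)$ is a homotopy associative ring spectrum, which is needed for the ``multiplication by $v$'' manipulations to make sense.
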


This is proven as Corollary \ref{non_nilp_cor} below.  It provides some evidence for the chromatic redshift conjecture, as enunciated in Conjecture 4.4 of \cite{ausoni-rognes}. The spectrum of the theorem is the algebraic K-theory spectrum of a prominent $K(n)$-local spectrum.  After localisation at $K(n+1)$, we have shown it to support an algebra structure one chromatic level higher.  

It is worth pointing out that this result is entirely $K(n+1)$-local.  The corresponding statement for $n=0$ -- that multiplication by the Bott element as considered by Thomason is a $K(1)$-local equivalence -- follows naturally from the fact that it descends to the familiar Bott class in the complex $K$-theory spectrum.  A deeper statement in fact holds for $n=0$: the Bott element exists prior to localisation, and multiplication by it is not nilpotent.  Furthermore, inversion of this element \emph{defines} the $K(1)$-localisation after smashing with an appropriate Moore spectrum (see \cite{mitchell}).

For $n=1$, Ausoni has constructed in \cite{ausoni} a class that he calls the \emph{higher Bott element}, $b \in V(1)_{2p+2}(K(E_1))$, and has verified that it, too, is not nilpotent using topological cyclic homology techniques.  Again, this construction occurs at a stage prior to $K(2)$-localisation.  It follows from his Theorem 1.1 that for $p>5$, after localisation, $b^{p-1}$ is a unit multiple of a $V(1)$-Hurewicz image of the element $\beta_2^{p-1}$ in 
$$[S^{2p^2-2}, L_{K(n)}(V(1) \wedge K(E_1))] \cong [\Sdet^{p-1}, L_{K(n)}(V(1) \wedge K(E_1))]$$
(this isomorphism follows from the same arguments as Proposition \ref{moore_prop}).  In contrast to the intricate computational methods of \cite{ausoni}, our proof of the periodicity of $\beta_{n+1}$ is achieved by detecting it modulo $p$ as multiplication by an invertible element of the Picard-graded homotopy of the $K(n+1)$-local Moore spectrum.

However, an important caveat to Theorem \ref{red_thm} is that it does not actually show that the localisation $L_{K(n+1)}K(E_{n})$ is nonzero.  When $n=0$, the fact that $L_{K(1)} K(\Q_p) \neq 0$ is visible in \cite{hess_mad}, where it is shown that $K_*(\Q_p)$ contains the image of $J$ as a factor.  When $n=1$, the nonvanishing of $L_{K(2)} K(E_1)$ follows from Ausoni's explicit computations of $V(1)_* K(E_1)$.

\subsection{Acknowledgements}

It is a pleasure to thank Mark Behrens, Ethan Devinatz, Bj{\o}rn Ian Dundas, Drew Heard, Jacob Lurie, Peter May, Eric Peterson, and Charles Rezk for a number of very helpful conversations.  I refer the reader to Peterson's very interesting \cite{peterson_annular} for an alternative approach to some of the constructions studied here.  Several of the ideas in this paper grew from the results in \cite{sati_wes}, joint with Hisham Sati; I wish to thank him for an excellent collaboration.  I am very grateful to Daniel Davis and Tyler Lawson, whose careful reading of a first draft of this paper has greatly improved it.  I would also like to thank the referee for a number of important clarifications and improvements on proofs.  Lastly, even a cursory read of this paper will reflect the fact that it is deeply influenced by the papers \cite{rw, hms, abghr, spitz_ost}.  I am happy to acknowledge support for this research from the Australian Research Council under the Discovery Project and Future Fellowship schemes, as well as the National Science Foundation, via DMS-1406162.

\section{Generalities on the $K(n)$-local category}

\subsection{Morava $K$ and $E$-theories}

A central object of the chromatic program is the Honda formal group law $\Gamma_n$ of height $n$ over the finite field $\F_{p^n}$; the $p$-series of $\Gamma_n$ is $[p](x) = x^{p^n}$.  We will write $K$ or $K_n$ for the 2-periodic Landweber-exact cohomology theory that supports this formal group law; thus the coefficients of $K$ are $\pi_* K = \F_{p^n}[u^{\pm 1}]$. This is closely related to the ``standard" Morava K-theory, $K(n)$, whose homotopy groups are $\F_p[v_n^{\pm 1}]$, with $|v_n| = 2p^n-2$.

The (Lubin-Tate) universal deformation of $\Gamma_n$ is defined over the ring $\W(\F_{p^n})[[u_1, \dots, u_{n-1}]][u^{\pm 1}]$, where $\W(\F_{p^n})$ is the ring of Witt vectors over $\F_{p^n}$.  There is a cohomology theory $E_n$ -- \emph{Morava $E$-theory} -- whose homotopy groups are given by that ring, and support this formal group law.  The theorem of Goerss-Hopkins-Miller \cite{gh} equips $E_n$ with the structure of an $E_\infty$ ring spectrum.

If we write $\m$ for the maximal ideal $\m = (p, u_1, \dots, u_{n-1}) \subseteq \pi_*(E_n)$, these cohomology theories are related via:
$$K^*(X) \cong (E_n / \m)^*(X) \cong \F_{p^n} \otimes_{\F_p} K(n)^*(X)[u] / (u^{p^n-1} - v_n),$$
and similarly in homology.  Notice that the representing spectrum $K$ is equivalent to a bouquet of copies of $K(n)$.  Thus its Bousfield class is the same as that of $K(n)$.  

Recall that $\F_{p^n}^{\times}$ consists of $p^n-1^{\rm st}$ roots of unity.  Factoring $p^n-1 = (p-1)(1 + p + \dots + p^{n-1})$, we note that $2(p-1)$ divides $p^n-1$ when $p$ is odd and $n$ is even.  Choose a $p-1^{\rm st}$ root of $(-1)^{n-1}$, $\xi \in \F_{p^n}$, to be:
$$\xi := \left\{ \begin{array}{ll}
1, & \mbox{$n$ odd} \\
\mbox{a primitive $2p-2^{\rm nd}$ root of $1$}, & \mbox{$n$ even}.
\end{array} \right.$$

\subsection{The Morava stabiliser group}

We will briefly summarise some preliminaries regarding the Morava stabiliser group.  We refer the reader to \cite{morava} and \cite{ghmr_resolution} for more complete discussions of this material.

We will use the notation $\bS_n := \Aut_{\F_{p^n}}(\Gamma_n)$ to denote the group of automorphisms of $\Gamma_n$.  Then $\G_n$ is defined as the semidirect product
$$\G_n := \bS_n \rtimes \Gal(\F_{p^n}/\F_p).$$
The Goerss-Hopkins-Miller theorem lifts the defining action $\G_n$ on $\Gamma_n$ to an action on the spectrum $E_n$ through $E_\infty$ maps.  The work of Devinatz-Hopkins \cite{dev_hop}, Davis \cite{davis}, and Behrens-Davis \cite{bd} defines homotopy fixed point spectra with respect to closed subgroups of $\G_n$ and constructs associated descent spectral sequences.

We write $\OO_n$ for the noncommutative ring
$$\OO_n = \W(\F_{p^n})\langle S \rangle / (S^n-p, Sa = a^{\sigma} S)$$
where $\sigma$ denotes a lift of the Frobenius on $\F_{p^n}$ to the ring $\W(\F_{p^n})$ of Witt vectors.  Then $\OO_n \cong \End_{\F_{p^n}}(\Gamma_n)$is the ring of endomorphisms of $\Gamma_n$, and so $\bS_n \cong \OO_n^{\times}$.

Now, $\bS_n$ naturally acts on $\OO_n$ (by right multiplication) through left $\W(\F_{p^n})$-module homomorphisms, and so defines a map $\bS_n \to \GL_n(\W(\F_{p^n}))$, since $\OO_n$ is free of rank $n$ over $\W(\F_{p^n})$, with basis $\{ 1, S, \dots, S^{n-1} \}$.  It turns out that the determinant of elements coming from $\bS_n$ actually lie in $\Z_p^{\times}$, instead of $\W(\F_{p^n})^{\times}$; this gives a homomorphism
$$\det: \bS_n \to \Z_p^{\times}.$$
Note that the Frobenius generator $F$ of $\Gal(\F_{p^n}/\F_p)$ acts on the division algebra $\OO_n \otimes \Q$ by conjugation by $S$.  Therefore, we may extend $\det$ to $\detpm: \G_n \to \Z_p^{\times}$ by sending $F \in \Gal(\F_{p^n}/\F_p)$ to $(-1)^{n-1}$; the action of $F$ on $\bS_n$ is such that this is indeed a homomorphism.  We will write $\SGn$ for the kernel of this homomorphism.  

The sign character is somewhat unusual (e.g., it does not appear in \cite{gross-hopkins} or \cite{ghmr_resolution} and further work of these authors), but is necessary in order to deal with a troublesome sign in the Ravenel-Wilson computations of the Verschiebung in the Morava K-theory of Eilenberg-MacLane spaces.  This is what requires the introduction of the root of unity $\xi$ in section \ref{rw_section} and is further reflected in the proof of Proposition \ref{morava_mod_prop}.

We  recall that there is an isomorphism $\Z_p^{\times} = \mu_{p-1} \times (1 +p\Z_p)^{\times} \cong \mu_{p-1} \times \Z_p$.  Composing the determinant with the projection onto the second factor yields the reduced determinant\footnote{Since $\pm 1 \in \mu_{p-1}$, this is the same as the reduced determinant of, e.g., \cite{ghmr_resolution}, so our $\G_n^1$ is the same as in that article.  However, the homomorphism to $\mu_{p-1}$ in equation (\ref{SES_eqn}) differs.} $\tdet: \G_n \to \Z_p$.  If we write $\G_n^1$ for the kernel of $\tdet$, then there is a split short exact sequence
\begin{equation}\label{SES_eqn}
1  \to \SGn \to \G^1_n \to \mu_{p-1} \to 1
\end{equation}

Furthermore, one may define a homomorphism $z: \Z_p^{\times} \to \bS_n$ by $a \mapsto a + 0 \cdot S + \dots + 0 \cdot S^{n-1}$.  The image is central, and the composite $\detpm \circ z: \Z_p^{\times} \to \Z_p^{\times}$ is $a \mapsto a^n$.  In particular, if $n$ is coprime to both $p$ and $p-1$, this map is an isomorphism, yielding a splitting $\G_n \cong \SGn \times \Z_p^{\times}$.  Similarly, we get a splitting $\G_n \cong \G_n^1 \times \Z_p$.

\subsection{The Picard group}

We recall that the category of $K(n)$-local spectra is symmetric monoidal; the tensor product
$$A \otimes B := L_{K(n)}(A \wedge B)$$
is the $K(n)$-localisation of the smash product of $A$ and $B$.  The unit is the $K(n)$-local sphere $S: = L_{K(n)}(S^0)$.  

An \emph{invertible spectrum} $A$ is invertible with respect to this tensor product; that is, there exists another spectrum $B$ with $A \otimes B \simeq S$.  The set of weak equivalence classes of invertible spectra forms an abelian group under tensor product, the \emph{Picard group} of the $K(n)$-local category, denoted $\Pic_n$. 

It was shown in \cite{hms} that $A$ is invertible if and only if $K(n)_*(A)$ is free of rank one over $K(n)_*$.  Consequently there is a well-defined \emph{dimension} $\dim(A) \in \Z / 2(p^n-1)$ which records the degree of a generator of $K(n)_*(A)$.

For spectra $A$, $B$, $F(A, B)$ will denote the function spectrum of maps from $A$ to $B$.  If $A$ is an element of $\Pic_n$, it is easily seen that the Spanier-Whitehead dual $F(A, S) = A^{-1}$ is the inverse to $A$: the evaluation map $A \otimes F(A, S) \to S$ is an equivalence.  

\subsection{$\Pic_n$-graded invariants}

In \cite{hms} it was suggested that one should take a larger view of homotopy groups in the $K(n)$-local category, and grade homotopy by the Picard group.  To invent some notation, we will write $\pi_A(X) := [A, X]$, for $K(n)$-local spectra $X$ and invertible $A$; then $\pi_{S^m}(X) = \pi_m(X)$.

We propose that the same approach be used to index\footnote{The point is raised in \cite{hovey-strickland} that grading homotopy (and hence other such invariants) by the Picard group does not necessarily yield a well-behaved ring structure on the resulting direct sum.  In practice, however, we will only ever consider indexing on integral powers of spheres and $\Sdet$, for which their Theorem 14.11 is more than sufficient.} the coefficients of more general (co)homology theories.  

\begin{defn}

Let $X$ and $E$ be $K(n)$-local spectra.  For an invertible spectrum $A$, define 
$$\begin{array}{cccc}
E_A(X) := [A, X \wedge E], & E^\vee_A(X) := [A, X \otimes E], & {\rm and} & E^A(X):= [X, A \otimes E]. 
\end{array}$$
These are the $\Pic_n$-graded \emph{$E$-homology}, \emph{completed $E$-homology}, and \emph{$E$-cohomology}.

\end{defn}

We note that $E_A(X)$ need not be isomorphic to the $E_A^\vee(X)$, even for $A = S^m$.  Indeed, when $E=E_n$ is Morava $E$-theory, the completed homology theory is the \emph{Morava module} of $X$, which may differ from $(E_n)_m(X)$ when $X$ is an infinite complex.  In this paper, we will largely be concerned with $E^\vee_A(X)$, rather than $E_A(X)$.  

To be completely balanced, we should also consider the cohomological functor $[X, A \wedge E]$ (and perhaps decorate the functor $E^A(X)$ above with a $\vee$).  However, $[X, A \wedge E] = [X, A \otimes E]$ when $A$ is a  sphere (since $S^m \wedge E = \Sigma^m E$ is already $K(n)$-local).  Furthermore, as in the homological setting, we will largely concern ourselves with $K(n)$-local constructions, making the non-local $A \wedge E$ awkward.

To distinguish from the standard indexing (by elements $m \in \Z$, corresponding to $S^m$), we will write
$$
E^\vee_\bigstar(X) := \bigoplus_{A \in \Pic_n} E^\vee_A(X), \; \mbox{ but } \;\;  E^\vee_*(X) := \bigoplus_{n \in \Z} E^\vee_n(X)
$$
and similarly for cohomology.

\begin{exmp} \label{k_invert_exmp}

Since $K$ satisfies a strong K\"unneth theorem,
$$K^A(X) = [X, A \otimes K] = [X \otimes A^{-1}, K] = K^0(X \otimes A^{-1}) = (K^*(X) \otimes_{K_*} K^{*}(A^{-1}))_0$$
If $A$ has dimension $d$, then $A^{-1}$ has dimension $-d$, so we conclude that $K^A(X) \cong K^d(X)$.  So grading $K^\bigstar$ by the full Picard group does not recover any more information than grading by the integers.  

The same argument works if one replaces $K$ with $E_n$, since $E_n^{*}(A^{-1})$ is free of rank $1$ over $E_n^*$, giving a collapsing K\"{u}nneth spectral sequence. However, the result is far from true for all (co)homology theories, most notably stable homotopy.

\end{exmp}

Additionally, we note the following, whose proof is immediate:

\begin{prop}

If $A$ is invertible and $E$ a $K(n)$-local ring spectrum, then $E^\vee_\bigstar(A)$ and $E^\bigstar(A)$ are free $E_\bigstar := E_\bigstar(S)$-modules of rank one. 

\end{prop}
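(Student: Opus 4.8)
The plan is to reduce both assertions to the tautological case of the unit $S$ by using that $A$, being invertible, makes $-\otimes A$ a self-equivalence of the $K(n)$-local category with inverse $-\otimes A^{-1}$, $A^{-1}=F(A,S)$. This self-equivalence gives, for all $K(n)$-local $Y,Z$, a bijection $[Y,Z]\cong[Y\otimes A,Z\otimes A]$, and on objects it permutes $\Pic_n$ by the translation $B\mapsto B\otimes A$; the latter is what keeps the reindexing of the $\Pic_n$-indexed direct sums under control.

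For the completed homology, I would compute one graded piece at a time: tensoring with $A^{-1}$ and rearranging the symmetric-monoidal product,
$$E^\vee_B(A)=[B,A\otimes E]\;\cong\;[B\otimes A^{-1},\,(A\otimes A^{-1})\otimes E]=[B\otimes A^{-1},S\otimes E]=E^\vee_{B\otimes A^{-1}}(S),$$
and, since every object of $\Pic_n$ is $K(n)$-local, the right-hand side is the graded piece $E_{B\otimes A^{-1}}(S)$ of $E_\bigstar=E_\bigstar(S)$. Summing over $B$ and reindexing by $C=B\otimes A^{-1}$ gives an additive bijection $E^\vee_\bigstar(A)\cong E_\bigstar$. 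To promote this to an isomorphism of $E_\bigstar$-modules witnessing rank-one freeness, I would exhibit the generator explicitly: with $\eta\colon S^0\to E$ the unit, put $u_A:=\id_A\otimes\eta\in[A,A\otimes E]=E^\vee_A(A)$. A short chase through the definition of the $E_\bigstar$-action --- built from the multiplication $\mu$ on $E$ together with the associativity and symmetry isomorphisms --- shows that $x\in E_C(S)=[C,E]$ acts on $u_A$ as the map $\id_A\otimes x\in[A\otimes C,A\otimes E]=E^\vee_{A\otimes C}(A)$, i.e. as the bijection above; associativity of $\mu$ then yields $E_\bigstar$-linearity of $x\mapsto x\cdot u_A$. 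Hence $E^\vee_\bigstar(A)$ is $E_\bigstar$-free of rank one on $u_A$.

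For the cohomology, the same tensoring trick gives $E^B(A)=[A,B\otimes E]\cong[S,(B\otimes A^{-1})\otimes E]=E^{B\otimes A^{-1}}(S)$, hence $E^\bigstar(A)\cong E^\bigstar(S)$ as $E_\bigstar$-modules, so it suffices to treat $E^\bigstar(S)$. Here Spanier--Whitehead duality among invertible spectra --- the evaluation $B\otimes F(B,S)\to S$ being an equivalence --- identifies $E^B(S)=[S,B\otimes E]\cong[F(B,S),E]=E_{F(B,S)}(S)$, and reindexing $B\mapsto F(B,S)$ produces the isomorphism $E^\bigstar(S)\cong E_\bigstar$ of $E_\bigstar$-modules, with generator the class of $\id_S\otimes\eta$ corresponding to $1\in\pi_0 E$. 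Composing the two identifications finishes the cohomological statement.

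I expect the only genuine work to be the bookkeeping: checking that the evident additive bijections respect the $E_\bigstar$-module structures, and keeping the grading conventions consistent throughout (the paper's own footnote flags that $\Pic_n$-graded coefficient ``rings'' can be delicate, but no pathology arises here because $-\otimes A$ acts on the index sets merely by translation). Beyond invertibility of $A$ and the definitions, no homotopy-theoretic input is needed --- consistent with the remark that this proposition's proof is immediate.
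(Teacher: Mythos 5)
Your argument is correct and is precisely the ``immediate'' verification the paper has in mind but does not spell out: translate the $\Pic_n$-grading by tensoring with $A^{-1}$ (and, for cohomology, use Spanier--Whitehead duality $E^B(S)\cong E_{F(B,S)}(S)$), exhibit $u_A=\id_A\otimes\eta$ as a generator, and check $E_\bigstar$-linearity from unitality and associativity of $\mu$. No gap, and no genuinely different route from what the paper intends.
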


Note that, by the (collapsing) K\"unneth spectral sequence, this implies that for any spectrum $X$, the natural map
$$E_\bigstar(A) \otimes_{E_{\bigstar}} E_\bigstar(X) \to E_\bigstar(A \otimes X)$$
is an isomorphism (and similarly for the completed homology and cohomology).

\subsection{Localising ring spectra} \label{loc_section}

If $X$ is a $K(n)$-local ring spectrum with multiplication $\mu$, $A \in \Pic_n$, and $f:A \to X$ an element of $\pi_A(X)$, one can localise $X$ away from $f$ in the same fashion as is usually done for homotopy elements.

\begin{defn}

Define $X[f^{-1}]$ to be the $K(n)$-localisation of the telescope (homotopy colimit) of the diagram 
$$\xymatrix{X \ar[r]^-{m_f} & A^{-1} \otimes X \ar[r]^-{1 \otimes m_f} & A^{-1} \otimes (A^{-1} \otimes X) \ar[r]^-{1 \otimes 1 \otimes m_f} & \dots.}$$
Here $m_f$ is the composite
$$\xymatrix{X = A^{-1} \otimes A \otimes X \ar[r]^-{1 \otimes f \otimes 1} & A^{-1} \otimes X \otimes X  \ar[r]^-{1 \otimes \mu} & A^{-1} \otimes X}$$
where $\mu$ is the multiplication in $X$.

\end{defn}

The $K(n)$-localisation in the definition is necessary because it is not clear that the telescope of $m_f$, being a homotopy colimit of $K(n)$-local spectra, is itself local.

\begin{prop} \label{e-infty_loc_prop}

If $X$ is an $E_\infty$-ring spectrum, then so too is $X[f^{-1}]$.

\end{prop}

\begin{proof}

This is based on section VIII of \cite{ekmm}, which shows that $KU = ku[\beta^{-1}]$ is an $E_\infty$-ring spectrum; we thank the referee for directing us to it.  All references in this proof are to \cite{ekmm}; this argument differs from theirs only in the $K(n)$-local aspect.  The telescope of $m_f$ is a (right) $X$-module; Proposition V.2.3 gives it also the weak structure of $X$-ring spectrum in the classical homotopy theoretic sense.  These descend through the $K(n)$-localisation to give structures on $X[f^{-1}]$ using, e.g., Theorem VIII.1.6 for the module structure and the fact that Bousfield localisation is monoidal up to homotopy for the ring structure.

This allows us, for any $K(n)$-local $X$-module $M$, to consider the map
$$\lambda: M \cong X \otimes_X M \to X[f^{-1}] \otimes_X M$$
given by the unit of $X[f^{-1}]$.  This has the property that $\lambda \otimes_X X[f^{-1}]$ is an equivalence, since the multiplication $X[f^{-1}] \otimes_X X[f^{-1}] \to X[f^{-1}]$ is an equivalence (this is the $K(n)$-localisation of Lemma V.1.15).  Thus $\lambda$ is a $X[f^{-1}]$-equivalence, and so $X[f^{-1}] \otimes_X M$ is a $X[f^{-1}]$-Bousfield localisation of $M$.  Applying this to $M = X$, which is an $X$-algebra, and invoking Theorem VIII.2.2, we see that $X[f^{-1}]$ is an $E_\infty$ $X$-algebra.

\end{proof}

Standard properties of homotopy colimits then give: 

\begin{prop} \label{loc_prop}

The natural map $X \to X[f^{-1}]$ induces an isomorphism 
$$(K_\bigstar X) [f^{-1}] \to K_\bigstar(X [f^{-1}] ).$$
where $f$ is regarded as an element of $K_A(X) = [A, X \otimes K]$ by smashing with the unit of $K$.  This induces an isomorphism
$$(K_* X) [f_*^{-1}] \to K_*(X [f^{-1}] ).$$
where $f_* \in K_{\dim(A)}(X)$ is the image of a generator of $K_*(A)$ under $f$.  The same holds with $K$ replaced by $E_n$.

\end{prop}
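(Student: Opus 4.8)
The plan is to compute $K_\bigstar$ of the defining telescope directly and to recognise the colimit as an algebraic localisation. First I would use that $K$-homology commutes with this kind of homotopy colimit: since $K_\bigstar(Y)=\pi_\bigstar(Y\wedge K)$, smashing with $K$ carries homotopy colimits to homotopy colimits and is insensitive to $K(n)$-localisation (the fibre of $W\to L_{K(n)}W$ is $K(n)_*$-acyclic, hence $K$-acyclic), so the defining diagram for $X[f^{-1}]$ yields an isomorphism $K_\bigstar(X[f^{-1}])\cong\colim_i K_\bigstar((A^{-1})^{\otimes i}\otimes X)$. The fact that $\pi_\bigstar$ of this sequential homotopy colimit is the colimit of the $\pi_\bigstar$ of the stages reduces, via Example \ref{k_invert_exmp} (that is, $K_A(Y)\cong K_{\dim A}(Y)$), to the integer-graded assertion, where the spheres are compact and the statement is classical.

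Next I would trivialise the stages using the collapsing K\"unneth spectral sequence noted just above, which applies because $K_\bigstar(A^{-1})$ is free of rank one over $K_\bigstar$: iterating $K_\bigstar(A^{-1})\otimes_{K_\bigstar}K_\bigstar(-)\xrightarrow{\sim}K_\bigstar(A^{-1}\otimes -)$ identifies $K_\bigstar((A^{-1})^{\otimes i}\otimes X)$ with the $i$-fold $K_\bigstar$-tensor power of the rank-one module $K_\bigstar(A^{-1})$, tensored with $K_\bigstar(X)$. Choosing a generator of $K_{\dim A}(A)$ and its dual generator in $K_{-\dim A}(A^{-1})$ trivialises the rank-one factors, and a diagram chase through the definition of $m_f$ ($1\otimes f\otimes 1$ followed by $1\otimes\mu$) shows that under these identifications the transition maps become multiplication by the class of $f$ in the $\Pic_n$-graded ring $K_\bigstar(X)$. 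Hence the colimit is $(K_\bigstar X)[f^{-1}]$ and the natural map $X\to X[f^{-1}]$ induces exactly the localisation map, which is the first claim. The integer-graded statement follows either by running the same argument with $\Z$-grading — where $K_*(A^{-1})$ is free of rank one in degree $-\dim A$, so $K_*((A^{-1})^{\otimes i}\otimes X)\cong\Sigma^{-i\dim A}K_*(X)$ with structure maps given by multiplication by $f_*$ — or directly from the first claim using $K_A(Y)\cong K_{\dim A}(Y)$. Finally, the only properties of $K$ that were used (it is field-like with a strong K\"unneth isomorphism against invertible spectra, $K$-homology is invariant under $K(n)$-localisation, and the relevant K\"unneth spectral sequence collapses) hold verbatim for $E_n$, so the identical argument gives the $E_n$ version.

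The step I expect to require the most care is the compatibility in the middle: verifying that the K\"unneth identifications are natural enough that the telescope's structure maps $m_f$ translate into honest multiplication by $f$ (equivalently, by $f_*$), rather than some twist of it. Everything else is a formal consequence of the fact that $K_\bigstar(-)$ takes sequential homotopy colimits to colimits and is unchanged under $K(n)$-localisation.
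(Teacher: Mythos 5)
Your argument for the $K$-theory statement is sound and supplies the details the paper omits: the paper itself gives no proof of Proposition~\ref{loc_prop}, asserting only that it follows from ``standard properties of homotopy colimits.'' The reduction via Example~\ref{k_invert_exmp} to integer grading, the use of compactness of invertible spectra in the $K(n)$-local category, and the collapsing K\"unneth spectral sequence all work precisely because $K$ lies in the same Bousfield class as $K(n)$, so $K$-homology cannot tell the non-local telescope (nor the non-local smash implicit in $\otimes$) from its $K(n)$-localisation.

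The final sentence, however---that the identical argument ``holds verbatim'' for $E_n$---contains a genuine gap. The property you single out as crucial, that $K$-homology is insensitive to $K(n)$-localisation (the fibre of $W\to L_{K(n)}W$ being $K(n)_*$-acyclic, hence $K$-acyclic), fails for $E_n$. The Bousfield class of $E_n$ is that of $E(n)\simeq K(0)\vee\cdots\vee K(n)$, strictly larger than $\langle K(n)\rangle$, so a $K(n)$-acyclic spectrum need not be $E_n$-acyclic. Thus $(E_n)_\bigstar$ does \emph{not} ignore the $K(n)$-localisation built into the stages $(A^{-1})^{\otimes i}\otimes X$ of the telescope, and your K\"unneth identification of the stages no longer applies as written. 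Passing instead to the completed theory ${E_n^\vee}_\bigstar$ repairs the localisation-invariance issue but destroys the colimit step, since $L_{K(n)}$ does not commute with filtered homotopy colimits; neither version of $E_n$-homology enjoys both properties simultaneously, so your two-step scheme breaks down. The way the paper actually exploits the $E_n$ version of this statement (in Proposition~\ref{morava_mod_prop}) uses a different route: one shows ${E_n^\vee}_*$ of the localisation is pro-free and then reduces modulo $\m$ to the already-established $K$-computation, which determines the answer. To make your last sentence rigorous you would need to either adopt that mod-$\m$ reduction, or argue directly that in the cases of interest the naive colimit of the ${E_n^\vee}_\bigstar$ of the stages is already $\m$-adically complete, so that the comparison map to the localised colimit is an isomorphism.
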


Write $\eta$ for the unit in $X$.  By construction, there is an equivalence $\mu \circ (f \otimes 1): A \otimes X[f^{-1}] \to X[f^{-1}]$; let $g$ be the inverse equivalence.  Define $f^{-1} : A^{-1} \to X[f^{-1}]$ as
$$\xymatrix@1{A^{-1} = A^{-1} \otimes S \ar[r]^-{1 \otimes \eta} & A^{-1} \otimes X[f^{-1}] \ar[r]^-{1 \otimes g} & A^{-1} \otimes A \otimes X[f^{-1}]  = X[f^{-1}] }$$
By construction $\eta = f \cdot f^{-1} : S = A \otimes A^{-1} \to X[f^{-1}]$; thus $f$ is indeed invertible in $\pi_\bigstar(X[f^{-1}])$.

\subsection{Automorphism groups}

For a spectrum $A$, we will write $\eend(A) := F(A, A)$ for the function spectrum of maps from $A$ to itself.  This is an associative ring spectrum under composition of functions, and so $\pi_0 \eend(A, A) = [A, A] $ forms a ring.

\begin{defn}

Write $\Aut(A) \subseteq \End(A) :=  \Omega^\infty \eend(A)$ for the union of components whose image in the ring $\pi_0 \eend(A, A)$ is invertible.  

\end{defn}

Note that the multiplication in $\eend(A)$ equips $\Aut(A)$ with the structure of a grouplike $A_\infty$ monoid.  We also note that if $A = S$, the adjoint of the identity on $A$ defines an equivalence of ring spectra $\eend(S) \simeq S$, and so $\Aut(S)$ is identified as the $A_\infty$ monoid of units in the ring spectrum $S$, $\GL_1(S)$:
$$\Aut(S) = \GL_1 (S) \subseteq \Omega^\infty  S$$

The following is immediate:

\begin{prop}

If $A$ is invertible and $B$ any spectrum, tensoring with the identity of $A$ gives an equivalence 
$$\id_A \otimes - : \eend(B) \to \eend(A \otimes B).$$
Passing to infinite loop spaces yields a natural equivalence of $A_\infty$ monoids $\Aut(B) \to \Aut(A \otimes B)$.

\end{prop}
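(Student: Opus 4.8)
The plan is to produce an explicit homotopy inverse to $\id_A \otimes -$ built from $A^{-1}$, upgrade the statement to one about $A_\infty$ ring spectra, and then read off the claim about automorphism monoids by passing to units.

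First I would build the inverse. Because $A$ is invertible, the evaluation $A^{-1}\otimes A \to S$ is an equivalence (recall $A^{-1} = F(A,S)$), so smashing with $B$ gives a natural equivalence $c_B\colon A^{-1}\otimes A\otimes B \simeq B$. Let $\psi\colon \End(A\otimes B)\to \End(B)$ be the composite of $\id_{A^{-1}}\otimes -$ with conjugation by $c_B$. Using functoriality of the $K(n)$-local smash product in each variable together with naturality of the evaluation equivalence, both composites $\psi\circ(\id_A\otimes -)$ and $(\id_A\otimes -)\circ\psi$ are conjugations by canonical self-equivalences and are therefore homotopic to the appropriate identities; hence $\id_A\otimes -$ is an equivalence of spectra. (Formally this is the chain $F(A\otimes B, A\otimes B)\simeq F(B, A^{-1}\otimes A\otimes B)\simeq F(B,B)$ coming from the closed symmetric monoidal structure and dualizability of $A$.)

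Next I would promote this to a map of ring spectra. Composition in $\End(-)$ is induced from the monoidal structure, and since $-\otimes-$ is functorial in each variable one has $(\id_A\otimes f)\circ(\id_A\otimes g) = \id_A\otimes(f\circ g)$ and $\id_A\otimes\id_B = \id_{A\otimes B}$. In a point-set model such as EKMM, taking $A$ cofibrant and fibrant makes $-\otimes A$ an honest functor, so $\id_A\otimes -$ is a strict map of associative monoid spectra; in any case it is an $E_1$ (hence $A_\infty$) map, being induced by a symmetric monoidal functor. A ring map induces a ring homomorphism on $\pi_0$, which by the previous paragraph is an isomorphism, so it carries the units of $\pi_0\End(B)$ bijectively onto those of $\pi_0\End(A\otimes B)$. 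Applying $\Omega^\infty$ and restricting to the invertible components then yields an equivalence of $A_\infty$ monoids $\Aut(B)\to\Aut(A\otimes B)$, and naturality in $B$ is immediate since every ingredient used is natural in $B$.

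The one point that requires care — and the only real obstacle — is the coherence bookkeeping in the third paragraph: one needs $\id_A\otimes -$ to respect composition as a map of $A_\infty$ ring spectra, not merely on $\pi_0$, and the homotopy inverse $\psi$ to be compatible with that structure. I would settle this either by working in a strict model where $-\otimes A$ is a genuine functor preserving the relevant monoid structures, or by invoking the general fact that tensoring with an invertible object of a closed symmetric monoidal $\infty$-category is a symmetric monoidal self-equivalence and hence induces an equivalence of internal endomorphism ring objects $\End(B)\simeq\End(A\otimes B)$. Granting that, everything else is formal.
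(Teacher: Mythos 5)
Your argument is correct and is essentially the standard formal one the paper has in mind; the paper simply labels the proposition ``immediate'' and gives no proof, since dualizability of $A$ furnishes the inverse $\id_{A^{-1}}\otimes -$ and monoidality of $\otimes$ makes $\id_A\otimes -$ a map of $A_\infty$ monoids. Your write-up supplies exactly the details being elided, so there is nothing to flag.
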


In particular, taking $B= S$, we conclude

\begin{cor} \label{aut_cor}

If $A$ is invertible, smashing with the identity of $A$ defines an equivalence of $A_\infty$ monoids 
$$\GL_1(S) \to \Aut(A).$$

\end{cor}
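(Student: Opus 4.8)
The plan is to deduce this immediately from the preceding Proposition by specializing $B = S$. First I would recall that $S$ is the unit of the symmetric monoidal structure, so there is a canonical equivalence $A \otimes S \simeq A$; this is part of the coherence data of the $K(n)$-local smash product. Second, I would invoke the identification $\End(S) \simeq S$ noted just above (the adjoint of the identity map $S \to S$ gives an equivalence of ring spectra), under which $\Aut(S)$ is by definition $\GL_1(S) \subseteq \Omega^\infty S$. Then the Proposition, applied with $B = S$, yields an equivalence of $A_\infty$ monoids
$$\id_A \otimes - : \Aut(S) \to \Aut(A \otimes S),$$
and composing with the equivalence $\Aut(A \otimes S) \simeq \Aut(A)$ induced by the unit isomorphism $A \otimes S \simeq A$ gives the desired equivalence $\GL_1(S) \to \Aut(A)$.

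The only point requiring a word of care is that the equivalence $\Aut(A \otimes S) \to \Aut(A)$ induced by the unitor $A \otimes S \simeq A$ is indeed a map of $A_\infty$ monoids, compatibly with composition in the respective endomorphism ring spectra. This follows from functoriality of $B \mapsto \End(B) = F(B,B)$ under equivalences: an equivalence $u : B' \to B$ induces a ring spectrum equivalence $F(B,B) \to F(B',B')$ by conjugation $f \mapsto u^{-1} \circ f \circ u$, and passing to $\Omega^\infty$ and restricting to invertible components gives an $A_\infty$-monoid equivalence $\Aut(B) \to \Aut(B')$. Applying this to $u$ the unitor $A \otimes S \iso A$ gives what is needed; there is no real obstacle here, as conjugation by an equivalence manifestly respects composition.

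I do not anticipate any genuine difficulty: the statement is a formal consequence of the Proposition together with the fact that $S$ is the monoidal unit and $\End(S) \simeq S$. The one stylistic choice is how much of the coherence bookkeeping (associators, unitors for the $K(n)$-local smash product) to spell out; since the paper is working in a setting where this monoidal structure is taken as given, I would simply cite the symmetric monoidal structure on the $K(n)$-local category and treat the unit isomorphism as available. Thus the corollary follows by taking $B = S$ in the Proposition and using $\Aut(S) = \GL_1(S)$.
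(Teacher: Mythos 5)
Your proof is correct and is essentially the paper's own argument: the paper simply says ``In particular, taking $B=S$, we conclude,'' relying on $\Aut(S) = \GL_1(S)$ and the unitor $A \otimes S \simeq A$ exactly as you do. Your extra paragraph on conjugation-functoriality of $\End(-)$ is a reasonable bit of bookkeeping the paper leaves implicit.
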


\noindent Loosely speaking, $A$ becomes a $\GL_1(S)$-spectrum by its action on the left tensor factor of $S \otimes A = A$.

\subsection{Thom spectra} \label{thom_section}

Let $X$ be a topological space, and $\zeta: X \to B \GL_1(S)$ a continuous map.  Ando, et al. define the Thom spectrum $X^\zeta$ in \cite{abghr} as
$$X^\zeta := \Sigma^\infty P_+ \wedge_{\Sigma^\infty \GL_1(S)_+} S$$
where $P$ is the principal $\GL_1(S)$-bundle over $X$ defined by $\zeta$.  Using Corollary \ref{aut_cor}, we may modify and extend this definition in the $K(n)$-local category:

\begin{defn}

For an invertible spectrum $A$, define the \emph{Thom spectrum} $X^{A\zeta}$ as  
$$X^{A\zeta} := \Sigma^\infty P_+ \otimes_{\Sigma^\infty \GL_1(S)_+} A = L_{K(n)}(\Sigma^\infty P_+ \wedge_{\Sigma^\infty \GL_1(S)_+} A).$$

\end{defn}

A remark on notation: we have chosen to present this multiplicatively ($X^{A\zeta}$ instead of $X^{A + \zeta}$) to be consistent with the multiplicative nature of the (smash) product in the Picard group.  This of course conflicts with the common additive notation $X^{n+\zeta}$ when $A = S^n$; we hope no confusion will result.

Note that even when $A=S$, this differs slightly from the definition in \cite{abghr} in that we have $K(n)$-localised the Thom spectrum.  This extension of the definition of Thom spectra to have any invertible spectrum as ``fibre" is convenient, but not very substantial; noticing that the action of $\GL_1(S)$ is on the $S$ factor in $A = S \otimes A$, one can easily show:

\begin{prop}

$X^{A\zeta} = X^\zeta \otimes A$.

\end{prop}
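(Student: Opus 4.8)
Write $G=\GL_1(S)$ and $R=\Sigma^\infty G_+$, so that by definition $X^\zeta=\Sigma^\infty P_+\wedge_R S$ and $X^{A\zeta}=L_{K(n)}(\Sigma^\infty P_+\wedge_R A)$, where $P\to X$ is the principal $G$-bundle classified by $\zeta$, $\Sigma^\infty P_+$ is the resulting right $R$-module, $S$ is the left $R$-module obtained from the augmentation $R\to S$ adjoint to $G\subseteq\Omega^\infty S$, and $A$ is the left $R$-module obtained from the monoid map $G=\Aut(S)\to\Aut(A)$ of Corollary \ref{aut_cor}. Since $X^\zeta\otimes A=L_{K(n)}(X^\zeta\wedge A)$ by definition of $\otimes$, it suffices to produce a natural $K(n)_*$-equivalence $\Sigma^\infty P_+\wedge_R A\simeq (\Sigma^\infty P_+\wedge_R S)\wedge A$; the outer $L_{K(n)}$ then finishes the argument.

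The first step is to make precise the slogan accompanying Corollary \ref{aut_cor}: under the equivalence $A\simeq S\otimes A$, the left $R$-module structure map $R\wedge A\to A$ is identified with $\mu_S\otimes\id_A$, where $\mu_S\colon R\wedge S\to S$ is the $R$-module structure on $S$. This is exactly the content of Corollary \ref{aut_cor}, that the composite $G=\Aut(S)\to\Aut(A)$ is ``$\id_A\otimes(-)$'', together with the fact that this is a map of grouplike $A_\infty$-monoids and hence respects the associated module structures. Equivalently, as an $R$-module $A$ is the image of the $R$-module $S$ under the colimit-preserving, smash-compatible functor $(-)\wedge A$ (the localization being absorbed at the end). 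The second step is then formal: model the relative smash product by the bar construction $\Sigma^\infty P_+\wedge_R M=\bigl|\,q\mapsto \Sigma^\infty P_+\wedge R^{\wedge q}\wedge M\,\bigr|$; since $(-)\wedge A$ preserves smash products and commutes with geometric realization, applying it to $B_\bullet(\Sigma^\infty P_+,R,S)$ and invoking the first step gives an isomorphism of simplicial spectra $B_\bullet(\Sigma^\infty P_+,R,S)\wedge A\cong B_\bullet(\Sigma^\infty P_+,R,S\wedge A)$, and on realizations a $K(n)_*$-equivalence $(\Sigma^\infty P_+\wedge_R S)\wedge A\simeq \Sigma^\infty P_+\wedge_R A$ after the final $L_{K(n)}$ (using that $L_{K(n)}(W\wedge V)\simeq L_{K(n)}(W\wedge L_{K(n)}V)$ passes levelwise through the bar construction to reconcile $S\wedge A$ with $S\otimes A\simeq A$).

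The main obstacle is the bookkeeping in the first step: one must check that the equivalence $\Aut(S)\to\Aut(A)$ of Corollary \ref{aut_cor} is compatible, \emph{as a map of $A_\infty$-monoids}, with the left actions on $S$ and on $A$ in the strong sense needed to identify the two bar constructions coherently — i.e. that the square relating $\mu_S\wedge\id_A$ to the $\Aut(A)$-action commutes up to coherent homotopy. Everything else is routine: the behaviour of $(-)\wedge A$ on colimits and smash products, and the commutation with $L_{K(n)}$ at the very end. (One could instead package the whole argument by observing that $(-)\otimes A$ is a symmetric monoidal self-equivalence of the $K(n)$-local category and therefore carries the $K(n)$-localized Thom construction to itself, but this repackaging still rests on the same compatibility of the $\GL_1(S)$-action, so the substance is unchanged.)
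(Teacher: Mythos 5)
Your proposal is correct and takes essentially the same approach as the paper, which omits the details and simply remarks that ``the action of $\GL_1(S)$ is on the $S$ factor in $A = S \otimes A$''; you have unpacked exactly that observation via the bar construction, with the key point being, as you identify, that the $\Sigma^\infty \GL_1(S)_+$-module structure on $A$ supplied by Corollary \ref{aut_cor} is $\id_A$ smashed with the module structure on $S$.
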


The composite 
$$\xymatrix@1{
X \times X \ar[r]^-{pr_1} & X \ar[r]^-\zeta&  B \GL_1(S)
}$$
of $\zeta$ with the projection onto the first factor defines a Thom spectrum over $X \times X$, which may be identified with $X^{A \zeta} \otimes X_+$.  Furthermore, the diagonal map $\Delta: X \to X \times X$ is covered by a map of Thom spectra 
$$D: X^{A \zeta} \to X^{A \zeta} \otimes X_+$$
since $pr_1 \circ \Delta = \id_X$.  The map $D$ is the \emph{Thom diagonal}, and for any $K(n)$-local ring cohomology theory $E$, makes $E^\bigstar(X^{A\zeta})$ into a $E^\bigstar(X)$-module.  

Similarly, for each point $x \in X$, the inclusion $\{x\} \subseteq X$ is covered by a ``fibre inclusion" 
$$i_x: A = X^{A\zeta}|_{\{x\}} \to X^{A\zeta}.$$

\begin{defn}

A class $u \in E^A(X^{A \zeta})$ is a \emph{Thom class} if, for every $x \in X$ its restriction along $i_x: A \to X^{A \zeta}$, $i_x^*(u) \in E^A(A) \cong E_0$ is a unit.

\end{defn}

\begin{prop} \label{thom_prop}

If $X^{A \zeta}$ admits a Thom class $u$ for $E$, then $E^\bigstar (X^{A \zeta})$ is a free $E^\bigstar(X)$-module of rank one, generated by $u$.

\end{prop}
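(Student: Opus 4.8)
The plan is to mimic the classical Thom isomorphism argument, using the Thom diagonal $D$ to reduce to the case of a trivial bundle and the freeness result for $E^\bigstar$ of an invertible spectrum. First I would recall that $D: X^{A\zeta} \to X^{A\zeta} \otimes X_+$ makes $E^\bigstar(X^{A\zeta})$ a module over $E^\bigstar(X)$, and that the fibre inclusions $i_x: A \to X^{A\zeta}$ are compatible with $D$ in the evident sense. The Thom class $u \in E^A(X^{A\zeta})$ then determines an $E^\bigstar(X)$-module map
$$\Phi: E^\bigstar(X) \to E^\bigstar(X^{A\zeta}), \qquad \Phi(a) = a \cdot u,$$
and the claim is that $\Phi$ is an isomorphism. (Here I am implicitly using that $E^A(X^{A\zeta}) \cong E^\bigstar(X^{A\zeta})$ in the appropriate graded piece, via tensoring with $A$; this is where $\Phi$ is declared to send the unit $1 \in E^0(X)$ to $u$.)

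The key steps, in order: (1) Reduce to the case where $X$ is a point — or more precisely, observe that the statement is a statement about a map of $E^\bigstar$-modules being an equivalence, and run a Mayer–Vietoris/colimit argument over a cell or open-cover filtration of $X$. Over a point, $X^{A\zeta} = A$ and the statement reduces to the already-cited fact that $E^\bigstar(A)$ is free of rank one over $E_\bigstar$, with the Thom class condition being exactly the assertion that $i_x^*(u)$ is a unit, so $u$ is a generator. (2) For a general CW complex $X$, filter by skeleta $X^{(k)}$; the cofibre sequences $\bigvee S^{k-1} \to X^{(k-1)} \to X^{(k)}$ pull back (via $\zeta$) to cofibre sequences of Thom spectra, since a Thom spectrum construction is compatible with the relevant pushouts. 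Feed these into the long exact sequences in $E^\bigstar(-)$, compare them via the maps $\Phi$ using the five lemma, and note the base case (wedges of spheres, i.e., trivial-bundle pieces) follows from the point case together with the rank-one freeness for $E^\bigstar(A \otimes S^k)$. (3) Pass to the colimit over skeleta; here one needs $E^\bigstar$ to behave well with respect to the relevant homotopy colimit, which in the $K(n)$-local setting one gets from a $\lim^1$/Milnor-sequence argument, or more cleanly by working with the pro-object structure already implicit in the $\Pic_n$-graded completed theories.

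The main obstacle I expect is step (3) — controlling the colimit, i.e., showing that nothing is lost in passing from finite skeleta to all of $X$. In the $K(n)$-local world, cohomology of infinite complexes is delicate (this is precisely the $E_\bigstar$ versus $E^\vee_\bigstar$ subtlety flagged earlier in the paper), so one must either restrict to finite or suitably nice $X$, or invoke a pro-freeness statement and a Milnor exact sequence to see that the $\lim^1$ term vanishes (which it does, since the tower of $E^\bigstar(X^{(k)})$-modules is pro-free hence Mittag-Leffler). A secondary, more bookkeeping-level obstacle is verifying that the Thom spectrum functor $X \mapsto X^{A\zeta}$ genuinely converts the skeletal cofibre sequences into cofibre sequences of spectra and that the Thom class restricts correctly along each inclusion $X^{(k-1)} \hookrightarrow X^{(k)}$ — this is routine given the construction $X^{A\zeta} = \Sigma^\infty P_+ \otimes_{\Sigma^\infty \GL_1(S)_+} A$ and the fact that principal bundles pull back, but it is what makes the five-lemma comparison legitimate. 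Everything else is formal module algebra over $E^\bigstar$.
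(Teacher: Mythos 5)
Your proof takes a genuinely different route from the paper's, and it has a flaw in one step. The paper does not filter $X$ by skeleta at all: it cites the classical $A = S$ Thom isomorphism (Mahowald--Ray, or Proposition 5.43 of the ABGHR reference), where the isomorphism is realised by the explicit map $E \otimes X^{\zeta} \to E \otimes X^{\zeta} \otimes X_+ \to E \otimes E \otimes X \to E \otimes X$ built from the Thom diagonal and the Thom class --- a structural equivalence of $E$-module spectra valid for arbitrary $X$ in one stroke, with no induction and hence no colimit or ${\lim}^1$ issues --- and then deduces the general-$A$ case from $X^{A\zeta} \simeq X^{\zeta} \otimes A$ together with the already-recorded fact that $E^{\bigstar}(- \otimes A) \cong E^{\bigstar}(-) \otimes_{E_{\bigstar}} E_{\bigstar}(A)$ with $E_{\bigstar}(A)$ free of rank one. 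Your skeletal-filtration argument is the classical hands-on proof of the same statement and can in principle be made to close, but the specific claim in your step (3) --- that the tower is ``pro-free hence Mittag--Leffler'' --- is not a valid inference: freeness of the individual terms of a tower says nothing about image-stability of the structure maps, which is what Mittag--Leffler actually requires. Fortunately you do not need it: once each finite-stage comparison $\Phi_k : E^{\bigstar}(X^{(k)}) \to E^{\bigstar}\bigl((X^{(k)})^{A\zeta}\bigr)$ is an isomorphism, the two towers are isomorphic \emph{as towers}, so their ${\lim}$ and ${\lim}^1$ terms agree on the nose, and a five-lemma comparison of the two Milnor sequences finishes the argument without any need for ${\lim}^1$ to vanish. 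So your route is viable but substantially longer than the paper's two-line reduction to the $A=S$ case plus a tensor isomorphism, and the ${\lim}^1$ step as you wrote it contains a real, though repairable, error.
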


When $A=S$, this is classical, and is realised in homology by the map:
$$\xymatrix{
E \otimes X^{A \zeta} \ar[r]^-{1 \otimes D} & E \otimes X^{A \zeta} \otimes X_+ \ar[r]^-{1\otimes u \otimes 1} & E \otimes E \otimes A \otimes X \ar[r]^-{\mu \otimes 1} & E \otimes A \otimes X.
}$$
See, e.g., \cite{mah_ray} or \cite{abghr}, Prop 5.43.  For general $A$, this is obtained from that fact and the $\Pic_n$-graded isomorphism 
$$E^\bigstar(X^{A \zeta}) = E^\bigstar(X^{\zeta} \otimes A)  \cong E^\bigstar(X^{\zeta}) \otimes_{E_\bigstar} E_\bigstar A.$$

\section{Eilenberg-MacLane spaces}

\subsection{Recollections from Ravenel-Wilson} \label{rw_section}

Working stably, after $p$-completion, we note the equivalences\footnote{Here $\mup \cong \varinjlim \Z / p^j$ is the group of $p^{\rm th}$ power roots of unity in, e.g., $\C$.}
\beqn \label{limit_eqn}
\Sigma^\infty K(\mup, n)_+ = \varinjlim \Sigma^\infty K(\Z / p^j, n)_+ \simeq \Sigma^\infty K(\Z_p, n+1)_+ \simeq \Sigma^\infty K(\Z, n+1)_+
\eeqn
(The first equivalence uses the Bockstein). This will be our main object of study:

\begin{defn}

Let $X = X_n$ denote the $K(n)$-localisation of the unreduced suspension spectrum of $K(\Z_p, n+1)$,
$$X_n = L_{K(n)} \Sigma^\infty K(\Z_p, n+1)_+.$$

\end{defn}

We recall from Ravenel-Wilson \cite{rw} the Morava K-theory of this spectrum:
$$\begin{array}{ccc}
K(n)^* K(\Z_p, n+1) = K(n)_*[[x]], & \mbox{and} & K(n)_* K(\Z_p, n+1)  = \bigotimes_{k\geq 0} R(b_k).\end{array}$$
Here $|x| = 2g(n)$, where we define $g(n) := \frac{p^n-1}{p-1}$.  In the notation of section 12 of \cite{rw}, $x$ corresponds to the class $x_S$ for $S = (1, 2, \dots, n-1)$.  Also, for each integer $k \geq 0$, $R(b_k)$ is the ring 
$$R(b_k) := K(n)_*[b_k] / (b_k^p - (-1)^{n-1} v_n^{p^k} b_k) = \F_p[x, v_n^{\pm}] / (b_k^p - (-1)^{n-1} v_n^{p^k} b_k),$$
where the class $b_k$ has dimension $2p^k g(n)$ and is dual to  $(-1)^{k(n-1)} x^{p^k}$.  The notation $b_k$ is our abbreviation for Ravenel-Wilson's $b_J$, with $J = (nk, 1, 2, \dots, n-1)$.

There are similar results for $K(\Z / p^j, n)$:
$$\begin{array}{ccc}
K(n)^* K(\Z / p^j, n) = K(n)_*[x]/x^{p^j}, & \mbox{and} & K(n)_* K(\Z / p^j, n) = \bigotimes_{k= 0}^{j-1} R(b_k)
\end{array}$$
We have normalised these classes to be consistent across the limit in (\ref{limit_eqn}).  This is not quite consistent with the notation of section 11 of \cite{rw}; there $K(n)_* K(\Z / p^j, n)$ is presented as being generated by classes $a_I$, with $I = (nk, n(j-1)+1, n(j-1)+2, \dots, n(j-1) +n-1)$ with $0\leq k < j$.  This $a_I$ differs from $b_k$ by a power of $v_n$.

In the extension
$$K^*X_n = \F_{p^n} \otimes_{\F_p} K(n)^*(X_n)[u] / (u^{p^n-1} - v_n),$$
use the $2p-2^{\rm nd}$ root of unity, $\xi$, to define a new (degree 0) coordinate $y:= \xi x u^{g(n)}$; then
$$K^*(X_n) = K_*[[y]]$$
We may similarly normalise the $K$-homology; setting $c_k = (-1)^{k(n-1)}\xi^{-p^k} b_k u^{-p^k g(n)} =\xi^{-1} b_k u^{-p^k g(n)}$ we have
$$K_*(X_n) = K_*[c_0, c_1, \dots]/(c_k^p-c_k).$$
and $\langle c_k, y^{p^j} \rangle = \delta_k^j$.

\begin{prop} \label{rw_prop}

The multiplication on $X_n$ equips $K^* X_n$ with the structure of a formal group over $K_*$ which is isomorphic to the formal multiplicative group, $\G_m$.

\end{prop}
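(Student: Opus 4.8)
The plan is to show that the formal group law $F(y,y')$ coming from the multiplication $\mu\colon K(\Z_p,n+1)\times K(\Z_p,n+1)\to K(\Z_p,n+1)$ on the coordinate $y$ is, up to isomorphism over $K_*$, the multiplicative one $y+y'+t\,yy'$, equivalently has height $1$. First I would record that, since $K$ satisfies a strong Künneth theorem, $K^*(X_n\otimes X_n)\cong K_*[[y\otimes 1, 1\otimes y]]$, so $\mu^*(y)=F(y\otimes 1,1\otimes y)$ is a genuine formal group law $F\in K_*[[y,y']]$; the unit and associativity of the $H$-space $K(\Z_p,n+1)$ translate into the formal group law axioms. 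The key input is the Ravenel-Wilson computation quoted just above: $K^*K(\Z_p,n+1)=K_*[[y]]$ with a single generator in degree $0$, and the $K$-homology is the divided-power-free thing $K_*[c_0,c_1,\dots]/(c_k^p-c_k)$ with $\langle c_k,y^{p^j}\rangle=\delta_k^j$. The plan is to extract the height of $F$ from the $p$-series $[p]_F(y)$ by pairing against the $c_k$.

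The main step is to compute $[p]_F(y) \bmod (\text{higher filtration})$. I would argue as follows: the $p$-th power map $[p]\colon K(\Z_p,n+1)\to K(\Z_p,n+1)$ (multiplication by $p$ in the topological abelian group, or $p$-fold $H$-space multiplication) induces on $K$-homology a map determined by Ravenel-Wilson's analysis; dually on $K^*$ it is $y\mapsto [p]_F(y)$. Because $\Z_p$ is torsion-free but $K(n)$-locally $K(\Z_p,n+1)\simeq K(\mup,n)$, multiplication by $p$ on $K(\mup,n)$ corresponds to the surjection $\mup\twoheadrightarrow\mup$ with kernel $\Z/p$; on the level of Ravenel-Wilson generators this sends the bottom class $b_0$ (dual to $y$) to $0$ and shifts $b_{k}\mapsto b_{k-1}$ (this is exactly the content of the $R(b_k)$ having $b_k$ dual to $x^{p^k}$ and the Bockstein/transfer structure across the tower $K(\Z/p^j,n)$). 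Dually, $[p]_F(y)$ pairs to zero against $c_0$ and to $c_k$ against $y^{p^{k}}$ for $k\ge 1$, i.e.\ $[p]_F(y)\equiv (\text{unit})\,y^{p}\pmod{y^{p+1}}$, with no lower-order term. Hence $F$ has height exactly $1$.

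Once $[p]_F(y)=uy^{p}+\cdots$ with $u\in K_*^\times$, the classification of one-dimensional formal group laws over the graded field $K_*=\F_{p^n}[u^{\pm1}]$ (which is algebraically closed-ish in the sense needed: it is a graded field over $\F_{p^n}$ whose Lubin-Tate/Honda classification of $p$-typical formal group laws by height applies) shows $F$ is isomorphic over $K_*$ to the unique height-$1$ formal group law, namely $\G_m$: $F\cong y+y'+ty y'$ after a coordinate change. Concretely I would produce the isomorphism by the standard argument that any height-$1$ formal group law over a field of characteristic $p$ is isomorphic to $\G_m$ (e.g.\ via the logarithm after base change, or by Hazewinkel's functional equation lemma / direct successive approximation), being careful that the coordinate change respects the grading, which it does since $y$ and the target coordinate both live in degree $0$.

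The main obstacle I expect is the precise identification of the effect of multiplication-by-$p$ on the Ravenel-Wilson generators — i.e.\ pinning down that $[p]_F(y)$ has leading term exactly $y^{p}$ (height $1$) rather than some $y^{p^j}$ — and doing so compatibly with the normalisations $y=\xi x u^{g(n)}$, $c_k=\xi^{-1}b_k u^{-p^kg(n)}$ chosen above. This requires unwinding Ravenel-Wilson's description of the Hopf ring structure and the role of the Bockstein in the equivalence $K(\mup,n)\simeq K(\Z_p,n+1)$ in (\ref{limit_eqn}); the degree bookkeeping with the $2p-2$nd root of unity $\xi$ is where sign and unit ambiguities could creep in. Everything else — the formal group law axioms, the Künneth identification, and the abstract classification of height-$1$ formal group laws — is standard.
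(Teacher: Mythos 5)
Your overall strategy — extract the $p$-series of the formal group law and use it to identify $\G_m$ — is the same as the paper's, but both of your two main steps have real gaps, and neither follows the route the paper actually takes.

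On the $p$-series: your argument that $[p]_F(y)$ has leading term a unit times $y^p$ rests on the assertion that multiplication by $p$ sends $b_0\mapsto 0$ and $b_k\mapsto b_{k-1}$, which you attribute to ``the Bockstein/transfer structure'' but do not actually derive; as you yourself note, this is the crux of the whole argument, and as stated it is essentially the conclusion you want restated in homology. Moreover, even granting that claim, pairing $[p]_F(y)$ against the $c_k$ only determines the coefficients of $y^{p^j}$; a priori it says nothing about the coefficients of $y^2,\dots,y^{p-1}$, so you have not even pinned down the leading term. The paper sidesteps all of this by quoting a specific Ravenel--Wilson computation, namely that the Verschiebung on $\overline{K(n)}^*(X_n)$ satisfies $V(x)=(-1)^{n-1}x$, and then uses the universal Hopf-algebra identity $[p]=FV$ to get the \emph{exact} formula $[p](x)=(-1)^{n-1}v_n^{-1}x^p$, which after the normalisation $y=\xi x u^{g(n)}$ (where $\xi^{p-1}=(-1)^{n-1}$) becomes $[p](y)=y^p$ on the nose.

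The second gap is more serious. The final step ``height one over $K_*$ implies isomorphic to $\G_m$'' is false, and $K_*=\F_{p^n}[u^{\pm1}]$ is not ``algebraically closed-ish in the sense needed.'' Over $\F_{p^n}$ there are non-isomorphic height-one formal group laws: the forms of $\hat\G_m$ are governed by $H^1\bigl(\Gal(\bar\F_p/\F_{p^n}),\Z_p^\times\bigr)$, which is nontrivial (concretely, the leading coefficient of $[p]_F$ is an isomorphism invariant in $\F_{p^n}^\times/(\F_{p^n}^\times)^{p-1}$, which has order $p-1$). So knowing the height does not suffice. What the paper's computation gives, and what is actually needed, is the \emph{exact} equality $[p](y)=y^p$. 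That does pin down $\hat\G_m$ over $K_*$: any isomorphism $\phi\colon F\to\hat\G_m$ over $\bar\F_p$ must satisfy $\phi(x^p)=\phi(x)^p$, and in characteristic $p$ this forces every coefficient of $\phi$ to lie in $\F_p$, so $\phi$ is already defined over $\F_p\subseteq K_*$. This is precisely why the paper insists on the exact $p$-series rather than just the height.
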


\begin{proof}

The computations described above equip $K^* X_n$ with a coordinate $y$, and the associative and unital multiplication on $X_n$ coming from the H-space structure on $K(\Z_p, n+1)$ defines a formal group law $F$ on $K^* X_n = K_*[[y]]$.

We may compute the $p$-series of this formal group law using \cite{rw}.  The relevant fact is that the Verschiebung, well-defined\footnote{Because $\F_p = K(n)_* / (v_n - 1)$ is a perfect field, while $K(n)_*$ is not, $V$ is not naturally defined on $K(n)^*(X)$, but on its cyclically graded analogue $\overline{K(n)}^*(X) = K(n)^*(X) / (v_n-1)$.} up to powers of $v_n$,  satisfies $V(x) = (-1)^{n-1} x$.  Thus 
$$[p](x) = FV(x) = (-1)^{n-1} v_n^{-1} x^p$$
Consequently $[p](y) = y^p$, and so $F$ has height 1.  Thus there is an isomorphism $f: F \cong \G_m$ over $\overline{\F}_p$.  In fact $f$ is defined over $\F_p$, for the equation
$$f(y^p) = f([p]_F(y)) = [p]_{\G_m}(f(y)) = f(y)^p$$
implies that if $f(y) = \sum a_i y^i$, then the coefficients $a_i$ satisfy $a_i^p =a_i$, so must lie in $\F_p$.  Note, however, that the formal group on $K(n)^*X_n$ is not isomorphic (over $\F_p$) to $\G_m$ when $n$ is even, since the root of unity $\xi$ used in defining $y$ is not present in $\F_p$ in that case.

\end{proof}

We note that this computation allows one to formally define the $a$-series $[a](y) \in K_*[[y]]$ for any element $a \in \Z_p$.  

\begin{defn}

Write $a$ in its $p$-adic expansion as $a = a_0 + a_1 p + a_2 p^2 + \dots$, where $0 \leq a_i <p$.  Then 
$$[a](y) := [a_0](y) +_F [a_1]([p](y)) +_F [a_2]([p^2](y)) +_F \dots$$
Here, $+_F$ is addition according to the formal group law on $K_*[[y]]$.  

\end{defn}

The formula gives a well-defined series, since $\deg ([p^n](y)) = p^n$ (or equivalently in this case: the order of vanishing at $0$) grows with $n$. 

\subsection{Group actions}

The $p$-adic integers $\Z_p$ are a topologically cyclic group with generator $1 \in \Z_p$; that is, the subgroup generated by $1$ (i.e., $\Z$) is dense in $\Z_p$.  We will have occasion to write elements of $\Z_p$ in multiplicative notation; then we will write $h$ for the generator 1.  

Furthermore, for $p>2$, 
$$\Z_p^{\times} \cong \mu_{p-1} \oplus (1+p \Z_p)^{\times},$$ 
and the latter factor is isomorphic to $\Z_p$.  Let $\zeta = \zeta_{p-1} \in \mu_{p-1}$ be a primitive $(p-1)^{\rm st}$ root of unity, i.e.,  a generator of $\mu_{p-1}$.  A generator for $(1+p \Z_p)^{\times}$ is $1+p$.  Then $\Z_p^{\times}$ is also topologically cyclic, with generator $g:= (\zeta, 1+p)$.  We note that $g \bmod p = \zeta \in \F_p^{\times}$.

The group $\Z_p^{\times}$ acts on $\Z_p$, and hence on $X_n = L_{K(n)} \Sigma^\infty K(\Z_p, n+1)_+$.  For an element $a \in \Z_p^{\times}$, we will denote by $\psi^a$ the map $\psi^a: X_n \to X_n$ given by the action of $a$.

\begin{prop} \label{action_prop}

The action of $\Z_p^{\times}$ on $K^*(X_n)$ is via ring homomorphisms, and is determined by $\psi^a (y) = [a](y)$.

\end{prop}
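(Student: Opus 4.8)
The plan is to reduce the statement about the action on $K^*(X_n)$ to the underlying computation of how $\Z_p^\times$ acts on the space $K(\Z_p, n+1)$, and then transport this through the coordinate $y$ constructed above. First I would observe that each $a \in \Z_p^\times$ acts on $\Z_p$ by multiplication; since $K(\Z_p, n+1)$ is an H-space representing the functor $H^{n+1}(-;\Z_p)$, multiplication by $a$ on the coefficient group induces an H-map $\psi^a \colon K(\Z_p, n+1) \to K(\Z_p, n+1)$, and hence an H-map on $X_n$ after suspending and localising. Because $\psi^a$ respects the H-space multiplication, the induced map $(\psi^a)^*$ on $K^*(X_n) = K_*[[y]]$ is a homomorphism of formal groups over $K_*$ — this gives the ``via ring homomorphisms'' clause essentially for free, and moreover forces $(\psi^a)^*(y)$ to be a formal-group endomorphism of $\hat{\G}_m$, i.e. of the form $[\lambda](y)$ for some $\lambda$ determined by its effect on the tangent space.

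Next I would pin down $\lambda$. The coordinate $y$ is, up to the unit $\xi u^{g(n)}$, the Ravenel-Wilson class $x = x_S$ with $S=(1,2,\dots,n-1)$, which by the computations recalled above is detected on $\pi_{2g(n)}$ and is dual to the fundamental class in the appropriate degree of $K_*K(\Z_p,n+1)$. On this bottom cell, multiplication by $a$ on $\Z_p$ induces multiplication by $a$ (or its reduction in the relevant ring), so $(\psi^a)_*$ acts on the dual generator $c_0$ as the identity scaled by... more precisely, $\psi^a$ acts on the primitive generating the tangent line by $a$, hence $(\psi^a)^*(y) \equiv a y \bmod (y^2)$. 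A formal-group endomorphism of $\hat{\G}_m$ whose linear term is $a$ is exactly $[a](y)$ by the very definition of the $a$-series given just above (using that the $p$-series is $y \mapsto y^p$, so the $p$-adic expansion formula converges and produces the unique such endomorphism). This yields $\psi^a(y) = [a](y)$.

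The step I expect to be the main obstacle is the verification that $\psi^a$ acts by the identity character $a$ (and not by some twist, e.g. $a^{-1}$ or a Galois-conjugate) on the tangent line of the formal group — i.e. getting the normalisation exactly right. This requires tracing the chosen coordinate $y = \xi x u^{g(n)}$ and the dual homology generator $c_0 = \xi^{-1} b_0 u^{-g(n)}$ through Ravenel-Wilson's identifications in sections 11--12 of \cite{rw}, and checking that under the external product / transfer description of the $\Z_p^\times$-action on $K(\Z_p,n+1)$, multiplication by $a$ on $\pi_{n+1}$ really does scale $x_S$ by $a$ rather than by a $p$-adic unit obtained from reindexing. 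The powers of $v_n$ and $u$ that relate $b_0$ to Ravenel-Wilson's $a_I$ must be checked not to introduce an extra factor, and one should confirm that the $\xi$-twist (present only when $n$ is even) cancels correctly on both sides; the cleanest way is to test the formula on the subgroup $\mu_{p-1} \subseteq \Z_p^\times$ where $[a](y)$ is just a genuine scalar multiple, and separately on $1+p$, using compatibility with the maps $K(\Z/p^j,n) \to K(\Z/p^{j+1},n)$ from \eqref{limit_eqn}.

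Once the normalisation is settled, the proof is short: $\psi^a$ is an H-map, so $(\psi^a)^*$ is a formal-group endomorphism of $(K^*X_n, F) \cong \hat{\G}_m$; any such endomorphism is $[\lambda](y)$ where $\lambda$ is the coefficient of $y$; and that coefficient is $a$ by the tangent-space computation, whence $\psi^a(y)=[a](y)$. Functoriality of $a \mapsto \psi^a$ in $a$ (coming from functoriality of $K(-,n+1)$ in the coefficient group) then also follows, consistent with $[ab](y) = [a]([b](y))$.
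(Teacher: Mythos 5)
Your approach is genuinely different from the paper's, and is structurally sound as a reduction, but the step you flag as the main obstacle is handled more simply in the paper than by your proposed route, and one of your proposed checks is actually incorrect. The paper's proof does not classify endomorphisms of $\hat{\G}_m$ at all. It observes that for $m \in \N$ the map $\psi^m$ on $K(\Z_p,n+1)$ factors through the $m$-fold diagonal followed by $m$-ary multiplication, which immediately gives $\psi^m(y) = [m](y)$ as a full power series, not just a tangent-line identity. It then notes that $a \mapsto (\psi^a)^*$ and $a \mapsto [a]$ are both \emph{continuous} maps $\Z_p \to \End(K^*(X_n))$ (citing the Hovey--Strickland topology), so since they agree on the dense subset $\N$ they agree on all of $\Z_p$. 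No input about $\End(\hat{\G}_m)$ is required.

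Your reduction --- $\psi^a$ is an H-map, hence $(\psi^a)^*$ is a formal group endomorphism, every formal group endomorphism of $\hat{\G}_m$ over $K_*$ is $[\lambda]$ for a unique $\lambda \in \Z_p$ (using $[p](y)=y^p$ from Prop.~\ref{rw_prop} to force coefficients into $\F_p$), and $\lambda$ is read off the tangent --- is a correct chain of reasoning, but it imports a classification theorem the paper does not need and still leaves the tangent normalisation, which you acknowledge as the hard part. Your proposed route to that normalisation has a genuine error: for $\zeta \in \mu_{p-1} \setminus \{1\}$, the series $[\zeta](y)$ over $\hat{\G}_m$ is \emph{not} a scalar multiple of $y$; e.g.\ $[-1](y) = -y + y^2 - y^3 + \cdots$. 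The linear formula $\psi^\zeta(y) = \zeta y$ of Proposition~\ref{finite_cor} lives on $K^*K(\Z/p,n)$, a different ring with a different coordinate, and the Bockstein $\beta^*$ does not carry $y$ to $y$; so this does not certify $[\zeta](y) = \zeta y$ on $K^*(X_n)$. The cleanest way to pin down the tangent coefficient within your framework is exactly the paper's mechanism restricted to the linear term: $\psi^m = \mathrm{mult}\circ\Delta$ gives tangent coefficient $m$ for $m \in \N$, and continuity in $a$ extends this to $\Z_p$. But at that point you have reproduced the paper's argument with the endomorphism classification stacked on top as an unnecessary detour, so the net effect is more machinery for the same computation.
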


\begin{proof} 

This formula in fact holds for every $a \in \Z_p$.  Note that this homotopy commutes, for any $m \in \N$:
$$\xymatrix{
K(\Z_p, n+1) \ar[r]^-\Delta \ar[dr]_-{\psi^m} & K(\Z_p, n+1)^{\times m} \ar[d]^-{\rm mult} \\
 & K(\Z_p, n+1)
}$$
where $\Delta$ is the $m$-fold diagonal, and mult is $m$-fold multiplication.  The path along the upper right carries $y$ to $[m](y)$.  

Being defined by space-level maps, the action of $\Z_p$ on $K^*(X_n)$ may be regarded as a \emph{continuous} homomorphism $\Z_p \to \End(K_*[[y]])$.  Here, the topology on $\End(K^*(X))$ is compact-open with respect to the natural topology on $K_*(Y)$ defined in section 11 of \cite{hovey-strickland}.  In that topology, the map $\Z_p \subseteq [X, X] \to \End(K^*(X))$ is continuous.  The argument above indicates that it agrees with the action $a \cdot y = [a](y)$ when $a \in \N$.  Since the latter is also continuous, and $\N$ is dense in $\Z_p$, these two actions must agree.

\end{proof}

The group $\mu_{p-1} \cong \F_p^{\times}$ acts via multiplication on $\Z/p = \F_p$, and therefore on $K(\Z / p, n)$. 

\begin{prop} \label{finite_cor}

The action of $\mu_{p-1}$ on $K^*K(\Z / p, n) \cong K_*[y] / y^p$ is given by $\psi^\zeta (y^m) = \zeta^m y^m$.

\end{prop}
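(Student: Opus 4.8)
The plan is to identify the action of $\psi^\zeta$ on $K^*K(\Z/p,n) = K_*[y]/y^p$ with an automorphism of the truncated multiplicative group of Proposition \ref{rw_prop}, and then to diagonalise it. First I would note, exactly as in the proof of Proposition \ref{action_prop}, that $\psi^\zeta$ --- being induced by the group homomorphism ``multiply by $\zeta$'' on $\Z/p$ --- acts by $K_*$-algebra automorphisms and so is determined by $\psi^\zeta(y)$. The argument with the diagonal in the proof of Proposition \ref{action_prop} (or restriction along the $\mu_{p-1}$-equivariant map $K(\Z/p,n) = K(\mu_p,n) \to K(\mup,n) \simeq X_n$, using that the coordinates have been normalised consistently across (\ref{limit_eqn})) then identifies $\psi^\zeta(y)$ with the $\zeta$-series of that formal group reduced modulo $y^p$; in particular $\psi^\zeta(y) = \zeta y + O(y^2)$. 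With the coordinate inherited from $X_n$ the higher-order terms do not vanish, so the real content is that after truncating at $y^p$ this automorphism can be linearised.

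For the linearisation I would proceed as follows. Since $(\psi^\zeta)^{p-1}$ is induced by multiplication by $\zeta^{p-1}\equiv 1$ on $\Z/p$, it is the identity, so $\psi^\zeta$ satisfies $T^{p-1}-1=0$; as $p-1$ is coprime to $\car(K_*) = p$, this polynomial has distinct roots in $\F_p \subseteq K_*$, hence $\psi^\zeta$ is semisimple. With respect to the $y$-adic filtration it acts on $\m^k/\m^{k+1}$ by $\zeta^k$ (the $k$-th power of its leading coefficient), so the eigenvalue $\zeta^k$ is realised by an eigenvector with leading term $y^k$. I would then take the eigenvector with leading term $y$ as the coordinate; this is still an admissible coordinate (its leading coefficient is a unit, so $K^*K(\Z/p,n) = K_*[y]/y^p$ is unchanged), and now $\psi^\zeta(y) = \zeta y$, whence $\psi^\zeta(y^m) = \zeta^m y^m$ for all $m$ because $\psi^\zeta$ is a ring homomorphism. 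The coincidence $\zeta^0 = \zeta^{p-1}$ does no harm: the classes $1$ and $y^{p-1}$ are then both fixed.

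The step I expect to require the most care is precisely this re-choice of coordinate, together with the observation that it is available only because of the truncation: diagonalising the analogous action on $K_*[[y]]$ in Proposition \ref{action_prop} would amount to constructing the exponential isomorphism $\widehat{\G}_a \simeq \widehat{\G}_m$, which does not exist in characteristic $p$, whereas modulo $y^p$ only the invertible factorials $2!,\dots,(p-1)!$ intervene and there is no obstruction. One should also record that the new $y$ can be chosen compatibly with the normalisation fixed for $X_n$ --- equivalently, that the eigendecomposition of $\psi^\zeta$ refines the $y$-adic filtration --- but that is routine.
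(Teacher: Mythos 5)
Your argument goes a genuinely different route from the paper's, and it has a gap.

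The paper's proof is a Hopf-ring computation in homology, not a formal-group argument. Having reduced (as you do) to determining $\psi^\zeta(y)$, it passes to $K(n)_*$ and uses the Ravenel--Wilson circle-product decomposition $b_0 = a_{(0)}\circ a_{(1)}\circ\cdots\circ a_{(n-1)}$. The homotopy-commuting square relating $\psi^\zeta$ on $K(\Z/p,n)$ to $\psi^\zeta\times 1$ on $K(\Z/p,1)\times K(\Z/p,1)^{\times n-1}$ (both composites with the iterated $\circ$-map classify $\zeta$ times the fundamental class in $H^n(K(\Z/p,1)^{\times n};\F_p)$) reduces the question to $\psi^\zeta_*(a_{(0)})\in K(n)_2 K(\Z/p,1)$, which is pinned down by tracing through the Bockstein to $K(\Z_p,2)=\C P^\infty$ and the Hurewicz image of the generator of $\pi_2$. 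The output is the exact identity $\psi^\zeta_*(b_0)=\zeta b_0$; since $\psi^\zeta_*$ is a ring homomorphism for the $*$-product, $\psi^\zeta_*(b_0^m)=\zeta^m b_0^m$, and the cohomological statement follows by duality.

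In your proposal the wrong turn is the assertion that, in the coordinate inherited from $X_n$, ``the higher-order terms do not vanish.'' This is the opposite of what the proposition claims and of what the Hopf-ring computation proves. Implicitly you are treating $y$ as the standard multiplicative coordinate, for which $[\zeta](y)=(1+y)^{\zeta}-1$ really does carry a nonzero $y^2$-coefficient $\binom{\zeta}{2}$. But the Ravenel--Wilson coordinate $y = \xi x u^{g(n)}$ is a different one, in which $\langle b_0^m, y\rangle = \delta_{m,1}$ for $0\le m\le p-1$; that orthogonality is exactly what the paper's ``follows by duality'' step uses, and it turns $\psi^\zeta_*(b_0^m)=\zeta^m b_0^m$ directly into $\psi^\zeta(y)=\zeta y$ on the nose. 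No linearisation is needed because the $\zeta$-series in this coordinate already truncates to $\zeta y$ modulo $y^p$.

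Even granting your premise, the coordinate change does not prove the proposition. Choosing an eigenvector with leading term $y$ as a new coordinate shows only that \emph{some} coordinate on $K^*K(\Z/p,n)$ diagonalises the $\mu_{p-1}$-action. The proposition --- and the rest of the paper, for instance the Bockstein compatibility used immediately afterwards to deduce $\psi^g(y)\bmod y^p = \zeta y$, the Iwasawa isomorphism $\phi(h)=1+y$ of Proposition \ref{Z_p_prop}, and the definitions of $\alpha$ and $\rho$ --- all refer to the specific normalised $y$ already fixed. Your closing remark that the new coordinate ``can be chosen compatibly with the normalisation fixed for $X_n$ ... but that is routine'' is precisely the nontrivial content, and your argument does not supply it. That is the genuine gap.
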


\begin{proof}

Since the action is space-level, it suffices to show that $\psi^\zeta (y) = \zeta y$.  To see this, we note that the following diagram commutes:
$$\xymatrix{
K(\Z /p, 1) \times K(\Z /p, 1)^{\times n-1} \ar[r]^-{\circ} \ar[d]_-{\psi^\zeta \times 1} & K(\Z / p, n) \ar[d]^-{\psi^\zeta} \\
K(\Z /p, 1) \times K(\Z /p, 1)^{\times n-1} \ar[r]_-{\circ} & K(\Z / p, n)
}$$
since both paths around the diagram represent $\zeta$ times the fundamental class in $H^n(K(\Z /p, 1)^{\times n}; \F_p)$.  Thus in $K(n)_*$, 
\begin{eqnarray*}
\psi^\zeta_*(b_0) & = & \psi^\zeta_*(a_{(0)} \circ a_{(1)} \circ \cdots \circ a_{(n-1)}) \\
 & = & \psi^\zeta_*(a_{(0)}) \circ a_{(1)} \circ \cdots \circ a_{(n-1)} \\
 & = & \zeta a_{(0)} \circ a_{(1)} \circ \cdots \circ a_{(n-1)} \\
 & = & \zeta b_0.
\end{eqnarray*}
The third equality uses the claim that $\psi^\zeta_*(a_{0}) = \zeta a_{(0)} \in K(n)_2 K(\Z / p, 1)$.  This follows from the fact that both classes are carried injectively by the Bockstein to the unique class in $K(n)_2 K(\Z_p, 2)$ which is $\zeta$ times the Hurewicz image of the fundamental class of $S^2 \to K(\Z_p, 2)$.  The claimed result then follows by duality.

\end{proof}

The commutativity of the diagram of exact sequences:
$$\xymatrix{
0 \ar[r] & \Z_p \ar[r]^-p \ar[d]_-{g} & \Z_p \ar[r]^-{\bmod p} \ar[d]_-{g} & \Z / p \ar[d]^-{\zeta} \ar[r] & 0 \\
0 \ar[r] & \Z_p \ar[r]_-p  & \Z_p \ar[r]_-{\bmod p}  & \Z / p \ar[r] & 0
}$$
yields a commutative diagram of fibrations
\beqn \label{diagram_eqn}
\xymatrix{
K(\Z / p, n) \ar[r]^-\beta \ar[d]_-{\psi^\zeta} & K(\Z_p, n+1) \ar[r]^-{\psi^p} \ar[d]^-{\psi^g} & K(\Z_p, n+1) \ar[d]^-{\psi^g} \\
K(\Z / p, n) \ar[r]^-\beta & K(\Z_p, n+1) \ar[r]^-{\psi^p} & K(\Z_p, n+1) 
}\eeqn
Thus, the Bockstein
$$\beta : L_{K(n)} \Sigma^\infty K(\Z / p, n)_+ \to L_{K(n)} \Sigma^\infty K(\Z_p, n+1)_+ = X_n$$
is $\Z_p^{\times}$-equivariant where the action of $\Z_p^{\times}$ on $L_{K(n)} \Sigma^\infty K(\Z / p, n)_+$ is via the reduction $\Z_p^{\times} \to \mu_{p-1}$.  In particular, we conclude that
$$\psi^g(y) \bmod y^p = \zeta y$$

\subsection{Splitting $K(\Z/p, n)$} 

In this section, we construct an invertible spectrum $Z$ and decompose $L_{K(n)} \Sigma^\infty K(\Z/p, n)_+$ into powers of $Z$.  It will turn out that $Z$ has order $p-1$ in $\Pic_n$, and is represented in the algebraic Picard group by the character $\chi$ given by the composite
$$\xymatrix{\G_n \ar[r]^-{\detpm} & \Z_p^{\times} \ar[r]^-{\bmod p} & \F_p^{\times} \ar[r]^-{T} & \Z_p^{\times}}$$
($T$ is the Teichm\"{u}ller character); that is, $E_*^{\vee}(Z) \cong E_* [\chi]$.

For any $p$-complete spectrum $Y$, there is an action (distinct from the action described in the previous section) of $\Z_p^{\times}$ on $Y$ where group elements acts in homotopy by multiplication; we will write the action of $\zeta$ simply as $\zeta$.

\begin{defn}

Define an endomorphism $\pi$ of $L_{K(n)} \Sigma^\infty K(\Z/p, n)_+$ by 
$$\pi = \frac{1}{p-1} \sum_{k=0}^{p-2} \zeta^{-k} (\psi^\zeta)^k = \frac{1}{p-1} \sum_{k=0}^{p-2} \zeta^{-k} \psi^{\zeta^k}$$

\end{defn}

We thank Tyler Lawson for this construction and the proof of the following proposition, which replaces an incorrect argument in a previous version of this paper.

\begin{prop} \label{small_split_prop}

The map $\pi$ is a homotopy idempotent.  It thus yields a splitting 
$$L_{K(n)} \Sigma^\infty K(\Z/p, n)_+ \simeq Z \vee Z^{\perp}$$
where the image of $Z$ in $K_*K(\Z/p, n)$ is the rank one $K_*$-subspace generated by $b_0$.

\end{prop}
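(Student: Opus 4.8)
The strategy is the standard one for splitting off an idempotent summand: verify that $\pi$ is a homotopy idempotent, invoke the fact that the $K(n)$-local category is idempotent-complete to obtain the splitting, and then compute the effect on $K$-homology. First I would record that the operations $\psi^{k\zeta}$ (multiplication by $\zeta^k$ in homotopy, combined with $k$-fold iterates of the Adams-type operation $\psi^\zeta$ of the previous subsection) commute with one another and satisfy $\psi^{k\zeta}\circ\psi^{\ell\zeta} = \psi^{(k+\ell)\zeta}$, since the two commuting $\Z_p^\times$-actions assemble into a single action through which $\pi$ is built. The key input is Proposition \ref{finite_cor}: on $K_*K(\Z/p,n) = K_*[y]/y^p$ (dually, on the exterior-type algebra generated by the $b_k$ with $0 \le k \le \dots$), $\psi^\zeta$ acts on the weight-$m$ piece by $\zeta^m$. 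In particular $\psi^\zeta_*(b_0) = \zeta b_0$.

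The idempotence $\pi\circ\pi \simeq \pi$ I would check on the level of the ring $\pi_0\End(L_{K(n)}\Sigma^\infty K(\Z/p,n)_+)$, or more concretely by a direct manipulation: since the $\psi^{k\zeta}$ form a representation of the cyclic group $\mu_{p-1}$ of order $p-1$ (invertible in $\Z_p$, so $\frac{1}{p-1}$ makes sense $p$-adically), the element $\pi = \frac{1}{p-1}\sum_{k} \zeta^{-k}\psi^{k\zeta}$ is the standard projector onto the $\zeta$-eigenspace of $\psi^\zeta$; multiplying two such sums and reindexing gives $\pi^2 = \pi$ by the orthogonality $\frac{1}{p-1}\sum_k \zeta^{k(j-1)} = \delta_{j,1}$. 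Because the $K(n)$-local stable homotopy category is idempotent-complete (all idempotents split; this is a general property of the $K(n)$-local category, being the homotopy category of a presentable stable $\infty$-category), the homotopy idempotent $\pi$ yields a splitting $L_{K(n)}\Sigma^\infty K(\Z/p,n)_+ \simeq Z \vee Z^\perp$ with $Z$ the image (telescope) of $\pi$ and $Z^\perp$ the image of $1-\pi$.

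Finally I would identify $K_*Z$. Since $K_*(-)$ is a homology theory and the splitting is realized by the idempotent, $K_*Z = \im(\pi_* : K_*K(\Z/p,n) \to K_*K(\Z/p,n))$. Writing $K_*K(\Z/p,n)$ as the tensor product $\bigotimes_{k=0}^{j-1} R(b_k)$ graded by the weight on which $\psi^\zeta_*$ acts by the corresponding power of $\zeta$, the projector $\pi_*$ picks out exactly the $\zeta^1$-eigenspace. By Proposition \ref{finite_cor} (and the multiplicativity of the $\psi^\zeta$-action together with the fact that $b_k$ lives in weight $p^k \equiv 1 \bmod (p-1)$... here one must be slightly careful) the classes in the $\zeta$-eigenspace include $b_0$; one checks the eigenspace is precisely the rank-one $K_*$-subspace spanned by $b_0$ by noting that the weight grading on the monomial basis of $\bigotimes R(b_k)$, reduced mod $p-1$, equals $1$ only on $b_0$ itself (the other generators $b_k$ have weight $p^k$, and products have additive weights, so modulo $p-1$ one sees which monomials land in weight $1$; the dimension/weight bookkeeping from the Ravenel-Wilson presentation pins this down).

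**Main obstacle.** The delicate point is the last one: precisely matching the $\zeta$-eigenspace of $\psi^\zeta_*$ on $K_*K(\Z/p,n)$ with the span of $b_0$, rather than a larger subspace. This requires being careful about the weight grading on the Ravenel-Wilson generators $b_k$ and their products (the $b_k$ satisfy $b_k^p = (-1)^{n-1}v_n^{p^k}b_k$, so weights live mod something), and confirming that no nontrivial monomial other than $b_0$ has weight $\equiv 1 \bmod (p-1)$ in the relevant range — equivalently, that $\psi^\zeta_*$ acts on each $b_k$ by $\zeta^{p^k}$ and these exponents are all $\equiv 1 \bmod (p-1)$ only when... Actually since $p^k \equiv 1 \bmod (p-1)$ for all $k$, every $b_k$ is a $\zeta$-eigenvector, so the eigenspace is larger than claimed unless the splitting refers to something finer; I would need to recheck that the intended statement concerns only the image in $K_*K(\Z/p,n)$ being the $K_*$-span of $b_0$ \emph{as an indecomposable}, or that the action being used is the full weight (not mod $p-1$) action — this is exactly the subtle normalization issue the paper flags, and resolving it correctly is the crux of the argument. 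Deferring to the precise conventions of Proposition \ref{finite_cor}, the computation reduces to linear algebra over $K_*$ once the eigenvalue of $\psi^\zeta_*$ on each Ravenel-Wilson generator is pinned down.
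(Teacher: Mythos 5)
Your overall strategy — verify idempotence via orthogonality of characters, split off the summand, identify $K_* Z$ as $\im(\pi_*)$ — is exactly the paper's, and the first two steps are fine. The gap is in the final computation, where you became uncertain and couldn't close the argument.

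The source of your confusion is that you are working with the wrong ring. The proposition concerns $K(\Z/p, n)$, i.e.\ $j=1$ in the Ravenel--Wilson formula $K(n)_* K(\Z/p^j, n) \cong \bigotimes_{k=0}^{j-1} R(b_k)$. For $j = 1$ there is a single tensor factor $R(b_0) = K_*[b_0]/(b_0^p - (-1)^{n-1} v_n b_0)$, so the $K_*$-basis consists precisely of the monomials $b_0^\ell$ for $0 \le \ell \le p-1$, and there are no generators $b_k$ with $k \ge 1$ at all. Your worry that ``since $p^k \equiv 1 \bmod (p-1)$ for all $k$, every $b_k$ is a $\zeta$-eigenvector, so the eigenspace is larger than claimed'' is about a different ring ($K_* K(\Z_p, n+1)$ or $K_* K(\Z/p^j, n)$ with $j>1$) and simply does not arise here. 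By Proposition \ref{finite_cor} (dualized), $\psi^\zeta_*(b_0^\ell) = \zeta^\ell b_0^\ell$, and among $\ell \in \{0, \dots, p-1\}$ the condition $\zeta^\ell = \zeta$ holds only at $\ell = 1$. So $\pi_*(b_0^\ell) = \frac{1}{p-1}\sum_k \zeta^{k(\ell-1)} b_0^\ell$ equals $b_0$ for $\ell = 1$ and vanishes otherwise, giving exactly the claimed rank-one image. Once you specialize to $j=1$, the ``subtle normalization issue'' you flagged evaporates and the proof closes by the orthogonality relation you already wrote down.
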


\begin{proof}

The idempotence follows from the fact that $\psi^\zeta$ commutes with $\zeta$ and a brief computation.  The splitting comes in the standard way, defining $Z$ as the spectrum representing the image of $\pi$ inside the functor $[-, L_{K(n)} \Sigma^\infty K(\Z/p, n)_+]$.  Consequently, the image of $K_*(Z)$ is the image of $\pi_*$.  However,
$$\pi_*(b_0^\ell) = \frac{1}{p-1} \sum_{k=0}^{p-2} \zeta^{-k} \psi^{\zeta^k}(b_0^\ell) = \frac{1}{p-1} \sum_{k=0}^{p-2} \zeta^{-k} \zeta^{k\ell} b_0^\ell = \frac{1}{p-1} \sum_{k=0}^{p-2} \zeta^{k(\ell-1)} b_0^\ell$$
which is precisely $b_0$ when $\ell=1$, and $0$ otherwise.

\end{proof}

\begin{lem} \label{split_1_lem}

$Z$ is an element of $\Pic_n$.  Furthermore,
$$L_{K(n)} \Sigma^\infty K(\Z/p, n)_+ \simeq \bigvee_{k=0}^{p-1} Z^{\otimes k}.$$
Lastly, $Z^{\otimes p-1} \simeq S$.

\end{lem}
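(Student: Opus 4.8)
The plan is to deduce all three assertions from a single computation in $K$-homology, exploiting the ring structure on $R := L_{K(n)}\Sigma^\infty K(\Z/p,n)_+$ coming from the additive infinite loop space structure on $K(\Z/p,n)$; denote by $\eta\colon S\to R$ its unit and by $\mu_k\colon R^{\otimes k}\to R$ the $k$-fold multiplication (with $\mu_0 := \eta$). First I would record invertibility of $Z$: by the previous proposition $K_*(Z)$ is the free rank-one $K_*$-submodule of $K_*K(\Z/p,n)$ on $b_0$, and since $K$ is a wedge of copies of $K(n)$ with $K_*$ free over $K(n)_*$, this makes $K(n)_*(Z)$ free of rank one; the classification of invertible $K(n)$-local spectra in \cite{hms} then gives $Z\in\Pic_n$, with inverse $Z^{-1} = F(Z,S)$. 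The splitting $R\simeq Z\vee Z^\perp$ also furnishes a split inclusion $\iota\colon Z\hookrightarrow R$ and a retraction $r\colon R\to Z$.

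Next, to build the wedge decomposition I would define, for $0\le k\le p-1$, the composite $\phi_k\colon Z^{\otimes k}\xrightarrow{\iota^{\otimes k}} R^{\otimes k}\xrightarrow{\mu_k} R$, assemble them into $\Phi := \bigvee_{k=0}^{p-1}\phi_k\colon \bigvee_{k=0}^{p-1} Z^{\otimes k}\to R$, and check that $\Phi$ is an equivalence. Since both sides are $K(n)$-local, it is enough to see $\Phi$ induces an isomorphism on $K_*(-)$. By the Künneth theorem $K_*(Z^{\otimes k})\cong K_*(Z)^{\otimes_{K_*} k}$ is free of rank one on $b_0^{\otimes k}$, and lax monoidality of $K_*(-)$ identifies $\phi_{k*}(b_0^{\otimes k})$ with the $k$-fold Pontryagin power $b_0^k\in K_*R = K_*K(\Z/p,n)$ (where $b_0^0 = 1$ is the image of $\eta$). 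The Ravenel--Wilson description recalled above presents $K_*K(\Z/p,n) = R(b_0)$ as a free $K_*$-module on $\{1, b_0, \dots, b_0^{p-1}\}$ \emph{with respect to exactly this product}, the relation $b_0^p = (-1)^{n-1}v_n b_0$ only constraining higher powers. So $\Phi_*$ sends a $K_*$-basis to a $K_*$-basis, whence $\Phi$ is an equivalence.

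For the final claim I would run the same argument one step further, with $\phi_p = \mu_p\circ\iota^{\otimes p}\colon Z^{\otimes p}\to R$: here $\phi_{p*}(b_0^{\otimes p}) = b_0^p = (-1)^{n-1}v_n b_0$ is a unit multiple of the generator of $K_*(Z)\subseteq K_*R$, so $r\circ\phi_p\colon Z^{\otimes p}\to Z$ is a $K_*$-isomorphism of free rank-one modules, hence an equivalence. Thus $Z^{\otimes p}\simeq Z$, and tensoring with $Z^{-1}$ gives $Z^{\otimes(p-1)}\simeq S$.

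The one place this can go wrong, and hence the main thing to pin down, is the Ravenel--Wilson bookkeeping: I need to be certain that the product occurring in $R(b_0)$ is the Pontryagin product induced by the additive $H$-space structure on $K(\Z/p,n)$ --- equivalently the $\ast$-product of their Hopf ring --- since it is precisely this that makes $\{b_0^k\}_{k=0}^{p-1}$ a $K_*$-basis and forces $b_0^p$ to be a unit multiple of $b_0$ (rather than zero or nilpotent), which is what drives both the rank count $p$ in the wedge and the periodicity $Z^{\otimes p}\simeq Z$. Granting the normalisations fixed in the recollections before Proposition \ref{rw_prop}, everything else is formal manipulation with $K(n)$-local equivalences and idempotent splittings.
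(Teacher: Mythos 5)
Your proof is correct and takes essentially the same approach as the paper: the wedge decomposition comes from the maps $Z^{\otimes k}\to L_{K(n)}\Sigma^\infty K(\Z/p,n)_+$ induced by $k$-ary multiplication, detected as $K_*$-isomorphisms onto the rank-one summands generated by $b_0^k$, and $Z^{\otimes p}\simeq Z$ follows from the Ravenel--Wilson relation $b_0^p=(-1)^{n-1}v_n b_0$. Your version is somewhat more explicit about assembling the single map $\Phi$ and composing with the retraction $r$, but the idea and the computations are identical to the paper's.
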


\begin{proof}

The first claim follows immediately from the previous Proposition.  Then the map $Z^{\otimes k} \to L_{K(n)} \Sigma^\infty K(\Z/p, n)_+$ induced by $k$-ary multiplication in $K(\Z/p, n)$ is an isomorphism in $K_*$ onto its image, which is the subspace generated by $b_0^k$.  Thus $L_{K(n)} \Sigma^\infty K(\Z/p, n)_+$ decomposes as indicated into a wedge of tensor powers of $Z$.

The same argument, applied to $k=p$ (along with the fact that $b_0^p=(-1)^{n-1} v_n b_0$) yields $Z^{\otimes p} \simeq Z$.  The last result follows by cancelling a factor of $Z$.

\end{proof}

Lastly, we note that $\pi \circ \psi^{\zeta} = \zeta \cdot \pi$; thus the action of $\psi^\zeta$ on $L_{K(n)} \Sigma^\infty K(\Z/p, n)_+$, when restricted to $Z$ is simply multiplication by $\zeta$.

\subsection{A description of $K_* K(\Z_p, n+1)$ in terms of functions on $\Z_p$}

Let $G$ be a profinite group and $R$ a topological ring.  Following \cite{hovey_ops}, we define the \emph{completed group ring on $G$}, $R[[G]]$ as the inverse limit of finite group rings
$$R[[G]] := \varprojlim_{U} R[G/U]$$
where $U$ ranges over open subgroups of $G$.  We recall that $R[[G]]$ admits the structure of an $R$-Hopf algebra where each $g \in G$ is grouplike: $\Delta(g) = g \otimes g$.   

One may similarly define the $R$-Hopf algebra $C(G, R)$ to be the ring of continuous $R$-valued functions on $G$.  The coproduct is the dual of multiplication in $G$.  Theorem 5.4 of \cite{hovey_ops} and the discussion that follows it give a proof that these two Hopf algebras are dual to each other over $R$: 
$$R[[G]] \cong \Hom_R(C(G, R), R).$$

\begin{rem}

The careful reader will note that in \cite{hovey_ops}, Hovey studies the \emph{twisted} completed group ring $R[[G]]$ in the setting where $G$ acts continuously on $R$; the multiplication in $R[[G]]$ is deformed by this action.  One similarly alters the coproduct (and right unit) in $C(G, R)$; the result is a Hopf \emph{algebroid}.  While of course essential to his goal of identifying $E_n^* E_n$ and its dual, for our purposes the untwisted analogues will suffice.  We note in particular that the algebra structure on $C(G, R)$ (and coalgebra structure on $R[[G]]$) is the same in the twisted and untwisted setting.

\end{rem}

When $G=\Z_p$ and $R = \F_p$, a natural family of functions is given as follows: if $m=m_0 + m_1 p + m_2 p^2 + \dots \in \Z_p$ with $m_i$ taken to be either $0$ or a $p-1^{\rm st}$ root of unity in $\Z_p$, define $f_k(m) = m_k \bmod p$.  Clearly, $f_k^p = f_k$.  In fact, the $f_k$ form a set of generators for $C(\Z_p, \F_p)$, and so
$$C(\Z_p, \F_p) = \F_p[f_0, f_1, f_2, \dots] / (f_k^p-f_k)$$
See, e.g., section 2 of \cite{ravenel_algebras} or 3.3 of \cite{hovey_ops}.  Lastly, if $k$ is any finite extension of $\F_p$, then the natural map $C(\Z_p, \F_p) \otimes_{\F_p} k \to C(\Z_p, k)$ is an isomorphism (this fact is perhaps more evident in the dual Hopf algebra).

Note that $\Z_p^{\times}$ acts on $\Z_p$, and hence on $R[[\Z_p]]$.

\begin{prop} \label{Z_p_prop}

There exists a $\Z_p^{\times}$-equivariant isomorphism of Hopf algebras
$$\phi: K_*[[\Z_p]] \to K^* K(\Z_p, n+1) \cong K_*[[y]],$$
which carries a topological generator $h$ of $\Z_p$ to $1+y$.

\end{prop}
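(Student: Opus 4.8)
The plan is to build $\phi$ by hand out of the grouplike element $1+y$, check it is an isomorphism of underlying algebras almost tautologically, and then use "grouplikes generate" and Proposition \ref{action_prop} to get the Hopf and $\Z_p^\times$-equivariance statements for free. The one genuine input is an identification of the \emph{comultiplication} (not just the ring structure and $p$-series) on $K^*K(\Z_p,n+1)$.

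First I would record the source side. Since the subgroups $p^j\Z_p$ form a neighbourhood basis of $0$, $K_*[[\Z_p]]=\varprojlim_j K_*[\Z/p^j]$; writing $h$ for the topological generator $1\in\Z_p$ and using $\car K_*=p$, one has $h^{p^j}-1=(h-1)^{p^j}$ in $K_*[\Z/p^j]$, so $K_*[\Z/p^j]\cong K_*[s]/(s^{p^j})$ with $s=h-1$, and in the limit $K_*[[\Z_p]]\cong K_*[[s]]$, with $h=1+s$ a grouplike topological generator of the algebra over $K_*$. The crux, which I would isolate as a lemma, is: in the coordinate $y$ the formal group law on $K^*K(\Z_p,n+1)=K_*[[y]]$ coming from the $H$-space structure on $K(\Z_p,n+1)$ is the multiplicative one $F(y_1,y_2)=y_1+y_2+y_1y_2$; equivalently $1+y$ is grouplike; equivalently the dual Hopf algebra $K_*K(\Z_p,n+1)$, in the normalised form $K_*[c_0,c_1,\dots]/(c_k^p-c_k)$, is isomorphic via $c_k\mapsto f_k$ to the function Hopf algebra $C(\Z_p,K_*)=K_*[f_0,f_1,\dots]/(f_k^p-f_k)$ introduced above. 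Proposition \ref{rw_prop} already gives that this formal group is \emph{isomorphic} to $\widehat{\G}_m$; to finish Step~1 one either appeals to Ravenel--Wilson's computation of the coproduct on $K(n)_*K(\Z_p,n+1)$ (which exhibits the $b_k$, after the normalisations already made, as dual to the $p$-adic digit functions $f_k$), or simply takes $y$ to be the multiplicative coordinate furnished by Proposition \ref{rw_prop} — a re-choice compatible with the earlier normalisations, since it changes neither $[p](y)=y^p$ nor the pairing $\langle c_k,y^{p^j}\rangle=\delta_k^j$.

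Granting Step~1, the rest is formal. The ring $K_*[[y]]$ is $(y)$-adically complete and $(1+y)^{p^j}=1+y^{p^j}\to 1$ (again $\car K_*=p$), so $s\mapsto y$ extends uniquely to a continuous $K_*$-algebra homomorphism $\phi\colon K_*[[\Z_p]]=K_*[[s]]\to K_*[[y]]=K^*K(\Z_p,n+1)$ with $\phi(h)=1+y$; concretely $\phi$ sends the group element $a\in\Z_p$ to $(1+y)^a:=\lim_{m\to a}(1+y)^m$. Being on the nose the substitution $s\mapsto y$ between complete $K_*$-algebras, $\phi$ is an isomorphism with inverse $y\mapsto s$. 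It is a map of Hopf algebras: it is a $K_*$-algebra map taking the grouplike $h$ to the grouplike $1+y$, hence takes $h^m$ to $(1+y)^m$ and, by continuity of the comultiplication, every grouplike of $K_*[[\Z_p]]$ to a grouplike of $K^*K(\Z_p,n+1)$; since the grouplikes topologically generate $K_*[[\Z_p]]$ over $K_*$ and all structure maps are continuous algebra maps, $\phi$ intertwines comultiplications, counits and antipodes. Finally, for $\Z_p^\times$-equivariance: the action of $a\in\Z_p^\times$ on $K_*[[\Z_p]]$ is induced by multiplication by $a$ on $\Z_p$, so it sends $h$ to the group element $a$; by Proposition \ref{action_prop} the action on $K^*K(\Z_p,n+1)$ satisfies $\psi^a(y)=[a](y)$, and since $F$ is multiplicative, unwinding the definition of $[a]$ from $[p](y)=y^p$ gives $[a](y)=(1+y)^a-1$, so $\psi^a(\phi(h))=(1+y)^a=\phi(\psi^a(h))$. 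As $\phi\circ\psi^a$ and $\psi^a\circ\phi$ are $K_*$-algebra maps agreeing on the topological generator $h$, they coincide.

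The hard part is exactly Step~1: passing from "$F\cong\widehat{\G}_m$'' (Proposition \ref{rw_prop}) to "$F=\widehat{\G}_m$ in the coordinate $y$'', i.e. pinning down the comultiplication rather than merely the multiplication and the $p$-series on $K^*K(\Z_p,n+1)$ — equivalently, identifying $K_*K(\Z_p,n+1)$ with $C(\Z_p,K_*)$ on the nose, with $b_k$ corresponding to the digit functions. Everything after that is bookkeeping with completions and the fact that a $K_*$-algebra map is determined by its effect on a topological generator.
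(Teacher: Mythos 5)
Your proposal is correct and its skeleton — define $\phi$ by $h\mapsto 1+y$, extend to a continuous algebra map, show it respects the Hopf structure, and get equivariance from the multiplicative formal group law plus Proposition \ref{action_prop} — matches the paper's. The difference is in how the Hopf-algebra compatibility is verified. You construct $\phi$ bare-hands as the completed substitution $K_*[[s]]\to K_*[[y]]$ and then argue via grouplikes that it intertwines all structure maps. The paper instead passes to the dual $\phi^*\colon K_*K(\Z_p,n+1)\to C(\Z_p,K_*)$, computes $\phi^*(c_k)(h^m)=\langle c_k,(1+y)^m\rangle=\binom{m}{p^k}$ from the normalised pairing $\langle c_k,y^{p^j}\rangle=\delta_k^j$, and invokes Lucas' theorem to conclude $\phi^*(c_k)=f_k$; since both sides are presented as $K_*[c_k]/(c_k^p-c_k)$ and $K_*[f_k]/(f_k^p-f_k)$ this exhibits $\phi^*$ as the obvious ring isomorphism, whence $\phi$ is a Hopf isomorphism. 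The two verifications buy essentially the same thing, but the dual-side/Lucas computation is a pleasant shortcut worth knowing.

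You are right to isolate as the genuine input the statement that $1+y$ is grouplike — equivalently, that $\mu^*(y)=y_1+y_2+y_1y_2$ on the nose rather than merely up to a coordinate change — since this is logically stronger than Proposition \ref{rw_prop}'s $[p](y)=y^p$, and both your argument and the paper's actually use it (the paper implicitly, both in its claim that $\phi^*$ is a \emph{ring} map and in the last sentence of its equivariance argument). Appealing to Ravenel--Wilson's coproduct computation is the right way to close this. Your suggested alternative — "re-choose $y$ to be the multiplicative coordinate" — is more delicate than you indicate: a coordinate change $y\mapsto y'=y+a_2y^2+\cdots$ certainly preserves $[p]=y^p$, but you have not argued that it preserves the normalised pairings $\langle c_k,y^{p^j}\rangle=\delta_k^j$, since those were fixed using the original $y$; so stick with the Ravenel--Wilson route. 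Modulo that small caveat, the argument is complete.
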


\begin{proof}

The ring isomorphism $\phi$ is a basic fact about the Iwasawa algebra $\Z_p[[\Z_p]]$ (or, in this case, its reduction modulo $p$, and extension over $\F_{p^n}$).  The dual map $\phi^*$ satisfies 
$$\phi^*(c_k)(h^m) = \langle c_k, (1+y)^m \rangle = {m \choose p^k} \langle c_k, y^{p^k} \rangle = {m \choose p^k}$$
If we write $m$ in its $p$-adic expansion as $m=m_0 + m_1 p + m_2 p^2 + \dots$, then by Lucas' theorem, ${m \choose p^k} \bmod p = m_k$.   Therefore the map $\phi^*$ carries $c_k$ to $f_k \in C(\Z_p, K_*)$.  It is evidently a ring isomorphism,  and so $\phi$ is an isomorphism of Hopf algebras.

To see that the action of $\Z_p^{\times}$ is as claimed, we first note that $\Z_p^{\times}$ acts on $\Z_p$ through group homomorphisms; hence it acts on $K_*[[\Z_p]]$ through ring homomorphisms.  Thus it suffices to show that for $\gamma \in \Z_p^{\times}$, $\phi(\gamma \cdot h) = \gamma \cdot \phi(h)$.  The lefthand side is $\phi(h^\gamma) =\phi(h)^\gamma = (1+y)^\gamma$, whereas the right is $\gamma \cdot(1 + y) = 1 + [\gamma](y)$.  That these two are equal follows from the fact that $y$ is a coordinate on the formal multiplicative group.

\end{proof} 

We note that a more invariant description of this Hopf algebra may be given as follows:  Cartier duality gives an isomorphism $\Spec(K_0 K(\Z_p, n+1)) \cong \Hom(F, \G_m)$.  Further, $\Aut(\G_m) \cong \Z_p^{\times}$ acts by post-composition.

Proposition \ref{rw_prop} shows that the formal group $F$ associated to $K^0 K(\Z_p, n+1)$ is isomorphic to the multiplicative formal group $\G_m$.  If we \emph{choose} an isomorphism of $F$ with $\G_m$, $\Spec(K_0 K(\Z_p, n+1))$ is in turn identified with $\End(\G_m) \cong \Z_p$ as a $\Z_p^{\times}$ space, which yields the results above.  The choice of such an isomorphism corresponds to the choice of coordinate $y$ above.  Both points of view on this computation will be useful as we go forward.

\subsection{Inverting roots of unity; the spectrum $R_n$} 

Define $\alpha = \beta \circ i: Z \to X$ as the composite of the Bockstein with the natural map $i: Z \to \Sigma^\infty K(\Z/p, n)_+$.  Consider the localisation of $X$ at this element in the sense of section \ref{loc_section}:

\begin{defn}

Write $R_n$ for the $E_\infty$ ring spectrum $R_n := X[\alpha^{-1}]$.

\end{defn}

\begin{cor} \label{unit_cor}

The dual isomorphism of Proposition \ref{Z_p_prop} localises to a ring isomorphism
$$\phi^*: K_*(R_n) \to C(\Z_p^{\times}, K_*)$$

\end{cor}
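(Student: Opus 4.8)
The plan is to deduce Corollary \ref{unit_cor} from Proposition \ref{Z_p_prop} by understanding what happens to the Hopf algebra $K_*[[\Z_p]] \cong K^*K(\Z_p,n+1)$ when we invert $\alpha$. First I would recall that $\alpha = \beta \circ i : Z \to X$, and that $Z$ is the summand of $L_{K(n)}\Sigma^\infty K(\Z/p,n)_+$ on which $K_*$-homology is spanned by $b_0$; under the normalisation conventions introduced above, $\beta$ carries $b_0$ to the class dual to $y$, i.e.\ (after passing through $\phi$) the class $c_0 \mapsto f_0$. Dually, on cohomology $\alpha^*$ corresponds to reading off the degree-one part of the coordinate; more usefully, since $K^*X = K_*[[y]]$ is the dual of $K_*R_n$-to-be, the element $\alpha$ acts in $K$-homology of $X$ by picking out the relevant group-like generator. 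The key point is that, via Proposition \ref{Z_p_prop}, $K^*X \cong K_*[[\Z_p]]$ as a Hopf algebra with $h \mapsto 1+y$, and inverting $\alpha$ should exactly have the effect of adjoining inverses to the group-like elements, i.e.\ passing from the monoid ring on $\Z_p$ (under multiplication) to the group ring on $\Z_p^{\times}$.

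Concretely, the steps would be: (1) Apply Proposition \ref{loc_prop} to identify $K_*(R_n) = K_*(X[\alpha^{-1}])$ with the localisation $(K_*X)[\alpha_*^{-1}]$, where $\alpha_*$ is the image in $K_{\dim Z}(X)$ of a generator of $K_*Z$. (2) Translate this across the isomorphism $\phi$ of Proposition \ref{Z_p_prop}: $K_*X \cong K^*K(\Z_p,n+1)$ is the continuous dual of $K_*[[\Z_p]]$, equivalently (since everything here is finite-type profinite) $K_*X \cong K_*[[\Z_p]]$ itself, with $h \leftrightarrow 1+y$. Under this identification $\alpha_*$ corresponds to a specific grouplike element — I would pin it down as (a unit multiple of) $h \in \Z_p \subseteq K_*[[\Z_p]]$, using the commutative diagram (\ref{diagram_eqn}) relating $\beta$, $\psi^p$ and the reduction $\Z_p^{\times} \to \mu_{p-1}$, together with the computation $\pi \circ \psi^\zeta = \zeta\cdot\pi$ that shows $\alpha$ carries the generator of $Z$ to the ``weight one'' part. (3) Observe that $K_*[[\Z_p]][h^{-1}]$ — localising the completed monoid ring on the multiplicative monoid $\Z_p$ at the single grouplike element corresponding to a topological generator — is naturally isomorphic to the completed group ring $K_*[[\Z_p^{\times}]]$; indeed inverting $1+y$ in $K_*[[y]]$ precisely produces functions invertible at the relevant locus, and dually this is $C(\Z_p^{\times}, K_*)$. (4) Dualise: the dual of the localised Hopf algebra $K_*[[\Z_p^{\times}]]$ is $C(\Z_p^{\times}, K_*)$ by the Hovey duality $R[[G]] \cong \Hom_R(C(G,R),R)$ recalled just before Proposition \ref{Z_p_prop}, applied now with $G = \Z_p^{\times}$. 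This yields the claimed ring isomorphism $\phi^* : K_*(R_n) \to C(\Z_p^{\times}, K_*)$.

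The main obstacle I anticipate is step (3) — making precise, and compatible with all the normalisations, the claim that inverting the single element $\alpha$ (equivalently $1+y$, or $h$) in the completed monoid ring on $(\Z_p,\times)$ turns it into the completed group ring on $(\Z_p^{\times},\times)$, rather than into some larger or smaller localisation. The subtlety is that $\Z_p$ under multiplication is not a group, and $\Z_p^{\times}$ is its group of units, but $\Z_p$ has many non-units (the elements $p\Z_p$), so one must check that inverting the \emph{single} grouplike $1+y$ is enough to invert all of $\Z_p^{\times}$, and doesn't accidentally invert elements of $p\Z_p$. Here the structural input is that $\Z_p^{\times}$ is topologically cyclic with generator $g = (\zeta, 1+p)$ while $1+p$ topologically generates $(1+p\Z_p)$; one needs the observation that $y$ and hence $1+y$ already ``sees'' the full unit group through the formal-group/Lucas-theorem description — i.e.\ $f_0 = $ the reduction $\Z_p \to \Z/p$, whose restriction to $\Z_p^{\times}$ is the reduction $\Z_p^{\times}\to\F_p^{\times}$, and the remaining coordinates $f_k$ are already units on the relevant locus after inverting $f_0$. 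I would verify this at the level of the explicit presentation $C(\Z_p,\F_p) = \F_p[f_0,f_1,\dots]/(f_k^p - f_k)$: inverting $f_0$ inverts exactly those functions which are nonzero on $\Z_p^{\times} = \{f_0 \neq 0\}$, and the ring of such functions is $C(\Z_p^{\times},\F_p)$, extended over $K_*$. Everything else — the passage through $\phi$, the duality, and the ring-homomorphism claim — is formal once this identification of the localised coefficient ring is in hand.
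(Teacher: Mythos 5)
Your step (1) and your final verification paragraph match the paper's proof, but your steps (2)--(4) are working on the wrong side of a duality, and the confusion would need to be sorted out before the middle of the argument could be made rigorous.

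The issue is this: Proposition \ref{Z_p_prop} gives an isomorphism $\phi: K_*[[\Z_p]] \to K^*(X)$, so the Iwasawa algebra models the \emph{cohomology} of $X$. Its continuous dual $\phi^*$ identifies the \emph{homology} $K_*(X)$ with $C(\Z_p, K_*)$. These two rings are dual but not isomorphic -- $K_*[[\Z_p]] = \varprojlim K_*[\Z/p^j]$ is a profinite (uncountable-rank) ring while $C(\Z_p, K_*) = K_*[f_0,f_1,\dots]/(f_k^p-f_k)$ is ind-finite -- so the parenthetical ``equivalently $K_*X \cong K_*[[\Z_p]]$ itself'' is false. This matters because the element you must invert is $\alpha_*$, a \emph{homology} class. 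Under $\phi^*$ it corresponds to (a unit multiple of) the \emph{function} $f_0 \in C(\Z_p, K_*)$, namely $m\mapsto m\bmod p$. It does not correspond to the grouplike element $h$, nor to the cohomology class $1+y$; those live in $K^*(X)\cong K_*[[\Z_p]]$, and both are already units there (a power series with constant term $1$ is invertible, and $h$ is grouplike in a group ring), so ``inverting $1+y$'' or ``inverting $h$'' is a no-op. The ``main obstacle'' you anticipate in step (3) -- making sense of $K_*[[\Z_p]][h^{-1}]$ as the group ring of the unit group -- is therefore a symptom of the misidentification rather than a genuine technical difficulty, and the ``completed monoid ring on the multiplicative monoid $\Z_p$'' that step (3) invokes is not the structure that is actually present ($K_*[[\Z_p]]$ is the group ring of the \emph{additive} $\Z_p$).

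Your last paragraph, however, is exactly the paper's argument: apply Proposition \ref{loc_prop}, identify the image of the generator of $K_*(Z)$ under $\alpha$ as a unit multiple of $f_0\in C(\Z_p, K_*)$, observe that $m\in\Z_p$ lies in $\Z_p^\times$ iff $f_0(m)\neq 0$, and conclude $C(\Z_p, K_*)[f_0^{-1}] = C(\Z_p^\times, K_*)$. If you replace steps (2)--(4) with that computation stated directly in $C(\Z_p, K_*)$ (the homology side), the proposal is correct and coincides with the paper's proof.
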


\begin{proof}

Proposition \ref{loc_prop} implies that 
$$K_*(X[\alpha^{-1}]) = K_*(X)[\alpha_*^{-1}] = C(\Z_p, K_*)[ f_0^{-1}]$$
since the image of a generator of $K_*(Z)$ under $\alpha$ is $b_0 = \xi f_0u^{g(n)}$.  Note that $m \in \Z_p$ lies in $\Z_p^{\times}$ if and only if $f_0(m) = m \bmod p$ is invertible.  Therefore $C(\Z_p, K_*)[ f_0^{-1}]$ may be identified as $C(\Z_p^{\times}, K_*)$.

\end{proof}

We would like a $\Z_p^{\times}$-equivariant version of this result.  Recall from Proposition \ref{e-infty_loc_prop} that $X[\alpha^{-1}]$ may be presented as the Bousfield localisation $X[\alpha^{-1}] \simeq L_{X[\alpha^{-1}]} X$.  For each $k \in \Z_p$, the Bousfield localisation of $\psi^k: X \to X$ yields a map $\psi^k: X[\alpha^{-1}] \to X[\alpha^{-1}]$.

\begin{lem} \label{infty_lem}

For each $k \in \Z_p^{\times}$, the map $\psi^k: X[\alpha^{-1}] \to X[\alpha^{-1}]$ is an $E_\infty$ map.  Its  induced map in $K_* X[\alpha^{-1}] = C(\Z_p^{\times}, K_*)$ can be identified with translation of functions by $k$.  Finally, $\psi^k \circ \psi^{\ell} \simeq \psi^{k\ell}$.

\end{lem}

\begin{proof}

The last claim is simply that Bousfield localisation is functorial up to homotopy.  The first claim follows immediately from the fact that the localisation of $\psi^k$ is the lifted arrow
$$\xymatrix@1{
L_{X[\alpha^{-1}]} X \ar@{.>}[r]^-{\psi^k} & L_{X[\alpha^{-1}]} X \\
X \ar[u] \ar[r]_-{\psi^k} & X \ar[u]
}$$
provided by Theorem VIII.2.2 of \cite{ekmm}.  The second claim follows from Proposition \ref{Z_p_prop}, since that is how $\psi^k$ acts on $K_*(X) = C(\Z_p, K_*)$.

\end{proof}

A caveat is in order: the maps $\psi^k$ defined on $X[\alpha^{-1}]$ in the above fashion make sense for every $k \in \Z_p$, not just $\Z_p^{\times}$.  However, for any $k \in p\Z_p$, the second statement of the above Proposition cannot possibly hold, since translation by $k$ carries $\Z_p^{\times}$ off itself.  In fact, the only possible localisation of $\psi^k$ making the above diagram commute is zero in $K_*$.  However, $\psi^p$ is essential on $X$, as we will see in the next result.  Define $j: X \to R_n$ to be the natural map from a ring spectrum to its localisation.

\begin{prop} \label{weird_split_prop}

The map $j:X \to R_n$ splits; in fact, $j \vee \psi^p: X \to R_n \vee X$ is an equivalence.

\end{prop}

\begin{proof} 

Recall that $\psi^k$ acts on $K_* X = C(\Z_p, K_*)$, via multiplication by $k$ on $\Z_p$.  Consequently, 
$$(j \vee \psi^p)_*: K_* X = C(\Z_p, K_*) \to C(\Z_p^{\times}, K_*) \oplus C(\Z_p, K_*) = K_* (R_n \vee X)$$
is induced by the continuous bijection $\Z_p^{\times} \sqcup \Z_p \to \Z_p$ which is the natural inclusion on $\Z_p^{\times}$ and multiplication by $p$ on $\Z_p$ (i.e., $\Z_p = \Z_p^{\times} \sqcup p\Z_p$).  Since both spaces are compact Hausdorff, this is a homeomorphism, so it induces an isomorphism on $C(-, K_*)$.  Hence $j \vee \psi^p$ is a $K(n)$-local equivalence.

\end{proof}

In light of Snaith's Theorem, for $n=1$ this says that $KU^{\wedge}_p$ is a split summand of $L_{K(1)} \Sigma^\infty \C P^\infty_+$.  This  can alternatively be proven as follows: in \cite{segal}, Segal proves that $BU$ is a summand of $Q \C P^\infty$ (although not as infinite loop spaces).  Apply the Bousfield-Kuhn $\Phi_1$-functor to the splitting map $BU \to Q \C P^\infty$ to get a one-sided inverse $L_{K(1)} KU \to L_{K(1)} \Sigma^\infty \C P^\infty_+$ to $j$.

\subsection{Invertible spectra as homotopy fibres} \label{invert_section}

\begin{prop} \label{invertible_prop}

Let $\gamma \in \pi_0(R_n)^{\times}$.  The homotopy fiber $F_\gamma$ of $(\psi^g - \gamma): R_n \to R_n$ is an invertible spectrum.  When $\gamma = 1$, $F_1$ is equivalent to $S$.

\end{prop}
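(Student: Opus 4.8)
The plan is to exploit the identification of $K_*$-homology from Corollary \ref{unit_cor}, namely $K_*(R_n) \cong C(\Z_p^\times, K_*)$, together with the description of the self-map $\psi^g$ as translation of functions. First I would recall the general principle (see \cite{hms}) that a $K(n)$-local spectrum $A$ is invertible precisely when $K(n)_*(A)$ is free of rank one over $K(n)_*$; since $K$ is a bouquet of copies of $K(n)$, it suffices to show $K_*(F_\gamma)$ is free of rank one over $K_*$. To compute this, apply $K_* = K_*(-)$ to the fibre sequence $F_\gamma \to R_n \xrightarrow{\psi^g - \gamma} R_n$, yielding a long exact sequence; the key is to understand the map $(\psi^g - \gamma)_*$ on $K_*(R_n) = C(\Z_p^\times, K_*)$.

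Next I would analyse that map explicitly. By the Lemma just preceding the proposition, $\psi^g_*$ is translation by $g$: $(\psi^g_* f)(x) = f(gx)$ for $f \in C(\Z_p^\times, K_*)$ and $x \in \Z_p^\times$. So $(\psi^g - \gamma)_*$ sends $f$ to the function $x \mapsto f(gx) - \gamma f(x)$. Since $g$ is a topological generator of $\Z_p^\times$, the orbit of any $x$ under multiplication by $g$ is dense, and one checks that this operator is \emph{injective} on $C(\Z_p^\times, K_*)$ whenever $\gamma$ is a unit: if $f(gx) = \gamma f(x)$ for all $x$, then iterating gives $f(g^k x) = \gamma^k f(x)$, and continuity plus density of $\{g^k\}$ forces $f$ to be determined by a single value in a way incompatible with $f$ being $K_*$-valued unless $f = 0$ (this is where one uses that $\gamma^k$ does not converge appropriately; more carefully, evaluating the continuous function $f$ and using that $\gamma$ generates an infinite subgroup gives the vanishing). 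Granting injectivity, the long exact sequence degenerates to a short exact sequence
$$0 \to K_* F_\gamma \to C(\Z_p^\times, K_*) \xrightarrow{(\psi^g-\gamma)_*} C(\Z_p^\times, K_*) \to 0,$$
and the cokernel computation identifies $K_* F_\gamma$ with the "coinvariants/invariants" which should be free of rank one: functions satisfying $f(gx) = \gamma f(x)$ form a rank-one $K_*$-module, being determined by their value at $1 \in \Z_p^\times$ once one passes to the appropriate completed/twisted-equivariant picture. I would carry this out by choosing the explicit generators $f_k$ (or rather their localisations) and tracking the action of translation by $g$, reducing to a concrete linear-algebra computation over $K_*$.

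For the case $\gamma = 1$: here $(\psi^g - 1)_*$ has kernel the constant functions, so $K_* F_1 \cong K_*$ is free of rank one with the kernel generated by the constant function $1$, which is the image of the unit; hence the unit map $S \to F_1$ (coming from the fact that $\psi^g$ fixes the unit of the ring spectrum $R_n$, so the unit $S \to R_n$ lifts to $F_1$) induces an isomorphism on $K_*$, and a $K(n)$-local equivalence between $K(n)$-locally finite-type spectra that is a $K_*$-isomorphism is an equivalence. The main obstacle I anticipate is the injectivity claim for $(\psi^g - \gamma)_*$ and the precise rank-one identification of the kernel of the cokernel side: one must be careful that $C(\Z_p^\times, K_*)$ is a completed (profinite) object, so "translation by a topological generator has no invariants" requires a genuine argument using continuity and the topology on $K_*$-homology from section 11 of \cite{hovey-strickland}, rather than a naive algebraic manipulation. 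Everything else — the fibre sequence, the Hopkins–Mahowald–Sadofsky invertibility criterion, and the unit lift — is formal.
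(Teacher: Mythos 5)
Your overall strategy (fibre sequence, compute on $K_*$, use the Hopkins--Mahowald--Sadofsky criterion that rank-one $K_*$ forces invertibility, and lift the unit when $\gamma=1$) matches the paper, but the central analytic step is backwards and internally contradictory.

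The map $(\psi^g-\gamma)_*$ on $C(\Z_p^\times,K_*)$ is \emph{not} injective; it is \emph{surjective}, and its kernel has rank one. Your own heuristic shows why: a function in the kernel satisfies $f(g^k x)=\gamma^k f(x)$, so it is determined by $f(1)$. This does not force $f=0$: since $K_*=\F_{p^n}[u^{\pm 1}]$, the relevant value $\gamma$ enters only through $\gamma\bmod p\in\F_p^\times$, which has finite order dividing $p-1$, so $m\mapsto\gamma^m f(1)$ is locally constant (indeed periodic) and extends to a genuine nonzero continuous function on $\Z_p^\times$ whenever $f(1)\neq 0$. Thus the kernel is free of rank one over $K_*$, not zero. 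You notice this yourself when you handle $\gamma=1$ (the constants lie in the kernel), but that observation flatly contradicts the injectivity you assert for general units $\gamma$. Moreover the short exact sequence you display places $K_*F_\gamma$ as the kernel of $(\psi^g-\gamma)_*$; under your injectivity hypothesis that would give $K_*F_\gamma=0$, so the sequence as written cannot coexist with injectivity and is also the wrong degree bookkeeping if one instead tried to use the cokernel.

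What the argument actually needs is \emph{surjectivity} of $(\psi^g-\gamma)_*$ (the paper cites an argument in the style of Lurie's notes; one can see it on finite quotients $(\Z/p^k)^\times$ and pass to the colimit, using that the pullback of the ``obstructing'' cokernel functional vanishes mod $p$). Then the long exact sequence in $K_*$ collapses to identify $K_*(F_\gamma)\cong\ker((\psi^g-\gamma)_*)$, which is the rank-one module of functions with $f(gx)=\gamma f(x)$, and invertibility follows. Your $\gamma=1$ discussion and the unit-lift argument are fine once the correct exactness picture is in place.
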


\begin{proof}

It is not hard to see that $\psi^g - \gamma$ is surjective in $K_*$.  First note that we may regard the $K_*$-Hurewicz image of $\gamma$ as a continuous function $\gamma_* \in K_0(R_n) = C(\Z_p^{\times}, \F_p)$.  If $\phi: \Z_p^{\times} \to \F_p$ is any continuous map, define $f(g^m) \in \F_p$ for $m \in \N$ inductively by the formulae $f(1) = 1$ and $f(g^{m+1}) = \gamma_*(g^m) f(g^m) + \phi(g^m)$.  The function $f$ extends uniquely from the dense subspace $g^\N$ to $\Z_p^{\times}$, since the function $f$ so defined is (uniformly) continuous on $g^\N \leq \Z_p^{\times}$.  Then, since 
$$[\psi^g-\gamma](f)(g^m) = f(g^{m+1})-\gamma_*(g^m)f(g^m) = \phi(g^m),$$
the equality $(\psi^g-\gamma)_*(f) = \phi$ holds on a dense subset; since both are continuous, they must be equal.

Further, the kernel of $(\psi^g-\gamma)_*$ kernel consists of those functions $f: \Z_p^{\times} \to K_*$ which satisfy $f(gx) = \gamma f(x)$.  As such a function is determined by its value on $1$, this has rank one over $K_*$.  Thus $K_*(F_\gamma) = \ker((\psi^g - \gamma)_*)$ is rank one over $K_*$, and hence invertible.

When $\gamma = 1$, the kernel consists of those functions on $\Z_p^{\times}$ which are invariant under translation by $g$, namely the constant functions.  As in Lemma 2.5 of \cite{hms}, the unit of the ring spectrum $R_n$ --  induced by the basepoint inclusion in $K(\Z_p, n+1)$ -- lifts to $F_1$, and carries $K_*(S)$ onto these functions, yielding the desired isomorphism in $K_*$.

\end{proof}

One should think of the second statement in this Proposition as saying that $S$ is the homotopy fixed point spectrum for the action of $\Z_p^{\times}$ on $R_n$ given in Lemma \ref{infty_lem}.

\begin{defn}

Denote by $G$ the invertible spectrum $F_g$ associated to $g \in \Z_p^{\times} \leq \pi_0(R_n)^{\times}$.

\end{defn}

We will show below that $G=F_g$ is in fact the spectrum $\Sdet$ of the introduction.  With Theorem \ref{intro_snaith_thm} in hand, this is largely an exercise in notation; until that result is proven, we must apologise for the multiplicity of different symbols for the same object.

\begin{prop} \label{map_to_pic_prop}

The assignment $\gamma \mapsto F_\gamma$ defines a homomorphism $\pi_0(R_n)^{\times} \to \Pic_n$.  

\end{prop}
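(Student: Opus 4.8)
The plan is to show that the assignment $\gamma \mapsto F_\gamma$ respects the group operations, so it suffices to produce, for $\gamma, \delta \in \Z_p^\times$, an equivalence $F_\gamma \otimes F_\delta \simeq F_{\gamma\delta}$ natural enough to be compatible with associativity and to send $1$ to $F_1 \simeq S$ (which was already checked in Proposition \ref{invertible_prop}). The key structural input is that each $F_\gamma$ is the homotopy fibre of the self-map $\psi^g - \gamma$ of the ring spectrum $R_n$, and that $R_n$ is an $R_n$-module in the obvious way; I would exploit the $R_n$-module structure together with the fact that, after tensoring with the invertible spectrum $F_\delta$, the map $\psi^g$ still makes sense on $R_n \otimes (\text{something})$ because $\psi^g$ is a ring map.

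First I would set up the multiplication map. Because $\psi^g \colon R_n \to R_n$ is a map of ($E_\infty$, hence in particular homotopy-commutative) ring spectra, the two fibre sequences
$$
F_\gamma \to R_n \xrightarrow{\psi^g - \gamma} R_n, \qquad F_\delta \to R_n \xrightarrow{\psi^g - \delta} R_n
$$
can be smashed (and $K(n)$-localised) and, using $\mu\colon R_n \otimes R_n \to R_n$ and the identity $\psi^g(ab) = \psi^g(a)\psi^g(b)$, the composite $F_\gamma \otimes F_\delta \to R_n \otimes R_n \xrightarrow{\mu} R_n$ lands in the homotopy fibre of $\psi^g - \gamma\delta$. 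Concretely, if $x$ maps into $R_n$ with $\psi^g x = \gamma x$ and $y$ with $\psi^g y = \delta y$, then $\psi^g(xy) = \gamma\delta\, xy$, so multiplication carries $F_\gamma \otimes F_\delta$ into $F_{\gamma\delta}$. This gives a canonical map $\nu_{\gamma,\delta}\colon F_\gamma \otimes F_\delta \to F_{\gamma\delta}$.

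Next I would check $\nu_{\gamma,\delta}$ is an equivalence by working in $K(n)$-homology, which detects equivalences between $K(n)$-local spectra and where everything is computable via Corollary \ref{unit_cor}. By Proposition \ref{invertible_prop}, $K_*(F_\gamma)$ is the rank-one $K_*$-submodule of $C(\Z_p^\times, K_*)$ consisting of functions $f$ with $f(gx) = \gamma f(x)$, spanned by the unique such function with $f(1) = 1$; write $e_\gamma$ for this generator. Under the ($K_*$-algebra) structure on $K_*(R_n) = C(\Z_p^\times, K_*)$, the pointwise product $e_\gamma \cdot e_\delta$ satisfies $(e_\gamma e_\delta)(gx) = \gamma\delta (e_\gamma e_\delta)(x)$ and takes value $1$ at $1$, hence equals $e_{\gamma\delta}$. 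Thus on $K_*$ the map $\nu_{\gamma,\delta}$ sends the generator $e_\gamma \otimes e_\delta$ of the rank-one module $K_*(F_\gamma) \otimes_{K_*} K_*(F_\delta)$ to the generator $e_{\gamma\delta}$, so it is an isomorphism in $K_*$, hence an equivalence. Finally, associativity and unitality of this product follow from associativity and unitality of $\mu$ on $R_n$ (all the coherence one needs is already in $K_*$, where $C(\Z_p^\times, K_*)$ is honestly a commutative ring), and the unit statement $F_1 \simeq S$ is Proposition \ref{invertible_prop}. Together with the dependence of $F_\gamma$ on $\gamma$ being suitably continuous/functorial, this shows $\gamma \mapsto F_\gamma$ is a homomorphism $\Z_p^\times \to \Pic_n$.

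The main obstacle I anticipate is not the $K_*$-level bookkeeping but making the multiplication map $\nu_{\gamma,\delta}$ genuinely well-defined on homotopy fibres rather than merely on $K_*$: since the $F_\gamma$ are defined as homotopy fibres, one needs a map of fibre sequences, and the relevant square $R_n \otimes R_n \to R_n$ versus $\psi^g \otimes \psi^g$ versus $\psi^g$ commutes only because $\psi^g$ is a ring map, so some care is needed to produce an actual map (and homotopy) rather than just a diagram commuting up to unspecified homotopy. One clean way around this is to observe (as the footnote before Proposition \ref{invertible_prop} hints, and as will be proven via Theorem \ref{intro_snaith_thm}) that $R_n \simeq E_n^{h\SGn}$ carries an honest $\Z_p^\times$-action with $F_\gamma$ the fibre of $\psi^g - \gamma$ for the strict $\psi^g$; then the smash-product argument produces strict maps and the homomorphism property is immediate. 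Either way, once the map exists the verification that it is an equivalence is the routine $K_*$-computation above.
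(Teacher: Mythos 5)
Your proposal is correct and takes essentially the same route as the paper: both use that $\psi^g$ is a ring map so that $\mu \circ (i \otimes i) \colon F_\gamma \otimes F_\delta \to R_n$ intertwines $\psi^g$ with multiplication by $\gamma\delta$, hence lifts to $F_{\gamma\delta}$, and both then verify the lift is a $K_*$-isomorphism using the description of $K_*(F_\gamma)$ as the eigenspace of functions $f$ with $f(gx)=\gamma f(x)$. Your concluding remarks about strictifying the construction and coherence are sensible caveats the paper elides, but the core argument is identical.
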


\begin{proof}

We must show that $F_{\gamma \sigma} \simeq F_\gamma \otimes F_\sigma$.  Now $\psi^g$ is a map of ring spectra, so this diagram homotopy commutes:
$$\xymatrix{
F_\gamma \otimes F_\sigma \ar[dr]^-{(\gamma \otimes \sigma) \circ i} \ar[d]_-i & \\
R_n \otimes R_n \ar[r]^-{\psi^g \otimes \psi^g} \ar[d]_-\mu & R_n \otimes R_n \ar[d]^-{\mu}\\
R_n \ar[r]^-{\psi^g} & R_n\\
}$$
Since $\mu \circ (\gamma \otimes \sigma ) \circ i = \gamma \sigma \circ i$, we see that $\mu \circ i$ lifts to $F_{\gamma \sigma}$; this map is easily seen to be an isomorphism in $K_*$.

\end{proof}

\begin{cor} \label{lifting_prop}

The canonical map $\delta:G \to X[\alpha^{-1}]$ from the homotopy fibre $G = F_g$ lifts to a map $\rho: G \to X$:
$$\xymatrix{
 & X \ar[r]^-{\psi^g-g} \ar[d]^-j & X \ar[d]^-j \\
G \ar[r]^-\delta \ar[ur]^-{\rho} & X[\alpha^{-1}] \ar[r]^-{\psi^g-g} & X[\alpha^{-1}]
}$$
Furthermore, there are equivalences of ring spectra
$$\xymatrix@1{X[\rho^{-1}] \ar[r]^-\simeq & X[\alpha^{-1}][\delta^{-1}] & X[\alpha^{-1}] \ar[l]_-\simeq}.$$

\end{cor}

\begin{proof} 

We note that Proposition \ref{weird_split_prop} immediately implies the existence of the lift of $\delta$ to $X$.

The right equivalence is the localisation map at $\delta$.  By the proof of Proposition \ref{invertible_prop}, the image of a generator for $G$ under $\delta$ is $f_0(x) = x \bmod p$.  As this is clearly invertible in $C(\Z_p^{\times}, K_*) = K_*(X[\alpha^{-1}])$, $m_\delta$ is an equivalence, and thus the directed system defining $X[\alpha^{-1}][\delta^{-1}]$ is constant.

The left equivalence is induced by the map $X \to X[\alpha^{-1}]$ after localisation at $\rho$ (which maps to localisation at $\delta$, since $j \rho = \delta$).  By the same argument as above, the image of a generator for $G$ under $\rho$ is a function $f: \Z_p \to K_*$ which restricts to $f_0$ along the inclusion $\Z_p^{\times} \subseteq \Z_p$.  Replacing $\rho$ with $m_\alpha^{p-1} \cdot \rho$ if necessary, we may assume that $f$ vanishes on $\Z_p \setminus \Z_p^{\times}$, and so $f = f_0$.  Thus the localisation map $X[\rho^{-1}] \to X[\alpha^{-1}][\delta^{-1}]$ is an $K_*$-isomorphism.

\end{proof}

We note that since $\im(\rho_*)$ is generated by the class\footnote{This class is detected by the primitive element $y \in K^*X$.  We thank Mike Hopkins, Jacob Lurie, and Eric Peterson for pointing out that this observation may be promoted to the claim that $G$ can be constructed as $\Sigma \Cotor_X(S, S)$; here $X$ is a coalgebra spectrum, and $\Cotor_X(S, S)$ is the associated (reduced) cobar construction.  See \cite{peterson_annular} for details on this point of view and an algebro-geometric interpretation.} $f_0$, we may conclude that $\dim(G) = \dim(f_0) = 2g(n)$.  We additionally record the following for later use:

\begin{prop}

The spectrum $Z$ is homotopy equivalent to $F_\zeta$, via a map making the following diagram (whose bottom row is a fibre sequence) commute:
$$\xymatrix{
Z \ar[r]^-{\alpha} \ar@{.>}[d] & X \ar[r]^-{\psi^g-\zeta} \ar[d]^-j & X \ar[d]^-j \\
F_\zeta \ar[r] & X[\alpha^{-1}] \ar[r]^-{\psi^g-\zeta} & X[\alpha^{-1}].
}$$

\end{prop}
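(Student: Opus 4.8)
The plan is to construct the dotted arrow as a lift against the fibre sequence forming the bottom row of the diagram, and then to identify it as an equivalence by a rank-one computation in $K_*$, paralleling the proofs of Propositions~\ref{invertible_prop} and~\ref{lifting_prop}.

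First I would check that $(\psi^g-\zeta)\circ j\circ\alpha\simeq 0$, so that a lift exists. Section~\ref{invert_section} already records that $\psi^g\circ\alpha\simeq\alpha\circ\zeta$ as maps $Z\to X$: this comes from the $\Z_p^\times$-equivariance of the Bockstein in diagram~(\ref{diagram_eqn}), extended along $i\colon Z\to\Sigma^\infty K(\Z/p,n)_+$, together with the fact that $\psi^\zeta$ restricts to multiplication by $\zeta$ on $Z$. Since multiplication by $\zeta\in\Z_p$ commutes with every map of $p$-complete spectra, $\alpha\circ\zeta$ (with $\zeta$ acting on $Z$) agrees with $\zeta\circ\alpha$ (with $\zeta$ acting on $X$), so $(\psi^g-\zeta)\circ\alpha\simeq\psi^g\circ\alpha-\zeta\circ\alpha\simeq 0$ in $[Z,X]$. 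Composing with $j$ and using that the construction of $\psi^g$ on $X[\alpha^{-1}]$ makes $j$ equivariant ($j\circ(\psi^g-\zeta)\simeq(\psi^g-\zeta)\circ j$) yields the required nullhomotopy; the right-hand square of the diagram commutes for the same reason. Choosing such a nullhomotopy, the universal property of the homotopy fibre gives a map $\lambda\colon Z\to F_\zeta$ whose composite with the fibre inclusion $F_\zeta\to X[\alpha^{-1}]$ is $j\circ\alpha$, which is exactly the commutativity of the left-hand square.

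Next I would show $\lambda$ is an equivalence. Both $Z$ and $F_\zeta$ lie in $\Pic_n$, by Lemma~\ref{split_1_lem} and Proposition~\ref{invertible_prop} respectively, so $K_*(Z)$ and $K_*(F_\zeta)$ are free of rank one over the graded field $K_*$; hence it suffices to show $\lambda_*$ is nonzero, since a $K_*$-isomorphism between $K(n)$-local spectra is an equivalence. Under the identification $K_*(X[\alpha^{-1}])\cong C(\Z_p^\times,K_*)$ of Corollary~\ref{unit_cor}, the proof of Proposition~\ref{invertible_prop} identifies $K_*(F_\zeta)=\ker(\psi^g-\zeta)_*$ with the functions $f$ satisfying $f(gx)=\zeta f(x)$. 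Since the composite of $\lambda$ with the fibre inclusion is $j\circ\alpha$, the image under $\lambda_*$ of a generator of $K_*(Z)$ maps injectively to $\alpha_*$ of that generator, which by Corollary~\ref{unit_cor} is $\xi f_0 u^{g(n)}$, a $K_*$-unit multiple of the function $f_0\colon m\mapsto m\bmod p$. This is nonzero, and it indeed lies in $K_*(F_\zeta)$ because $f_0(gx)=(g\bmod p)(x\bmod p)=\zeta f_0(x)$. Therefore $\lambda_*$ is nonzero, hence an isomorphism, and $\lambda$ is the desired equivalence.

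The only point requiring genuine care is the very first one: one needs $(\psi^g-\zeta)\circ\alpha$ to be honestly nullhomotopic rather than merely trivial on $K_*$, which is why the argument rests on the space-level commutativities behind diagram~(\ref{diagram_eqn}) and its extension along $i$, rather than on homology. After that, everything is formal --- manipulation of fibre sequences and the rank-one bookkeeping already in place from Proposition~\ref{invertible_prop}. In particular, the ambiguity in the chosen nullhomotopy, a torsor under $[\Sigma Z,X[\alpha^{-1}]]$, is harmless: we need only one lift, and it turns out to be a $K_*$-isomorphism.
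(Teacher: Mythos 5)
Your proof is correct and follows essentially the same approach as the paper: establish the nullhomotopy $(\psi^g-\zeta)\circ j\circ\alpha\simeq 0$ to obtain the lift, then verify it is a $K_*$-isomorphism by checking that the generator of $K_*(Z)$ and $K_*(F_\zeta)$ both map to a unit multiple of $f_0$ in $K_*(X[\alpha^{-1}])$. The only cosmetic difference is that you cite the relation $\psi^g\circ\alpha\simeq\alpha\circ\zeta$ already recorded in section~\ref{invert_section}, whereas the paper rederives $\psi^g\circ\beta\circ\pi=\zeta\cdot\beta\circ\pi$ directly from the idempotent $\pi$; the content is the same.
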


\begin{proof}

Consider the composite
$$\xymatrix{ L_{K(n)} \Sigma^\infty K(\Z/p, n)_+ \ar[r]^-\pi & L_{K(n)} \Sigma^\infty K(\Z/p, n)_+ \ar[r]^-\beta & X \ar[r]^-{\psi^g} & X}$$
Using the fact that $\psi^g \circ \beta = \beta \circ \psi^\zeta$, we have
\begin{eqnarray*}
\psi^g \circ \beta \circ \pi & = & \beta \circ \psi^\zeta \circ \left(\frac{1}{p-1} \sum_{k=0}^{p-1} \zeta^{-k} (\psi^\zeta)^k\right) \\
 & = & \beta \circ \zeta \cdot \pi \\
 & = & \zeta \cdot \beta \circ \pi.
\end{eqnarray*}
Thus $(\psi^g - \zeta) \circ j \circ \alpha = 0$, and giving rise to the dashed arrow.  The fact that it is a $K_*$-isomorphism (and hence equivalence) follows from inspection of the images of fundamental classes in $K_*(X[\alpha^{-1}])$; they are both generated by $f_0$.

\end{proof}

\subsection{Splitting $R_n$}

There is an analogue for $R_n$ of the splitting of $p$-adic $K$-theory into a wedge of $p-1$ Adams summands:

\begin{prop}

There is an equivalence
$$\bigvee_{k=0}^{p-2} G^{\otimes k} \otimes R_n^{h\mu_{p-1}} \to R_n.$$

\end{prop}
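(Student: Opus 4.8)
The plan is to mimic the classical splitting of $p$-adic $K$-theory into Adams summands, but carried out inside the fibre-sequence description of $R_n$ rather than via idempotents on $R_n$ directly. The group $\mu_{p-1} \leq \Z_p^\times = \G_n/\SGn$ acts on $R_n$ (via the residual $\Z_p^\times$-action, or equivalently through the self-map $\psi^\zeta$, which exists by the same compact-object/telescope argument used to construct $\psi^g$ in Section \ref{invert_section}), and one forms the homotopy fixed point spectrum $R_n^{h\mu_{p-1}}$. Since $\mu_{p-1}$ has order prime to $p$ and $R_n$ is $p$-local (indeed $K(n)$-local), the homotopy fixed point spectral sequence collapses and $K(n)$-homology commutes with the fixed points: $K_*(R_n^{h\mu_{p-1}}) = K_*(R_n)^{\mu_{p-1}} = C(\Z_p^\times, K_*)^{\mu_{p-1}}$, the functions invariant under translation by $\mu_{p-1}$. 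More generally, averaging $\frac{1}{p-1}\sum_{k=0}^{p-2} \zeta^{-jk}(\psi^\zeta)^k$ gives orthogonal idempotents $e_j$ on $R_n$ splitting it into eigen-summands $R_n^{(j)}$, with $K_*(R_n^{(j)})$ the functions $f$ on $\Z_p^\times$ with $f(\zeta' x) = (\zeta')^j f(x)$ for $\zeta' \in \mu_{p-1}$; the $j=0$ piece is $R_n^{h\mu_{p-1}}$.

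The key step is then to identify the $j$-th eigen-summand with $G^{\otimes j} \otimes R_n^{h\mu_{p-1}}$. For this I would use the Proposition identifying $Z \simeq F_\zeta$ together with the map $\rho: G \to X \to R_n$ and the element $\alpha : Z \to X \to R_n$. Tensoring the unit $S \to R_n^{h\mu_{p-1}}$ with $G^{\otimes j}$ and multiplying by $\rho^{\otimes j}$ yields a map $G^{\otimes j} \otimes R_n^{h\mu_{p-1}} \to R_n$; on $K(n)$-homology, a generator of $K_*(G^{\otimes j})$ maps to $f_0^j$ (since $\im(\rho_*)$ is generated by $f_0 = x \bmod p$, invertible in $C(\Z_p^\times, K_*)$), so the map sends $K_*(G^{\otimes j}) \otimes C(\Z_p^\times,K_*)^{\mu_{p-1}}$ isomorphically onto $f_0^j \cdot C(\Z_p^\times,K_*)^{\mu_{p-1}}$. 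One checks that $f_0$ transforms by the character $\zeta' \mapsto \zeta'$ under $\mu_{p-1}$ (this is exactly the content of $\psi^g(y) \equiv \zeta y \bmod y^p$ from Section 3.2, together with the fact that $\dim(G) = 2g(n)$ so that $f_0$ sits in the right graded piece), hence $f_0^j \cdot C(\Z_p^\times,K_*)^{\mu_{p-1}}$ is precisely the $j$-th eigenspace $K_*(R_n^{(j)})$. Finally, since $C(\Z_p^\times, K_*)$ is free of rank $p-1$ over its $\mu_{p-1}$-invariants with basis $1, f_0, \dots, f_0^{p-2}$, the wedge $\bigvee_{k=0}^{p-2} G^{\otimes k} \otimes R_n^{h\mu_{p-1}} \to R_n$ is a $K(n)_*$-isomorphism, hence an equivalence of $K(n)$-local spectra.

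The main obstacle I expect is not the $K(n)$-homology bookkeeping but the construction of the map at the spectrum level in a way compatible with the $\mu_{p-1}$-action, and verifying that the various $\psi^\zeta$, $\rho$, and $\alpha$ are coherent enough that the composite $G^{\otimes j} \otimes R_n^{h\mu_{p-1}} \to R_n$ is genuinely defined (rather than just defined after forgetting structure). This is the same point that forced the telescope/compactness workaround when constructing $\psi^g$ in Section \ref{invert_section}; here one must additionally know $\rho$ intertwines the $\mu_{p-1}$-action on $G$ (multiplication by $\zeta$, by the last remark of Section 3.3) with that on $X[\alpha^{-1}]$. Once $G \simeq F_\zeta$ is used to pin down this equivariance, the rest is the (collapsing) Künneth spectral sequence and the elementary observation that $C(\Z_p^\times, \F_p)$ is a free module of rank $p-1$ over the invariants of the prime-to-$p$ group $\mu_{p-1}$. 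Alternatively, and perhaps more cleanly, one can sidestep the equivariance of $\rho$ by defining the summand abstractly as $e_j R_n$ via the idempotent and only afterward exhibiting an equivalence $G^{\otimes j}\otimes R_n^{h\mu_{p-1}} \simeq e_j R_n$ by a $K(n)_*$-computation, which is the route I would actually take.
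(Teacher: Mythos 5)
Your core argument is the same one the paper uses: form the map $\bigvee_{k=0}^{p-2} G^{\otimes k}\otimes R_n^{h\mu_{p-1}}\to R_n$ by multiplying the forgetful map $R_n^{h\mu_{p-1}}\to R_n$ against the wedge of powers of $\delta$ (your $j\circ\rho^{\otimes k}$ is exactly $\delta^k$), and then check it is a $K(n)_*$-isomorphism using the fact that $C(\Z_p^\times,K_*)$ is free of rank $p-1$ over its $\mu_{p-1}$-invariants with basis $1,f_0,\dots,f_0^{p-2}$. The idempotent splitting into eigen-summands $e_j R_n$ is a somewhat heavier repackaging of the same computation that the paper dispenses with, and your worry about spectrum-level coherence of $\psi^\zeta$, $\rho$, and $\alpha$ is unnecessary here: the map need only exist as a homotopy class, and it is built directly as a product, so no equivariance of $\rho$ has to be verified.
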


\begin{proof}

There is a natural forgetful map $R_n^{h\mu_{p-1}} \to R_n$.  Additionally, one may produce a map $\vee_{k=0}^{p-2} G^{\otimes k} \to R_n$ by wedging together powers of $\delta$; the product of these maps gives the desired equivalence.  

To see that this map is an isomorphism in $K_*$ (and hence an equivalence), we note that since
$$K_*(R_n) = C(\Z_p^{\times}, \F_{p^n}) = \F_{p^n}[f_0, f_1, f_2, \dots] / ( f_0^{p-1}-1, f_k^p-f_k),$$
then $K_* R_n^{h\mu_{p-1}}$ is the subspace generated by monomials in the $f_i$ whose total degree is a multiple of $p-1$, since $\psi^\zeta(f_k) = \zeta f_k$.  The whole space is a free module over this subalgebra, generated by the classes $\{1, f_0, f_0^2, \dots, f_0^{p-2}\}$, which are the images of $G^{\otimes k}$ under $\delta^k$.

\end{proof}

\subsection{The Morava module of $R_n$} \label{morava_section}

\begin{prop} \label{morava_mod_prop}

There are ring isomorphisms
$$\begin{array}{ccc}
C(\Z_p, {E_n}_*) \to {E_n^{\vee}}_*(X) & and & C(\Z_p^{\times}, {E_n}_*) \to {E_n^{\vee}}_*(R_n)
\end{array}$$
In this description, the action of $\G_n$ on both of these Morava modules is induced by $\detpm: \G_n \to \Z_p^{\times}$ and the natural action of $\Z_p^{\times}$ on $\Z_p$ and $\Z_p^{\times}$.

\end{prop}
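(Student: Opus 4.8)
The plan is to establish the two ring isomorphisms first, deducing the $R_n$ statement from the $X$ statement by inverting $\alpha$, and then to pin down the $\G_n$-action by reducing everything modulo $\m$, where the Ravenel--Wilson calculation makes the determinant visible. (One cannot shortcut this by quoting $R_n \simeq E_n^{h\SGn}$ -- whose Morava module is manifestly $C(\G_n/\SGn, {E_n}_*) = C(\Z_p^{\times}, {E_n}_*)$ -- since that equivalence will be proved \emph{using} the present proposition.)

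\emph{The underlying rings.} The first step is to observe that, by Ravenel--Wilson, $K_*X = K_*K(\Z_p, n+1)$ is even and free over $K_*$, so the standard lifting machinery for Morava modules (Hovey \cite{hovey_ops}, Hovey--Strickland \cite{hovey-strickland}) shows that ${E_n^\vee}_*(X)$ is pro-free over ${E_n}_*$, is $\m$-adically complete, and reduces modulo $\m$ to $K_*(X)$. Dualizing the Hopf-algebra isomorphism of Proposition \ref{Z_p_prop} identifies that reduction with $C(\Z_p, K_*)\cong K_*[f_0, f_1, \dots]/(f_k^p-f_k)$. This is pro-(split \'etale) over $K_* = {E_n}_*/\m$; since idempotents lift uniquely along the complete local map ${E_n}_* \to K_*$, the characteristic functions of clopen subsets of $\Z_p$ lift to orthogonal idempotents of ${E_n^\vee}_*(X)$, which assemble into a ring map $C(\Z_p, {E_n}_*) \to {E_n^\vee}_*(X)$; comparing reductions modulo $\m$ (both sides pro-free) shows it is an isomorphism. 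For the second isomorphism one applies Proposition \ref{loc_prop} to $R_n = L_{K(n)}X[\alpha^{-1}]$ exactly as in Corollary \ref{unit_cor}: the image of $\alpha$ is the function $f_0 = (\,\cdot\, \bmod p)$, an element $m \in \Z_p$ lies in $\Z_p^\times$ iff $f_0(m)$ is a unit, and hence ${E_n^\vee}_*(R_n) = C(\Z_p, {E_n}_*)[f_0^{-1}] = C(\Z_p^\times, {E_n}_*)$.

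\emph{The $\G_n$-action.} Since $\G_n$ acts on $X$ trivially, its action on ${E_n^\vee}_*(X)$ comes entirely from the action on $E_n$, so under the identification above it is a ${E_n}_*$-semilinear action on $C(\Z_p, {E_n}_*)$. Because this algebra is pro-(split \'etale), such an action is determined by the induced action on the profinite set $\Z_p = \pi_0\Spec$; and since the reduction ${E_n^\vee}_*(X) \to K_*(X)$ is $\G_n$-equivariant (it is induced by $E_n \to E_n/\m$, with $\m$ a $\G_n$-stable ideal), this action on $\Z_p$ coincides with the one on $\pi_0\Spec K_*(X)$. Thus it suffices to compute the $\G_n$-action inside Morava $K$-theory, which is where I would invoke Ravenel--Wilson: $\bS_n = \Aut(\Gamma_n)$ acts on $K^*B(\Z/p) = K_*[[t]]/[p]_{\Gamma_n}(t)$ through its tautological action on $\Gamma_n$; Ravenel--Wilson assemble $K^*K(\Z/p^j, n)$ from $n$ such factors by their circle-product construction, and, using the embedding $\bS_n \hookrightarrow \GL_n(\W(\F_{p^n}))$ and the resulting $\det\colon\bS_n\to\Z_p^\times$ recalled in the discussion of the Morava stabiliser group, the induced $\bS_n$-action on the $1$-dimensional Hopf algebra $K^*K(\Z_p, n+1) = K_*[[y]]$ (obtained in the colimit over $j$; see \eqref{limit_eqn}) is $y \mapsto [\det g](y)$ -- the $\widehat{\Gm}$-multiplication, i.e.\ $\psi^{\det g}$ in the notation of Proposition \ref{action_prop}. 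The Frobenius generator of $\Gal(\F_{p^n}/\F_p)$ acts via the Verschiebung computation $V(x) = (-1)^{n-1}x$ from the proof of Proposition \ref{rw_prop}, i.e.\ as $\psi^{(-1)^{n-1}}$, which is exactly $\detpm$ of the Frobenius. Dualizing (Cartier duality carries $[a]$ on $\widehat{\Gm}$ to multiplication by $a$ on $\widehat{\Gm}^\vee = \underline{\Z_p}$), the $\G_n$-action on $\Z_p$ is multiplication by $\detpm$; restricting along $f_0^{-1}$ gives the action on $\Z_p^\times$.

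\emph{Main obstacle.} The first paragraph is routine Morava-module bookkeeping; the real content is the determinant computation in the second. The delicate points I expect to fight with are (i) extracting from the Ravenel--Wilson circle-product structure that the $\bS_n$-action on the top $1$-dimensional piece is exactly the determinant character $\bS_n\to\Z_p^\times$, rather than some other power of it, and (ii) getting the Frobenius contribution $(-1)^{n-1}$ right, so that the composite character is $\detpm$ and not the variant $\det$ of \cite{ghmr_pic}. A careful treatment must also keep track of the normalizing root of unity $\xi$ and the $\F_{p^n}$-versus-$\F_p$ bookkeeping that entered the definitions of $y$ and the $c_k$.
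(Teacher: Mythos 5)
Your first paragraph (the ring isomorphisms) is essentially right and reaches the same conclusion as the paper, but by a slightly different route: where you lift idempotents along the complete local map ${E_n}_* \to K_*$ using the pro-(split \'etale) structure, the paper constructs the isomorphism directly by observing that $E_n^*(X)$ carries the multiplicative formal group (Proposition \ref{rw_prop} lifts), choosing a coordinate $\overline{y}$ reducing to $y$, and sending $k \in \Z_p$ to the grouplike element $\psi^k(1+\overline{y})$, producing a Hopf-algebra map ${E_n}_*[\Z_p] \to E_n^*(X)$ whose continuous ${E_n}_*$-dual is the required iso (checked mod $\m$ against Proposition \ref{Z_p_prop}). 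Both approaches work; yours is a clean alternative.

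The second paragraph has a genuine gap, and you have yourself flagged part of it. Your reduction to $\pi_0\Spec K_*X$ via pro-(split \'etale) semilinearity is fine, but the claim that the $\bS_n$-action on $K^*K(\Z_p, n+1) = K_*[[y]]$ is $y \mapsto [\det g](y)$ is asserted, not argued: you invoke the embedding $\bS_n \hookrightarrow \GL_n(\W(\F_{p^n}))$ and ``the circle-product construction,'' but the step that makes the action a \emph{determinant} rather than some other function of the matrix is exactly the exterior-power structure, and that must be extracted. The paper's proof supplies this via two inputs you do not use: Peterson's identification of $\Spf E_n^*K(\Z_p, n+1)$ with the $n$-th exterior power of the $p$-divisible group of $E_n$, and the anticommutativity $a \circ b = -b \circ a$ of the Ravenel--Wilson circle product, which makes the iterated circle product $\bS_n$-equivariantly alternating and hence transforms by $\det$. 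Without that, your point (i) remains an unproved assertion.

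More seriously, the Frobenius computation you propose is wrong in mechanism. The Verschiebung identity $V(x) = (-1)^{n-1}x$ in the proof of Proposition \ref{rw_prop} concerns the Verschiebung endomorphism of the formal group and is used there only to compute $[p](x)$; it has nothing to do with the Galois element $\sigma \in \Gal(\F_{p^n}/\F_p)$, which acts on the \emph{coefficients}. The correct argument (as in the paper) is: $\sigma$ fixes $x u^{g(n)}$ because that class descends to $K(n)^*(X)$ (coefficients $\F_p$), while it acts on the normalizing root of unity $\xi \in \F_{p^n}$ by $\xi \mapsto \xi^p$, so $\sigma(y) = \sigma(\xi x u^{g(n)}) = \xi^p x u^{g(n)} = \xi^{p-1}y = (-1)^{n-1}y$, using that $\xi$ was chosen to be a $(p-1)$-st root of $(-1)^{n-1}$. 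So the $\xi$-bookkeeping you worried about is precisely where the $(-1)^{n-1}$ comes from -- not from the Verschiebung -- and you need to replace that step.
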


\begin{proof}

The proof of the first fact is based on the related computation of ${E_n^\vee}_*{E_n}$ given in \cite{hovey_ops}, section 2.  We recall that $K_n^0(X)$ supports a formal group $F$ isomorphic to $\G_m$ (Prop. \ref{rw_prop}), and note that $E_n^0(X)$ supports a deformation $\overline{F}$ of it to $E_n^0$.  The universal deformation of $\G_m$ over $K_n^0= \F_{p^n}$ is the multiplicative group $\G_m$ over $\W(\F_{p^n})$, and so there is a ring map $\phi: \W(\F_{p^n}) \to E_n^0$ and a $\ast$-isomorphism $f: \overline{F} \to \phi^*(\G_m)$.  Since the formula for multiplication in $\G_m$ involves only the ring elements $0$ and $1$, $\phi^*(\G_m) = \G_m$.  

Thus $\overline{F}$ is isomorphic to the multiplicative group.  Consequently, there is a coordinate $\overline{y} \in E_n^*(X)$ which reduces modulo $\m$ to $y \in K^*(X)$, and presents $\overline{F}$ as the multiplicative group.  Therefore, the action of the Adams operations $\psi^k$ on $X$ gives a continuous, exponential map 
$$\begin{array}{ccc}
a: \Z_p \to E_n^*(X) & \mbox{by} & k \mapsto \psi^k(1+\overline{y}) = 1+[k](\overline{y}) = (1+\overline{y})^k
\end{array}$$
This extends linearly over ${E_n}_*$ to give a continuous ring homomorphism $a: {E_n}_*[\Z_p] \to E_n^*(X)$.  Writing $\mu: X \otimes X \to X$ for the multiplication in $X$, $\mu^*$ is the coproduct on $E_n^*(X)$.  Then since $\psi^k: X \to X$ is a map of ring spectra and $\overline{y}$ is a coordinate on the multiplicative group,
\begin{eqnarray*}
\mu^* (\psi^k (1+\overline{y})) & = & (\psi^k \otimes \psi^k)(\mu^*(1+\overline{y})) \\
 & = & (\psi^k \otimes \psi^k)(1\otimes 1+\overline{y} \otimes 1 + 1\otimes \overline{y} + \overline{y} \otimes \overline{y}) \\
 & = & \psi^k(1+\overline{y}) \otimes \psi^k(1+\overline{y}).
\end{eqnarray*}

Therefore $a$ is a map of Hopf algebras.  Applying the functor $\Hom^c_{{E_n}_*}(-, {E_n}_*)$ of continuous ${E_n}_*$-module homomorphisms into ${E_n}_*$ yields a Hopf algebra map
$$a^*: \Hom^c_{{E_n}_*}(E_n^*(X), {E_n}_*) \to \Hom^c_{{E_n}_*}({E_n}_*[\Z_p], {E_n}_*) \cong C(\Z_p, {E_n}_*).$$
The identification of the codomain uses the fact that ${E_n}_*[\Z_p]$ is free.  Similarly, the domain is %
$$\Hom^c_{{E_n}_*}(E_n^*(X), \varprojlim_I (E_n / I)_*) = \varprojlim_I \Hom((E_n / I)^*(X), (E_n / I)_*) = \varprojlim_I  (E_n / I)_*(X) ={ E_n^{\vee}}_*(X)$$
where the limit is taken over the ideals $I = (p^{i_0}, u_1^{i_1}, \dots, u_{n-1}^{i_{n-1}})$.  We know that ${E_n^{\vee}}_*(X)$ is pro-free, concentrated in even dimensions, with reduction modulo $\m$ isomorphic to $K_*(X) = C(\Z_p, \F_{p^n}[u^{\pm 1}])$.  The ring $C(\Z_p, {E_n}_*)$ is also pro-free, and $a^*: E_n^{\vee}(X) \to C(\Z_p, {E_n}_*)$  reduces modulo $\m$ to the isomorphism $\phi^*: (E_n / \m)_*(X) \to C(\Z_p, \F_{p^n}[u^{\pm 1}])$ of Prop. \ref{Z_p_prop}; it is then an isomorphism itself.  

The same localisation technique as in the proof of Cor. \ref{unit_cor} yields the corresponding result for $R_n$.  Specifically, the image of the fundamental class of $Z$ under $\alpha$ is a function $B_0: \Z_p \to {E_n}_*$ whose reduction modulo $\m$ is $b_0 = \xi f_0 u^{g(n)}$.  As in the residue field, an element $m \in \Z_p$ is invertible if and only if $B_0(m)$ is.

To see that the $\G_n$ action is as claimed, we employ Peterson's adaptation \cite{peterson} of Ravenel-Wilson's results \cite{rw} to identify the formal spectrum $\Spf E_n^* K(\Z_p, n+1)$ in terms of the $n^{\rm th}$ exterior power of the $p$-divisible group associated to $E_n$.  Concretely, we recall that the action of $\bS_n$ on ${E_n^\vee}_*(K(\Z_p, 2)) \cong {E_n^\vee}_*(K(\mup, 1))$ is via the defining action of $\OO_n$ on $\Gamma_n$.  Furthermore the Hopf ring circle product satisfies $a \circ b = - b \circ a$.  Therefore, the action of $\bS_n$ on classes in ${E_n^\vee}_*(K(\Z_p, n+1)) \cong {E_n^\vee}_*(K(\mup, n))$ lying in the image of the iterated circle product
$${E_n^\vee}_*(K(\mup, 1)^{\times n}) \to {E_n^\vee}_*(K(\mup, n))$$
is via the $n^{\rm th}$ exterior power of the defining action.  All classes in $K_*(K(\mup, n)))$ may be obtained this way, and the above analysis lifts this statement to ${E_n^\vee}_*$.  Therefore the action of $\bS_n$ must be via $\det$.  Examining the action of elements of $\Gal(\F_{p^n}/\F_p)$, we note that if $\sigma$ is the Frobenius homomorphism, then  
$$\sigma \cdot y = \sigma \cdot(\xi x u^{g(n)}) = \xi^p x u^{g(n)} = \xi^{p-1} y = (-1)^{n-1} y,$$
Here, $x u^{g(n)}$ is fixed by the action of Frobenius, since it descends to a class in $K(n)^*(X)$.  This yields the result.

\end{proof}

\begin{thm} \label{e_infty_maps_thm}

The space of $E_\infty$ maps $R_n \to E_n$ is an infinite loop space with contractible components, the set of which is isomorphic to $\Z_p^{\times}$.  Furthermore, the action of $\G_n \simeq \Aut_{E_\infty}(E_n)$ on this set is via the homomorphism $\detpm: \G_n \to \Z_p^{\times}.$

\end{thm}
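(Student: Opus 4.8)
The plan is to compute the mapping space $\Map_{E_\infty}(R_n, E_n)$ by $K(n)$-local Goerss--Hopkins obstruction theory \cite{gh}, run with $E_n$ as target and completed $E_n$-homology ${E_n^{\vee}}_*(-)$ as coefficient functor, exactly as in the proof of the Goerss--Hopkins--Miller theorem for $E_n$ itself. In that framework the homotopy type of $\Map_{E_\infty}(R_n, E_n)$ is assembled from a tower whose obstructions, and whose homotopy groups relative to any chosen basepoint map, live in Andr\'e--Quillen cohomology groups $H^s_{{E_n}_*}\bigl({E_n^{\vee}}_*(R_n);\, \Sigma^t\, {E_n^{\vee}}_*(E_n)\bigr)$ computed in the category $\mathcal{C}$ of $K(n)$-local Morava modules carrying their power operations ($K(n)$-local $\theta$-algebras with continuous $\G_n$-action); if all of these with $s \geq 1$ vanish, the tower degenerates, every component becomes contractible, and $\pi_0 \Map_{E_\infty}(R_n, E_n) \cong \Hom_{\mathcal{C}}\bigl({E_n^{\vee}}_*(R_n), {E_n^{\vee}}_*(E_n)\bigr)$. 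The algebraic inputs come from Proposition \ref{morava_mod_prop}: ${E_n^{\vee}}_*(R_n) \cong C(\Z_p^{\times}, {E_n}_*)$ as a Morava module with $\G_n$ acting through $\detpm$, while ${E_n^{\vee}}_*(E_n) \cong C(\G_n, {E_n}_*)$ (Hovey \cite{hovey_ops}), which is cofree on ${E_n}_*$ as a $\G_n$-object.

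The crucial point is that $C(\Z_p^{\times}, {E_n}_*)$ is \emph{pro-(finite \'etale)} over ${E_n}_*$: it is the cofiltered limit, over open subgroups $U \leq \Z_p^{\times}$, of the rings ${E_n}_*[\Z_p^{\times}/U]$, each a finite product of copies of ${E_n}_*$ and hence finite \'etale over ${E_n}_*$. A finite \'etale ${E_n}_*$-algebra carries a unique compatible power-operation structure and has vanishing (topological) Andr\'e--Quillen cohomology in positive degrees; passing to the limit --- precisely the limit presentation of ${E_n^{\vee}}_*$ used in the proof of Proposition \ref{morava_mod_prop} --- kills $H^s_{{E_n}_*}\bigl(C(\Z_p^{\times}, {E_n}_*);\, M\bigr)$ for all $s \geq 1$ and all coefficient modules $M$. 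Hence the obstruction tower degenerates: the components of $\Map_{E_\infty}(R_n, E_n)$ are contractible, so the space is equivalent to the discrete abelian group $\pi_0 \Map_{E_\infty}(R_n, E_n)$, which accounts for the asserted infinite loop structure, and $\pi_0 \Map_{E_\infty}(R_n, E_n) \cong \Hom_{\mathcal{C}}\bigl(C(\Z_p^{\times}, {E_n}_*), C(\G_n, {E_n}_*)\bigr)$.

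To count components, use cofreeness of $C(\G_n, {E_n}_*)$ in the $\G_n$-direction to identify this Hom with $\Hom^c\bigl(C(\Z_p^{\times}, {E_n}_*), {E_n}_*\bigr)$ taken in ${E_n}_*$-$\theta$-algebras; since the source is \'etale, a continuous ${E_n}_*$-algebra map out of it automatically respects the power operations, so this is just $\Hom^c_{{E_n}_*\text{-alg}}\bigl(C(\Z_p^{\times}, {E_n}_*), {E_n}_*\bigr)$. Reducing modulo $\m$, a continuous $\F_{p^n}[u^{\pm 1}]$-algebra map out of $C(\Z_p^{\times}, \F_{p^n}[u^{\pm 1}])$ is evaluation at a point of the profinite set $\Z_p^{\times}$, since $\Spec \F_{p^n}[u^{\pm 1}]$ is connected; and each such point lifts uniquely over the $\m$-complete ring ${E_n}_*$ along the pro-(finite \'etale) tower. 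Hence this Hom set is $\Z_p^{\times}$, via evaluation; in particular the mapping space is nonempty --- giving an $E_\infty$-map $R_n \to E_n$, which can also be exhibited concretely from $\varphi_n$ of \cite{sati_wes} after inverting $\alpha$ --- and $\pi_0 \Map_{E_\infty}(R_n, E_n) \cong \Z_p^{\times}$.

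Finally, $\G_n \simeq \Aut_{E_\infty}(E_n)$ acts on $\Map_{E_\infty}(R_n, E_n)$ by post-composition, hence on $\pi_0$. Under the identifications above, the action of $\Aut_{E_\infty}(E_n)$ on the target ${E_n^{\vee}}_*(E_n)$ is converted, via the corepresentability of $C(\G_n, {E_n}_*)$, into the action on $\Hom^c_{{E_n}_*\text{-alg}}\bigl(C(\Z_p^{\times}, {E_n}_*), {E_n}_*\bigr)$ by pre-composition with the $\G_n$-action on the source Morava module $C(\Z_p^{\times}, {E_n}_*)$ --- the same mechanism by which $\Aut_{E_\infty}(E_n)$ acts on $\pi_0 \Map_{E_\infty}(E_n, E_n) \cong \G_n$ by translation. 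By Proposition \ref{morava_mod_prop} that source action is translation of $\Z_p^{\times}$ through $\detpm$, and pre-composing evaluation at $g_0 \in \Z_p^{\times}$ with translation by $\detpm(\gamma)$ is evaluation at $\detpm(\gamma)^{\pm 1} g_0$, so $\G_n$ acts on the component set $\Z_p^{\times}$ through $\detpm$, as claimed. The real work should be the second step --- setting up the $K(n)$-local Goerss--Hopkins obstruction theory with $E_n$-completed coefficients and power operations and establishing the vanishing above, i.e.\ the ``\'etale rigidity'' that a pro-(finite \'etale) ${E_n}_*$-algebra admits an essentially unique $E_\infty$-$E_n$-algebra structure and that $E_\infty$-maps out of the associated spectrum are classified by that algebraic structure --- together with the care needed to interchange the obstruction groups with the defining cofiltered limit.
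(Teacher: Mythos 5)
Your argument is the same strategy as the paper's: run $K(n)$-local Goerss--Hopkins obstruction theory with ${E_n^\vee}_*$-coefficients, show the relevant Andr\'e--Quillen obstruction groups vanish, and read off $\pi_0$ of the mapping space as a set of continuous ${E_n}_*$-algebra maps, which is $\Z_p^{\times}$ by Proposition \ref{morava_mod_prop}. Where you diverge is in how the vanishing is established, and this is worth comparing. The paper reduces immediately to the special fiber: it observes that the obstruction groups vanish if the cotangent complex of $\F_{p^n} \to (E_n^\vee)_0(R_n)/\m \cong C(\Z_p^{\times}, \F_{p^n})$ is contractible, and that this holds because the Frobenius on that $\F_{p^n}$-algebra is visibly an isomorphism (it is a colimit of $\F_{p^n}[f_0,\dots,f_k]/(f_0^{p-1}-1, f_i^p-f_i)$, each Frobenius-perfect). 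This is a one-line check and completely sidesteps the concern you flag at the end about interchanging obstruction groups with the cofiltered limit defining ${E_n^\vee}_*$: the mod-$\m$ reduction is a single honest ring, and perfectness of Frobenius for $\F_p$-algebras is exactly the criterion for acyclicity of the cotangent complex. Your route through the pro-(finite \'etale) presentation of $C(\Z_p^{\times}, {E_n}_*)$ over the full ${E_n}_*$ is morally equivalent --- \'etaleness and Frobenius-perfection of the fiber are closely linked here --- but it does genuinely require the limit-interchange argument you defer, whereas the paper avoids it by design. Your treatment of the $\G_n$-action by post-composition, converted to pre-composition on $C(\Z_p^{\times}, {E_n}_*)$ via cofreeness of $C(\G_n, {E_n}_*)$, is more explicit than what the paper writes (the paper essentially just cites the identification $\Z_p^{\times} = \G_n/\SGn$), and is a nice supplement. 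One small caution: the infinite loop structure on the mapping space is not really a consequence of components being contractible per se (any homotopy-discrete space is ``an infinite loop space'' only after one remembers the ambient monoidal structure); the paper treats this as part of the package from the obstruction machinery and you should too, rather than phrasing it as a corollary of contractibility.
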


\begin{proof}

The proof is essentially the same as the main technical result of \cite{sati_wes}, so we will be brief.  The Goerss-Hopkins-Miller obstruction machinery shows that the higher homotopy groups of $\Map_{E_\infty}(R_n, E_n)$ vanish if the cotangent complex for the map $\F_{p^n} \to (E_n^\vee)_0(R_n)/ \m$ is contractible.  But the latter is
$$(E_n^\vee)_0(R_n)/ \m \cong C(\Z_p^{\times}, \F_{p^n}) \cong \F_{p^n}[f_0, f_1, f_2, \dots] / ( f_0^{p-1}-1, f_k^p-f_k).$$
The cotangent complex is contractible because the Frobenius on this ring is evidently an isomorphism.

The set of components of $\Map_{E_\infty}(R_n, E_n)$ is then the set of continuous ${E_n}_*$-algebra homomorphisms $\Hom^c_{{E_n}_*-alg}({E_n^\vee}_* R_n, {E_n}_*)$, which may be identified with $\Z_p^{\times} = \G_n / \SGn$ by the previous result. 

\end{proof}

One may phrase this in more invariant terms: ${E_n^\vee}_* R_n$ may be identified as $C(\Isom(F, \G_m), {E_n}_*)$, in a variant on the comments after the proof of Proposition \ref{Z_p_prop}.  Then the set of components of the $E_\infty$ mapping space is $\Isom(F, \G_m)$.  This is evidently a torsor for the group $\Z_p^{\times} = \Aut(\G_m)$.

Reduction of a chosen generator of the group of such maps $R_n \to E_n$ modulo $\m$ gives a natural transformation from $R_n$ to $K$ which will be useful in the following sections.

\begin{prop} \label{t_prop}

The cohomology class $1+y = \varphi \bmod \m \in K^*(X) = [X, K]$ extends over $R_n$ to give a map of $A_\infty$-ring spectra $t: R_n \to K$.

\end{prop}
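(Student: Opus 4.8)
The plan is to produce $t$ as the composite $R_n\to E_n\to E_n/\m\simeq K$, where the first map is the $E_\infty$-ring map from Theorem \ref{e_infty_maps_thm} whose restriction to $X$ is the grouplike coordinate $1+\overline{y}$, and the second is reduction modulo $\m$. There are two things to pin down: the choice of the first map, and the $A_\infty$-ring structure through which the second is made sense of.

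For the choice of map: by Theorem \ref{e_infty_maps_thm}, the $E_\infty$-ring maps $R_n\to E_n$ are in natural bijection with $\Z_p^{\times}$, realised as the continuous $(E_n)_*$-algebra homomorphisms $(E_n^{\vee})_*(R_n)=C(\Z_p^{\times},(E_n)_*)\to (E_n)_*$ coming from Proposition \ref{morava_mod_prop}. Under the identifications $(E_n^{\vee})_*(X)\cong C(\Z_p,(E_n)_*)$ and $(E_n^{\vee})_*(R_n)\cong C(\Z_p^{\times},(E_n)_*)$ of that proposition, the localisation map $X\to R_n$ induces restriction of functions along $\Z_p^{\times}\hookrightarrow\Z_p$ (compare the proof of Corollary \ref{unit_cor}). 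Since the grouplike class $1+\overline{y}\in E_n^0(X)$ of Proposition \ref{morava_mod_prop} is classified by the point $1\in\Z_p$, which lies in $\Z_p^{\times}$, it extends to an $E_\infty$-ring map $\varphi\colon R_n\to E_n$ with $\varphi|_X=1+\overline{y}$; and because $\overline{y}$ reduces modulo $\m$ to $y$, we get $\varphi|_X\bmod\m=1+y\in K^{*}(X)$.

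For the passage to $K$: since $\m=(p,u_1,\dots,u_{n-1})$ is generated by a regular sequence in $\pi_*(E_n)$, I would use that $K\simeq E_n/\m$ carries an $A_\infty$-ring structure for which the quotient $E_n\to E_n/\m$ is a map of $A_\infty$-ring spectra, and set $t:=(E_n\to K)\circ\varphi$. If one prefers not to cite this — and, the Goerss--Hopkins obstruction machinery being already in play, this is arguably the cleaner route — one builds the $A_\infty$-ring map $R_n\to K$ directly by the $A_\infty$ analogue of that obstruction theory: the obstruction groups are governed by the cotangent complex of $\F_{p^n}\to (E_n^{\vee})_0(R_n)/\m\cong C(\Z_p^{\times},\F_{p^n})$, and this is contractible because the Frobenius on $C(\Z_p^{\times},\F_{p^n})=\F_{p^n}[f_0,f_1,\dots]/(f_0^{p-1}-1,\,f_k^{p}-f_k)$ is an isomorphism — exactly the computation in the proof of Theorem \ref{e_infty_maps_thm}. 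Either way the space of $A_\infty$-ring maps $R_n\to K$ has contractible components, and I would single out the one restricting to $1+y$ on $X$ using the previous paragraph.

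The step I expect to be the only genuine obstacle is this $A_\infty$ rigidification — verifying that $E_n\to E_n/\m$ can be taken to be an $A_\infty$-ring map, or equivalently that the $A_\infty$ obstruction theory for maps $R_n\to K$ behaves as expected. Once the étaleness of $C(\Z_p^{\times},\F_{p^n})$ over $\F_{p^n}$ (the bijectivity of its Frobenius) is granted — and it already is, from Theorem \ref{e_infty_maps_thm} — the remainder is bookkeeping with the Morava-module descriptions of Proposition \ref{morava_mod_prop} and Corollary \ref{unit_cor}.
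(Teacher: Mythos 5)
Your proposal is correct but takes a genuinely different route from the paper. The paper's proof does not pass through $E_n$ at all: it cites \cite{sati_wes} for the fact that the single homotopy class $1+y \in K^0(X) = [X,K]$ already admits an $A_\infty$-ring representative $X \to K$ (proved there by a Hochschild-cohomology obstruction argument), and then extends this over the localisation $R_n = X[\alpha^{-1}]$ by noting that the composite $(1+y)\circ\alpha \in \pi_Z K$ pairs with $b_0$ to give the unit $\xi$, so that $\alpha$ becomes invertible in $K$ and the universal property of telescopic localisation produces the extension. Your approach instead lifts all the way to $E_n$, invoking Theorem \ref{e_infty_maps_thm} to choose the $E_\infty$-ring map $\varphi\colon R_n \to E_n$ corresponding to $1 \in \Z_p^{\times}$ (your identification of its restriction to $X$ with $1+\overline{y}$, via the Morava-module picture and the restriction $\Z_p^{\times}\hookrightarrow\Z_p$, is correct), and then reduces modulo $\m$. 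What you buy is that you avoid the telescope-localisation argument altogether and never need to check invertibility of anything; what you pay for is the external fact that $E_n \to E_n/\m$ can be rigidified to an $A_\infty$-ring map. That fact is true — it follows from the theory of $A_\infty$ structures on regular quotients of even-periodic $E_\infty$ rings due to Lazarev, Angeltveit, Baker et al. — but it is not established in the paper, so you are trading the citation to \cite{sati_wes} for a citation to that literature. Your fall-back suggestion of running the $A_\infty$ obstruction theory for maps $R_n\to K$ directly is also reasonable and is essentially what \cite{sati_wes} does (there for $X\to K$ rather than $R_n\to K$), though one small imprecision: the obstruction groups for the $A_\infty$ theory live in Hochschild (or $\Gamma$-)cohomology rather than in the André–Quillen cohomology computed by the cotangent complex; the vanishing is for the same underlying reason (Frobenius on $C(\Z_p^{\times},\F_{p^n})$ is an isomorphism, so the algebra is formally étale), but the groups you name are the ones from the $E_\infty$ theory, not the $A_\infty$ one.
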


\begin{proof}

It was shown in \cite{sati_wes} that the homotopy class $1+y$ contains an $A_\infty$ representative using Hochschild cohomology methods.  Thus it suffices to show that $(1+y) \circ \alpha: Z \to K$ is invertible.  This follows, since the image of the fundamental class of $Z$ under $\alpha$ is $b_0$, and $\langle 1+y, b_0 \rangle = \xi$.

\end{proof}

\subsection{$R_n$ as a homotopy fixed point spectrum} \label{fixed_section}

We now lift the results of Theorem \ref{e_infty_maps_thm} to an equivalence between $R_n$ and $E_n^{h\SGn}$.  Some setup is required to study such homotopy fixed point spectra.  Let $H$ be a closed subgroup of $\G_n$, and let $A$ be a $K(n)$-local $E_\infty$ ring spectrum, with the property that its Morava module is isomorphic to the ring of continuous functions on $\G_n / H$:
$${E_n^\vee}_*A \cong C(\G_n / H, {E_n}_*).$$
Let $K \leq \G_n$ be another closed subgroup.  A consequence of \cite{dev_hop} is that the homotopy fixed point spectrum $E_n^{hK}$ admits a model which is an $E_\infty$-ring spectrum.  When $K = U$ is open, Devinatz-Hopkins construct a fibrant cosimplicial $E_\infty$-algebra whose totalisation is the spectrum $E_n^{hU}$.  This cosimplicial spectrum is a rectification of an $h_\infty$ cosimplicial $E_n$-Adams resolution of $E_n^{hU}$, were it to exist (see also \cite{bd}).  When $K$ is closed, the homotopy fixed point spectrum is constructed as a colimit over open subgroups $U$ of $\G_n$ containing $K$: $E_n^{hK} = L_{K(n)}( \colim_U E_n^{hU})$.

Following their lead, but using their result that for any closed $K$, $E_n^{hK}$ \emph{does} exist, we will consider an actual cosimplicial $K(n)$-local $E_n$-based Adams resolution of $E_n^{hK}$.  Define 
$$B_K^s = E_n^{\otimes s+1} \otimes E_n^{hK}  ; \; s \geq -1.$$
Taking $s\geq 0$, one may equip $B_K^\bullet$ with the structure of a cosimplicial spectrum in a familiar fashion, inserting units for coface maps, and applying multiplication for codegeneracies.  We may freely replace $B_K^\bullet$ with a fibrant cosimplicial spectrum without changing the homotopy type of the terms $B_K^s$.  The natural coaugmentation $E_n^{hK} = B_K^{-1} \to \Tot(B_K^\bullet)$ is a weak equivalence, as can be seen from the associated Adams spectral sequence, and Theorem 2.(ii) of \cite{dev_hop}.  Lastly $\Tot(B_K^\bullet)$, being the totalisation of a cosimplical $E_\infty$ algebra, is itself $E_\infty$; the coaugmentation is an equivalence of $E_\infty$ algebras. 

\begin{thm} \label{awesome_lemma}

Let $K$ be a closed subgroup of $\G_n$. The space of maps of $E_\infty$ ring spectra from $A$ to $E_n^{hK}$ has contractible components.  Furthermore, there is a bijection from $\pi_0(\Map_{E_{\infty}}(A, E_n^{hK}))$ to the set 
$$\{ \mbox{$xH \in \G_n/H$ which conjugate $K$ into $H$} \} \subseteq \G_n / H;$$ 
Lastly, if $K$ is conjugate to $H$, each of these maps is an equivalence. 

\end{thm}

\begin{proof}

The Morava module of $B_K^s$ may be identified as the ring $C(\G_n^{\times s+1} \times (\G_n / K), {E_n}_*)$.  By the same sort of argument as in the previous section, we see that the cotangent complex for the reduction of the degree $0$ part of this algebra by $\m$ (that is, $C(\G_n^{\times s+1} \times (\G_n / K), \F_{p^n})$) is contractible.   Thus the space $\Map_{E_\infty}(A, B_K^s)$ of maps of $E_\infty$ ring spectra from $A$ to $B_K^s$ has contractible components which are in bijection with the set $\Map(\G_n^{\times s+1} \times (\G_n / K), (\G_n / H))^{\G_n}$ of continuous maps, equivariant for the action of $\G_n$ (diagonal on the domain):
\begin{eqnarray*} 
\pi_0 \Map_{E_\infty}(A, B_K^s) & \cong & \Hom^c_{{E_n}_*-alg/{E_n^\vee}_* {E_n}}({E_n^\vee}_*(A), {E_n^\vee}_*(B^s_K)) \\
 & = & \Map^\delta(\G_n^{\times s+1} \times (\G_n / K), (\G_n / H))^{\G_n} 
\end{eqnarray*}
Here, we are equipping this function space with the discrete topology; this is the meaning of the superscript $\delta$.  The discrete topology on $\Map(\G_n^{\times s+1} \times (\G_n / K), (\G_n / H))^{\G_n}$ is \emph{not} homeomorphic to the natural (compactly generated compact-open) one, which we will denote with a superscript $co$.  However, since $\G_n / H$ is totally disconnected, we note that for all $s$, the evaluation at any point in $\Delta^s$ gives a bijection
$$\Map(\Delta^s,  \Map^{co}(\G_n^{\times s+1} \times (\G_n / K), (\G_n / H))^{\G_n}) =  \Map^{co}(\G_n^{\times s+1} \times (\G_n / K), (\G_n / H))^{\G_n}. \leqno{(*)}$$
Additionally, since $\Map^\delta$ is equipped with the discrete topology, the same evaluation yields a bijection
$$\Map(\Delta^s,  \Map^{\delta}(\G_n^{\times s+1} \times (\G_n / K), (\G_n / H))^{\G_n}) =  \Map^{\delta}(\G_n^{\times s+1} \times (\G_n / K), (\G_n / H))^{\G_n}.  \leqno{(**)}$$

We note that $\G_n^{\times s+1} \times (\G_n / K)$ is the $s^{\rm th}$ term of a simplicial $\G_n$-space, the product ${E\G_n}_\bullet \times \G_n / K = \G_n^{\times \bullet+1} \times \G_n / K$.  Therefore $\Map^{co}(\G_n^{\times s+1} \times (\G_n / K), (\G_n / H))^{\G_n}$ is the $s^{\rm th}$ term of the cosimplicial space of $\G_n$-equivariant, continuous functions
$$\Map^{co}({E\G_n}_\bullet \times \G_n / K, \G_n / H)^{\G_n}$$
Being a function object from a simplicial space, this is a fibrant cosimplicial space.  

In contrast, $\Map^\delta(\G_n^{\times s+1} \times (\G_n / K), (\G_n / H))^{\G_n}$ is the $s^{\rm th}$ term of the discretisation of this cosimplicial space, $\Map^{\delta}({E\G_n}_\bullet \times \G_n / K, \G_n / H)^{\G_n}$.  It is also fibrant, being a cosimplicial \emph{set}.  The identity is a continuous map
$$\Tot(\Map^{\delta}({E\G_n}_\bullet \times \G_n / K, \G_n / H)^{\G_n}) \to \Tot(\Map^{co}({E\G_n}_\bullet \times \G_n / K, \G_n / H)^{\G_n}).$$
We have no expectation that this map is a homeomorphism.  However, equations $(*)$ and $(**)$ imply that it \emph{is} a bijection.  In fact, this totalisation can be computed as follows:
\begin{eqnarray*}
\Tot(\Map^{co}({E\G_n}_\bullet \times \G_n / K, \G_n / H)^{\G_n}) & \cong & \Map^{co}(E \G_n \times (\G_n / K), \G_n / H)^{\G_n} \\
 & \cong & \Map^{co}(\G_n / K, \G_n / H)^{\G_n} 
\end{eqnarray*} 
Here $\cong$ indicates homeomorphism.  The last homeomorphism is induced by restriction to the basepoint in $E\G_n$; it is a homeomorphism since $E\G_n$ is connected and $\G_n / H$ is totally disconnected.  Note that an element $f \in \Map^{co}(\G_n / K, \G_n / H)^{\G_n}$ is specified by the element $xH = f(K) \in \G_n / H$.  It must be the case that for each $k \in K$, 
$$xH = f(K) = f(kK) = kf(K) = kxH$$
and so $K \subseteq xHx^{-1}$.  Thus this set is empty if $K$ is not subconjugate to $H$, and in fact the set of all such maps $f$, being determined by $x$, is given precisely by these cosets:
$$\Tot(\Map^{\delta}({E\G_n}_\bullet \times \G_n / K, \G_n / H)^{\G_n}) = \{ \mbox{$xH \in \G_n/H$ which conjugate $K$ into $H$} \}.$$

%

Now, since $E_n^{hK} \simeq \Tot(B_K^\bullet)$ is (equivalent to) the totalisation of a cosimplicial $E_\infty$-algebra, we have 
$$\Map_{E_\infty}(A, E_n^{hK}) \simeq \Tot(\Map_{E_\infty}(A, B_K^\bullet)).$$
Taking $B_K^\bullet$ to be a fibrant cosimplicial $E_\infty$-algebra, $\Map_{E_\infty}(A, B_K^\bullet)$ is a fibrant cosimplicial space. 
Now, the projection to the set of components, 
$$\Map_{E_\infty}(A, B_K^\bullet) \to \pi_0 \Map_{E_\infty}(A, B_K^\bullet) = \Map^{\delta}({E\G_n}_\bullet \times \G_n / K, \G_n / H)^{\G_n}$$
is a levelwise equivalence of fibrant cosimplicial spaces; the fibrancy of domain and codomain yields an equivalence of totalisations.  Therefore $\Map_{E_\infty}(A, E_n^{hK})$ is empty if $K$ is not subconjugate $H$ and has components in bijection with the set of $xH \in \G_n / H$ which conjugate $K$ into $H$.

Finally, if $K = xHx^{-1}$, the Morava modules of both $A$ and $E_n^{hK}$ are both isomorphic to $C(\G_n / K, {E_n}_*)$, so the element of the $E_\infty$ mapping space corresponding to multiplication by $x$ implements an isomorphism of Morava modules, and hence an equivalence.

\end{proof}

\begin{cor} \label{snaith_cor}

The space of weak equivalence of $E_\infty$ ring spectra $R_n \to E_n^{h\SGn}$ has contractible components, and the set of which is a torsor for $\Z_p^{\times}$, where $k \in \Z_p^{\times}$ acts by post-composition with the Adams operation $\psi^k$.

\end{cor}

Recall that $\Z_p^\times$ acts up to homotopy on $R_n$ by way of a strict action on $L_{K(n)} \Sigma^\infty K(\Z_p, n+1)_+$.  Any equivalences $e$ constructed in the Corollary is equivariant (up to homotopy) for the $\Z_p^{\times}$-action.  This can be seen as follows.  For any $k \in \Z_p^{\times}$, the action of $\psi^k$ on $R_n$ is an $E_\infty$-map, via Lemma \ref{infty_lem}.  Comparing the results of Theorem \ref{awesome_lemma} (with $H= \SGn$) for $K=\{1\}$ and $\SGn$, we see that the forgetful map
$$\Map_{E_\infty}(R_n, E_n^{h\SGn}) \to \Map_{E_\infty}(R_n, E_n)$$
is a homotopy equivalence (and both spaces have contractible components in bijection with $\Z_p^{\times}$).  So to see that $\psi^k \circ e \simeq e \circ \psi^k$, it suffices to show that their composite with the forgetful map into $E_n$ are homotopic.  But this is follows immediately from Proposition \ref{morava_mod_prop}.

\subsection{A variant on $R_n$ when $n$ is even} \label{old_R_n_section}

Consider the homomorphism $\det: \G_n \to \Z_p^\times$ which is given by the determinant on $\bS_n$, and is trivial on $\Gal(\F_{p^n}/\F_p)$; let $S \G_n = \ker(\det)$.  This group differs slightly from $\SGn$ when $n$ is even; we will assume this throughout this section.  Let $H = S \G_n \cap \SGn$; this is index $2$ in both of these groups and normal in $\G_n$.  Further, $S\G_n / H$ is generated by $F \in \Gal(\F_{p^n}/\F_p)$, subject to the relation $F^2 = 1$.

\begin{prop}

The spectrum $E_n^{hH}$ splits as the wedge $E_n^{hS\G_n} \vee E_n^{h\SGn}$.

\end{prop}

\begin{proof}

Since $p$ is odd, we define an idempotent $q: E_n^{hH} \to E_n^{hH}$ as in Proposition \ref{small_split_prop} by $q = \frac{1}{2}(1-F)$, and define a spectrum $Q$ which represents the image of this functor inside of $E_n^{hH}$; then $E_n^{hH}$ splits as a wedge $Q \vee Q^{\perp}$.  Note that $Q^\perp$ represents the image of the complementary idempotent $1-q = \frac{1}{2}(1+F)$.  Further, one can see that the composite of the forgetful map and projection
$$E_n^{hS\G_n} \to E_n^{hH} \to Q^\perp$$
is an equivalence by examining the induced maps in $K_*$: $K_*(E_n^{hH}) \cong C(\G_n / H, K_*)$, and the (injective) image of $K_*(E_n^{hS\G_n})$ consists of those functions which factor through $\G_n / S\G_n$.  This is precisely $K_*(Q^\perp) = \ker(q_*) = \{f: \G_n / H \to K_* \; | \; f(Fg) = f(g)\}$.  A similar argument shows that $Q \simeq E_n^{h\SGn}$.

\end{proof}

Recall the primitive $2p-2^{\rm nd}$ root of unity $\xi$, which we will regard as an element of $\pi_0 E_n$.  Note that it is invariant under $H \leq \G_n$ (the relevant fact being that $F(\xi) = -\xi$, so $\xi$ is fixed by $\Gal(\F_{p^n}/\F_p) \cap H = \langle F^2 | F^n \rangle$).  Thus, multiplication by $\xi$ defines an automorphism $\xi: E_n^{hH} \to E_n^{hH}$.

Note that both $E_n^{hS\G_n}$ and $E_n^{h\SGn}$ admit residual actions of $\Z_p^{\times}$ as quotients of $\G_n$.  

\begin{prop}

The map $\xi: E_n^{hH} \to E_n^{hH}$ restricts to equivalences
$$\xi: E_n^{hS\G_n} \to E_n^{h\SGn} \; \mbox{ and } \; \xi: E_n^{h\SGn} \to E_n^{hS\G_n}$$
These are not maps of ring spectra, although they are $\Z_p^{\times}$-equivariant, up to homotopy.

\end{prop}

\begin{proof}

We note that $q \circ \xi = \frac{1}{2}(\xi+\xi F) = \xi \circ (1-q)$.  That is, the automorphism $\xi$ interchanges the two complementary idempotents, and hence the wedge factors $Q$ and $Q^\perp$.  

There are two natural homomorphisms $\G_n / H \to \Z_p^{\times}$ given by $\det$ and $\detpm$, respectively.  Indeed, $\det$ and the projection onto the Galois factor give an isomorphism
$$\xymatrix{\G_n / H \ar[rr]^-{\det \times \proj} & & \Z_p^{\times} \times \Gal(\F_{p^n}/\F_p) / (H \cap  \Gal(\F_{p^n}/\F_p)) = \Z_p^{\times} \times \langle F | F^2 \rangle.}$$
With respect to this splitting, $\det$ is the projection onto the first factor, and $\detpm$ is that projection, convolved with the sign character of the second factor.

Let $g \in \Z_p^{\times}$ be a topological generator and let $h \in \G_n / H$ be the element with $\det(h) = g$ and $\proj(h) = 1$; then also $\detpm(h) = g$.  Denote by $\chi^h$ the action of $h$ on $E_n^{hH}$, and by $\varphi^g$ the action of $g$ on $E_n^{hS\G_n}$; then this diagram commutes up to homotopy:
$$\xymatrix{
E_n^{hS\G_n} \ar[r] \ar[d]_-{\varphi^g} & E_n^{hH} \ar[d]_-{\chi^h} & E_n^{h\SGn} \ar[d]^-{\psi^g} \ar[l] \\
E_n^{hS\G_n} \ar[r] & E_n^{hH} & E_n^{h\SGn}. \ar[l]
}$$
Since $\proj(h) = 1$, $\xi$ commutes with $\chi^h$.  Since $\xi$ permutes the two wedge factors $Q$ and $Q^\perp$, it therefore throws $\psi^g$ onto $\varphi^g$, and vice versa.

\end{proof}

\subsection{Gross-Hopkins duality}

The previous sections imply that there is a homotopy commutative diagram
$$\xymatrix{
R_n \ar[d]_-{\psi^g - g} \ar[r]^-{\simeq} &  E_n^{h\SGn} \ar[d]_-{\psi^g - g} \ar[r]_-{\xi}^-{\simeq}  & E_n^{hS\G_n} \ar[d]^-{\varphi^g - g} \\
R_n \ar[r]^-{\simeq}  & E_n^{h\SGn} \ar[r]_-{\xi}^-{\simeq} & E_n^{hS\G_n} 
}$$
where the action of $\psi^g \in \Z_p^{\times}$ on $E_n^{h\SGn}$ is from the residual $ \G_n / \SGn$ Morava stabiliser action (and the second square is only required when $n$ is even).  Consequently the homotopy fibres of each column are equivalent.  In \cite{ghmr_pic}, the homotopy fibre of the rightmost arrow was named $\Sdetold$, and its Morava module was shown to be ${E_n^{\vee}}_*(\Sdetold) = {E_n}_*\langle \det \rangle$. 

In \cite{gross-hopkins, strickland_gh} it was shown that this is the same Morava module as for $\Sigma^{n-n^2} I_n$, where $I_n$ is the Brown-Comenetz dual of $M_n S^0$, the $n^{\rm th}$ monochromatic layer of the sphere spectrum.  It was also shown that there and in \cite{hms} that for $2p-2 \geq \max \{n^2, 2n+2 \}$, an invertible spectrum is determined by its Morava module.  We conclude:

\begin{cor}

There is an equivalence $G \simeq \Sdetold$.  When $2p-2 \geq \max \{n^2, 2n+2 \}$, these may also be identified with $\Sigma^{n-n^2} I_n$. 

\end{cor}

\section{Thom spectra and characteristic classes}

\subsection{Orientations and cannibalistic classes for $R_n$}

We briefly review the theory of \cite{may_e_infty, abghr} for orientations of Thom spectra, especially from the point of view of \cite{may_good_for}.  Recall that for an $E_\infty$ ring spectrum $R$, $\gl_1 R$ is the spectrum whose infinite loop space is $\GL_1 R$, the space of units in $\Omega^\infty R$.  Further, $\gl_1$ is functorial for maps of $E_\infty$-ring spectra.

Let $\eta: S \to R_n$ be the unit of $R_n$, and define $b(S, R_n)$ to be the cofibre of $\gl_1 \eta: \gl_1 S \to \gl_1 R_n$.  Write $B(S, R_n) = \Omega^\infty b(S, R_n)$; then there is an equivalence 
$$B(S, R_n) \simeq B(*, \GL_1 S, \GL_1 R_n)$$
between $B(S, R_n)$ and the bar construction for the action of $\GL_1 S$ on $\GL_1 R_n$ via $\GL_1(\eta)$.  The fibre sequence of infinite loop spaces associated to this construction is of the form
$$\xymatrix{
\GL_1 S \ar[r]^-{\eta} & \GL_1 R_n \ar[r] & B(S, R_n) \ar[r]^-{\gamma} & B\GL_1 S
}$$

We recall that (homotopy classes of) maps of spaces $\zeta:Y \to B\GL_1 S$ define Thom spectra with ``fibre" $S$ -- for such a map $\zeta$, the Thom spectrum $Y^\zeta$ is defined as 
$$Y^\zeta = \Sigma^\infty P_{+} \otimes_{\Sigma^\infty \GL_1 S_+} S,$$ 
where $P$ is the homotopy fibre of $\zeta$.  Furthermore (see section 3 of \cite{may_good_for}), the set of lifts of $\zeta$ over $\gamma$ to elements of $[Y, B(S, R_n)]$ are in bijection with $R_n$-orientations of $\zeta$, that is, Thom classes $u: Y^\zeta \to R$.

Consider a space $Y$ and a based map $f: Y \to B(S, R_n)$; write $\zeta = \gamma \circ f$.  By the above, $f$ consists of the data of an orientation $u: Y^\zeta \to R_n$.  Let $A \in \Pic_n$, and note that a Thom class for $A\zeta$ is given by
$$1 \otimes u: A \otimes Y^\zeta = Y^{A\zeta} \to A \otimes R_n$$
If we insist that $A$ is a power of $G$, we may make employ the periodicity of $R_n$ to make the following definition:

\begin{defn}

The \emph{normalised} Thom class $\ubar = \ubar(G^m \zeta) := \delta^m u \in R^0(Y)$ is defined as the composite
$$\xymatrix{
Y^{G^m \zeta} = G^m \otimes Y^\zeta \ar[r]^-{1 \otimes u} & G^m \otimes R_n \ar[r]^-{\delta^m \otimes 1} & R_n \otimes R_n \ar[r]^-\mu & R_n
}$$
Then for any $k \in \Z_p^{\times}$, define $\theta_k(G^m \zeta) \in R_n^0(Y)$ by the equation 
$$\psi^k(\ubar) = \theta_k(G^m \zeta) \cdot \ubar.$$
Following Adams, we call $\theta_k(G^m \zeta)$ the (Bott) \emph{cannibalistic class} of $G^m\zeta$.

\end{defn}

\begin{prop} \label{easy_prop}

Let $\zeta$ and $\xi$ be Thom spectra associated to classes $f, h: Y \to B(S, R_n)$.  Then

\begin{enumerate} 

\item $\theta_k(0) = 1$.

\item $\theta_k(\zeta + \xi) = \theta_k(\zeta) \cdot \theta_k(\xi)$.

\item $\theta_k(G^m\zeta) = k^m \theta_k(\zeta)$.

\item $\theta_{kl}(\zeta) = \psi^k(\theta_l(\zeta)) \cdot \theta_k(\zeta)$.

\end{enumerate}

\end{prop}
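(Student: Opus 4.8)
The plan is to deduce the four identities from three structural facts — that $\psi^k\colon R_n\to R_n$ is a map of $A_\infty$-ring spectra; that $k\mapsto\psi^k$ is a homomorphism $\Z_p^{\times}\to[R_n,R_n]$ (coming from the residual $\Z_p^{\times}=\G_n/\SGn$ action under the equivalence $R_n\simeq E_n^{h\SGn}$ of Corollary \ref{snaith_cor}), so in particular $\psi^{kl}\simeq\psi^k\circ\psi^l$; and that, by Proposition \ref{thom_prop}, the normalised Thom class $\ubar$ generates $R_n^\bigstar(Y^{G^m\zeta})$ freely of rank one over $R_n^\bigstar(Y)$, so that $c_1\,\ubar=c_2\,\ubar$ with $c_i\in R_n^\bigstar(Y)$ forces $c_1=c_2$ — together with one non-formal input, the behaviour of $\psi^k$ on the class $\delta\colon G\to R_n$, which I isolate for part (3). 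Throughout, in parts (1), (2), (4) one has $m=0$, so $\ubar$ is just the orientation $u$.

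For (1): the zero map classifies the trivial Thom spectrum $Y^{0}=\Sigma^\infty Y_+$ with Thom class $\eta\circ(\mathrm{collapse})$; since $\psi^k$ is a ring map, $\psi^k\circ\eta=\eta$, so $\psi^k(\ubar)=\ubar$ and $\theta_k(0)=1$. For (4), writing $\ubar$ for the orientation of $\zeta$, multiplicativity of $\psi^k$ (also in the $R_n^\bigstar(Y)$-module $R_n^\bigstar(Y^\zeta)$, via the Thom diagonal) gives
$$\psi^{kl}(\ubar)=\psi^k\bigl(\psi^l(\ubar)\bigr)=\psi^k\bigl(\theta_l(\zeta)\,\ubar\bigr)=\psi^k(\theta_l(\zeta))\cdot\psi^k(\ubar)=\psi^k(\theta_l(\zeta))\cdot\theta_k(\zeta)\cdot\ubar,$$
and comparing with $\psi^{kl}(\ubar)=\theta_{kl}(\zeta)\,\ubar$ and cancelling $\ubar$ yields the formula. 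For (2): the sum $\zeta+\xi$ is classified by $f+h$, and by the additivity of the orientation/lift correspondence of \cite{may_good_for}, section 3, its Thom spectrum is the pullback $\Delta^*(Y^\zeta\wedge Y^\xi)$ along the diagonal $\Delta\colon Y\to Y\times Y$, with Thom class $\Delta^*$ of $\mu\circ(u_\zeta\wedge u_\xi)$. Applying $\psi^k$ and using multiplicativity, $\psi^k\circ\mu\circ(u_\zeta\wedge u_\xi)=\mu\circ\bigl((\theta_k(\zeta)u_\zeta)\wedge(\theta_k(\xi)u_\xi)\bigr)$; pulling back along $\Delta$ turns the external product of coefficient classes into the internal product $\theta_k(\zeta)\theta_k(\xi)\in R_n^0(Y)$, whence $\theta_k(\zeta+\xi)=\theta_k(\zeta)\,\theta_k(\xi)$.

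The substantive point — and the step I expect to be the main obstacle — is (3), which comes down to the identity $\psi^k\circ\delta=k\cdot\delta$, where $k\cdot$ denotes the canonical action of $k\in\Z_p^{\times}$ on the $p$-complete invertible spectrum $G$. Since $\Z_p^{\times}$ is abelian, $\psi^k$ commutes with $\psi^g-g$ and therefore restricts to a self-equivalence of the fibre $G=F_g$ compatible with $\delta$; on the Morava module $(E_n)^{\vee}_*(G)$, which sits inside $(E_n)^{\vee}_*(R_n)=C(\Z_p^{\times},(E_n)_*)$ as the submodule generated by the identity function $x\mapsto x$, this restriction acts by translation by $k$, i.e.\ by multiplication by $k$, and since the canonical action is itself detected on Morava modules the two agree. (One cannot read this off $K(n)$-homology alone, which sees only $k\bmod p$; this is where a little care is needed.) Multiplicativity of $\psi^k$ then promotes this to $\psi^k\circ\delta^m=k^m\cdot\delta^m$ via the diagonal action on $G^{\otimes m}$ (the image of a fundamental class under $\delta^m$ in $C(\Z_p^{\times},(E_n)_*)$ is $x\mapsto x^m$, which translation by $k$ scales by $k^m$), and hence
$$\psi^k\bigl(\ubar(G^m\zeta)\bigr)=\psi^k\bigl(\mu\circ(\delta^m\otimes 1)\circ(1\otimes u)\bigr)=\mu\circ\bigl((k^m\delta^m)\otimes 1\bigr)\circ\bigl(1\otimes\theta_k(\zeta)u\bigr)=k^m\,\theta_k(\zeta)\cdot\ubar(G^m\zeta),$$
so $\theta_k(G^m\zeta)=k^m\theta_k(\zeta)$ by the rank-one freeness of Proposition \ref{thom_prop}.
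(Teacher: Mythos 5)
Your proof is correct and takes the same approach as the paper's terse one-paragraph argument, which derives (2) from multiplicativity of Thom classes, (4) from $\psi^{kl}=\psi^k\psi^l$, and (3) from the asserted identity $\psi^k(\delta^m\otimes u)=k^m\delta^m\otimes\psi^k(u)$. You have rightly singled out the relation $\psi^k\circ\delta=k\cdot\delta$ as the one non-formal step, and the warning that $K(n)$-homology only sees $k\bmod p$ is well taken. The one soft spot is your closing clause ``since the canonical action is itself detected on Morava modules the two agree'': maps $G\to R_n$ are not automatically detected on Morava modules, as the descent spectral sequence for $R_n\simeq E_n^{h\SGn}$ could contribute in positive filtration. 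A cleaner route, compatible with the paper's own usage, is this: the fibre-sequence definition $G=F_g=\hofib(\psi^g-g)$ gives $\psi^g\circ\delta=g\cdot\delta$ directly (this is invoked explicitly in the proof of Proposition \ref{f_g_prop}); iterating gives $\psi^{g^m}\circ\delta=g^m\cdot\delta$ for all $m\in\Z$; and continuity of the $\Z_p^\times$-action on the profinite $\Z_p$-module $[G,R_n]$ then extends the identity to every $k$ in the closure $\Z_p^\times=\overline{\langle g\rangle}$.
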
 

\begin{proof}

The proof that $\theta_k$ is exponential depends upon the fact that a product of Thom classes for $\zeta$ and $\xi$ is a Thom class for $\zeta + \xi$.  Note that this implies that $\theta_k(\zeta)^{-1}$ exists, and is equal to $\theta_k(-\zeta)$.  The third property uses the fact that $\psi^k(\delta^m \otimes u) = k^m \delta^m \otimes \psi^k(u)$.  The last is an application of the equation $\psi^{kl} = \psi^k \psi^l$.

\end{proof}

We note that this proposition implies that $\theta_k(\zeta)$ is invertible; $\theta_k(\zeta) \in R_n^0(Y)^{\times}$.  Further, the last part of this Proposition allows us to regard the collection $\{\theta_k(\zeta); \; k \in \Z_p^{\times}\}$ of all of the cannibalistic classes of $\zeta$ as a 1-cocycle $\theta(\zeta): \Z_p^{\times} \to R_n^0(Y)^\times$, where the action of $\Z_p^{\times}$ on $R_n^0(Y)^\times$ is through Adams operations.

Now for all $k$, $\psi^k: R_n \to R_n$ is a ring map, so there is a well defined map $\gl_1 \psi^k: \gl_1 R_n \to \gl_1 R_n$.  Write $\psi^k/1: \gl_1 R_n \to \gl_1 R_n$ for the difference (using the \emph{multiplicative} infinite loop space structure on $\GL_1 R_n$) between this map and the identity.  Since $\psi^k \circ \eta = \eta$, we see that $(\psi^k/1) \circ \gl_1 \eta$ is nullhomotopic.   Therefore there is a natural map $c(\psi^k): b(S, R_n) \to \gl_1 R_n$ making this diagram commute
$$\xymatrix{
\gl_1 S \ar[r]^-{\gl_1 \eta} & \gl_1 R_n \ar[dr]_-{\psi^k/1} \ar[r] & b(S, R_n) \ar[r]^-{\gamma} \ar[d]^-{c(\psi^k)} & \Sigma \gl_1 S  \\
 & & \gl_1 R_n &
}$$
Then the cannibalistic classes  may be computed via the operation $c(\psi^k)$:
$$\theta_k(\zeta) = c(\psi^k) \circ f;$$
see, e.g., Proposition 3.5 of \cite{may_good_for}.

\subsection{A model for $B(S, R_n)$}

Let $r$ denote a topological generator for the $p$-adic units $\Z_p^{\times}$.  At times it will be useful to take $r$ to be $g = \zeta(1+p)$, where $\zeta$ is a primitive $p-1^{\rm st}$ root of unity.  At others it will be beneficial to choose $r \in \N$; e.g., any positive integer whose reduction modulo $p^2$ generates $(\Z / p^2)^{\times}$.  We recall from Proposition \ref{invertible_prop} that for $r=g$ there is a cofibre sequence
$$\xymatrix{
S \ar[r]^-{\eta} & R_n \ar[r]^-{\psi^r-1} & R_n
}$$
This may be shown to be a fibre sequence for other generators $r$ using the fact that $\Z_p^{\times}$ is topologically cyclic.  Alternatively, this follows from the $\Z_p^{\times}$-equivariant identification $R_n \simeq E_n^{h\SGn}$ of Corollary \ref{snaith_cor}.

\begin{prop}

There is an equivalence between $\Omega^\infty S = \Omega^\infty \hofib(\psi^r-1:R_n \to R_n)$ and the homotopy equaliser of the maps $\psi^r$ and $1: \Omega^\infty R_n \to \Omega^\infty R_n$, 
$$\hoeq((\psi^r, 1): \Omega^\infty R_n \to \Omega^\infty R_n).$$
Similarly, there is an equivalence between $\hofib(\psi^r/1: \GL_1 R_n \to \GL_1 R_n)$ and the homotopy equaliser of the maps $\psi^r$ and $1: \GL_1 R_n \to \GL_1 R_n$, 
$$\hoeq((\psi^r, 1): \GL_1 R_n \to \GL_1 R_n).$$

\end{prop}

\begin{proof}

In the first case, an element of the homotopy fibre consists of a pair $(x, f)$, where $x \in \Omega^\infty R_n$, and $f$ is a path in $\Omega^\infty R_n$, starting at the basepoint (the additive unit $0$), and ending at $\psi^r(x)-x$.  An element of the homotopy equaliser is a path $h$ in $\Omega^\infty R_n$, starting at a point $x$, and ending at $\psi^r(x)$.  An equivalence between these spaces is gotten by sending $(x, f)$ to the path $h$ gotten from $f$ by pointwise addition of $x$.

The proof of the second claim is the same as for the first, replacing addition with multiplication and $0$ with $1$.  
\end{proof}

%
%

\begin{prop} 

$\GL_1 S$ is the union of the components of $\Omega^\infty S$ lying over $\GL_1 R_n$.

\end{prop}

\begin{proof}

Equivalently, we must show that for every $x \in \pi_0(S)$, if its image $\eta_*(x) \in \pi_0(R_n)$ is invertible, then so too is $x$.  Let $y = \eta_*(x)^{-1}$; since $\psi^r$ is a ring homomophism,
$$1 = \psi^r(1) = \psi^r(\eta_*(x)) \psi^r(y) = \eta_*(x) \psi^r(y)$$
and so $\psi^r(y) = y$.  Thus $y$ lifts to a class $y' \in \pi_0(S)$ with $\eta_*(y') = y$, and so $xy' = 1 + w$, where $w \in \ker(\eta_*) = \im [\partial_*: \pi_1 R_n \to \pi_0(S)]$.  We note that $w^2 = 0$: $w$ is represented by an element in $E_2^{1, 1}$ of the homotopy fixed point spectral sequence $H^*(\Z_p^{\times}, \pi_*(R_n)) \implies \pi_*(S)$, and so its square is represented by a class in $E_2^{2, 2}$ (modulo terms of \emph{higher} filtration).  This is $0$, since the homological dimension of $\Z_p^{\times}$ is $1$.  Therefore $y'(1-w)$ is a multiplicative inverse for $x$.

%

\end{proof}

\begin{cor}

For any topological generator $r$, the infinite loop map $c(\psi^r): B(S, R_n) \to \GL_1 R_n$ is an equivalence on connected components.

\end{cor}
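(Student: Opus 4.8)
The plan is to realise $c(\psi^r)$, at the level of connective spectra, as the map on cofibres induced by a connective cover, and then to read off its behaviour on homotopy groups from the octahedral axiom; applying $\Omega^\infty$ at the end gives the statement about $B(S,R_n) = \Omega^\infty b(S,R_n)$.

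First I would assemble two cofibre sequences of spectra sharing the same first map $\gl_1\eta\colon \gl_1 S \to \gl_1 R_n$. One is tautological, namely $\gl_1 S \xrightarrow{\gl_1\eta} \gl_1 R_n \to b(S,R_n)$, since $b(S,R_n)$ is by definition $\Cone(\gl_1\eta)$. For the second, set $\mathcal F := \hofib(\psi^r/1\colon \gl_1 R_n \to \gl_1 R_n)$, so that $\mathcal F \xrightarrow{v} \gl_1 R_n \xrightarrow{\psi^r/1} \gl_1 R_n$ is a (co)fibre sequence. The two propositions immediately above identify $\Omega^\infty\mathcal F \simeq \GL_1 S$ as infinite loop spaces over $\GL_1 R_n$, the relevant nullhomotopy of $(\psi^r/1)\circ\gl_1\eta$ being the canonical one arising from $\psi^r\circ\eta\simeq\eta$. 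Concretely this yields a map $u\colon \gl_1 S \to \mathcal F$ with $v\circ u = \gl_1\eta$ which is an isomorphism on $\pi_j$ for all $j\geq 0$; since $\gl_1 R_n$ is connective, $\mathcal F$ has homotopy concentrated in degrees $\geq -1$, so $u$ is exactly the connective cover and $\Cone(u) \simeq \tau_{\leq -1}\mathcal F$ has homotopy concentrated in degree $-1$.

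Next I would note that, by inspection of the diagram defining $c(\psi^k)$, the map $c(\psi^r)\colon b(S,R_n)\to\gl_1 R_n$ is precisely the map out of the cofibre $b(S,R_n) = \Cone(\gl_1\eta)$ classified by $\psi^r/1\colon \gl_1 R_n \to \gl_1 R_n$ together with that same canonical nullhomotopy of its restriction to $\gl_1 S$. Applying the octahedral axiom to $\gl_1 S \xrightarrow{u} \mathcal F \xrightarrow{v} \gl_1 R_n$ (whose composite is $\gl_1\eta$) then produces a cofibre sequence
$$\Cone(u) \longrightarrow b(S,R_n) \xrightarrow{c(\psi^r)} \gl_1 R_n,$$
where I have identified $\Cone(v) \simeq \gl_1 R_n$ via $\psi^r/1$. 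Consequently $\hofib(c(\psi^r)) \simeq \Cone(u)$ has homotopy groups vanishing outside degree $-1$, so the long exact sequence forces $c(\psi^r)_*\colon \pi_j b(S,R_n)\to \pi_j\gl_1 R_n$ to be an isomorphism for all $j\geq 1$ (and a monomorphism for $j=0$). Passing to $\Omega^\infty$, and using that $B(S,R_n)$ and $\GL_1 R_n$ are infinite loop spaces, so that every component is weakly equivalent to the simple basepoint component, this is exactly the assertion that $c(\psi^r)$ restricts to a homotopy equivalence on connected components.

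The step requiring care is the identification of $c(\psi^r)$ with the octahedral map: one must check that the nullhomotopy of $(\psi^r/1)\circ\gl_1\eta$ built into the definition of $c(\psi^r)$ coincides with the one built into the equivalence $\GL_1 S \simeq \hofib(\psi^r/1)$ of the preceding propositions. Both descend from the homotopy $\psi^r\circ\eta\simeq\eta$ witnessing that $\psi^r$ is a unital ring map, so this is bookkeeping rather than substance; I would settle it by tracing through the model $\Omega^\infty S = \hoeq((\psi^r,1)\colon \Omega^\infty R_n \to \Omega^\infty R_n)$ used in those propositions, in which the path over the basepoint is the constant path at the unit of $R_n$ in either description. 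A reader preferring to avoid this identification can argue instead that $c(\psi^r)$ induces a map from the long exact homotopy sequence of $\gl_1 S \to \gl_1 R_n \to b(S,R_n)$ to that of $\gl_1 S \to \gl_1 R_n \xrightarrow{\psi^r/1} \gl_1 R_n$, the connecting-homomorphism squares commuting for the same reason, and then finish with the five lemma, using that $\psi^r/1$ acts as $(\psi^r-1)_*$ on $\pi_j$ for $j\geq 1$ and that $\gl_1\eta$ agrees with the unit map there.
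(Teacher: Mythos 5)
Your argument is correct and is essentially the paper's proof, recast one categorical level up: the paper compares the two fibre sequences of spaces directly (the defining one for $B(S,R_n)$ and the one coming from $\GL_1 S \simeq \hofib(\psi^r/1)$) and reads off that the induced map on loop spaces is an equivalence, while you run the same comparison as a diagram of cofibre sequences of connective spectra via the octahedral axiom and then apply $\Omega^\infty$. Your explicit remarks about matching the nullhomotopy built into $c(\psi^r)$ with the one implicit in the equivalence $\GL_1 S \simeq \hofib(\psi^r/1)$, and the alternative five-lemma argument, spell out a point the paper leaves tacit, but the mathematical content is the same.
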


\begin{proof}

The previous two Propositions allow us to conclude that there is an equivalence $\GL_1 S \simeq \hofib(\psi^r/1: \GL_1 R_n \to \GL_1 R_n)$.  This allows us to identify the lower fibre sequence in the following commuting diagram of fibrations:
$$\xymatrix{
\Omega B(S, R_n) \ar[r]^-\gamma \ar[d]_-{c(\psi^r)} & \GL_1 S \ar[r]^\eta \ar@{=}[d] & \GL_1 R_n \ar[r] \ar@{=}[d] & B(S, R_n) \ar[d]^-{c(\psi^r)} \\
\Omega \GL_1 R_n \ar[r]_-\gamma & \GL_1 S \ar[r]_\eta  & \GL_1 R_n \ar[r]_{\psi^r/1} & \GL_1 R_n  \\
}$$
The leftmost column is an equivalence, since both spaces are homotopy fibres of the same map; this gives the result.

\end{proof}

Perhaps surprisingly, this allows us to conclude that an $R_n$-orientation of a Thom spectrum $Y^\zeta$ over a connected space $Y$ is uniquely determined by its $r^{\rm th}$ cannibalistic class $\theta_r(\zeta)$ (or equivalently, the cocycle $\theta(\zeta)$).  More concretely, examining the exact sequences gotten from mapping a connected $Y$ into to the fibrations above, we have:
$$\xymatrix{
\cdots \ar[r] & [Y, \GL_1(R_n)] \ar[r] \ar@{=}[d] & [Y, B(S, R_n)]  \ar[r] \ar[d]_-{\theta_r}^\cong & [Y, B\GL_1(S)] \ar[r] \ar@{=}[d] & \cdots \\
\cdots \ar[r] & R_n^0(Y)^\times \ar[r]^-{\psi^r/1} & R_n^0(Y)^\times \ar[r] & [Y, B\GL_1(S)] \ar[r] & \cdots
}$$

\begin{cor}

An $R_n$-oriented Thom spectrum $Y^\zeta$ over connected $Y$ is trivial (i.e., equivalent to $L_{K(n)} \Sigma^\infty Y_+$) if and only if $\theta_r(\zeta) = \psi^r(s)/s$ for some unit $s \in R_n^0(Y)^\times$.

\end{cor}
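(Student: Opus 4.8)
The plan is to read the statement off the long exact sequence displayed just above; the only nontrivial inputs are the preceding Corollary (that $\theta_r$ is an isomorphism on $[Y,-]$ for connected $Y$) and the observation that a Thom spectrum over a connected space is trivial exactly when its classifying map to $B\GL_1 S$ is null-homotopic.

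First I would pin down the two ends of the equivalence to be proved. On one end, the data of an $R_n$-orientation of $Y^\zeta$ is, as recalled above following \cite{may_good_for}, exactly that of a lift $f\colon Y \to B(S, R_n)$ of $\zeta$ along $\gamma$. On the other end, for connected based $Y$ the Thom spectrum $Y^\zeta$ is equivalent to $L_{K(n)}\Sigma^\infty Y_+$ if and only if $\zeta\colon Y \to B\GL_1 S$ is null-homotopic: a null-homotopy trivialises the principal $\GL_1 S$-bundle $P$ and hence, directly from the definition of $Y^\zeta$, identifies it with $L_{K(n)}\Sigma^\infty Y_+$; conversely the Thom diagonal $D\colon Y^\zeta \to Y^\zeta \otimes Y_+$ recovers the underlying stable spherical fibration, and therefore $\zeta$ up to homotopy, so a trivialisation of $Y^\zeta$ compatible with this structure forces $\zeta \simeq \ast$.

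Next I would run the chase. Mapping the connected space $Y$ into the fibration sequence
$$\GL_1 S \xrightarrow{\eta} \GL_1 R_n \longrightarrow B(S, R_n) \xrightarrow{\gamma} B\GL_1 S$$
and using exactness at $[Y, B(S, R_n)]$, the class $\zeta = \gamma \circ f$ is null-homotopic precisely when $f$ lies in the image of $[Y, \GL_1 R_n] = R_n^0(Y)^\times \to [Y, B(S, R_n)]$. Applying the isomorphism $\theta_r\colon [Y, B(S, R_n)] \xrightarrow{\cong} R_n^0(Y)^\times$ of the preceding Corollary, $f$ is sent to $\theta_r(\zeta)$, and the commuting ladder displayed immediately above the statement identifies the composite $R_n^0(Y)^\times \to [Y, B(S, R_n)] \xrightarrow{\theta_r} R_n^0(Y)^\times$ with $\psi^r/1$, that is with $s \mapsto \psi^r(s)/s$. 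Hence $f$ lies in that image if and only if $\theta_r(\zeta) = \psi^r(s)/s$ for some unit $s \in R_n^0(Y)^\times$; combining with the first step gives the corollary.

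I expect no serious obstacle here, since all the substance is already in the preceding Corollary and the fibration sequence. The one point needing care is the equivalence ``$Y^\zeta$ trivial $\iff \zeta \simeq \ast$'', and specifically that the equivalence $Y^\zeta \simeq L_{K(n)}\Sigma^\infty Y_+$ is understood compatibly with the Thom diagonal (so the underlying spherical fibration is genuinely remembered) rather than as a bare equivalence of spectra; a secondary, routine point is the exactness of the pointed-set sequence in the relevant range, which is automatic from the fibration sequence and the action of the group $[Y, \GL_1 R_n]$ on $[Y, B(S, R_n)]$.
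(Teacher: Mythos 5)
Your proof is correct and is exactly the diagram chase the paper intends: the corollary is stated without a separate proof, to be read off the exact-sequence ladder displayed immediately above it (the fibration $\GL_1 S \to \GL_1 R_n \to B(S, R_n) \xrightarrow{\gamma} B\GL_1 S$ together with the identification $\theta_r = c(\psi^r)$ as an isomorphism on $[Y,-]$ for connected $Y$), and your argument traces precisely that ladder. You are also right to flag that ``trivial'' should be read as $\zeta \simeq \ast$ (equivalently, an equivalence $Y^\zeta \simeq L_{K(n)}\Sigma^\infty Y_+$ respecting the Thom-spectrum structure) rather than a bare abstract equivalence of spectra, since this is the only sense the exact-sequence argument establishes; the paper's parenthetical gloss is a little informal on this point.
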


For $n=1$, this appears to be a strong form of a $K(1)$-local version of Corollary 5.8 of \cite{adams2}.
Further, it says that $Y^\zeta$ is trivial if and only if the cocycle $\theta(\zeta)$ is a coboundary, so the map
$$[Y, B(S, R_n)] \to H^1(\Z_p^{\times}, R_n^0(Y)^{\times}) \; \mbox{ given by } \; \zeta \mapsto \theta(\zeta)$$
is injective.  This is perhaps a form of descent for Picard groups in the $\Z_p^{\times}$-Galois extension $S \to R_n$.

\subsection{A Thom spectrum over $K(\Z_p, n+1)$}

The following proposition, which is immediate from the connectivity of $Y$, is an analogue of the fact that a formal power series is invertible if and only if its constant term is.

\begin{prop}

Let $R$ be an $A_\infty$ ring spectrum.  If $Y$ is connected and $\alpha \in R^0(Y) = [Y, \Omega^\infty R]$, then $\alpha$ lies in $R^0(Y)^{\times} = [Y, \GL_1 R]$ if and only if the restriction of $\alpha$ to a point in $Y$ is a unit; $\alpha|_{\rm pt} \in R_0^{\times}$.

\end{prop}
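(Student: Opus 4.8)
The plan is to unwind the definition of $\GL_1 R$ as the union of those path components of $\Omega^\infty R$ that map to units in $\pi_0 R$, and then to exploit the connectedness of $Y$ to reduce the question to the behaviour of a single component.

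First I would recall that $\GL_1 R \subseteq \Omega^\infty R$ is, by construction, the union of the path components of $\Omega^\infty R$ lying over the group of units $R_0^\times \subseteq \pi_0 R$; in particular the inclusion $\GL_1 R \hookrightarrow \Omega^\infty R$ is the inclusion of a union of components. Consequently a class in $R^0(Y) = [Y, \Omega^\infty R]$ lifts to $[Y, \GL_1 R]$ -- which is exactly what the notation $\alpha \in R^0(Y)^\times$ means -- if and only if some, equivalently (by homotopy invariance and connectedness of $Y \times I$) any, representing map $\alpha \colon Y \to \Omega^\infty R$ has image contained in $\GL_1 R$.

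Next, since $Y$ is connected, the image of $\alpha$ lies in a single path component $C$ of $\Omega^\infty R$, and $C$ is detected by the element of $\pi_0(\Omega^\infty R) = \pi_0 R = R_0$ obtained by restricting $\alpha$ along the inclusion of any point of $Y$; because $Y$ is connected this element $[\alpha|_{\mathrm{pt}}]$ does not depend on the chosen point. Hence $\alpha(Y) \subseteq \GL_1 R$ if and only if $C \subseteq \GL_1 R$, which by the previous paragraph holds precisely when $[\alpha|_{\mathrm{pt}}] \in R_0^\times$, i.e. when $\alpha|_{\mathrm{pt}}$ is a unit. Combining the two equivalences yields the claim.

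The only point that requires any care -- and it is the main (minor) obstacle -- is the bookkeeping identifying the component $C$ with the element $\alpha|_{\mathrm{pt}} \in R_0 = \pi_0(\Omega^\infty R)$, together with the observation that "$\alpha \in R^0(Y)^\times$" means literally that $\alpha$ lifts through the inclusion $\GL_1 R \hookrightarrow \Omega^\infty R$ of a union of components. Once these identifications are in place the proposition is a formal consequence of $\GL_1 R$ being a union of path components of $\Omega^\infty R$; no deeper input is needed -- nothing about $K(n)$-localisation, and nothing about the multiplication on $R$ beyond its $A_\infty$ structure, which is already enough to define $\GL_1 R$.
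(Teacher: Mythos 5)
Your proof is correct and is precisely the argument the paper treats as immediate: $\GL_1 R$ is the union of components of $\Omega^\infty R$ lying over $R_0^\times$, and since $Y$ is connected a map $Y \to \Omega^\infty R$ lands in a single component, detected by restriction to a point. This matches the paper, which gives no proof beyond remarking that the claim is "immediate from the connectivity of $Y$."
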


We will denote by $j$ the natural localisation map $j: X \to R_n = X[\rho^{-1}]$.  We will often confuse $j$ with the corresponding element $j \in R_n^0(K(\Z_p, n+1)) \cong R_n^0(X)$.  Since $j|_{\rm pt} = 1$ is the unit of $R_n$, the previous criterion shows that $j \in R_n^0(X)^{\times}$, while $j-1$ is not a unit.  Consequently, the following is not automatic:

\begin{prop}

Let $r \in \N$ be a topological generator of $\Z_p^{\times}$.  The element $\psi^r(j-1) \in R_n^0(X)$ is divisible by $j-1$, and the quotient $\frac{\psi^r(j-1)}{j-1}$ is a unit.

\end{prop}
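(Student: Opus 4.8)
The plan is to work inside the ring $R_n^0(X)$ and exploit the fact that, by Proposition \ref{morava_mod_prop} and the discussion of $R_n$ as a homotopy fixed point spectrum, the element $j\in R_n^0(X)$ corresponds under the Morava-module identification to the coordinate $1+\overline{y}$ on the multiplicative formal group, i.e. to a grouplike element on which $\psi^r$ acts by $x\mapsto (1+\overline{y})^r$. So first I would reduce the statement to an identity of formal power series: writing $v=\overline{y}$, the claim becomes that $(1+v)^r-1$ is divisible by $v$ in $E_n^*(X)\cong E_{n*}[[v]]$ (resp. its reduction) with unit quotient. This is the power-series analogue flagged in the preceding proposition: $(1+v)^r-1 = rv + \binom{r}{2}v^2+\cdots = v\cdot\big(r+\binom{r}{2}v+\cdots\big)$, and the quotient has constant term $r$, which is a unit in $\Z_p^\times$ (hence in $R_{n*}$) precisely because $r$ is a topological generator of $\Z_p^\times$ and in particular a $p$-adic unit.

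Second, I would make the divisibility statement precise at the spectrum level rather than merely in $K_n$ or $E_n$ cohomology. The element $j-1\in R_n^0(X)$ is, by the connectivity criterion just proved, a non-unit whose restriction to a point vanishes; and $X = L_{K(n)}\Sigma^\infty K(\Z_p,n+1)_+$ has $R_n^\bigstar(X)\cong R_n^\bigstar[[x]]$ by the orientation formalism (this is where I would invoke Theorem \ref{orient_thm}-type statements, i.e. that $R_n^\bigstar(X)$ is a power series ring on one generator, with $j-1$ a generator of the augmentation ideal since it restricts to $0$ at the basepoint and is a coordinate). In such a power series ring, divisibility by the generator $j-1$ of an element in the augmentation ideal is automatic, and the quotient is a unit iff its image in $R_n^\bigstar(X)/(j-1) = R_{n\bigstar}$ — i.e. its ``derivative at $0$'' — is a unit.

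Third, I would compute that image. Since $\psi^r$ is a ring map acting on the coordinate by the $r$-series of the multiplicative formal group law, we have $\psi^r(j) = j^r$ (the $r$-th power, using that $j$ is grouplike, equivalently $1+\overline{y}$ raised to the $r$), so $\psi^r(j-1) = j^r - 1 = (j-1)\sum_{i=0}^{r-1} j^i$ when $r\in\N$ — this is an honest factorization in the ring $R_n^0(X)$, valid because $r$ is a positive integer. Hence $\frac{\psi^r(j-1)}{j-1} = \sum_{i=0}^{r-1} j^i$, and restricting to a point sends each $j^i$ to $1$, giving the value $r\in\Z_p^\times\subseteq R_{n0}^\times$. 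Since a unit in $R_{n0}$ is detected after restriction by the connectivity criterion (the preceding proposition), $\sum_{i=0}^{r-1}j^i$ is a unit in $R_n^0(X)$, which is exactly the assertion.

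The main obstacle, and the only real content beyond bookkeeping, is legitimizing the identity $\psi^r(j) = j^r$ as a statement in $R_n^0(X)$ rather than merely in Morava $K$- or $E$-cohomology: one must know that $j$ genuinely behaves as a grouplike element of the ring $R_n^0(X)$ under the $H$-space structure on $K(\Z_p,n+1)$, so that the $r$-fold multiplication/diagonal argument of Proposition \ref{action_prop} applies verbatim with $K$ replaced by $R_n$. This is exactly the content of the extension of $1+y$ to a map $t\colon R_n\to K$ together with the fact that $j\colon X\to R_n$ is the localization map and $\psi^r$ is a ring map on $R_n$; I would cite Propositions \ref{action_prop}, \ref{t_prop}, and \ref{morava_mod_prop} to transport the grouplike identity, and then the factorization $j^r-1 = (j-1)(1+j+\cdots+j^{r-1})$ is a tautology in any commutative ring. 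The choice $r\in\N$ (allowed by the statement) is what makes this clean; had we only $r\in\Z_p^\times$ we would instead need the $r$-series expansion and the divisibility argument via the power-series ring structure of step two.
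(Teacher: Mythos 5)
Your core argument (step three) is exactly the paper's proof: $\psi^r(j)=j^r$ via the $r$-fold diagonal/multiplication and the multiplicativity of $j$, the algebraic factorization $j^r-1 = (j-1)(1+j+\cdots+j^{r-1})$ valid because $r\in\N$, and detection of invertibility by restriction to the basepoint using the preceding connectivity criterion. Steps one and two (the power-series picture and the appeal to $R_n^\bigstar(X)\cong R_n^\bigstar[[x]]$) are unnecessary scaffolding once the explicit factorization is in hand, and relying on Theorem~\ref{orient_thm} there would in fact be circular, since that theorem is proved in Section~5 from the Thom-spectrum equivalence established only after the present proposition.
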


\begin{proof}

It follows from the proof of Proposition \ref{action_prop} that $\psi^r(j)$ is simply $j$ composed with the self map of $K(\Z_p, n+1)$ which is multiplication by $r$ in $\pi_{n+1}$.  Furthermore, this is implemented by the $r$-fold diagonal, followed by $r$-ary multiplication in $K(\Z_p, n+1)$.  Since $j$ is multiplicative, we conclude that $\psi^r(j) = j^r$.  Thus  $\psi^r(j-1) = j^r-1$ is evidently divisible by $j-1$, with quotient
$$\frac{\psi^r(j-1)}{j-1} = 1 + j + \cdots + j^{r-1}$$
Restricting $j$ along the unit yields $1$; thus the restriction of $\frac{\psi^r(j-1)}{j-1}$ along the unit gives $r \in R_0^{\times}$, so this class is invertible.

\end{proof}

\begin{defn}

Let $e: K(\Z_p, n+1) \to BX_n = B(S, R_n)_{>0}$ be the unique map satisfying
$$c(\psi^r) \circ e = r^{-1}\frac{\psi^r(j-1)}{j-1} \in R_n^0(X)^{\times}.$$ 

\end{defn}

Here we write $Y_{>m}$ for the $m$-connected cover of $Y$.  Pulling $\gamma$ back over $e$ allows us to define a Thom spectrum $K(\Z_p, n+1)^{\gamma \circ e}$ which, for brevity, we will write as $X^\gamma$.  Then $e$ is defined in such a way that the $r^{\rm th}$ cannibalistic class of $\gamma$ is $\theta_r(\gamma) = r^{-1}\frac{\psi^r(j-1)}{j-1}$.  

In fact, this definition of $e$ is independent of our choice of generator $r$.  An induction using part 4 of Proposition \ref{easy_prop} along with the continuity of the action of the Adams operations gives:

\begin{prop}

For every $k \in \Z_p^{\times}$,
$$\theta_k(\gamma) = k^{-1}\frac{\psi^k(j-1)}{j-1}$$

\end{prop}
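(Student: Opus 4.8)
The plan is to bootstrap the identity from a single topological generator $r$ to all of $\Z_p^{\times}$ in two stages: first over the cyclic subgroup $\{r^m : m \in \Z\}$ (which is dense in $\Z_p^{\times}$) using the multiplicativity of the cannibalistic classes in the subscript, and then to the whole group by a continuity argument. The base case is built into the construction: $e$ was defined so that $\theta_r(\gamma) = c(\psi^r)\circ e = r^{-1}\tfrac{\psi^r(j-1)}{j-1}$, which is exactly the asserted formula at $k=r$.

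For the inductive step I would establish $\theta_{r^m}(\gamma) = r^{-m}\tfrac{\psi^{r^m}(j-1)}{j-1}$ for all $m\in\Z$. The case $m=0$ is part (1) of Proposition \ref{easy_prop} (using $\psi^{r^0}=\id$). Assuming the formula at $m$, apply part (4) of Proposition \ref{easy_prop} with $k=r$, $l=r^m$ to get $\theta_{r^{m+1}}(\gamma)=\psi^r(\theta_{r^m}(\gamma))\cdot\theta_r(\gamma)$. Since $\psi^r$ is a ring map it passes through the quotient, and since $r$ is a positive integer the scalar $r^{-m}$ lies in the image of $\pi_0 S\to\pi_0 R_n$ and hence is fixed by $\psi^r$ (because $\psi^r\circ\eta=\eta$); thus $\psi^r(\theta_{r^m}(\gamma)) = r^{-m}\tfrac{\psi^{r^{m+1}}(j-1)}{\psi^r(j-1)}$, and multiplying by $\theta_r(\gamma)=r^{-1}\tfrac{\psi^r(j-1)}{j-1}$ telescopes to the formula at $m+1$. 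Negative $m$ is handled the same way, since $k\mapsto\psi^k$ is a homomorphism $\Z_p^{\times}\to\Aut(R_n)$ so $\psi^{r^{-1}}$ is available; equivalently $\theta_1(\gamma)=1$ together with part (4) at $k=r^{-1}$, $l=r$ gives $\theta_{r^{-1}}(\gamma)=\psi^{r^{-1}}(\theta_r(\gamma))^{-1}$, which matches the formula.

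Finally I would upgrade to all $k\in\Z_p^{\times}$ by continuity. Both sides of the asserted identity are functions of $k$ valued in $R_n^0(X)^{\times}$: the right side $k\mapsto k^{-1}\tfrac{\psi^k(j-1)}{j-1}$ is continuous because inversion and multiplication in $\pi_0 R_n$ are continuous, $k\mapsto k^{-1}$ is continuous on $\Z_p^{\times}$, and $k\mapsto\psi^k(j-1)$ is continuous since the residual $\Z_p^{\times}=\G_n/\SGn$-action on $R_n\simeq E_n^{h\SGn}$ of Corollary \ref{snaith_cor} is continuous (the same input exploited in Proposition \ref{action_prop}); the left side $k\mapsto\theta_k(\gamma)$ is continuous because $\theta_k(\gamma)$ is characterized by $\psi^k(\ubar)=\theta_k(\gamma)\cdot\ubar$ and multiplication by the unit $\ubar$ is a homeomorphism of $R_n^0(X)$. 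Since $\{r^m:m\in\Z\}$ is dense in $\Z_p^{\times}$ and $R_n^0(X)\cong R_n^0(K(\Z_p,n+1))$ is separated in the relevant (profinite/pro-free) topology, a continuous map out of $\Z_p^{\times}$ is determined by its restriction to a dense subgroup, so the agreement over $\{r^m\}$ from the inductive computation propagates to all $k$. The one genuinely delicate point is this last continuity-and-separatedness claim: one must pin down a topology on $R_n^0(X)$ for which the Adams operations act continuously and continuous functions are detected on dense subsets — precisely the circle of ideas (topologies on $K_*$-based mapping spaces, profiniteness of the Morava module) already used in Proposition \ref{action_prop} and Section \ref{morava_section}. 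Everything else is the formal manipulation of Proposition \ref{easy_prop}.
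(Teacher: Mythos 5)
Your proof is correct and follows exactly the route the paper sketches in one sentence before the proposition, namely an induction via part (4) of Proposition \ref{easy_prop} over the dense subgroup $\{r^m : m \in \Z\}$ followed by the continuity of the Adams operations (the same circle of ideas used in Proposition \ref{action_prop}). You have merely filled in the details the paper leaves implicit, including the telescoping computation and the observation that $r^{-m} \in \Z_p = \pi_0 S$ is fixed by $\psi^r$.
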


Above, the Thom spectrum $X^\gamma$ has been normalised to have Thom class in dimension 0; classically, this corresponds to $(\C P^\infty)^{L-1}$, where $L$ is the tautological line bundle.  We will devote a lot of attention to the Thom spectrum $X^{G\gamma} = G \otimes X^{\gamma}$, the analogue in our setting of $(\C P^\infty)^L = S^2 \wedge (\C P^\infty)^{L-1}$.  We note then that the cannibalistic class $\theta_g(G\gamma)$ satisfies
$$(j-1) \cdot \theta_g(G\gamma) = \psi^g(j-1).$$

%
%
%
%
%
%

\subsection{A monochromatic J-homomorphism and an analogue of $MU$}

We would like to extend the map $K(\Z_p, n+1) \to B(S, R_n)$ over the inclusion $K(\Z_p, n+1) \to \Omega^\infty R_n$.  This is possible on the level of spaces; however, the result is not obviously an infinite loop map.  If it were, the composite of the associated spectrum maps 
$$(R_n)_{> 0} \to b(S, R_n) \to \Sigma \gl_1 S$$ 
could potentially be the connective cover of a map $(R_n)_{\geq 0} \to \pic (S)$ which provides a higher-chromatic analogue of the J-homomorphism.  Classically (when $n=1$), this is the map which assigns to a vector space $V$ the invertible spectrum $S^V = \Sigma^\infty (V \cup \{\infty\})$.  The possibility that such a map exists is hinted at by Proposition \ref{map_to_pic_prop} above, which in some sense constructs it in $\pi_0$.

In this section, we construct such an infinite loop map; however, we do so at the price of working monochromatically at level $n$.  Let $L_n$ denote localisation with respect to $E(n)$; we recall that the $n^{\rm th}$ monochromatic component of a spectrum $Y$ is fibre $M_n Y$ of the localisation map $L_n Y \to L_{n-1} Y$.  Furthermore, $L_n L_{K(n)} Y = L_{K(n)} Y$.  From this fact and the chromatic fracture square for $Y$, it is easy to see that $M_n Y \simeq M_n L_{K(n)} Y$.  In particular, $M_n \Sigma^\infty K(\Z_p, n+1)_+ \simeq M_n X$.

Let $A$ be a $K(n)$-local $E_\infty$ ring spectrum.  We recall from Theorem 4.11 of \cite{ahr} that the homotopy groups of the homotopy fibre $F$ of the localisation map $\gl_1 A \to L_n \gl_1 A$ (the ``discrepancy spectrum") are torsion and concentrated in dimensions $q \leq n$.  The $p$-completion of this fibre sequence is then of the form
$$F^{\wedge}_p \to (\gl_1 A)^{\wedge}_p \to (L_n \gl_1 A)^{\wedge}_p = L_{K(1) \vee \dots \vee K(n)} \gl_1 A$$
and the homotopy of $F^{\wedge}_p$ is now concentrated in dimensions $q \leq n+1$.  Furthermore, the map $\gl_1 A \to (\gl_1 A)^{\wedge}_p$ is an equivalence on connected covers, since the positive homotopy of $\gl_1 A$ is isomorphic to that of $A$ (which is $p$-complete).

Take $A = R_n$, and consider the diagram
$$\xymatrix{
\Sigma^\infty K(\Z_p, n+1)^{\wedge}_p  \ar@{.>}[d]_-{\alpha} \ar[rrr]^-{k^{\wedge}_p} \ar[drr]_-{\loc} \ar@{.>}[dr]_-{\overline{\loc}} & & & (\gl_1 R_n)^{\wedge}_p \ar[d] \\
Q \ar@{.>}[d]_-{\beta} & (M_n \Sigma^\infty K(\Z_p, n+1))^{\wedge}_p \ar@/^/[d]^-{M_n (a)} \ar@/_/[dl]_-{M_n(ja)} \ar[r]^-{\simeq} & (L_n \Sigma^\infty K(\Z_p, n+1))^{\wedge}_p \ar[r]^-{L_n k} & (L_n \gl_1 R_n)^{\wedge}_p \ar[d] \\
(M_n R_n)^{\wedge}_p \ar@/^/[r]^-{M_n(\ell)} & (M_n X)^{\wedge}_p \ar@/^/[u]^-{M_n (b)} \ar@/^/[l]^-{M_n(j)} & & \Sigma F^{\wedge}_p.
}$$
%

%
Here $k: \Sigma^\infty K(\Z_p, n+1) \to \gl_1 R_n$ is adjoint to the map 
$$r^{-1}\frac{\psi^r(j-1)}{j-1}: K(\Z_p, n+1) \to \GL_1 R_n$$
of the previous section.  It follows from $\cite{rw}$ that $L_{n-1} \Sigma^\infty K(\Z_p, n+1) = L_{\Q} \Sigma^\infty K(\Z_p, n+1)$, so their common $p$-completion is null.  Therefore, the $p$-complete localisation map 
$$\loc: \Sigma^\infty K(\Z_p, n+1)^{\wedge}_p \to (L_n \Sigma^\infty K(\Z_p, n+1))^{\wedge}_p$$
lifts to $(M_n \Sigma^\infty K(\Z_p, n+1))^{\wedge}_p$.  The map $a: \Sigma^\infty K(\Z_p, n+1) \to \Sigma^\infty K(\Z_p, n+1)_+$ is the inclusion of the wedge summand, and $b$ is its one-sided inverse.  Similarly, the one-sided inverse $\ell: R_n \to X$ to $j$ induces a splitting of $M_n X$.

\begin{defn} Let $Q$ be the $n+2^{\rm nd}$ Moore-Postnikov factorisation of the map $M_n(ja) \circ \overline{\loc}$.  \end{defn}

Then $Q$ is $n$-connected, $\alpha_*: \Z_p = \pi_{n+1}\Sigma^\infty K(\Z_p, n+1)^{\wedge}_p \to \pi_{n+1} Q$ is an isomorphism, and $\alpha$ and $\beta$ induce an isomorphism
$$\pi_{n+2} Q \cong \im[\pi_{n+2}\Sigma^\infty K(\Z_p, n+1)^{\wedge}_p \to \pi_{n+2} (M_n R_n)^{\wedge}_p].$$
Further, $\beta: Q \to M_n R_n$ is an isomorphism in homotopy in degrees greater than $n+2$.  As such, we regard $Q$ as a stand-in for $M_n R_n$ (and hence $R_n$) which interpolates between it and the spectrum $\Sigma^\infty K(\Z_p, n+1)^{\wedge}_p$.  In particular, since $K(n)$-localisation turns co-connected maps into equivalences, we have that
$$\beta: L_{K(n)} Q \to L_{K(n)} M_n R_n \simeq R_n$$
is an equivalence.



Write $k'$ for the map $k' = L_n k \circ M_n(b \ell) \circ \beta$; then we have the commuting diagram of cofibre sequences
$$\xymatrix{
\Sigma^\infty K(\Z_p, n+1)^{\wedge}_p \ar[r]^-{k^{\wedge}_p} \ar[d]_-{\alpha} & (\gl_1 R_n)^{\wedge}_p \ar[d] \\
Q \ar[r]^-{k'} \ar@{.>}[ur]^-{K} \ar[d] & (L_n \gl_1 R_n)^{\wedge}_p \ar[d] \\
\cofib(\alpha) \ar[r] & \Sigma F^{\wedge}_p
}$$

\begin{prop}

There exists a map $K: Q \to (\gl_1 R_n)^{\wedge}_p$ making the above diagram commute.  Further, the set of such maps is a torsor for $\pi_{n+1}(F^{\wedge}_p)$.

\end{prop}

\begin{proof}

The definition of $Q$ is such that $\cofib(\alpha)$ is $(n+2)$-connected.  In contrast, $\Sigma F^{\wedge}_p$ is $(n+2)$-co-connnected, so the bottom map is in the diagram is necessarily null, providing the existence of a map $K$.  Further, the set of such lifts is a torsor for the group
$$[Q, F^{\wedge}_p] = \Hom[\pi_{n+1} Q, \pi_{n+1}(F^{\wedge}_p)] = \Hom[\Z_p, \pi_{n+1}(F^{\wedge}_p)] =  \pi_{n+1}(F^{\wedge}_p),$$
where we again employ the co-connectivity of $F^{\wedge}_p$.

\end{proof}

Note that since $Q$ is connected and $\gl_1 R_n \to (\gl_1 R_n)^{\wedge}_p$ is an equivalence on connected covers, we may regard any choice of $K$ above as actually mapping into $\gl_1 R_n$.

\begin{defn}

For any choice of $K$ above, we define the \emph{$n^{\rm th}$ monochromatic J-homomorphism} 
$$J = J_n: Q \to b(S, R_n)$$
to be the unique map with $c(\psi^r) \circ J = K$.  

Furthermore, let $BX = BX_n := \Omega^\infty Q$, and define $MX_n$ as the Thom spectrum 
$$MX =MX_n := L_{K(n)} BX_n^{\gamma \circ J}$$
associated to the composite $\xymatrix@1{BX_n \ar[r]^-J & B(S, R_n) \ar[r]^-{\gamma} & B\GL_1 S}$.

\end{defn}

It is unclear to us the extent to which the choice of the lift $K$ affects these constructions.  We will make little use of the spectrum $MX_n$ in this paper; when we do, all our results will be insensitive to the choice of $K$.  We notice that $\gamma \circ J$ is an infinite loop map.  Using the methods of \cite{lewis_may_steinberger}, we conclude:

%
%
%
%
%
%
%

\begin{prop}

The Thom spectrum $MX_n$ is an $E_\infty$-ring spectrum.

\end{prop}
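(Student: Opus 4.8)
The plan is to exhibit $MX_n$ as the $K(n)$-localisation of a Thom spectrum attached to a map of infinite loop spaces, and then feed this into the standard $E_\infty$ Thom spectrum machinery.

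First I would record that the classifying map underlying $MX_n$ is an infinite loop map. Recall that $b(S, R_n) = \Cone(\gl_1 \eta)$ is a spectrum, $B(S, R_n) = \Omega^\infty b(S, R_n)$, and, as already noted, $\gamma \colon B(S, R_n) \to B\GL_1 S$ is an infinite loop map, namely $\Omega^\infty$ of the connecting map $b(S, R_n) \to \Sigma \gl_1 S$ in the cofibre sequence defining $b(S,R_n)$. The basepoint component $BX_n = B(S, R_n)\langle 0 \rangle$ is the infinite loop space $\Omega^\infty$ of the $0$-connected cover of $b(S, R_n)$, and the inclusion $BX_n \hookrightarrow B(S, R_n)$ is $\Omega^\infty$ applied to the cover map; in particular it is an infinite loop map. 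Composing, the map $\zeta \colon BX_n \to B\GL_1 S$ used to define $MX_n$ is a map of grouplike $E_\infty$-spaces, where the target carries the delooping of the \emph{multiplicative} $E_\infty$-structure on $\GL_1 S$.

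Second I would invoke the $E_\infty$ Thom spectrum construction: for a map of $E_\infty$-spaces $f \colon Y \to B\GL_1 S$, the Thom spectrum $Y^f$ carries a canonical $E_\infty$-ring structure, the ring structure being the multiplicative one. This is precisely the machinery of \cite{lewis_may_steinberger}; an $\infty$-categorical account phrased directly in terms of $\GL_1 S$ appears in \cite{abghr}, where the Thom spectrum functor is exhibited as (lax) symmetric monoidal and hence sends $E_\infty$-maps to $E_\infty$-ring spectra. Applying this to $\zeta$ shows that the unlocalised Thom spectrum $BX_n^\gamma$ is an $E_\infty$-ring spectrum.

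Finally, $K(n)$-localisation carries $E_\infty$-ring spectra to $E_\infty$-ring spectra: the $K(n)$-acyclic spectra form a $\otimes$-ideal (if $K(n) \wedge X \simeq 0$ then $K(n) \wedge (X \wedge Y) \simeq 0$ for all $Y$), so $L_{K(n)}$ is a symmetric monoidal Bousfield localisation of spectra and preserves commutative algebras; cf. \cite{ekmm}, and note that this is used tacitly already for $R_n$ itself. Hence $MX_n = L_{K(n)} BX_n^\gamma$ is $E_\infty$. The only point requiring genuine care is the compatibility of the $E_\infty$-structures through the two reductions — passage to the basepoint component and $K(n)$-localisation — but since each is best checked at the level of the underlying spectra and their connective covers rather than of the spaces, this is routine bookkeeping rather than a real obstacle.
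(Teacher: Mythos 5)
Your proof is correct and follows essentially the same route as the paper, which simply notes that $\gamma$ is an infinite loop map and invokes the machinery of \cite{lewis_may_steinberger}. Your write-up helpfully makes explicit the step — tacit in the paper — that $K(n)$-localisation preserves $E_\infty$-ring structures, which is needed since $MX_n$ is defined as the localised Thom spectrum.
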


Note also that by its very nature, $\gamma \circ J$ is $R_n$-oriented, yielding a Thom isomorphism 
$$R_n^{\bigstar}(MX) \cong R_n^\bigstar BX$$
of $R_n^\bigstar BX$-modules.  

\begin{exmp} The notation $MX_n$ is meant to evoke thoughts of the complex cobordism Thom spectrum $MU = BU^\gamma$ (where $\gamma$ is the tautological virtual bundle over $BU$).  Indeed, this is precisely the case when $n=1$ (at least $K(1)$-locally).  There is an equivalence 
$$R_1= X_1[\rho^{-1}] = L_{K(1)} \Sigma^\infty \C P^\infty_+ [ \beta^{-1}] \simeq L_{K(1)} K \simeq K^{\wedge}_p,$$
by Snaith's theorem.  Then $M_1 R_1$ is the first monochromatic component (or $\Q$-acyclisation) of $K^{\wedge}_p$; this spectrum has homotopy only in odd degrees, where it is $\Q_p / \Z_p$.  The monochromatisation is undone by the $p$-completion: $(M_1 R_1)^{\wedge}_p = K^{\wedge}_p$.  Finally, examining the homotopy of $Q$, we see that the resulting spectrum $Q \simeq (K^{\wedge}_p)_{>0}$ is equivalent to the $p$-completion of the connected cover of $K$.  Thus its infinite loop space is $BX_1 = BU^{\wedge}_p$; and indeed the natural map of $K(1)$-local Thom spectra $L_{K(1)} MU \to MX_1$ is an equivalence by the Thom isomorphism. 

%
%
%

In this case, there is a choice of $K$ above so that the monochromatic J-homomorphism $J_1: BX_1 \to B(S, R_1)$ is precisely the $p$-completion of the usual J-homomorphism.


\end{exmp}

In summary, we have maps
$$\xymatrix@1{K(\Z_p, n+1) \ar[r]^-{\alpha} & \Omega^\infty Q \ar[r]^-{J} \ar[r] & B(S, R_n) \ar[r]^-{\gamma} & B\GL_1(S)}$$
where all but the first are infinite loop maps.  The construction of $K$ makes it evident that for any choice, the resulting map $J$ yields the map $e$ of the previous section upon composite with $\alpha$.  This mirrors the classical situation for $n=1$:
$$\xymatrix@1{\C P^\infty = BU(1) \times \{1\} \ar[r]^-{\subseteq} & BU \times \Z \ar[r]^-{J} \ar[r] & B(S, K) \ar[r]^-{\gamma} & B\GL_1(S^0)}$$
The full composite defines the Thom spectrum of the tautological line over $\C P^\infty$; starting at the second term yields $MU$.

\subsection{A zero section map}

Recall that the zero section of the Thom spectrum $\Sigma^2 MU(1) = \Sigma^2 (\C P^\infty)^{L-1}$ is an equivalence.  The goal of this section is to give an analogous result the current setting: a stable ``zero-section" $z:X \to X^{G\gamma}$ which is a $K(n)$-local equivalence.

\begin{defn} 

For an $R_n$-oriented Thom spectrum $Y^{ \zeta}$ over $Y$, define $f_g( \zeta)$ as the composite
$$\xymatrix{
R_n \otimes Y  \ar[r]^-{1 \otimes \Delta} &  R_n \otimes Y \otimes Y  \ar[rr]^-{\psi^g \otimes \theta_g( \zeta)^{-1} \otimes 1} & & R_n \otimes R_n \otimes Y \ar[r]^-{\mu \otimes 1} & R_n \otimes Y
}$$

\end{defn}

\begin{prop} \label{f_g_prop}

Let $Y^{G\zeta}$ be $R_n$-oriented with Thom class $u$ (and associated Thom isomorphism $T_u$). Then the following diagram commutes:
$$\xymatrix{
R_n \otimes Y^{G \zeta} \ar[r]^-{T_{u}}  \ar[d]_-{\psi^g \otimes 1} & G \otimes R_n \otimes Y  \ar[d]^-{1\otimes f_g(\zeta)}  \ar[r]^-{\delta \otimes 1} & R_n \otimes Y \ar[d]^-{g^{-1} f_g(\zeta)}   \\
R_n \otimes Y^{G \zeta} \ar[r]^-{T_{u}} &  G \otimes R_n \otimes Y \ar[r]^-{\delta \otimes 1} & R_n \otimes Y,
}$$

\end{prop}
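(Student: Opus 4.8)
The plan is to check the two squares of the diagram separately. The right square is a formal consequence of $\delta$ being the fibre inclusion of $\psi^g-g$, while the left square is essentially the assertion that $f_g(\zeta)$ is the operation into which $\psi^g\otimes 1$ turns after transport through the Thom isomorphism $T_u$. Two bookkeeping facts will be used throughout: $\psi^g$ is a ring map fixing the unit $\eta$, so it commutes with the Thom diagonal $D$ (which only touches the $R_n$ tensor factor) and satisfies $\mu\circ(\psi^g\otimes\psi^g)=\psi^g\circ\mu$; and $f_g(\zeta)$ factors as $f_g(\zeta)=P\circ(\psi^g\otimes 1)$, where $P=(\mu\otimes 1)\circ(1\otimes\theta_g(\zeta)^{-1}\otimes 1)\circ(1\otimes\Delta)$ is a map of left $R_n$-modules (the action being on the left tensor factor).

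\emph{Right square.} Since $\delta\colon G\to R_n$ is the fibre of $\psi^g-g$ we have $\psi^g\circ\delta=g\cdot\delta$. Writing $\bar\delta=\mu\circ(\delta\otimes 1)\colon G\otimes R_n\to R_n$ for the map ``$\delta\otimes 1$'' appearing in the diagram, multiplicativity of $\psi^g$ gives $\psi^g\circ\bar\delta=\mu\circ(g\delta\otimes\psi^g)=g\cdot\bigl(\bar\delta\circ(1\otimes\psi^g)\bigr)$, i.e. $\bar\delta\circ(1\otimes\psi^g)=g^{-1}\cdot(\psi^g\circ\bar\delta)$. Then I would chase: using that $P$ is left $R_n$-linear, slide $P$ in $(\delta\otimes 1)\circ(1\otimes f_g(\zeta))=(\bar\delta\otimes 1)\circ(1_G\otimes P)\circ(1_G\otimes\psi^g\otimes 1)$ past $\bar\delta\otimes 1$, producing $P\circ(\bar\delta\otimes 1)$, and apply the displayed identity to the remaining $(\bar\delta\otimes 1)\circ(1_G\otimes\psi^g\otimes 1)$. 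Reassembling yields $(\delta\otimes 1)\circ(1\otimes f_g(\zeta))=g^{-1}\cdot\bigl(f_g(\zeta)\circ(\delta\otimes 1)\bigr)$, which is the right square.

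\emph{Left square.} Both composites $R_n\otimes Y^{G\zeta}\to G\otimes R_n\otimes Y$ are $\psi^g$-semilinear maps of $R_n$-modules ($T_u$ is $R_n$-linear, while $\psi^g\otimes 1$ and $f_g(\zeta)$ are $\psi^g$-semilinear), so, $R_n\otimes Y^{G\zeta}$ being freely generated by $Y^{G\zeta}$, it suffices to compare their restrictions along $\eta\otimes 1\colon Y^{G\zeta}\to R_n\otimes Y^{G\zeta}$. Since $\psi^g\eta=\eta$, the restriction of $T_u\circ(\psi^g\otimes 1)$ is just $t_u:=T_u\circ(\eta\otimes 1)=(u\otimes 1)\circ D$, the ``stable Thom class'' map; so the left square is equivalent to $(1_G\otimes f_g(\zeta))\circ t_u=t_u$. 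To prove this I would first record the relation $(1_G\otimes\psi^g)\circ u=\theta_g(\zeta)\cdot u$ inside the Thom module $R_n^\bigstar(Y^{G\zeta})$ (cf. Proposition \ref{thom_prop}); this follows from $\psi^g(\bar u)=\theta_g(G\zeta)\cdot\bar u$ together with $\theta_g(G\zeta)=g\cdot\theta_g(\zeta)$ (Proposition \ref{easy_prop}) and $\psi^g\delta=g\delta$, the two factors of $g$ cancelling — or, taking $u=1_G\otimes u_0$ for $u_0$ the chosen Thom class of $Y^\zeta$, it is immediate from $\psi^g u_0=\theta_g(\zeta)\cdot u_0$. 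Given this, in $(1_G\otimes f_g(\zeta))\circ t_u$ the $\psi^g$ inside $f_g(\zeta)$ hits the $R_n$ coming from $u$, turning $u$ into $\theta_g(\zeta)\cdot u$; the diagonal inside $f_g(\zeta)$, together with coassociativity of the Thom diagonal ($(D\otimes 1)\circ D=(1\otimes\Delta)\circ D$ and its compatibility with $\Delta_Y$), routes the factor $\theta_g(\zeta)^{-1}$ to the same $Y$-coordinate, where it cancels $\theta_g(\zeta)$; what survives is exactly $t_u$.

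The main obstacle is the diagram chase in the left square: tracking how $\psi^g$, the Thom diagonal, and the cannibalistic class interact, and in particular confirming that the scalar $g$ arising from $\psi^g\delta=g\delta$ cancels the one in $\theta_g(G\zeta)=g\,\theta_g(\zeta)$, so that $f_g(\zeta)$ — defined with $\theta_g(\zeta)$ rather than $\theta_g(G\zeta)$ — really is the transported operation. The semilinearity reduction is what makes this feasible, replacing a diagram of spectra by the single identity $(1_G\otimes f_g(\zeta))\circ t_u=t_u$ about Thom classes and the module structure they induce.
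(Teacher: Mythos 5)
Your proposal is correct, and the overall skeleton is the same as the paper's: split the diagram into two squares, use $\psi^g\circ\delta = g\cdot\delta$ (the fibre relation) for the right square, and for the left square play the Thom diagonal and its coassociativity against the Thom-class identity $\psi^g(u)=\theta_g(\zeta)\cdot u$. Where you genuinely diverge is in how the left square is reduced. The paper strips off the invertible factor $G$ (using $Y^{G\zeta}\simeq G\otimes Y^\zeta$ to transfer the question to the square for $Y^\zeta$) and then verifies, by inspection, a fully expanded diagram in which both $T_u$ and $f_g(\zeta)$ are unwound into their defining composites. You instead observe that both composites around the left square are $\psi^g$-semilinear maps out of the free $R_n$-module $R_n\otimes Y^{G\zeta}$, hence determined by their restrictions along $\eta\otimes 1$; combined with $\psi^g\circ\eta=\eta$ this collapses the entire square to the single identity $(1_G\otimes f_g(\zeta))\circ t_u\simeq t_u$ for the stable Thom class $t_u=(u\otimes 1)\circ D$. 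That is a smaller, cleaner verification, at the cost of invoking the module-theoretic freeness argument which the paper keeps implicit in the shape of its big diagram. A further difference: because you keep $Y^{G\zeta}$ rather than reducing to $Y^\zeta$, you have to make explicit the cancellation between $\theta_g(G\zeta)=g\cdot\theta_g(\zeta)$ and $\psi^g\delta=g\delta$ when establishing $(1_G\otimes\psi^g)\circ u = \theta_g(\zeta)\cdot u$; the paper sidesteps that bookkeeping by removing $G$ first, so that only $\theta_g(\zeta)$ and no factor of $g$ ever appears in the chase. Both routes are sound; yours trades a large explicit diagram for a slightly subtler reduction, and makes transparent why the operator is $f_g(\zeta)$ (defined with $\theta_g(\zeta)$) rather than something involving $\theta_g(G\zeta)$.
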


\begin{proof}

It follows quickly from this definition and the fact that $\psi^g \circ \delta  = g \cdot \delta$ that the right square commutes.  Thus the result follows from the commutativity of the left square without the tensor factor of $G$.  Here is an expanded version of that square, which commutes by inspection:
$$\xymatrix{
R_n \otimes Y^{ \zeta} \ar[r]^-{1 \otimes D} \ar[dr]_-{1 \otimes D}  \ar[ddd]_-{\psi^g \otimes 1} & R_n \otimes Y^{ \zeta} \otimes Y \ar[rr]^-{1\otimes u \otimes 1} \ar[dr]_-{1 \otimes 1 \otimes \Delta} & & R_n \otimes R_n \otimes  Y \ar[r]^-{\mu \otimes 1} \ar[d]^-{1 \otimes 1 \otimes \Delta}  & R_n  \otimes Y  \ar[d]^-{1 \otimes \Delta}    \\
 & R_n \otimes Y^{ \zeta} \otimes Y  \ar[dd]_-{\psi^g \otimes 1 \otimes 1} \ar@/_1pc/[ddrr]_-{\psi^g \otimes  u \otimes 1} \ar[r]_-{1 \otimes D \otimes 1} & R_n \otimes Y^{ \zeta} \otimes Y \otimes Y  \ar[r]^-{1\otimes u \otimes1 \otimes 1} \ar@/^0pc/[dr]_-{\psi^g \otimes \theta_g(\zeta) \cdot u \otimes \theta_g(\zeta)^{-1} \otimes 1 \hspace{0.7cm}} & R_n \otimes R_n \otimes  Y \otimes Y \ar[r]^-{\mu \otimes 1 \otimes 1}  \ar[d]^-{\psi^g \otimes \psi^g \otimes \theta_g(\zeta)^{-1} \otimes 1} & R_n \otimes Y \otimes  Y \ar[d]^-{\psi^g \otimes \theta_g(\zeta)^{-1} \otimes 1} \\
&  & &  R_n \otimes R_n \otimes R_n \otimes Y \ar[r]^-{\mu \otimes 1 \otimes 1} \ar[d]^-{1 \otimes \mu \otimes 1} &  R_n \otimes R_n \otimes  Y  \ar[d]^-{\mu \otimes 1} \\
R_n \otimes Y^{ \zeta} \ar[r]^-{1 \otimes D} & R_n \otimes Y^{ \zeta} \otimes Y \ar[rr]^-{1\otimes u \otimes 1} & & R_n \otimes R_n \otimes   Y \ar[r]^-{\mu \otimes 1} & R_n \otimes Y.
}$$

\end{proof}

Since the homotopy fibre of $\psi^g - 1: R_n \to R_n$ is $S$, we see that $\hofib((\psi^g-1) \otimes 1) =  Y^{G\zeta}$ in the diagram in the statement of Proposition \ref{f_g_prop}.  Since the horizontal maps are equivalences, we conclude:

\begin{cor}

$\hofib(g^{-1} f_g(\zeta)-1) = \hofib(f_g(\zeta) - g) = Y^{G\zeta}$.

\end{cor}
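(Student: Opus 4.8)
The plan is to deduce the statement formally from the commuting diagram of Proposition~\ref{f_g_prop}, from Proposition~\ref{invertible_prop}, and from exactness of the localised smash product; there is essentially no new content here, only bookkeeping.

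\textbf{Step 1 (the horizontal composites are equivalences).} In the diagram of Proposition~\ref{f_g_prop}, the top composite $R_n \otimes Y^{G\zeta} \xrightarrow{T_u} G \otimes R_n \otimes Y \xrightarrow{\delta \otimes 1} R_n \otimes Y$ is an equivalence: $T_u$ is the Thom isomorphism attached to the chosen $R_n$-orientation of $Y^{G\zeta}$ (Proposition~\ref{thom_prop}), and the map built from $\delta$ is an equivalence because $\delta$ is invertible in $\pi_\bigstar R_n$ --- equivalently, because $m_\delta \colon G \otimes R_n \to R_n$ is an equivalence, as established in the proof of Proposition~\ref{lifting_prop}. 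The same applies to the bottom composite. Since the diagram commutes, this equivalence intertwines the left vertical self-map $\psi^g \otimes 1$ of $R_n \otimes Y^{G\zeta}$ with the right vertical self-map $g^{-1} f_g(\zeta)$ of $R_n \otimes Y$, and it trivially intertwines the respective identity maps; hence it induces an equivalence on homotopy fibres
$$\hofib\bigl((\psi^g - 1)\otimes \id_{Y^{G\zeta}}\bigr) \;\simeq\; \hofib\bigl(g^{-1} f_g(\zeta) - 1\bigr).$$

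\textbf{Step 2 (identify the left-hand fibre).} The self-map $(\psi^g - 1)\otimes \id_{Y^{G\zeta}}$ of $R_n \otimes Y^{G\zeta}$ is the image of $\psi^g - 1 \colon R_n \to R_n$ under the functor $-\otimes Y^{G\zeta}$, which is exact on the $K(n)$-local category (it preserves cofibre, hence fibre, sequences). Therefore its homotopy fibre is $\hofib(\psi^g - 1)\otimes Y^{G\zeta}$, and by Proposition~\ref{invertible_prop} with $\gamma = 1$ this equals $S \otimes Y^{G\zeta} = Y^{G\zeta}$, since $Y^{G\zeta}$ is already $K(n)$-local. Combined with Step~1, this gives $\hofib(g^{-1} f_g(\zeta) - 1) \simeq Y^{G\zeta}$.

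\textbf{Step 3 (strip off the factor of $g$).} Because $g \in \Z_p^{\times}$ acts on $R_n$ through a unit, multiplication by $g^{-1}$ is a self-equivalence of the target $R_n \otimes Y$ carrying $f_g(\zeta) - g$ to $g^{-1} f_g(\zeta) - 1$; post-composition with a self-equivalence of the target does not change the homotopy fibre, so $\hofib(f_g(\zeta) - g) \simeq \hofib(g^{-1} f_g(\zeta) - 1)$, which closes the chain of equivalences. I expect the only genuine points requiring care --- the ``main obstacle'', such as it is --- to be that the square in Proposition~\ref{f_g_prop} must commute up to a \emph{chosen} homotopy (it does, by the proof of that proposition) in order for the induced map on homotopy fibres to be well-defined, and that ``$\delta \otimes 1$'' is read as the $R_n$-linear equivalence determined by the invertible class $\delta$ rather than as a literal smash with the map $\delta \colon G \to R_n$. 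Neither is more than routine given Propositions~\ref{invertible_prop}, \ref{lifting_prop}, and~\ref{f_g_prop}.
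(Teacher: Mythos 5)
Your proof is correct and follows the same line as the paper's own argument: the paper observes that $\hofib(\psi^g - 1) = S$, hence $\hofib((\psi^g - 1) \otimes 1) = Y^{G\zeta}$, and then uses the horizontal equivalences in the diagram of Proposition~\ref{f_g_prop} to conjugate this over to $\hofib(g^{-1}f_g(\zeta) - 1)$, with the final identification $\hofib(f_g(\zeta) - g) = \hofib(g^{-1}f_g(\zeta) - 1)$ left implicit. You have spelled out the bookkeeping (exactness of $-\otimes Y^{G\zeta}$, post-composition by a self-equivalence not affecting fibres, the reading of $\delta \otimes 1$) a bit more explicitly, but there is no substantive difference.
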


We now focus on $X^{G\gamma}$.  Define a map $\overline{z}: X \to R_n \otimes X$ by $\overline{z} = ((j-1) \otimes 1) \circ \Delta$.  Since 
$$\psi^g(j-1) = \theta_g(G\gamma) \cdot (j-1) = g \theta_g(\gamma)\cdot (j-1),$$ 
we see that this commutes
$$\xymatrix{
X \ar[rr]^-{\Delta} \ar[dr]_-{\Delta} \ar[dd]_-{g\overline{z}} & & X \otimes X \ar[r]^-{(j-1) \otimes 1} \ar[d]^-{1 \otimes \Delta} & R_n \otimes X \ar[d]^-{1 \otimes \Delta} \\
 & X \otimes X \ar[dl]^-{g(j-1) \otimes 1} \ar[r]^-{\Delta \otimes 1}& X \otimes X \otimes X \ar[r]^-{(j-1) \otimes 1} \ar[dr]_-{\hspace{-2cm}g \theta_g(\gamma) (j-1) \otimes \theta_g(\gamma)^{-1} \otimes 1} & R_n \otimes X \otimes X \ar[d]^-{\psi^g \otimes \theta_g(\gamma)^{-1} \otimes 1} \\
R_n \otimes X & & & R_n \otimes R_n \otimes X \ar[lll]^-{\mu \otimes 1}
}$$
Passage along the right side of this diagram gives $f_g(\gamma) \circ \overline{z}$, so $f_g(\gamma) \circ \overline{z} = g\overline{z}$.  Thus $\overline{z}$ lifts to a map\footnote{The map $z$ is, of course, not uniquely specified by this computation.  However, any lift of $\overline{z}$ will serve for our purposes, as will be evident from the proof of Proposition \ref{Euler_prop}.}
$$z: X \to X^{G\gamma}.$$
which we will regard as the zero-section of the Thom spectrum.

The following result encourages us to regard $j-1$ as the Euler class of $\gamma$ over $X$.  Indeed, when $n=1$, it is precisely the K-theoretic Euler class of the tautological line bundle over $\C P^\infty$.  We continue to write $e^*(u) \in R_n^G(X^{G\gamma})$ for the Thom class of $X^{G\gamma}$, and $e^*(\ubar) \in R_n^0(X^{G\gamma})$ for its normalisation.

\begin{prop} \label{Euler_prop}

$z^*(e^*(\ubar)) = j-1$.

\end{prop}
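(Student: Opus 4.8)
The plan is to trace the class $z^*(e^*(\ubar)) = e^*(\ubar) \circ z \in R_n^0(X)$ through the construction of $z$.  By construction $z \colon X \to X^{G\gamma}$ is a lift of the map $\overline{z} = ((j-1) \otimes 1) \circ \Delta \colon X \to R_n \otimes X$ along the canonical map $p \colon X^{G\gamma} = \hofib(f_g(\gamma) - g) \to R_n \otimes X$, so that $p \circ z \simeq \overline{z}$ (this holds for whichever lift is chosen, which is why the non-uniqueness of $z$ is harmless).  It therefore suffices to establish the single identity $e^*(\ubar) = (1 \otimes \epsilon) \circ p$, where $\epsilon \colon X \to S$ is the augmentation induced by collapsing $K(\Z_p, n+1)$ to a point.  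Granting this, $z^*(e^*(\ubar)) = (1\otimes\epsilon) \circ p \circ z = (1 \otimes \epsilon) \circ \overline{z} = (1\otimes \epsilon) \circ ((j-1) \otimes 1) \circ \Delta$; and since $\eta \circ \epsilon \colon X \to S \to R_n$ is precisely the multiplicative unit of the ring $R_n^0(X)$, this last composite is the product $(j-1) \cdot 1 = j-1$, as desired.

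The second step is to identify $p$ explicitly.  Unwinding the identification of $X^{G\gamma}$ with $\hofib(f_g(\gamma) - g)$ furnished by Proposition \ref{f_g_prop} and the corollary following it: one writes $X^{G\gamma} = S \otimes X^{G\gamma} = \hofib(\psi^g - 1 \colon R_n \to R_n) \otimes X^{G\gamma}$, using that $\hofib(\psi^g - 1) \simeq S$ via the unit $\eta$ (this is the case $\gamma = 1$ of Proposition \ref{invertible_prop}), and then transports this homotopy fibre along the equivalence $\Phi = (\delta \otimes 1) \circ T_u \colon R_n \otimes X^{G\gamma} \to R_n \otimes X$ appearing as the top row of the diagram in Proposition \ref{f_g_prop}.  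Tracking the fibre inclusions through this, one finds that $p$ is the composite
$$X^{G\gamma} \xrightarrow{\ \eta \otimes 1\ } R_n \otimes X^{G\gamma} \xrightarrow{\ T_u\ } G \otimes R_n \otimes X \xrightarrow{\ \delta \otimes 1\ } R_n \otimes X,$$
where $\delta \otimes 1$ denotes multiplication of $\delta$ of the $G$-factor into the $R_n$-factor.

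To verify $e^*(\ubar) = (1 \otimes \epsilon) \circ p$, I would use the explicit realization of the Thom isomorphism $T_u$ via the Thom diagonal $D \colon X^{G\gamma} \to X^{G\gamma} \otimes X_+$ and the $G$-valued Thom class $e^*(u)$, exactly as in the proof of Proposition \ref{thom_prop}.  The one point requiring care is that post-composition with $1 \otimes \epsilon$ annihilates the extra $X_+$-factor introduced by $D$: because $pr_1 \circ \Delta = \id$ on $K(\Z_p, n+1)$, the composite $(1 \otimes \epsilon) \circ D$ is the identity of $X^{G\gamma}$.  After this collapse, and after observing that the $R_n$-factor produced by $\eta \otimes 1$ is just the unit and so disappears from the multiplication, the composite $(1 \otimes \epsilon) \circ p$ reduces to $\mu \circ (\delta \otimes 1) \circ e^*(u) \colon X^{G\gamma} \to R_n$, which is by definition the normalised Thom class $e^*(\ubar)$.

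I expect the main obstacle to be purely organisational: carefully unwinding the chain of equivalences of Proposition \ref{f_g_prop} and its corollary so as to pin down $p$ as the composite displayed above --- in particular keeping track of which $R_n$-tensor-factor each multiplication map acts on --- and being careful with the interaction of the Thom diagonal with the augmentation $\epsilon$.  Once $p$ has been identified and the identity $(1\otimes\epsilon)\circ p = e^*(\ubar)$ is in hand, the remainder is a formal manipulation in the homotopy category; no further information about the space $K(\Z_p, n+1)$ or the spectrum $R_n$ is needed beyond what has already been established.
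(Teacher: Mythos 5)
Your proposal is correct and follows essentially the same route as the paper: you identify $e^*(\ubar)$ as the composite of the fibre inclusion $X^{G\gamma} \to R_n \otimes X$ with the augmentation $1 \otimes \epsilon$, and then use $p \circ z \simeq \overline{z}$ to reduce to $(1 \otimes \epsilon) \circ ((j-1) \otimes 1) \circ \Delta = j-1$. The paper's proof is structurally identical (its augmentation is called $p$, and its fibre inclusion is left unnamed), with the same two diagrams verifying the same two facts; your unwinding of the fibre inclusion via Proposition \ref{f_g_prop} and its corollary matches the paper's second diagram exactly.
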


\begin{proof}

We first observe that one may derive $e^*(\ubar)$ from the fibre sequence for $f_g(\gamma)-g$; that is, this commutes:
$$\xymatrix{
 & X^{G\gamma} \ar[d] \ar[dr]^-{e^*(\ubar)} & \\
X \ar[ur]^-z \ar[r]^-{\overline{z}} & R_n \otimes X \ar[r]^-{1 \otimes p} \ar[d]_-{f_g(\gamma)-g} & R_n \otimes S = R_n \\
 & R_n \otimes X & 
}$$
Here $p: X \to S$ is induced by the projection of $K(\Z_p, n+1)$ to a point.  The commutativity of the upper right triangle follows from the diagram of fiber inclusions
$$\xymatrix{
S \otimes X^{G\gamma} = X^{G\gamma} \ar[rrrr]^-= \ar[d]_-{\eta \otimes 1} & & & & X^{G\gamma} \ar[d] & \\
R_n \otimes X^{G \gamma} \ar[r]^-{1 \otimes D}  \ar[drrr]_-{1 \otimes e^*(u)} & R_n \otimes X^{G \gamma} \otimes X \ar[r]^-{1\otimes e^*(u) \otimes 1} & R_n \otimes R_n \otimes G \otimes X \ar[r]^-{\mu \otimes 1} & R_n \otimes G \otimes X  \ar[r]^-{\delta \otimes 1} \ar[d]^-{1 \otimes 1\otimes p} & R_n  \otimes X \ar[d]^-{1 \otimes p} \\
 & & & R_n \otimes G \ar[r]^-\delta & R_n, 
}$$
since passage along the lower left defines $\delta e^*(u) = e^*(\ubar)$.  Then
$$z^*(e^*(\ubar)) = (1 \otimes p) \circ \overline{z} = (1 \otimes p) \circ ((j-1) \otimes 1) \circ \Delta = j-1$$

\end{proof}

Recall that $K^*(X) = K_*[[y]]$, and the natural transformation $t: R_n \to K$ of Proposition \ref{t_prop}.  We note that  $t_*(e^*(u))$ is the $K$-Thom class of $X^{G\gamma}$.  It satisfies $z^*(t_*(e^*(u))) = y$, for
$$z^*(t_*(e^*(u))) = t \circ (z^*(e^*(u))) = t \circ(j-1) = (1+y) - 1 = y.$$
We conclude:

\begin{prop} \label{euler_prop}

The composite of the Thom isomorphism for $X^{G\gamma}$ and $z^*$, as a map from $K^*(X) \cong K^{*+2g(n)}(X^{G\gamma})$ to $K^{*+2g(n)}X$ is multiplication by $y$.

\end{prop}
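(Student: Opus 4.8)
The plan is to recognise the composite in question as an instance of the classical identity ``Thom isomorphism followed by restriction to the zero section $=$ multiplication by the Euler class'', and then to read off the Euler class from the computation already recorded just before the statement. Let $\tau$ denote the $K$-theory Thom class of $X^{G\gamma}$, namely the image $t_*(e^*(u)) \in K^{G}(X^{G\gamma}) \cong K^{2g(n)}(X^{G\gamma})$ of the $R_n$-Thom class under the ring map $t\colon R_n\to K$ of Proposition \ref{t_prop}; it is a Thom class because $t$ sends the unit of $R_n$ to the unit of $K$, so its fibrewise restrictions remain units, and the identification with integer degree $2g(n)$ is the strong K\"unneth isomorphism of Example \ref{k_invert_exmp} together with $\dim G = 2g(n)$. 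By Proposition \ref{thom_prop}, $K^{\bigstar}(X^{G\gamma})$ is free of rank one over $K^{\bigstar}(X)$ on $\tau$, with module structure induced by the Thom diagonal $D\colon X^{G\gamma}\to X^{G\gamma}\otimes X$, so the Thom isomorphism $\Phi$ is $a\mapsto a\cdot\tau = D^{*}(\tau\wedge a)$.

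I would then argue that $z^{*}\circ\Phi$ is multiplication by $z^{*}(\tau)$ and that $z^{*}(\tau)=y$. The first point rests on the fact that the zero section $z\colon X\to X^{G\gamma}$ built before Proposition \ref{Euler_prop} is compatible with diagonals, $D\circ z\simeq (z\otimes 1)\circ\Delta$ (with $\Delta\colon X\to X\otimes X$ the diagonal underlying the ring structure of $K^{\bigstar}(X)$); granting this, $z^{*}\Phi(a) = (D\circ z)^{*}(\tau\wedge a) = \Delta^{*}\big((z^{*}\tau)\wedge a\big) = (z^{*}\tau)\cdot a$ for every $a\in K^{\bigstar}(X)$. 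The second point is exactly the computation displayed above the statement: applying $t$ to the identity $z^{*}(e^{*}(\ubar)) = j-1$ of Proposition \ref{Euler_prop}, and using $t_{*}(j) = 1+y$ from Proposition \ref{t_prop}, gives $z^{*}(\tau) = t_{*}(j-1) = (1+y)-1 = y$. Matching this to the grading in the statement then only uses the $2$-periodicity of $K$ to absorb the shift by the even integer $2g(n)$.

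The step demanding genuine care, and the main obstacle, is the diagonal-compatibility $D\circ z\simeq (z\otimes 1)\circ\Delta$: the map $z$ was pinned down only as a (non-canonical) lift of $\overline z = ((j-1)\otimes 1)\circ\Delta$ along the fibre inclusion $X^{G\gamma}\to R_n\otimes X$. I would deduce it from the explicit model --- $\overline z$ manifestly factors through $\Delta$, and the large commuting diagram in the proof of Proposition \ref{Euler_prop}, which expresses $e^{*}(\ubar)$ through the fibre sequence for $f_g(\gamma)-g$ and shows $z^{*}(e^{*}(\ubar)) = (1\otimes p)\circ\overline z$, is precisely this compatibility tested against the Thom class. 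Equivalently, once one knows $z^{*}$ is $K^{\bigstar}(X)$-linear, the composite $z^{*}\circ\Phi$ is $K^{\bigstar}(X)$-linear and hence determined by its value on $1\in K^{0}(X)$, which is $z^{*}(\tau) = y$; this reduces the entire proposition to the single identity $t_{*}(j-1) = y$ already in hand.
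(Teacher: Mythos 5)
Your argument follows the paper's route exactly: the paper deduces the proposition from the single computation $z^*(t_*(e^*(u))) = t(j-1) = y$, and like you it implicitly invokes the identity ``Thom isomorphism followed by zero-section pullback equals multiplication by the Euler class.'' The $K^{\bigstar}(X)$-linearity of $z^*$ that you rightly flag is left tacit in the paper; your proposed resolution via the explicit factorisation of $\overline{z}$ through the diagonal, together with the commuting diagram already recorded in the proof of Proposition \ref{Euler_prop}, is the intended justification.
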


Note that the image of $z^*$ may be identified as the (split) subspace $y K_*[[y]] = \tilde{K}^*(K(\Z_p, n+1))$.  Since $z^*$ is evidently injective, we obtain the main result of this section:

\begin{cor}

The zero section restricts to a $K(n)$-local equivalence $z: L_{K(n)} \Sigma^\infty K(\Z_p, n+1) \to X^{G\gamma}$.

\end{cor}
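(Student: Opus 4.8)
The plan is to deduce the equivalence from a computation in $K$-cohomology; essentially all of the content is already contained in Proposition~\ref{euler_prop} together with the Thom isomorphism of Proposition~\ref{thom_prop}. Write $\bar X := L_{K(n)}\Sigma^\infty K(\Z_p,n+1)$ for the reduced suspension spectrum, so that the basepoint of $K(\Z_p,n+1)$ splits $X \simeq S \vee \bar X$; under the coordinate $y$ of Proposition~\ref{rw_prop} the resulting splitting of $K$-cohomology is $K^*(X)=K^*(S)\oplus K^*(\bar X) = K_* \oplus yK_*[[y]]$, so that $K^*(\bar X)=\widetilde K^*(K(\Z_p,n+1))$ is the augmentation ideal $(y)\subseteq K_*[[y]]$. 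The assertion is that $z$ precomposed with $\bar X \hookrightarrow X$ is an equivalence, and since $K$ is Bousfield equivalent to $K(n)$ it suffices to show $z|_{\bar X}$ is a $K$-homology isomorphism --- equivalently, since $K$ is a field spectrum so that $K^*(-)=\Hom_{K_*}(K_*(-),K_*)$ and a $K_*$-linear map is an isomorphism iff its dual is, a $K$-cohomology isomorphism. The $\Pic_n$-grading causes no trouble: the Thom isomorphism shifts degree by $\dim G = 2g(n)$, which is even, hence by $2$-periodicity of $K$ is an identification $K^*\cong K^*$ (cf.\ Example~\ref{k_invert_exmp}).

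Now I would carry out the computation. Pushing the tautological $R_n$-orientation of $\gamma$ forward along $t\colon R_n\to K$ of Proposition~\ref{t_prop} gives a $K$-Thom class $u_K = t_*(e^*(u))$ for $X^{G\gamma}$, so Proposition~\ref{thom_prop} yields an isomorphism of $K^*(X)$-modules $\Phi\colon K^*(X)\xrightarrow{\ \cong\ }K^*(X^{G\gamma})$, $a\mapsto a\cdot u_K$. By Proposition~\ref{euler_prop}, the composite $z^*\circ\Phi\colon K^*(X)\to K^*(X)$ is multiplication by $y$ on $K_*[[y]]$. But multiplication by $y$ is injective with image exactly $yK_*[[y]]=K^*(\bar X)$; hence $z^*$ is injective, its image is precisely the summand $K^*(\bar X)\subseteq K^*(X)$, and the induced map $(z|_{\bar X})^*\colon K^*(X^{G\gamma})\to K^*(\bar X)$ is $\Phi^{-1}$ followed by the isomorphism $K_*[[y]]\xrightarrow{\,\cdot\,y\,}yK_*[[y]]$. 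Thus $z|_{\bar X}$ is a $K$-cohomology isomorphism, hence a $K(n)$-local equivalence.

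Since the real work is done in Proposition~\ref{euler_prop}, no serious obstacle remains; the only step deserving care is the passage from ``$K^*$-isomorphism'' to ``$K(n)$-local equivalence'' for these infinite, non-dualizable spectra, which rests on $K$ being a field spectrum. Should one wish to avoid dualizing cohomology, the identical argument runs in $K$-homology: the homological Thom isomorphism identifies the composite $K_*(\bar X)\xrightarrow{z_*}K_*(X^{G\gamma})\xrightarrow{\ \cong\ }K_*(X)$ with the restriction to $K_*(\bar X)$ of the $K_*$-linear dual of multiplication by $y$, the ``contraction'' sending the class dual to $y^i$ to the class dual to $y^{i-1}$ (and the basepoint class $1$ to $0$); this annihilates exactly the summand $K_*(S)=K_*\cdot 1$ and restricts to an isomorphism on $K_*(\bar X)$, giving the statement directly.
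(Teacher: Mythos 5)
Your proof is correct and follows essentially the same route as the paper: the paper deduces the corollary from Proposition~\ref{euler_prop} by noting that the composite of $z^*$ with the Thom isomorphism is multiplication by $y$, hence $z^*$ is injective with image the reduced cohomology $yK_*[[y]]$, and then concludes by passing to a $K(n)$-local equivalence. Your exposition is more explicit about the basepoint splitting, the dualization via the field-spectrum property of $K$, and the Bousfield equivalence of $K$ and $K(n)$, but these are exactly the tacit steps the paper is relying on.
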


\section{Higher orientations for chromatic homotopy theory}

\subsection{$n$-orientations and formal group laws}

The map $\rho: G \to X$ allows us to extend the notion of a complex orientation of a cohomology theory to the $K(n)$-local category:

\begin{defn}

An \emph{$n$-orientation} of a $K(n)$-local ring spectrum $E$ is a class 
$$x \in E^{G}(X) = [X, G \otimes E]$$
with the property that $\rho^*(x) \in E^{G}(G) = \pi_0 E = E_{0}$ is a unit.  

\end{defn}

We note that since $\rho_1: L_{K(1)} S^2 \to X_1$ is the $K(1)$-localisation of the inclusion $\C P^1 \subseteq \C P^\infty$, a $1$-orientation is precisely a $K(1)$-local complex orientation.

\begin{exmps} The following are $n$-oriented spectra:

\begin{enumerate}

\item The spectrum $R_n = X[\rho^{-1}]$ is naturally $n$-oriented.  Define $x \in R_n^{G}(X)$ by
$$x = \delta^{-1} \otimes (j-1): X \to G \otimes X[\rho^{-1}].$$
Then $\rho^*(x) = \delta^{-1}\cdot \rho^*(j-1) = \delta^{-1}\cdot \delta = 1$.

\item $K$ is $n$-oriented, via the conveniently named class $x \in K^{2g(n)}(X) = K^G (X)$.  The image of $\rho$ in $K_*(X_n)$ is $b_0$, and so $\rho^*(x) = x(b_0) = 1$.  More generally, any power series $f(x) \in K_*[[x]]$ which begins with a unit multiple of $x$ gives an $n$-orientation of $K$.

\item $E_n$ is $n$-oriented by a lift of the previous orientation.  More carefully, 
$$E_n^G(X) \cong E_n^{2g(n)}(X) = u^{g(n)} \cdot E_n^0(X) = u^{g(n)} \cdot \W(\F_{p^n})[[u_1, \dots, u_{n-1}]][[\Z_p]]$$
Then an orientation is given by the class of $u^{g(n)} \cdot g$, since the image of a fundamental class under $\rho$ in ${E_n}_*(X) \cong C(\Z_p, {E_n}_*)$ is a function which carries $g$ to a unit.

\item $MX_n$ is $n$-oriented via the map $x:=(\alpha^\gamma \otimes 1) \circ z$, as can bee seen from the diagram:
$$\xymatrix{
X_n \ar[r]^-z & X^{G\gamma} \ar[r]^-=  & X^{\gamma} \otimes G \ar[r]^-{\alpha^\gamma \otimes 1} & MX_n \otimes G \\
G \ar[u]_-\rho & & S \otimes G \ar[ll]^-= \ar[u]^-{\eta \otimes 1}
}$$
which commutes by the proof of Proposition \ref{Euler_prop}. Thus $\rho^*(x) = \alpha^\gamma \circ \eta$ is the unit of the ring spectrum $MX_n$.

\end{enumerate}

\end{exmps}

\begin{thm} \label{orient_thm}

An $n$-orientation $x$ of $E$ gives an isomorphism $E^{\bigstar}(X) \cong E_\bigstar[[x]]$, and the multiplication in $X$ equips this ring with a formal group law $F(x, y) \in E_\bigstar[[x, y]]$.

\end{thm}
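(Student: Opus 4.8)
The plan is to convert the $n$-orientation into an $E$-Thom class on $X^{G\gamma}\simeq\tilde X$, run the standard Thom-isomorphism bootstrap to build the power-series structure one power of $x$ at a time, and then control the passage to the limit using the Ravenel--Wilson computation; the formal group law then drops out of the $H$-space structure on $K(\Z_p,n+1)$.

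First I would split off the unit: since $K(\Z_p,n+1)$ is a based connected space, $X\simeq S\vee\tilde X$ with $\tilde X=L_{K(n)}\Sigma^\infty K(\Z_p,n+1)$, so $E^\bigstar(X)\cong E_\bigstar\oplus E^\bigstar(\tilde X)$ with the $E_\bigstar$-summand spanned by the class of the constant map. By the last corollary of the previous section the zero section is an equivalence $z\colon\tilde X\xrightarrow{\ \simeq\ }X^{G\gamma}$, and --- chasing the diagram that $n$-orients $MX_n$ in the examples above, which commutes by the proof of Proposition~\ref{Euler_prop} --- the composite $G\xrightarrow{\ \rho\ }X\to\tilde X\xrightarrow{\ z\ }X^{G\gamma}$ is the fibre inclusion of the Thom spectrum $X^{G\gamma}$ over the basepoint. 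Hence, if $x\in E^{G}(X)$ is an $n$-orientation, the class $\overline{x}:=(z^{-1})^*(x|_{\tilde X})\in E^{G}(X^{G\gamma})$ restricts over the basepoint to the unit $\rho^*(x)$; since $K(\Z_p,n+1)$ is connected all fibre inclusions are homotopic, so $\overline{x}$ restricts to a unit over every point and is a Thom class. Proposition~\ref{thom_prop} then exhibits $E^\bigstar(X^{G\gamma})$ as the free $E^\bigstar(X)$-module of rank one on $\overline{x}$; combining this with $X^{G\gamma}\simeq\tilde X$ and the splitting gives the self-referential identity $E^\bigstar(X)\cong E_\bigstar\oplus x\cdot E^\bigstar(X)$, where $x\in E^{G}(X)$ is the cohomology class corresponding to $\overline{x}$ under the Thom isomorphism. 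Iterating, $1,x,x^2,\dots$ are $E_\bigstar$-independent and $E^\bigstar(X)=\bigoplus_{k\le N}E_\bigstar x^k\ \oplus\ x^{N+1}E^\bigstar(X)$ for every $N$.

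It remains to pass to the limit, and this is where the computational input is used. Writing $K(\mup,n)=\colim_j K(\Z/p^j,n)$ as in~(\ref{limit_eqn}) we have $X=\colim_j L_{K(n)}\Sigma^\infty K(\Z/p^j,n)_+$, hence $E^\bigstar(X)=\lim_j E^\bigstar(L_{K(n)}\Sigma^\infty K(\Z/p^j,n)_+)$ provided the corresponding $\lim^1$ vanishes. For $j=1$, Lemma~\ref{split_1_lem} gives $L_{K(n)}\Sigma^\infty K(\Z/p,n)_+\simeq\bigvee_{k=0}^{p-1}Z^{\otimes k}$, so $E^\bigstar$ is free of rank $p$ over $E_\bigstar$ with basis $1,x,\dots,x^{p-1}$ (the $k$-th power of the restricted orientation generates the $E^\bigstar(Z^{\otimes k})$-summand, since it pairs nontrivially with $b_0^k$). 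Climbing the tower and comparing ranks against $K(n)^*K(\Z/p^j,n)=K(n)_*[x]/x^{p^j}$ from Ravenel--Wilson to rule out hidden classes yields $E^\bigstar(L_{K(n)}\Sigma^\infty K(\Z/p^j,n)_+)\cong E_\bigstar[x]/x^{p^j}$ with surjective transition maps; so the $\lim^1$ vanishes and $E^\bigstar(X)\cong\lim_j E_\bigstar[x]/x^{p^j}=E_\bigstar[[x]]$, carrying $x$ to the orientation. Finally the $H$-space multiplication on $K(\Z_p,n+1)$ gives $\mu\colon X\otimes X\to X$; the Künneth isomorphism at each finite stage (and then the limit) identifies $E^\bigstar(X\otimes X)\cong E_\bigstar[[x,y]]$ with $x$, $y$ pulled back from the two tensor factors, and $F(x,y):=\mu^*(x)$ is a one-dimensional commutative formal group law because $\mu$ is homotopy unital, associative, and commutative.

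The hardest part will be the finite-stage analysis: the Thom-isomorphism bootstrap is formal and only yields $E^\bigstar(X)\cong E_\bigstar\oplus xE^\bigstar(X)$, which on its own distinguishes neither $E_\bigstar[[x]]$ from larger modules nor guarantees convergence, so the real work is in anchoring the induction at $j=1$ through the $Z$-splitting, invoking the Ravenel--Wilson computation to pin the ranks at each finite level, and verifying the Mittag-Leffler condition. Checking that $\overline{x}$ is actually a Thom class --- i.e.\ that $\rho$ is, up to $z$, a fibre inclusion --- is a diagram chase essentially contained in the proof of Proposition~\ref{Euler_prop}.
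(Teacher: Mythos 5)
Your proposal follows the paper's argument closely: both convert the $n$-orientation into a Thom class for $X^{G\gamma}$ via the zero-section equivalence $z$, invoke the Thom isomorphism to obtain the self-referential identity $E^\bigstar(X)\cong E_\bigstar\oplus x\cdot E^\bigstar(X)$, iterate to see that the augmentation-ideal filtration has free quotients generated by the $x^k$, and then use the K\"unneth spectral sequence (collapsing by freeness) to extract the formal group law from the multiplication on $X$. The one place you diverge is the passage to $E_\bigstar[[x]]$: the paper states this directly from the structure of the associated graded, while you recognise that completeness requires a separate argument and propose to get it by computing $E^\bigstar$ at the finite stages $K(\Z/p^j,n)$ and taking an inverse limit. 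This is a legitimate concern the paper elides, but as sketched your fix has its own gap: the step ``climbing the tower and comparing ranks against $K(n)^*K(\Z/p^j,n)$ to rule out hidden classes'' does not obviously transfer $K(n)_*$-rank information to $E_\bigstar$-module structure for an arbitrary $n$-oriented $E$, and for $j>1$ there is no analogue of the Lemma~\ref{split_1_lem} splitting to anchor the induction, so the claim $E^\bigstar(L_{K(n)}\Sigma^\infty K(\Z/p^j,n)_+)\cong E_\bigstar[x]/x^{p^j}$ with surjective transition maps would need its own Thom-isomorphism-style argument over each $K(\Z/p^j,n)$ rather than a rank comparison.
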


\begin{proof}

We note that an $n$-orientation of $E$, $x \in E^G(X)$ defines, via the equivalence 
$$z: L_{K(n)} \Sigma^\infty K(\Z_p, n+1) \to X^{G\gamma},$$
a Thom class $u = (z^*)^{-1}(x) \in E^G( X^{G\gamma})$, since restricting $u$ to each fibre gives the same class as restricting $x$ to $G$, namely, a unit.  Furthermore, extending $z$ to all of $X =  L_{K(n)} \Sigma^\infty K(\Z_p, n+1) \vee S$, we see that the augmentation ideal of $E^\bigstar X$ is $\tilde{E}^\bigstar K(\Z_p, n+1) = z^* E^\bigstar X^{G\gamma}$.  Moreover, by the Thom isomorphism, the latter is the cyclic ideal generated by $z^*u = x$.

We conclude two things: that the augmentation ideal of $E^\bigstar X$ is generated by $x$, and that it is itself isomorphic to $E^\bigstar X^{G\gamma}$, and hence $E^\bigstar X$ via the Thom isomorphism.  We thereby inductively observe that the quotients of the filtration by powers of the augmentation ideal are free $E^\bigstar$-modules of rank 1 generated by the powers of $x$, and so $E^\bigstar X \cong E_\bigstar[[x]]$.

The K\"unneth spectral sequence for $E^\bigstar(X \otimes X)$ collapses to $E^\bigstar(X_n) \otimes_{E_\bigstar} E^\bigstar(X_n) = E_\bigstar[[x, y]]$, since each factor has free $E^\bigstar$-cohomology. The properties of the formal group law all derive from the unital, associative multiplication on $X$.

\end{proof}

\subsection{A remark on $(n-1)$-gerbes}

It is natural to ask what sort of object an $n$-orientation is orienting.  We recall that a complex orientation of $E$ yields a theory of Chern classes for $E$ in which the orientation is the first Chern class of the tautological bundle.  Furthermore, the formal group law of the cohomology theory encodes the Chern class of a tensor product of line bundles for the cohomology theory.  

We will consider a \emph{$p$-adic $(n-1)$-gerbe} $V$ over a space $Y$ to be a map $f_V: Y \to K(\Z_p, n+1)$.  Here we are purposefully confusing a gerbe with its Dixmier-Douady type of characteristic class.  For an $n$-oriented cohomology theory $E$, the orientation class $x$ defines a first Chern class for $V$ by the formula
$$c_1(V) := f_V^*(x) \in E^{\Sdet}(Y).$$
The formal group law on $E^\bigstar$ then allows one to compute $c_1(V \otimes W) = F(c_1(V), c_1(W))$.

It is not apparent to the author how to extend this notion to higher rank (i.e., non-abelian) $(n-1)$-gerbes or higher Chern classes.

\subsection{Multiplicative $n$-oriented spectra}
\begin{defn}

An $n$-oriented spectrum $R$ is \emph{multiplicative} if there is a unit $t \in R_\bigstar$ such that the formal group law that $R$ supports on $R^\bigstar(X_n)$ is given by the formula
$$F(x, y) = x + y + txy$$

\end{defn}

\begin{thm} \label{s_o_thm}

The spectrum $R_n$ with its natural $n$-orientation is the universal (i.e., initial) multiplicative $n$-oriented spectrum.

\end{thm}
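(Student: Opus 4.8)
The plan is to prove universality in the usual ``Snaith-style'' fashion: given any multiplicative $n$-oriented spectrum $R$ with unit $t \in R_\bigstar$, I will construct a canonical map of ring spectra $R_n \to R$ and show it is unique. First I would use the $n$-orientation $x \in R^G(X)$ together with Theorem \ref{orient_thm} to see that the multiplication on $X$ endows $R^\bigstar(X)$ with the formal group law $F(x,y) = x+y+txy$; the key point is that this is the formal group law of the multiplicative group (after the change of coordinates $tx \mapsto \bar x$), so the orientation class behaves like $1+y$ in the sense of Proposition \ref{t_prop}. Concretely, the class $1 + tx \in R^\bigstar(X)$ is grouplike for the coproduct coming from the $H$-space structure on $K(\Z_p, n+1)$, exactly as $1+y$ is grouplike in $K^*(X)$. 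This produces a map of ring spectra (or at least a homotopy-ring map) $X \to R$ classifying $1+tx$, by the same argument as in Proposition \ref{t_prop}: the orientation hypothesis says the restriction along $\rho: G \to X$ is a unit, so after composing with $\alpha = \rho$ (or with $\beta \circ i$) one gets an invertible element, hence the map factors through the localisation $X[\rho^{-1}] = R_n$.

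Next I would upgrade this to a map of $E_\infty$ (or $A_\infty$) ring spectra. Here I would follow the Spitzweck--{\O}stv\ae r strategy alluded to in the introduction: since $R_n = X[\rho^{-1}]$ is obtained from $\Sigma^\infty K(\Z_p,n+1)_+$ by inverting a single homotopy element in a telescope, a map out of $R_n$ is the same data as a ring map out of $\Sigma^\infty K(\Z_p, n+1)_+$ carrying $\rho$ (equivalently $\alpha$) to a unit. The $E_\infty$ structure of $R_n$ comes, as recalled in section \ref{invert_section} and via \cite{ekmm} V.2, from the $E_\infty$ structure on $X$ and the telescope construction; so to get an $E_\infty$ map $R_n \to R$ it suffices to produce an $E_\infty$ map $\Sigma^\infty K(\Z_p, n+1)_+ \to R$ sending $\rho$ to a unit. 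But $\Sigma^\infty K(\Z_p, n+1)_+$ is the free $E_\infty$ ring on the pointed space $K(\Z_p, n+1)$, so an $E_\infty$ map out of it is just a pointed map $K(\Z_p, n+1) \to \Omega^\infty R$; the $n$-orientation $x$, via the Thom-class/zero-section identification $z\colon L_{K(n)}\Sigma^\infty K(\Z_p, n+1) \simeq X^{G\gamma}$ of Proposition \ref{euler_prop} and the multiplicativity hypothesis, supplies exactly such a pointed map whose composite with $\rho$ is a unit. Multiplicativity is what guarantees the resulting map respects the ring structure on the nose rather than only up to the formal group correction.

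Then I would verify the two universal properties. For \emph{existence} of the structure map: the map $R_n \to R$ just constructed is a map of multiplicative $n$-oriented spectra, i.e.\ it carries the canonical orientation $x \in R_n^G(X)$ (Example: $x = \delta^{-1}\otimes(j-1)$) to $t^{-1}$ times the given orientation of $R$ — this is forced because on $R^\bigstar(X) \cong R_\bigstar[[x]]$ the map must send the universal coordinate to the coordinate linearising the given formal group law, and that coordinate is $tx$ up to the unit $t$. For \emph{uniqueness}: any ring map $R_n \to R$ of $n$-oriented spectra is determined by where it sends $j - 1 \in R_n^0(X)$ (since $R_n^\bigstar(X) = R_{n\,\bigstar}[[j-1]]$ generates, and $R_n$ itself is built from $X$ by inverting $\rho$), and compatibility with orientations pins this down; multiplicativity then forces the value of $t$. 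So the structure map is unique, proving initiality.

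The main obstacle I expect is the $E_\infty$ rigidification in the second paragraph: one must be careful that ``multiplicative $n$-oriented'' really does force the classifying map $K(\Z_p,n+1)\to\Omega^\infty R$ to be a map of $E_\infty$ (or at least homotopy-commutative) monoids, not merely a unital one. The free-$E_\infty$-ring description of $\Sigma^\infty K(\Z_p,n+1)_+$ handles the target side cleanly, but matching it with the telescope presentation of $R_n$ — and checking the resulting ring map lands in $R$ compatibly with the localisation at $\rho$ — is where the Spitzweck--{\O}stv\ae r-type argument does its real work, and is the step I would write out most carefully.
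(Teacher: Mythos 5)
Your first and third paragraphs reproduce the paper's argument essentially verbatim: define $\phi = 1+tv$ (your $1+tx$), check it is a homotopy ring map using the multiplicative formal group law, observe $\phi_*(\rho) = t\cdot\rho^*(v)$ is a unit so $\phi$ factors through the localisation $j: X_n \to R_n$, and pin down uniqueness by observing that any orientation-preserving $\Psi$ satisfies $\Psi_*(\delta) = t$, forcing $\Psi \circ j = 1+tv = \phi$. (One small slip: you write ``$\alpha = \rho$'', but these are distinct maps — $\alpha = \beta\circ i: Z \to X$ and $\rho: G \to X$ is the lift of $\delta$ from Proposition \ref{lifting_prop}; the localisations at the two agree, but the orientation is defined via $\rho^*$, not $\alpha^*$.)

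The real divergence is your second paragraph. The paper is careful \emph{not} to claim the universal map $R_n \to E$ is $E_\infty$; it explicitly disclaims this in a footnote, asserting only that the map ``preserves multiplication and units up to homotopy.'' The theorem, as the paper uses and proves it, is a statement about homotopy ring maps, and the multiplicativity hypothesis controls only the formal group law — a $\pi_0$-level datum — not the higher coherence one would need to produce an $E_\infty$ map. Your proposal tries to deliver more, and you correctly flag this as the hard step: the adjunction $\operatorname{Map}_{E_\infty}(\Sigma^\infty K(\Z_p,n+1)_+, R) \simeq \operatorname{Map}_{E_\infty\text{-}spaces}(K(\Z_p,n+1), \Omega^\infty R)$ is not ``just a pointed map'' on the right; it demands coherently-multiplicative data that the definition of a multiplicative $n$-orientation does not supply. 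This gap is genuine, but it is a gap in an argument for a stronger statement than the one the paper asserts. If you drop the $E_\infty$ rigidification and work in the homotopy category of ring spectra throughout (as the paper does), your argument closes up and coincides with the paper's proof.
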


\begin{proof}

This is argument closely follows that of Spitzweck-{\O}stv{\ae}r \cite{spitz_ost} for the motivic analogue of Snaith's theorem.

First, $X_n[\rho^{-1}]$ is multiplicative.  We note that $j$ is a map of ring spectra, as it is induced by the identity on $X_n$.  Thus if $m$ denotes multiplication in $X_n$, 
$$m^*(j) = j \otimes j \in R_n^\bigstar(X_n \otimes X_n) = {R_n}_\bigstar[[x, y]],$$
since the tensor product is multiplication in the cohomology of the smash product.  Definitionally, $j\otimes 1 = 1+\delta x$, and $1 \otimes j = 1+\delta y$, so $j \otimes j = 1+\delta(x+y+\delta xy)$.  Thus
$$F(x, y) = m^*(x) = m^*(\delta^{-1}(j-1)) = \delta^{-1}(j \otimes j-1) = x+y+\delta xy$$

Loosely, $R_n = X_n[\rho^{-1}]$ is universal because $j$ is initial amongst maps from $X_n$ to spectra in which $\rho$ is invertible.  More carefully, let $E$ be an $n$-oriented, multiplicative spectrum, with orientation $v$, and whose formal group satisfies $F(v, w) = v + w + tvw$ for some $t \in E_G$.  Then there is a map of ring spectra\footnote{We are not claiming that this map is highly structured, only that it preserves multiplication and units up to homotopy.} $\phi: X_n \to E$ defined by $\phi = 1+tv$.  To check that $\phi$ is multiplicative, we need to see that $m^*(\phi) = \phi \otimes \phi \in E^\bigstar(X_n \otimes X_n) = E_\bigstar[[x, y]]$.  But
$$m^*(\phi) = 1+tm^*(v) = 1+t F(v, w) = 1+ tv + tw +t^2 vw = (1+tv)(1+tw) = \phi \otimes \phi.$$
Similarly, $\phi$ is unital, since $t$ restricts to $0$ over $S^0$. 

We note that 
$$\phi_*(\rho) = (1+tv) \circ \rho = tv \circ \rho = t \cdot (\rho^*v)$$
is a product of units, so $\phi$ extends over $j$ to a map of ring spectra $\Phi: R_n = X_n[\rho^{-1}] \to E$.  

Since $E$ is $n$-oriented, the map $\delta: G \to X_n$ defines a function 
$$\delta^*: E_\bigstar[[v]]_G  = E^G(X_n) \to E^G(G) = E_0$$
which carries a power series in $v$ to the coefficient of $v$.  Therefore $\phi \circ \delta = \delta^*(\phi) = t$.  We see, then, that $\Phi$ is orientation preserving (i.e., $\Phi_*(x) = v$), since
$$v = t^{-1}(1+tv - 1) = \Phi_*(\delta^{-1})(\phi - 1) = \Phi_*(\delta^{-1}(j - 1)) = \Phi_*(x).$$

Let $\Psi: X_n[\rho^{-1}] \to E$ is any other orientation-preserving map.  Since it is orientation preserving, it must preserve the formal group law, so $\Psi_*(\delta) = t$.  Consider the composite map $\psi := \Psi \circ j: X_n \to E$.  Then 
$$v = \Psi_*(x) = \Psi_*(\delta^{-1}(j-1)) = t^{-1}(\psi-1),$$
giving $\psi = 1+tv = \phi$, and so $\Psi = \Phi$.

Therefore, for any multiplicative, $n$-oriented spectrum $E$, there exists a orientation-preserving map $\Psi: X_n[\rho^{-1}] \to E$, unique up to homotopy.

\end{proof}

\subsection{Identifying the coefficients of $R_n$}

There is an ``integral lift" $\W(K)$ of the cohomology theory $K$ with homotopy groups 
$$\W(K)_* = \W(\F_{p^n})[u^{\pm 1}]$$
One may define $\W(K)$ as the $E_n$-algebra
$$\W(K) := E_n/(u_1, \dots, u_{n-1}).$$
Note that reduction modulo $p$ gives a natural transformation $\W(K) \to K$. Furthermore, the reduction map $E_n \to \W(K)$, being a ring homomorphism, carries an $n$-orientation of $E_n$ to an $n$-orientation for $\W(K)$.

\begin{prop}

The formal group law on $\W(K)^{G^*}(X)$ is the universal multiplicative formal group law in the category of $\W(\F_{p^n})$-algebras.

\end{prop}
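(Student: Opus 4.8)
The plan is to assemble pieces already in hand: Theorem \ref{orient_thm} for the power‑series presentation, the explicit multiplicative coordinate of Propositions \ref{rw_prop}, \ref{Z_p_prop} and \ref{morava_mod_prop} to pin down the formal group law, and a direct computation of the coefficient ring $\W(K)^{G^*}$. Throughout, $\W(K)$ carries the $n$-orientation $x$ obtained by reducing, along $E_n \to \W(K) = E_n/(u_1,\dots,u_{n-1})$, the chosen lift to $E_n$ of the orientation of $K$; applying (the $G^*$-graded form of) Theorem \ref{orient_thm} then yields $\W(K)^{G^*}(X) \cong \W(K)^{G^*}[[x]]$ together with a formal group law $F(x,y)$ in $\W(K)^{G^*}[[x,y]]$.

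First I would identify $\W(K)^{G^*}$. Since $\W(K)_* = \W(\F_{p^n})[u^{\pm 1}]$ with $|u|=2$ and $\dim G = 2g(n)$, the collapse of the K\"unneth spectral sequence on invertible spectra (as in Example \ref{k_invert_exmp}, which applies verbatim to the $E_n$-module $\W(K)$ since its homology on an invertible spectrum is free of rank one) gives $\W(K)^{G^k} \cong \W(K)^{2kg(n)} = \W(\F_{p^n})\cdot u^{-kg(n)}$. Choosing $t$ to be the evident unit multiple of $u^{g(n)}$ lying in $\W(K)^{G^{-1}}$, this produces a ring isomorphism $\W(K)^{G^*} \cong \W(\F_{p^n})[t^{\pm 1}]$ in which $t$ is a \emph{polynomial} generator over $\W(\F_{p^n})$. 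Note that $t$ sits in exactly the Picard degree $G^{-1}$ needed for $x+y+txy$ to be homogeneous of degree $G$.

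Next I would compute $F$. By Proposition \ref{rw_prop} the H-space multiplication on $K(\Z_p,n+1)$ makes $K^*(X)$ the multiplicative formal group, and by the lifting arguments behind Proposition \ref{morava_mod_prop} this persists through $\m$: there is a grouplike element $\tilde h \in \W(K)^{G^*}(X)$ for the coproduct $m^*$ dual to the multiplication on $X$, reducing mod $p$ to the class $1+y$ of Proposition \ref{Z_p_prop} and normalised so that $\tilde h = 1 + tx$. Then the same manipulation as in the proof of Theorem \ref{s_o_thm} — where grouplikeness of $j$ gave $j\otimes j = 1+\delta(x+y+\delta xy)$ — shows that $m^*(\tilde h) = \tilde h\otimes\tilde h$ forces $F(x,y) = x + y + txy$. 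Since $t$ is a unit, $\W(K)$ with this orientation is a multiplicative $n$-oriented theory.

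Finally, universality is then a formality: unwinding the definition, a multiplicative formal group law over a $G^*$-graded $\W(\F_{p^n})$-algebra $R$ is exactly the datum of a unit $s \in R$ of degree $G^{-1}$, via $x+y+sxy$, so the classifying object for such is $\W(\F_{p^n})[t^{\pm 1}]$ with its tautological unit; the previous two steps identify $(\W(K)^{G^*}, F)$ with precisely this object. The step I expect to be the real work is the pair of middle computations — keeping the Picard grading honest so that $t$ has the correct degree, and above all checking that $t$ is a polynomial generator of $\W(K)^{G^*}$ (not merely some unit), which is exactly where one must use $\W(K)_* = \W(\F_{p^n})[u^{\pm 1}]$ rather than just that $\W(K)$ is even-periodic. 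Once Theorem \ref{orient_thm} and the grouplike element are granted, the rest is bookkeeping.
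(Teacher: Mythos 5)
Your proof is correct but takes a genuinely different route from the paper's. The paper's proof is a two-sentence deformation-theoretic argument: it invokes Proposition~\ref{rw_prop} to say $K^{G^*}(X)$ carries $\G_m$ over $\F_{p^n}$, then appeals to Lubin--Tate theory for a height~$1$ formal group (whose deformation space is a point) to conclude that $\W(K)^{G^*}(X)$ carries the multiplicative group over $\W(\F_{p^n})$, and asserts that this ``is clearly initial in the indicated category.'' You instead compute everything by hand: you identify the coefficient ring $\W(K)^{G^*} \cong \W(\F_{p^n})[t^{\pm 1}]$ via the K\"unneth collapse of Example~\ref{k_invert_exmp}, construct a grouplike element $\tilde h = 1 + tx$ lifting $1+y$ (an integral analogue of $j$ and of the grouplike $h$ in Proposition~\ref{Z_p_prop}), and read off $F(x,y) = x+y+txy$ by the same $m^*(\tilde h)=\tilde h\otimes\tilde h$ manipulation used in Theorem~\ref{s_o_thm}. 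What the paper's argument buys is brevity and a clean conceptual explanation for \emph{why} $\W(K)$ is the right object (it is forced to be $\G_m$ by rigidity of height-$1$ deformations); what yours buys is that it actually supplies the computation of $\W(K)^{G^*}$ and the identification of a polynomial generator $t$ in Picard-degree $G^{-1}$, which the paper's ``clearly initial'' elides and which is precisely what Corollary~\ref{rep_cor} needs when it produces the map $\Xi\colon \pi_{G^*}\W(K) \to \pi_{G^*}R_n[\sigma]$. The one place you could tighten is the existence of $\tilde h$: you are implicitly choosing the coordinate $x := t^{-1}(\tilde h - 1)$ rather than literally taking the reduction of the $E_n$-orientation of Example~3 of \S5.1, but since any two orientations differ by a unit and the proposition is about the formal group law up to the usual freedom of coordinate, this is a harmless normalisation.
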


\begin{proof}

$K$ supports the multiplicative formal group over $\F_{p^n}$, via Proposition \ref{rw_prop}; $\W(K)$ is its universal deformation.  But the universal deformation of the multiplicative group is the multiplicative group over $\W(\F_{p^n})$, which is clearly initial in the indicated category.

\end{proof}

Recall that the Picard-graded homotopy of $E_n$ associated to powers of $G$ are 
$$(E_n)_{G^*} = \W(\F_{p^n})[[u_1, \dots, u_{n-1}]][s^{\pm 1}],$$
where $|s| = \dim(G) = 2g(n)$, following the discussion in Example \ref{k_invert_exmp}.  Thus $\W(K)_{G^*} = \W(\F_{p^n})[s^{\pm 1}]$.

We note that $\Z_p^{\times}$ acts on the multiplicative group $\G_m$, and therefore on $\W(K)_{G^*}$, since the previous Proposition implies that $\W(K)_{G^*}$ co-represents $\G_m$ in the category of $\W(\F_{p^n})$-algebras.  The action (through $\W(\F_{p^n})$-algebra homomorphisms) is easily seen to be determined by the formula $\gamma \cdot s = \gamma s$.

For the next result, we recall that $R_n$ is also equipped with an action of $\Z_p^{\times} = \G_n / \SGn$:

\begin{cor} \label{rep_cor}

As a $\Z_p^{\times}$-representation, $\pi_{G^*} R_n$ contains $\Z_p[\rho^{\pm 1}]$ as a split summand.

\end{cor}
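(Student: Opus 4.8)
The plan is to exhibit $\Z_p[\rho^{\pm1}]$ inside $\pi_{G^*}R_n$ by hand and then to split it off by comparing $R_n$ with its integral form $\W(K)$.

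First I would record the structural facts. By construction $R_n\simeq X[\rho^{-1}]$ (Proposition~\ref{lifting_prop}), so the class $\rho\in\pi_G(R_n)$ by which one localises is a unit of the graded ring $\pi_{G^*}R_n$, and $\pi_{G^m}R_n=\rho^m\cdot\pi_0R_n$ for all $m$. The unit $\eta\colon S\to R_n$ identifies $\Z_p=\pi_0S$ with a subgroup of $\pi_0R_n$: it is injective since composing with any of the ring maps $R_n\to E_n$ of Theorem~\ref{e_infty_maps_thm} recovers the standard inclusion $\Z_p\hookrightarrow\pi_0E_n$. Moreover $\psi^g$ fixes this $\Z_p$, because $\psi^g\circ\eta\simeq\eta$ (Proposition~\ref{invertible_prop}), and $\psi^g\circ\rho=g\cdot\rho$ (used in the proof of Proposition~\ref{f_g_prop}). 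Hence multiplication gives a $\Z_p^\times$-equivariant monomorphism of graded rings $\iota\colon\Z_p[\rho^{\pm1}]\hookrightarrow\pi_{G^*}R_n$, where on the source $\Z_p^\times$ acts trivially on $\Z_p$ and by $g\cdot\rho^m=g^m\rho^m$; this is exactly the $\Z_p^\times$-representation $\Z_p[\rho^{\pm1}]$ of the statement.

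To split $\iota$, I would use the universal property of $R_n$. By Theorem~\ref{s_o_thm}, $R_n$ is the initial multiplicative $n$-oriented spectrum, and the preceding Proposition says $\W(K)$ is multiplicative $n$-oriented, so there is a unique orientation-preserving ring map $\Phi\colon R_n\to\W(K)$. It preserves formal group laws, so on homotopy $\Phi_*\colon\pi_{G^*}R_n\to\W(K)_{G^*}=\W(\F_{p^n})[s^{\pm1}]$ sends the parameter $\rho$ of $R_n$'s law to the (unit) parameter of $\W(K)$'s law, i.e.\ to $c\,s$ for some $c\in\W(\F_{p^n})^\times$. Assuming $\Phi_*$ is $\Z_p^\times$-equivariant (source via $\psi^g$, target via $\gamma\cdot s=\gamma s$, trivial on $\W(\F_{p^n})$), the composite $\Phi_*\circ\iota$ sends $\rho^m\mapsto c^ms^m$; since $c^m$ is a unit it lies outside $p\W(\F_{p^n})$, so $\Z_p c^m$ is a free rank-one $\Z_p$-summand of $\W(\F_{p^n})$, and (twisting by $\gamma\mapsto\gamma^m$) $\Z_p c^ms^m$ is a $\Z_p^\times$-equivariant summand of $\W(\F_{p^n})s^m$. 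Thus $\Phi_*\circ\iota$ is a $\Z_p^\times$-equivariant split monomorphism, and composing any equivariant retraction of it with $\Phi_*$ yields the required equivariant retraction of $\iota$.

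The real content is the $\Z_p^\times$-equivariance of $\Phi_*$, which I expect to be the main obstacle. The point is that $\psi^g$ on $R_n$ is induced by the multiplication-by-$g$ self-map of $K(\Z_p,n+1)$ (Proposition~\ref{action_prop}), so it transforms the universal orientation of $R_n$ by the formal $[g]$-series of the multiplicative formal group law, while the $\Z_p^\times$-action on $\W(K)_{G^*}$ is by definition the one coming from multiplication by $g$ on $\G_m$, which transforms the orientation of $\W(K)$ by the same $[g]$-series. Realising this automorphism of $\G_m$ by an Adams operation $\psi^g_{\W(K)}$ on the spectrum $\W(K)$ (the multiplicative group over $\W(\F_{p^n})$ has a rigid Goerss--Hopkins--Miller model), the uniqueness clause of Theorem~\ref{s_o_thm}, applied to $\W(K)$ with its $[g]$-transformed orientation, forces $\Phi\circ\psi^g=\psi^g_{\W(K)}\circ\Phi$, and taking homotopy gives the claim. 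The delicate parts are making this $[g]$-series comparison precise and producing $\psi^g_{\W(K)}$; alternatively one might try to verify the equivariance of $\Phi_*$ directly on $\pi_0R_n$, where the target $\W(\F_{p^n})$ carries the trivial action.
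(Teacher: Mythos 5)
Your proposal is correct and rests on the same core idea as the paper: the unique oriented map $\Phi\colon R_n\to\W(K)$ coming from Theorem~\ref{s_o_thm} together with the universal multiplicative formal group law on $\W(K)_{G^*}$. The mechanics of the splitting, however, are genuinely different and arguably cleaner. The paper first constructs a section $\Xi\colon\pi_{G^*}\W(K)\to\pi_{G^*}R_n[\sigma]$ of (the $\W(\F_{p^n})$-linear extension of) $\Phi_{G^*}$, so that $\W(\F_{p^n})[\rho^{\pm1}]$ splits off of $\pi_{G^*}R_n[\sigma]$, and then \emph{descends} to $\Z_p$ by a separate argument showing $\sigma\notin\pi_0R_n$ (using the long exact sequence for $\psi^g-1$ and the fact that $\pi_0 S$ has no extra roots of unity). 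You bypass that descent entirely: you map $\iota\colon\Z_p[\rho^{\pm1}]\hookrightarrow\pi_{G^*}R_n$ forward along $\Phi_*$ to land on $\Z_p\cdot c^ms^m$, note that a unit $c^m\in\W(\F_{p^n})^\times$ spans a free rank-one $\Z_p$-summand, and pull a retraction back through $\Phi_*$. That removes the need to know anything about whether $\sigma$ lives in $\pi_0 R_n$. In exchange you have to face the $\Z_p^\times$-equivariance of $\Phi_*$ head-on, which you correctly flag as the real content; the paper dismisses this with ``As both maps were equivariant by universality properties,'' which is just as compressed as your sketch (matching the two $[g]$-series via an Adams operation on $\W(K)$ and invoking the uniqueness clause of Theorem~\ref{s_o_thm}). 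Both accounts leave this point at the level of a sketch; neither is more rigorous than the other on that score.
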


\begin{proof}

We will write $\sigma$ for a a lift of a primitive $p^n-1^{\rm st}$ root of unity in $\F_{p^n}$ to $\W(\F_{p^n})$, so that $\W(\F_{p^n}) = \Z_p[\sigma]$.

Since $R_n$ is the universal multiplicative $n$-oriented spectrum, the multiplicativity of the orientation of $\W(K)$ gives us a unique oriented map $\Phi: R_n \to \W(K)$, which induces $\Phi_{G^*}: \pi_{G^*}(R_n) \to \pi_{G^*}(\W(K))$.   The last Proposition gives us a unique oriented map $\Xi:\pi_{G^*} \W(K) \to \pi_{G^*} R_n [\sigma]$.  It must then be the case that $\Phi_{G^*} \circ \Xi = \id$ once we extend $\Phi_{G^*}$ over $\W(\F_{p^n}) = \Z_p[\sigma]$.  Thus $\pi_{G^*}(R_n)[\sigma]$ contains $\W(\F_{p^n})[\Xi(s)^{\pm1}]$ as a split summand.  As in the proof of Theorem \ref{s_o_thm}, we must have $\Xi(s) = \rho$.  As both maps were equivariant by universality properties, this splitting is equivariant.

We note that $\pi_0(S)$ does not contain any of the roots of unity in $\mu_{p^n-1}$ not lying in $\mu_{p-1}$.  If it contained one such $\tau$, then for any $K(n)$-local ring spectrum $Y$, $\pi_0(Y)$ would be a $\Z_p[\tau]$-algebra.  In particular, $\pi_0(K(n)) = \F_p$ would be an $\F_p[\tau] = \Z_p[\tau] / p$-vector space, which is false if $\tau \notin \mu_{p-1} = \F_p^{\times}$.

Suppose now that $\tau \in \pi_0(R_n)$.  Then the action of $\Z_p^{\times}$ on $\Z_p[\tau] \subseteq \pi_0(R_n)$ is trivial since it is trivial upon extending further to $\Z_p[\sigma] = \W(K)_0$. Examine the long exact sequence
$$\xymatrix{
\cdots \ar[r] & \pi_0(S) \ar[r]^{\eta} & \pi_0(R_n) \ar[r]^-{\psi^g-1} &  \pi_0(R_n) \ar[r] & \cdots
}$$
Then $\Z_p[\tau] \subseteq \ker(\psi^g-1) \subseteq \pi_0(S)$, a contradiction.

Knowing that $\pi_{G^*} R_n [\sigma]$ contains $\W(\F_{p^n})[\rho^{\pm 1}]$ as a split summand, and that $\pi_0 R_n$ does not contain $\sigma$, we see that $\pi_{G^*} R_n$ contains $\Z_p[\rho^{\pm 1}]$ as a split summand.

\end{proof}

\subsection{$R_n$ for large primes}

Let $q:= g^{p-1} = \zeta^{p-1}(1+p)^{p-1} = (1+p)^{p-1}$; $q$ is a topological generator of $(1+p\Z_p)^{\times}$.  Consequently, the equivalence $S = R_n^{h\Z_p^{\times}}$ may be factorised (following \cite{davis_iterated}) as
$$S = (R_n^{h\mu_{p-1}})^{h (1+p\Z_p)^{\times}} = \hofib(\psi^q - 1: R_n^{h\mu_{p-1}} \to R_n^{h\mu_{p-1}})$$

A variation on a standard sparseness result for the Adams-Novikov spectral sequence (see, e.g., \cite{ghmr_rat}) yields the following:

\begin{prop}

If $n^2<2p-3$, the long exact sequence in homotopy associated to 
$$\xymatrix{S \ar[r]^-\eta & R_n^{h\mu_{p-1}} \ar[r]^-{\psi^{q}-1} & R_n^{h\mu_{p-1}}}$$
splits in a range of degrees, giving
$$\begin{array}{ccc}
\Z_p = \pi_0(S) \cong \pi_0(R_n^{h\mu_{p-1}}), & \mbox{and} & \pi_*(S) \cong  \pi_{*}(R_n^{h\mu_{p-1}}) \oplus \pi_{*+1}(R_n^{h\mu_{p-1}})
\end{array}$$
when $-2p+1 \leq * < 0$.

\end{prop}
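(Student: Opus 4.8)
The plan is to recognise $R_n^{h\mu_{p-1}}$ as a homotopy fixed point spectrum for $E_n$ and then run its descent spectral sequence, using a sparseness estimate to force degeneration in the relevant band of stems. First I would use Corollary \ref{snaith_cor} to rewrite $R_n \simeq E_n^{h\SGn}$; the residual $\mu_{p-1} \subseteq \Z_p^{\times} = \G_n/\SGn$ has preimage $\G_n^1$ in $\G_n$ by the short exact sequence (\ref{SES_eqn}), so $R_n^{h\mu_{p-1}} \simeq E_n^{h\G_n^1}$, and the remaining quotient $\G_n/\G_n^1 \cong (1+p\Z_p)^{\times}$ is topologically generated by $q = g^{p-1}$. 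Thus the fibre sequence in the statement is $E_n^{h\G_n} \to E_n^{h\G_n^1} \xrightarrow{\psi^q-1} E_n^{h\G_n^1}$, and the long exact sequence in question compares $\pi_*$ of the $K(n)$-local sphere with $\pi_*(E_n^{h\G_n^1})$ and the operator $\psi^q-1$.

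The workhorse is the Devinatz--Hopkins descent spectral sequence $E_2^{s,t} = H^s_c(\G_n^1; (E_n)_t) \Rightarrow \pi_{t-s}(R_n^{h\mu_{p-1}})$. The hypothesis $n^2 < 2p-3$ forces $p \nmid n$, so $\G_n^1$ is $p$-torsion free of cohomological dimension $n^2-1$, and the spectral sequence is concentrated in columns $0 \le s \le n^2-1$. Combining this with the vanishing of $(E_n)_t$ for $t$ odd and the standard sparseness for the continuous cohomology of the Morava stabiliser group (a variant of which is recorded in \cite{ghmr_rat}), which confines $H^*_c$ to internal degrees divisible by $2(p-1)$, one sees that in a band of total degrees strictly containing $[-2p+1,1]$ the only nonzero $E_2$-terms are the columns $E_2^{s,0} = H^s_c(\G_n^1; (E_n)_0)$, sitting in stems $0,-1,\dots,-(n^2-1)$ --- all inside the band, since $n^2-1 < 2p-4$ --- with no room for differentials in or out. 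Therefore $\pi_j(R_n^{h\mu_{p-1}}) \cong H^{-j}_c(\G_n^1; (E_n)_0)$ for $j$ in the band, vanishing there unless $-(n^2-1) \le j \le 0$; in particular $\pi_1(R_n^{h\mu_{p-1}}) = 0$ and $\pi_0(R_n^{h\mu_{p-1}}) = ((E_n)_0)^{\G_n^1} = \Z_p$ (the last equality because the same sparseness gives $\pi_0(E_n^{h\G_n}) = ((E_n)_0)^{\G_n} = \Z_p$, forcing $\psi^q$ to act trivially on the torsion-free rank-one $\Z_p$-module $\pi_0(E_n^{h\G_n^1})$).

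I would then observe that $\psi^q-1$ kills $\pi_j(R_n^{h\mu_{p-1}})$ for every $j$ in the band: the residual $q$-action on the descent spectral sequence is the Morava stabiliser action along the quotient $\G_n/\G_n^1$, and on a surviving term $H^s_c(\G_n^1;(E_n)_0)$ it only sees the degree-zero Tate twist, hence is trivial. So the long exact sequence degenerates into short exact sequences $0 \to \pi_{j+1}(R_n^{h\mu_{p-1}}) \to \pi_j(S) \to \pi_j(R_n^{h\mu_{p-1}}) \to 0$ for $j$ in the band; since each surviving $\pi_j(R_n^{h\mu_{p-1}}) = H^{-j}_c(\G_n^1;(E_n)_0)$ is a free $\Z_p$-module in this range (the torsion, image-of-$J$-type classes all live in internal degrees that are nonzero multiples of $2(p-1)$, hence outside the band), these sequences split. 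Reading the splittings off gives $\pi_0(S) \cong \pi_0(R_n^{h\mu_{p-1}}) \oplus \pi_1(R_n^{h\mu_{p-1}}) = \Z_p$ and $\pi_*(S) \cong \pi_*(R_n^{h\mu_{p-1}}) \oplus \pi_{*+1}(R_n^{h\mu_{p-1}})$ throughout the negative part of the band, which is the claim.

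The step I expect to be the main obstacle is isolating the exact sparseness input and checking it does what is needed here: vanishing of $H^s_c(\G_n^1;(E_n)_t)$ --- for the subgroup $\G_n^1$ itself, not merely $\G_n$ or $\bS_n$ --- off internal degrees divisible by $2(p-1)$, freeness of the surviving columns, and the absence of differentials, all under the single numerical hypothesis $n^2<2p-3$. Bookkeeping exactly which stems lie in the degenerate band (hence the sharp range of validity of the direct-sum formula) also takes care, since near the ends of the range the $v_1$-periodic classes in internal degrees $\pm 2(p-1)$ intrude and $\psi^q$ acts on them nontrivially. The identification $R_n^{h\mu_{p-1}}\simeq E_n^{h\G_n^1}$ and the homological algebra of splitting the exact sequence are by contrast routine.
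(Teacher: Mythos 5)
Your overall strategy matches the paper's: identify $R_n^{h\mu_{p-1}} \simeq E_n^{h\G_n^1}$, run the Devinatz--Hopkins descent spectral sequence, and use sparseness ($H^s(\G_n^1, (E_n)_t)=0$ unless $2(p-1)\mid t$) together with a cohomological dimension bound to isolate a band where only the column $H^s(\G_n^1,(E_n)_0)$ contributes, so that $\pi_j(R_n^{h\mu_{p-1}})\cong H^{-j}(\G_n^1,(E_n)_0)$ there. You then deduce that $\psi^q-1$ acts as zero on these groups, producing short exact sequences $0\to\pi_{j+1}(R_n^{h\mu_{p-1}})\to\pi_j(S)\to\pi_j(R_n^{h\mu_{p-1}})\to 0$; this is implicitly the same input the paper uses (triviality of the residual $\Z_p$-action on $(E_n)_0$, hence on $H^*(\G_n^1,(E_n)_0)$). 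Up to this point your argument and the paper's run in parallel (apart from a minor discrepancy in the cohomological dimension of $\G_n^1$ --- you write $n^2-1$, the paper says $n^2$ --- which does not affect the band under $n^2<2p-3$).

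The gap is in the final splitting step. You claim the short exact sequences split because each $\pi_j(R_n^{h\mu_{p-1}})\cong H^{-j}(\G_n^1,(E_n)_0)$ is a free $\Z_p$-module, offering as justification that ``the torsion, image-of-$J$-type classes live in internal degrees that are nonzero multiples of $2(p-1)$.'' This conflates the internal grading of the coefficients with the module structure of the cohomology: sparseness tells you that $H^s(\G_n^1,(E_n)_t)$ vanishes for $t\neq 0$ in the band, but says nothing about whether $H^s(\G_n^1,(E_n)_0)$ for $s>0$ is torsion-free, and in general one expects these groups to carry $p$-power torsion (indeed that is where the interesting negative-degree torsion in $\pi_*(L_{K(n)}S^0)$ comes from under the very isomorphism you are proving). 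The paper instead obtains the splitting structurally: under $p\nmid n$, the reduced determinant splits, giving $\G_n\cong\G_n^1\times\Z_p$ with the $\Z_p$ factor acting trivially on $(E_n)_0$; the (two-column, collapsing) Lyndon--Hochschild--Serre spectral sequence then yields $H^s(\G_n,(E_n)_0)\cong H^{s-1}(\G_n^1,(E_n)_0)\oplus H^s(\G_n^1,(E_n)_0)$, with the splitting supplied canonically by inflation along the projection $\G_n\to\G_n^1$ (retracted by restriction along $\G_n^1\hookrightarrow\G_n$). That argument needs no freeness hypothesis. To repair your proof, replace the freeness claim by this inflation--restriction splitting, or equivalently observe that the product decomposition $\G_n\cong\G_n^1\times\Z_p$ makes the inflation $H^*(\G_n^1,(E_n)_0)\to H^*(\G_n,(E_n)_0)$ a section of the natural surjection in your short exact sequence.
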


\begin{proof}

Equivalently, we may show these facts for $E_n^{h\G_n^1} \simeq R_n^{h\mu_{p-1}}$.  The homotopy of this spectrum is computed via the spectral sequence
$$H^s(\G_n^1, (E_n)_t) \implies \pi_{t-s}(E_n^{h\G_n^1}).$$
Since $p$ is odd, our assumption implies that $(p-1)$ and $p$ do not divide $n$, and thus $\G_n$ is a $p$-adic analytic Lie group of dimension $n^2$ with no $p$-torsion; thus, its cohomological dimension is $n^2+1$ (see \cite{morava}).  Similarly, $\G_n^1$ has cohomological dimension $n^2$.  So the only contribution to $\pi_{*}$ comes from $H^s(\G_n^1, (E_n)_{s+*})$ where $0 \leq s \leq n^2$.

However, one may compute this group cohomology as 
$$H^s(\G_n^1 / \mu_{p-1}, (E_n)^{\mu_{p-1}}_{s+*}) = H^s(\SGn, (E_n)^{\mu_{p-1}}_{s+*}),$$
which vanishes when $s+*$ is not a multiple of $2(p-1)$.  Assuming that $n^2<2(p-1)$ and that $-2(p-1)< * \leq 0$ ensures that $-2(p-1) < * \leq s+* \leq s \leq n^2 <2(p-1)$, so the only possible contribution is when $s=-*$ (so $s+* = 0$).  So for $* \leq 0$, 
$$\pi_{*}(R_n^{h\mu_{p-1}}) = H^{-*}(\G_n^1, (E_n)_0)$$
The same analysis holds (for $n^2+1 < 2(p-1)$) to show that when $-2(p-1)< * \leq 0$, $\pi_*(S) =  H^{-*}(\G_n, (E_n)_0)$.  

Now, since $p \nmid n$, the reduced determinant is split, giving an isomorphism $\G_n \cong \G_n^1 \times \Z_p$, and the $\Z_p$ factor acts trivially on $(E_n)_0$ (and hence $H^{*}(\G_n^1, (E_n)_0)$).  Thus the (collapsing) Lyndon-Hochschild-Serre spectral sequence gives $H^0(\G_n, (E_n)_0) \cong H^0(\G_n^1, (E_n)_0)$, and if $*<0$,
\begin{eqnarray*}
H^{-*}(\G_n, (E_n)_0) & \cong & H^1(\Z_p, H^{-1-*}(\G_n^1, (E_n)_0)) \oplus H^0(\Z_p, H^{-*}(\G_n^1, (E_n)_0)) \\
 & \cong & H^{-1-*}(\G_n^1, (E_n)_0) \oplus H^{-*}(\G_n^1, (E_n)_0) \\
\end{eqnarray*}

\end{proof}

This implies that for $0\leq m \leq 2p-1$,
$$1= \psi^{q}: \pi_{-m}R_n^{h\mu_{p-1}} \to \pi_{-m}R_n^{h\mu_{p-1}},$$
since the unit map $S \to R_n^{h\mu_{p-1}}$ is surjective in $\pi_{-m}$, and fixed by $\psi^q$. Now, the $G$-periodicity of $\pi_\bigstar(R_n)$ yields a $G^{\otimes p-1}$-periodicity of $\pi_\bigstar(R_n^{h\mu_{p-1}})$, so 
$$[G^{\otimes k(p-1)}, \Sigma^m R_n^{h\mu_{p-1}}] \cong \pi_{-m}(R_n^{h\mu_{p-1}})$$

\begin{cor} \label{end_cor}

If $n^2<2p-3$, and $0 \leq m\leq 2p-1$, the endomorphism $\psi^q$ of $[G^{\otimes k(p-1)}, \Sigma^m R_n^{h\mu_{p-1}}]$ is multiplication by $q^k$.

\end{cor}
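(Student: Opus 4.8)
The plan is to deduce this directly from the $G^{\otimes(p-1)}$-periodicity isomorphism established just above, together with the two facts already in hand: that $\psi^q$ is a \emph{ring} endomorphism of $R_n^{h\mu_{p-1}}$, and that it is already known to be the identity on $\pi_{-m}(R_n^{h\mu_{p-1}})$ in the range $0\le m\le 2p-1$. Concretely, I would first make the periodicity class explicit. By Corollary \ref{rep_cor} the element $\rho$ gives a split subring $\Z_p[\rho^{\pm 1}]\subseteq \pi_{G^\bigstar}(R_n)$ on which $\Z_p^{\times}=\G_n/\SGn$ acts by $\gamma\cdot\rho=\gamma\rho$; in particular $\psi^\zeta(\rho^{p-1})=\zeta^{p-1}\rho^{p-1}=\rho^{p-1}$, so $\rho^{p-1}$ is $\mu_{p-1}$-invariant, and since $|\mu_{p-1}|$ is prime to $p$ it lifts to a unit $u\in\pi_{G^{\otimes (p-1)}}(R_n^{h\mu_{p-1}})$, compatibly with the residual $(1+p\Z_p)^{\times}$-action, the forgetful map to $\pi_{G^{\otimes(p-1)}}(R_n)$ being injective and $\psi^q$-equivariant. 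This $u$ is the class implementing the stated periodicity, so the isomorphism of the statement is $[G^{\otimes k(p-1)},\Sigma^m R_n^{h\mu_{p-1}}]\xrightarrow{\;\cdot\,u^{-k}\;}\pi_{-m}(R_n^{h\mu_{p-1}})$.

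Next I would run the bookkeeping. Because $\psi^q\colon R_n^{h\mu_{p-1}}\to R_n^{h\mu_{p-1}}$ is a map of ring spectra, it commutes with multiplication by $u$ up to the scalar $\psi^q(u)/u\in\pi_0(R_n^{h\mu_{p-1}})^{\times}=\Z_p^{\times}$; tracing the action $\gamma\cdot\rho=\gamma\rho$ of Corollary \ref{rep_cor} through the relation $q=g^{p-1}$ pins this scalar down as (a power of) $q$, whence $\psi^q(u^k)=q^{k}u^k$. On the other hand, the displayed identity immediately preceding the corollary records that for $0\le m\le 2p-1$ the map $\psi^q$ is the identity on $\pi_{-m}(R_n^{h\mu_{p-1}})$, since the unit $S\to R_n^{h\mu_{p-1}}$ is surjective in those degrees (sparseness, using $n^2<2p-3$) and $\psi^q$ fixes its image. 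Combining the two: if $x\in[G^{\otimes k(p-1)},\Sigma^m R_n^{h\mu_{p-1}}]$ corresponds to $\alpha\in\pi_{-m}(R_n^{h\mu_{p-1}})$ via $x=\alpha\,u^{k}$, then $\psi^q(x)=\psi^q(\alpha)\,\psi^q(u^{k})=\alpha\,q^{k}u^{k}=q^{k}x$, which is the assertion.

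The only genuinely delicate point — and the step I would spend the most care on — is the compatibility asserted in the first paragraph: that the periodicity class of $R_n^{h\mu_{p-1}}$ may be taken to be the image of $\rho^{p-1}$, and that the $\Z_p^{\times}$-equivariance of the splitting in Corollary \ref{rep_cor} descends to the $\mu_{p-1}$-homotopy fixed points so as to identify the scalar $\psi^q(u)/u$ with the expected power of $q$ rather than with some stray $\Z_p^{\times}$-unit. Here the hypothesis $n^2<2p-3$ re-enters a second time, via the same sparseness proposition: it forces $\pi_0(R_n^{h\mu_{p-1}})=\Z_p$, so the periodicity class is unique up to a $\Z_p^{\times}$-unit and the scalar is well defined, and it is this rigidity (together with the fact that the forgetful map to $R_n$ detects the scalar) that lets one read the scalar off from Corollary \ref{rep_cor}. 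Everything past that point is formal manipulation with the ring structure and the periodicity isomorphism.
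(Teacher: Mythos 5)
Your argument follows the paper's proof in all essentials: you identify the $G^{\otimes (p-1)}$-periodicity class, use that $\psi^q$ is a ring map so that it intertwines the periodicity isomorphism up to the eigenvalue $\psi^q(u)/u$, and invoke the preceding sparseness proposition to conclude that $\psi^q$ is the identity on $\pi_{-m}(R_n^{h\mu_{p-1}})$ in the given range. The paper expresses exactly this content as a single commuting diagram rather than via an explicit periodicity class $u$, but the inputs and the logic are the same.

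One remark on the step you yourself flag as delicate: you write $\psi^q(u^k)=q^k u^k$, matching the paper's $q^k$. But chasing through Corollary \ref{rep_cor} the eigenvalue is actually $q^{k(p-1)}$, not $q^k$: since $\psi^\gamma(\rho)=\gamma\rho$ and $\psi^q$ is a ring map, $\psi^q(u)=\psi^q(\rho^{p-1})=(q\rho)^{p-1}=q^{p-1}u$, so $\psi^q(u^k)=q^{k(p-1)}u^k$. Equivalently, in the paper's diagram the map $\psi^q$ restricted to each tensor factor of $G$ is multiplication by $g^{p-1}=q$, so the middle vertical should read $q^{k(p-1)}\otimes\psi^q$ rather than $q^k\otimes\psi^q$. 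This discrepancy is harmless for Corollary \ref{final_cor}, since $v_p(q^{k(p-1)}-1)=v_p(q^k-1)=s+1$ when $k=p^s m$ with $p\nmid m$, so the cokernel is still $\Z/p^{s+1}$; but since you yourself were uncertain enough to hedge with ``(a power of) $q$,'' it's worth pinning down — and the honest power is $k(p-1)$, on both your account and the paper's.
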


\begin{proof}

It suffices to observe that for $f \in  \pi_{-m}(R_n^{h\mu_{p-1}})$ this commutes:
$$\xymatrix{
\Sigma^{-m} G^{\otimes k(p-1)} = G^{\otimes k(p-1)} \otimes S^{-m} \ar[r]^-{1 \otimes f} \ar[dr]_-{q^k \otimes f} & G^{\otimes k(p-1)} \otimes R_n^{h\mu_{p-1}} \ar[d]^-{g^{k(p-1)} \otimes \psi^q} \ar[r]^-{\delta^{k(p-1)}} & R_n^{h\mu_{p-1}}  \ar[d]^-{\psi^q} \\
 & G^{\otimes k(p-1)} \otimes R_n^{h\mu_{p-1}} \ar[r]_-{\delta^{k(p-1)}} & R_n^{h\mu_{p-1}}
}$$

\end{proof}

\subsection{Analogues of the image of J}

\begin{cor} \label{final_cor}

Let $k \in \Z$, and write $k=p^s m$, where $m$ is coprime to $p$.  Then $[G^{\otimes k(p-1)}, S^1]$ contains a subgroup isomorphic to $\Z / p^{s+1}$.  Furthermore, if $n^2<2p-3$, there is an exact sequence
$$0 \to \Z / p^{s+1} \to [G^{\otimes k(p-1)}, S^1] \to N_{s+1} \to 0$$
where $N_{s+1} \leq \pi_{-1}(S)$ is the subgroup of $p^{s+1}$-torsion elements.

\end{cor}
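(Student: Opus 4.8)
\emph{Proof plan.} The plan is to obtain $[G^{\otimes k(p-1)},S^1]$ from the long exact sequence attached to a fibre sequence presenting the $K(n)$-local sphere as a homotopy fixed point spectrum of $R_n$ (or of its Adams summand $R_n^{h\mu_{p-1}}$), and then to read off the two ends of the resulting short exact sequence using the structural facts already in hand. The one piece of arithmetic needed is this: writing $q=g^{p-1}=(1+p)^{p-1}$, one has $v_p(q-1)=1$, so by the lifting-the-exponent argument $v_p(q^k-1)=1+v_p(k)=s+1$; hence $q^k-1=p^{s+1}u$ for a unit $u\in\Z_p$, and multiplication by $q^k-1$ on any $\Z_p$-module has the same cokernel $\Z/p^{s+1}$ and the same kernel (the $p^{s+1}$-torsion) as multiplication by $p^{s+1}$.

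For the first (unconditional) assertion, I would apply $[G^{\otimes k(p-1)},-]$ to the cofibre sequence $R_n\xrightarrow{\psi^g-1}R_n\to \Sigma S=S^1$ coming from the equivalence $F_1\simeq S$ of Proposition \ref{invertible_prop}. Since $[G^{\otimes k(p-1)},-]$ is a homological functor, this yields an injection
$$\mathrm{coker}\big((\psi^g-1)_*\colon [G^{\otimes k(p-1)},R_n]\to [G^{\otimes k(p-1)},R_n]\big)\hookrightarrow [G^{\otimes k(p-1)},S^1].$$
By Corollary \ref{rep_cor} the source contains the $\psi^g$-equivariant split summand $\Z_p\cdot\rho^{k(p-1)}$, on which $\psi^g$ acts by $g^{k(p-1)}=q^k$; thus $(\psi^g-1)_*$ restricts on that summand to multiplication by $q^k-1$, whose cokernel $\Z/p^{s+1}$ is itself a direct summand of the full cokernel and therefore embeds in $[G^{\otimes k(p-1)},S^1]$.

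For the exact sequence under the hypothesis $n^2<2p-3$, I would instead use the factorisation $S=(R_n^{h\mu_{p-1}})^{h(1+p\Z_p)^{\times}}=\hofib(\psi^q-1\colon R_n^{h\mu_{p-1}}\to R_n^{h\mu_{p-1}})$, whose cofibre is again $S^1$. Applying $[G^{\otimes k(p-1)},-]$ gives
$$0\to \mathrm{coker}\big((\psi^q-1)_*\colon B_0\to B_0\big)\to [G^{\otimes k(p-1)},S^1]\to \ker\big((\psi^q-1)_*\colon B_1\to B_1\big)\to 0,$$
where $B_m:=[G^{\otimes k(p-1)},\Sigma^m R_n^{h\mu_{p-1}}]$. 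The $G^{\otimes(p-1)}$-periodicity of $\pi_\bigstar(R_n^{h\mu_{p-1}})$ identifies $B_m\cong\pi_{-m}(R_n^{h\mu_{p-1}})$; Corollary \ref{end_cor} (applicable for $m=0,1$, which lie in the range $0\le m\le 2p-1$) identifies $\psi^q$ on $B_m$ with multiplication by $q^k$; and the sparseness Proposition gives $B_0\cong\pi_0(R_n^{h\mu_{p-1}})\cong\pi_0(S)=\Z_p$ together with $\pi_{-1}(S)\cong\pi_{-1}(R_n^{h\mu_{p-1}})\oplus\pi_0(R_n^{h\mu_{p-1}})=B_1\oplus\Z_p$. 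Hence the cokernel term is $\Z_p/(q^k-1)\Z_p\cong\Z/p^{s+1}$ and the kernel term is $B_1[p^{s+1}]$; since the $\Z_p$-summand of $\pi_{-1}(S)$ is torsion-free, $B_1[p^{s+1}]=\pi_{-1}(S)[p^{s+1}]=N_{s+1}$, giving the asserted sequence.

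The step that needs care, rather than any deep new obstacle, is the bookkeeping of the $\psi^q$-action under the periodicity identification $B_m\cong\pi_{-m}(R_n^{h\mu_{p-1}})$ — this is precisely what Corollary \ref{end_cor} records, so here it is a matter of citing it with the correct shift — and checking that the torsion-free $\Z_p=\pi_0$ occurring in the sparseness splitting of $\pi_{-1}(S)$ contributes nothing to the $p^{s+1}$-torsion, so the kernel term really is $N_{s+1}$ and not merely an extension of it. I would also flag explicitly that the first assertion is genuinely unconditional, since Corollary \ref{rep_cor} and Proposition \ref{invertible_prop} require no hypothesis on $n$, whereas identifying the kernel term — and so producing the full exact sequence — relies on the sparseness input and hence on $n^2<2p-3$.
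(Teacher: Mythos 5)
Your proof is correct and takes essentially the same route as the paper: applying $[G^{\otimes k(p-1)},-]$ to the fibre sequence for $S = \hofib(\psi^q - 1)$ on $R_n^{h\mu_{p-1}}$, reading off the cokernel of $(\psi^q-1)_*$ on the $\Z_p$-summand from Corollary \ref{rep_cor} (where $q^k-1$ is a unit times $p^{s+1}$), and then using the sparseness proposition together with Corollary \ref{end_cor} to identify the kernel term with $N_{s+1}$. The only cosmetic deviation is that you run the unconditional part through the $R_n$ fibre sequence rather than through $R_n^{h\mu_{p-1}}$ as the paper does; and your explicit observation that the torsion-free $\Z_p = \pi_0(R_n^{h\mu_{p-1}})$ complement contributes no $p^{s+1}$-torsion spells out a step the paper leaves implicit.
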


\begin{proof}

Without assumptions on $n$, Corollary \ref{rep_cor} implies that $[G^{\otimes i}, R_n]$ has a split summand $\Z_p$ upon which the action of $\psi^g$ is through multiplication by $g^i$.  Taking $i = k(p-1)$, we conclude that the action of $\psi^q = \psi^{g^{p-1}}$ on the corresponding summand $\Z_p \leq [G^{\otimes k(p-1)}, R_n^{h\mu_{p-1}}] \cong \pi_0(R_n)$ is by multiplication by $q^k$.

Consider the long exact sequence obtained by applying $[G^{\otimes k(p-1)}, -]$ to the fibre sequence
$$\xymatrix{
\cdots \ar[r] & R_n^{h\mu_{p-1}} \ar[r]^{\psi^q - 1} & R_n^{h\mu_{p-1}} \ar[r] & S^1 \ar[r] & \Sigma R_n^{h\mu_{p-1}} \ar[r]^{\psi^q - 1} & \Sigma R_n^{h\mu_{p-1}} \ar[r] & \cdots.
}$$
The first map contains a factor which is given by multiplication by $q^k-1$ on $\Z_p$.  Since $q^k-1$ generates the procyclic subgroup $p^{s+1} \Z_p \subseteq \Z_p$, we see that $[G^{\otimes k(p-1)}, S^1]$ contains the indicated cokernel as a subgroup.

Now, if $n^2<2p-3$, the results of the previous section give us a very precise computation of this sequence:
$$\xymatrix{
\Z_p \ar[r]^{\psi^q- 1} & \Z_p \ar[r] & [G^{\otimes k(p-1)}, S^1] \ar[r] & [G^{\otimes k(p-1)}, \Sigma R_n^{h\mu_{p-1}}] \ar[r]^-{\psi^q-1} & [G^{\otimes k(p-1)}, \Sigma R_n^{h\mu_{p-1}}],
}$$
and Corollary \ref{end_cor} implies that $\psi^q- 1$ is again multiplication by a unit multiple of $p^{s+1}$.  Lastly, the identification $ [G^{\otimes k(p-1)}, \Sigma R_n^{h\mu_{p-1}}] \cong \pi_{-1}(R_n^{h\mu_{p-1}})$ as a summand of $\pi_{-1}(S)$ (with complement $\Z_p$) yields the desired short exact sequence.

\end{proof}

\section{Redshift} \label{redshift_section}

We will write $M(p)$ for the $K(n)$-local mod $p$ Moore spectrum; i.e., the cofibre of the map $p : S \to S$.  When $p>2$, $M(p)$ is a ring spectrum, homotopy associative if $p>3$; we will assume the latter throughout this section.

\begin{prop} \label{moore_prop}

There is a map $v: \Sdet \to Z \otimes M(p)$ which induces an isomorphism in $E_n$ after multiplying by the identity on $M(p)$:
$$\xymatrix{v_*: {E_n}_*(\Sdet \otimes M(p)) \ar[r]^-\cong & {E_n}_*(Z \otimes M(p))}.$$
Thus $v$ induces an equivalence $\Sdet \otimes M(p) \simeq Z \otimes M(p)$.  

\end{prop}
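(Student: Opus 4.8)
The plan is to build $v$ by a lifting argument, exploiting that $g = \zeta(1+p) \equiv \zeta \pmod{p\Z_p}$ together with the fact that, since $p$ is odd (indeed $p>3$ here), $p\cdot\id_{M(p)} \simeq \ast$, so multiplication by $p$ vanishes on any spectrum of the form $Y\otimes M(p)$. Let $\delta\colon \Sdet = F_g \to R_n$ be the canonical map out of the homotopy fibre (as in Proposition~\ref{lifting_prop}), and let $\iota\colon S\to M(p)$ be the unit. I claim $\delta\otimes\iota\colon \Sdet \to R_n\otimes M(p)$ is annihilated by $(\psi^g-\zeta)\otimes 1_{M(p)}$. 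Indeed $(\psi^g-\zeta)\circ\delta \simeq (\psi^g-g)\circ\delta + p\zeta\cdot\delta$, the first term being null by definition of $\delta$; after $-\otimes\iota$ the second term is $\zeta$ times multiplication by $p$ on a map into $R_n\otimes M(p)$, hence zero. Since $Z\simeq F_\zeta = \hofib(\psi^g-\zeta\colon R_n\to R_n)$ and smashing with $M(p)$ preserves fibre sequences, $Z\otimes M(p) = \hofib((\psi^g-\zeta)\otimes 1_{M(p)})$, so $\delta\otimes\iota$ lifts to a map $v\colon \Sdet \to Z\otimes M(p)$ sitting in a triangle over $R_n\otimes M(p)$.

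Next I would compute the effect of $v$ on Morava modules. Both ${E_n^\vee}_*(\Sdet)$ and ${E_n^\vee}_*(Z)$ are free of rank one over ${E_n}_*$, and since $M(p)$ is finite with ${E_n}_*(M(p)) = {E_n}_*/p$, one has ${E_n^\vee}_*(-\otimes M(p)) \cong {E_n^\vee}_*(-)/p$. The map $\delta$ carries a generator of ${E_n^\vee}_*(\Sdet)$ to the function $B_0 \in C(\Z_p, {E_n}_*)$ appearing in the proof of Proposition~\ref{morava_mod_prop}, whose restriction to $\Z_p^\times$ reduces mod $\m$ to $b_0 = \xi f_0 u^{g(n)}$; as $b_0$ is a unit in $K_*(R_n) = C(\Z_p^\times, K_*)$ and ${E_n^\vee}_*(R_n)$ is complete local, $B_0$ is a unit there. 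The analogous map $\delta_Z\colon Z\to R_n$ (the equivalence used in identifying $Z\simeq F_\zeta$) likewise carries a generator to a unit multiple of $B_0$. Since the composite $\Sdet \xrightarrow{v} Z\otimes M(p) \xrightarrow{\delta_Z\otimes 1} R_n\otimes M(p)$ is $\delta\otimes\iota$, and $(\delta_Z\otimes 1)_*$ sends the generator of ${E_n^\vee}_*(Z)/p$ to the unit $B_0 \bmod p$ (so is split injective), it follows that $v_*$ carries the generator of ${E_n^\vee}_*(\Sdet)$ onto a generator of ${E_n^\vee}_*(Z)/p = {E_n^\vee}_*(Z\otimes M(p))$; equivalently $v$ induces an isomorphism ${E_n^\vee}_*(\Sdet)/p \xrightarrow{\ \cong\ } {E_n^\vee}_*(Z\otimes M(p))$.

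Finally, since for $p>3$ the spectrum $M(p)$ is a homotopy-associative ring spectrum, $v$ extends to the $M(p)$-linear map $\tilde v = (1_Z\otimes \mu)\circ(v\otimes 1_{M(p)})\colon \Sdet\otimes M(p) \to Z\otimes M(p)$, which is exactly "$v$ multiplied by the identity on $M(p)$." Under ${E_n^\vee}_*(\Sdet\otimes M(p)) = {E_n^\vee}_*(\Sdet)/p$ and using that $\mu_*$ is unital, $\tilde v_*$ is precisely the isomorphism of the previous step, so the map on $E_n$-homology in the statement is an isomorphism. Since $E_n$ and $K(n)$ have the same Bousfield class, an $E_n$-homology isomorphism of $K(n)$-local spectra is an equivalence, giving $\Sdet\otimes M(p) \simeq Z\otimes M(p)$.

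The main obstacle is conceptual rather than computational: the whole statement is driven by the single fact that $p\cdot\id_{M(p)}\simeq\ast$ at odd primes, which collapses the distinction between the fibres $F_g$ and $F_\zeta$ after smashing with $M(p)$ and is what makes the lift $v$ exist at all. After that, the only care needed is the mod-$p$/mod-$\m$ bookkeeping of the second paragraph — keeping straight which completed homology groups are free over ${E_n}_*$ versus over ${E_n}_*/p$, and noting that the rank doubling visible in $K_*(Y\otimes M(p))$ (where $p$ acts by $0$) does \emph{not} occur at the ${E_n}_*$-level, so the computation must be run there — but this is routine given the descriptions of $f_0$, $b_0$, and $B_0$ already established.
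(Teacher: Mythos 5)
Your approach is the same as the paper's: the key observation in both is that $g=\zeta(1+p)\equiv\zeta\pmod p$ together with $p\cdot\mathrm{id}_{M(p)}\simeq *$ collapses the fibre sequences defining $F_g=\Sdet$ and $F_\zeta\simeq Z$ after smashing with $M(p)$, giving the lift $v$; then one checks $v$ hits a generator by comparing with the images of $\delta$ and (a multiple of) $\alpha$ in the Morava module of $R_n\otimes M(p)$, both of which are given by $f_0\bmod p$. The paper phrases the lift as a commuting $3\times 2$ ladder of fibre sequences rather than as a nullhomotopy computation, and is terser about the final passage from $v\colon\Sdet\to Z\otimes M(p)$ to the equivalence $\Sdet\otimes M(p)\simeq Z\otimes M(p)$, which you spell out via the ring structure on $M(p)$ exactly as one should.
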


\begin{proof}

Notice that after tensoring with $M(p)$, the element $g \in \Z_p \subseteq [X_n, X_n]$ becomes homotopic to $\zeta \in \F_p \subseteq [X_n \otimes M(p), X_n \otimes M(p)]$.  Thus the indicated lift in the following diagram exists:
$$\xymatrix{
\Sdet \ar@{.>}[r]^-v \ar[d]_-\delta & Z \otimes M(p) \ar[d]^-{\alpha \otimes 1} \\
R_n \ar[d]_-{\psi^g - g} \ar[r]_-{1 \otimes \eta} & R_n \otimes M(p) \ar[d]^-{(\psi^g - \zeta) \otimes 1} \\
R_n \ar[r]_-{1 \otimes \eta} & R_n \otimes M(p)
}$$
To see that $v_*$ is an isomorphism, we note that the image in 
$${E_n}_*(R_n \otimes M(p)) = C(\Z_p, {E_n}_* /p) = C(\Z_p, \F_{p^n}[[u_1, \dots, u_{n-1}]][u^{\pm 1}])$$
of $\alpha \otimes 1$ and $(1 \otimes \eta) \circ \delta$ are both generated by the function $f_0: \Z_p \to \F_{p^n}[[u_1, \dots, u_{n-1}]][u^{\pm 1}])$ given by $f_0(x) = x \bmod p$.

\end{proof}

This fact allows us to multiply any element of the Picard-graded homotopy of $Y \otimes M(p)$ with $v$, for any spectrum $Y$. 

\begin{prop}

The map $\beta_n^{p-1}: \Sdet^{\otimes p-1} \to L_{K(n)} K(E_{n-1})$ is, after tensoring with $M(p)$, homotopic to multiplication of the unit $\eta:S \to L_{K(n)} K(E_{n-1})$ by $v^{p-1}$.

\end{prop}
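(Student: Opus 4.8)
The plan is to reduce the statement to the multiplicativity of the ring map underlying $\beta_n$ together with a computation in $K$-homology, exploiting the equivalence $\Sdet\otimes M(p)\simeq Z\otimes M(p)$ of Proposition~\ref{moore_prop} and the relation $Z^{\otimes p-1}\simeq S$ of Lemma~\ref{split_1_lem}. Write $\kappa\colon X_n\to L_{K(n)}K(E_{n-1})$ for the $K(n)$-localisation of the $E_\infty$ ring map $i\circ B\varphi_{n-1}$, so that $\beta_n=\kappa\circ\rho_n$. If $\rho_n^{p-1}\colon\Sdet^{\otimes p-1}\to X_n$ denotes the $(p-1)$-fold product of $\rho_n$ formed with the $H$-space multiplication on $K(\Z_p,n+1)$, then multiplicativity of $\kappa$ gives $\beta_n^{p-1}=\kappa\circ\rho_n^{p-1}$, and the unit $\eta$ of $L_{K(n)}K(E_{n-1})$ is $\kappa\circ\eta_{X_n}$. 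Similarly $v^{p-1}\colon\Sdet^{\otimes p-1}\to M(p)$ arises from $v^{\otimes p-1}$ by collapsing $Z^{\otimes p-1}\simeq S$ and multiplying in $M(p)$. So, after tensoring with $M(p)$, it is enough to show that $\kappa\otimes 1_{M(p)}$ sends the $M(p)$-reduction of $\rho_n^{p-1}$ and the class $(\eta_{X_n}\otimes 1)\circ v^{p-1}$ — a priori distinct classes in $[\Sdet^{\otimes p-1},X_n\otimes M(p)]$ — to the same class in $[\Sdet^{\otimes p-1},L_{K(n)}K(E_{n-1})\otimes M(p)]$.

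Next I would rewrite $\rho_n^{p-1}\otimes 1$ through $v$. By its construction in Proposition~\ref{moore_prop}, $v$ is the lift of the fibre inclusion $\delta\colon\Sdet\to R_n$ along $\alpha\otimes 1$, available because $g\equiv\zeta\pmod p$; since $\rho_n$ carries a generator to $f_0$ and $\alpha$ carries one to a unit multiple of $f_0$ in $K_*(X_n)=C(\Z_p,K_*)$, the map $\rho_n\otimes 1$ corresponds under $v$ to $\alpha\otimes 1$ up to a unit. Taking $(p-1)$-st powers — using that $\kappa$ and the Bockstein $\beta$ are ring maps and that $Z^{\otimes p-1}\simeq S$ — the $M(p)$-reduction of $\beta_n^{p-1}$ thereby corresponds to $\kappa\circ\alpha^{p-1}$ up to the unit already carried by $v^{p-1}$. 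Now $\alpha^{p-1}\colon Z^{\otimes p-1}\simeq S\to X_n$ is, by Lemma~\ref{split_1_lem}, the Bockstein applied to the $(p-1)$-fold product $Z^{\otimes p-1}\to L_{K(n)}\Sigma^\infty K(\Z/p,n)_+$, which is a $K_*$-isomorphism onto the line spanned by $b_0^{p-1}$; the relation $b_0^p=(-1)^{n-1}v_n b_0$ forces $b_0^{p-1}$ to be a unit multiple of the idempotent $c_0^{p-1}$, so that in $C(\Z_p,K_*)$ the class $\alpha^{p-1}$ is a unit multiple of the idempotent $\mathbf 1_{\Z_p^\times}$ supported on the units of $\Z_p$ — equivalently, the idempotent complementary to the coaugmentation.

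Everything therefore comes down to showing that $\kappa$ carries $\mathbf 1_{\Z_p^\times}$ — equivalently, the class $\rho_n^{p-1}$ — to a unit after $\otimes M(p)$; this is precisely the $\Sdet$-periodicity the section is after, so the argument cannot be purely formal. The approach I would take is to use the defining property of $i$: its adjoint sends an $E_{n-1}$-line to its invertible class in $\pi_0 K(E_{n-1})$, so the space-level composite $K(\Z_p,n+1)\xrightarrow{B\varphi_{n-1}}B\GL_1 E_{n-1}\to\GL_1 K(E_{n-1})$ lands in the units; feeding this into the telescope defining $R_n=X_n[\rho_n^{-1}]$ should show that $\kappa(\rho_n)=\beta_n$ is invertible in $\pi_\bigstar(L_{K(n)}K(E_{n-1})\otimes M(p))$, whence $\kappa\otimes 1_{M(p)}$ factors through $R_n\otimes M(p)$, under which the restriction $C(\Z_p,-)\to C(\Z_p^\times,-)$ sends $\mathbf 1_{\Z_p^\times}$ to $1$. (If this proves awkward, an alternative is to check invertibility of $\beta_n\otimes 1$ directly on Morava modules, using the explicit form of $\kappa_*$ coming from the construction of $\varphi_{n-1}$.)

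Finally the identity produced so far lives a priori only at the level of the mod-$p$ Morava module, and I would promote it to a genuine homotopy of maps out of the invertible spectrum $\Sdet^{\otimes p-1}$ by a sparseness argument for the relevant $\Ext$-groups in the $E_n$-based Adams spectral sequence — this is what forces the standing hypothesis $p>3$ in this section. The two steps where I expect real work, rather than bookkeeping, are the invertibility of $\beta_n\otimes 1$ in the previous paragraph and this last lifting step; the reductions in the first two paragraphs are routine manipulations with the multiplicativity of $\kappa$ and $\beta$ and the $K$-homology computations of Ravenel–Wilson recalled in Section~3.
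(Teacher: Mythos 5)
Your reduction runs in the wrong direction and ends up circular. The paper's proof of this proposition is a short diagram chase: one exhibits the commuting square
$$\xymatrix{
\Sdet^{\otimes p-1} \ar[r]^-{\rho_n^{p-1}} \ar[d]_-{v^{\otimes p-1}} & X_n \ar[r]^-{i \circ B\varphi_{n-1}} \ar[d]_-{1 \otimes \eta} & L_{K(n)} K(E_{n-1}) \ar[d]^-{1 \otimes \eta} \\
S \otimes M(p) \ar[r]^-{\eta \otimes 1} & X_n \otimes M(p) \ar[r]^-{i \circ B\varphi_{n-1} \otimes 1} & L_{K(n)} K(E_{n-1}) \otimes M(p), \\
}$$
and identifies the lower composite with $\eta \otimes 1$. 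No Morava-module identification of $\kappa_*$, no telescopic factorisation through $R_n \otimes M(p)$, and no $E_n$-based Adams spectral sequence sparseness are invoked or needed. By contrast, your third paragraph declares that ``everything comes down to showing that $\kappa$ carries $\mathbf 1_{\Z_p^\times}$ --- equivalently, the class $\rho_n^{p-1}$ --- to a unit after $\otimes M(p)$,'' and you then try to establish this either by factoring $\kappa \otimes 1$ through $R_n \otimes M(p)$ or by a direct Morava-module check. But the invertibility of $\beta_n^{p-1} = \kappa(\rho_n^{p-1})$ after $\otimes M(p)$ is exactly the content of Corollary~\ref{non_nilp_cor}, which is \emph{deduced} from the present proposition (once one knows $v^{p-1}$ is an equivalence from Proposition~\ref{moore_prop} and Lemma~\ref{split_1_lem}); it is not available as an input to the proposition's proof. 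Likewise, the proposed factorisation $\kappa \otimes 1 \colon X_n \otimes M(p) \to R_n \otimes M(p) \to L_{K(n)}K(E_{n-1}) \otimes M(p)$ presupposes that $\kappa(\rho_n)$ is already invertible in $\pi_\bigstar(L_{K(n)}K(E_{n-1}) \otimes M(p))$, which again is the conclusion rather than a lemma. The observation that the adjoint of $i$ carries $E_{n-1}$-lines to units shows only that $\kappa \in (L_{K(n)}K(E_{n-1}))^0(K(\Z_p,n+1))$ is invertible as an element of that ring of power series, not that its restriction $\rho_n^*(\kappa)$ along the bottom cell is invertible --- the latter is genuinely the hard periodicity statement (the $n=1$ analogue is Bott periodicity itself, not a formal consequence of $1+y$ being a unit in $\Z[[y]]$).

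Two smaller points. The standing hypothesis $p>3$ in this section is imposed so that $M(p)$ is a homotopy \emph{associative} ring spectrum (so that the $(p-1)$-fold products used throughout make sense), not for sparseness of an $E_n$-Adams spectral sequence as you suggest in the final paragraph. Also, your first two paragraphs do contain the germ of the paper's argument (using that $v$ lifts $\delta$ along $\alpha\otimes 1$ and that $\kappa$ is multiplicative), and indeed the paper's claim that the left square ``commutes by construction'' is terse and deserves the closer look you give it --- your Ravenel--Wilson bookkeeping in the second paragraph correctly isolates where the content lies (comparing the images of $\eta_{X_n}$ and $\alpha^{p-1}$ in $K_*X_n$). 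But rather than resolving that comparison by an appeal to the universal property of the telescope or to the diagram in Proposition~\ref{moore_prop}, which relate both sides over $R_n \otimes M(p)$, you outsource the remaining identity to the invertibility of $\beta_n$, and that is a circle.
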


\begin{proof}

The statement of the proposition amounts to the claim that the following diagram commutes:
$$\xymatrix{
\Sdet^{\otimes p-1} \ar[r]^-{\rho_n^{p-1}} \ar[d]_-{v^{\otimes p-1}} & X_n \ar[r]^-{i \circ B\varphi_{n-1}} \ar[d]_-{1 \otimes \eta} & L_{K(n)} K(E_{n-1}) \ar[d]^-{1 \otimes \eta} \\
S \otimes M(p) \ar[r]^-{\eta \otimes 1} & X_n \otimes M(p) \ar[r]^-{i \circ B\varphi_{n-1} \otimes 1} & L_{K(n)} K(E_{n-1}) \otimes M(p), \\
}$$
since the lower composite is the unit of $L_{K(n)} K(E_{n-1})$ tensored with the identity on $M(p)$.  The first square commutes by construction, and the second is evident.

\end{proof}

\begin{cor} \label{non_nilp_cor}

Multiplication by $\beta_n \in \pi_{\Sdet} L_{K(n)}K(E_{n-1})$ is an equivalence.

\end{cor}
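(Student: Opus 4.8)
The plan is to deduce this from the two preceding propositions together with a standard localisation argument. Write $T := L_{K(n)} K(E_{n-1})$. The first reduction I would make is that a map of $K(n)$-local spectra is an equivalence if and only if it becomes one after $-\otimes M(p)$: if $C$ denotes the cofibre of the map, then $C \otimes M(p) = \Cone(p \colon C \to C)$ is $K(n)$-acyclic precisely when $p$ acts invertibly on $K(n)_*C$, and since $p$ is zero on every $K(n)_*$-module this forces $K(n)_* C = 0$. The second reduction is that multiplication by $\beta_n$ on $T$ is an equivalence if and only if multiplication by $\beta_n^{p-1}$ is: both assertions say precisely that $T$ maps by an equivalence to the $\beta_n$-telescope $T[\beta_n^{-1}]$, since the subsystem consisting of every $(p-1)$-st stage is cofinal and computes the same homotopy colimit. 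So it is enough to show that multiplication by $\beta_n^{p-1}$ is an equivalence on $T \otimes M(p)$.

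Next I would feed in the Proposition immediately preceding this corollary, which identifies $\beta_n^{p-1}$, after $-\otimes M(p)$, with the unit of $T$ multiplied by $v^{p-1}$, where $v \colon \Sdet \to Z \otimes M(p)$ is the map of Proposition \ref{moore_prop}. Taking $p>3$, so that $M(p)$ is homotopy associative and $T \otimes M(p)$ is an $M(p)$-module, this says concretely that multiplication by $\beta_n^{p-1}$ on the $M(p)$-module $T \otimes M(p)$ coincides with multiplication by the class $v^{p-1} \in \pi_{\Sdet^{\otimes p-1}}(M(p))$.

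Finally I would observe that multiplication by $v^{p-1}$ is an equivalence on every $M(p)$-module. By Proposition \ref{moore_prop}, $v$ induces an equivalence of $M(p)$-modules $\Sdet \otimes M(p) \iso Z \otimes M(p)$; tensoring this equivalence with itself $(p-1)$ times over $M(p)$, using that $(A \otimes M(p))^{\otimes_{M(p)}(p-1)} \simeq A^{\otimes p-1} \otimes M(p)$, and then applying the identification $Z^{\otimes p-1} \simeq S$ of Lemma \ref{split_1_lem}, yields an equivalence of $M(p)$-modules $\Sdet^{\otimes p-1} \otimes M(p) \iso M(p)$ realised by $v^{p-1}$. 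Hence for any $M(p)$-module $N$ the map $\Sdet^{\otimes p-1} \otimes N \to N$ given by $v^{p-1}$ is an equivalence; taking $N = T \otimes M(p)$ completes the argument.

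The substantive inputs here — the construction and computation of $v$, and the comparison of $\beta_n^{p-1}$ with $v^{p-1}$ — are already established, so the only real care needed is in the bookkeeping of the first paragraph: verifying that $K(n)$-local equivalences are detected after smashing with $M(p)$, and that inverting $\beta_n$ agrees with inverting $\beta_n^{p-1}$. I do not anticipate any genuine obstacle beyond making those two reductions precise.
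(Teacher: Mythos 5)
Your argument follows essentially the same route as the paper: smash with $M(p)$, use the preceding proposition to identify $\beta_n^{p-1}$ with multiplication by $v^{p-1}$, invoke Proposition \ref{moore_prop} (plus $Z^{\otimes p-1}\simeq S$) to see that $v^{p-1}$ is an equivalence, and then descend back to the $K(n)$-local category. The only cosmetic difference is that you justify ``equivalences are detected after $\otimes M(p)$'' directly from $p=0$ on $K(n)_*$, whereas the paper phrases the same conclusion via the Bousfield-class observation that $K_n$ is a module over $E_n\otimes M(p)$; you also spell out the easy reduction from $\beta_n^{p-1}$ back to $\beta_n$, which the paper leaves implicit.
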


\begin{proof}

Since $v^{p-1}: \Sdet^{\otimes p-1} \otimes M(p) \to Z^{\otimes p-1} \otimes M(p) = M(p)$ is an equivalence by Proposition \ref{moore_prop}, smashing it with the identity of $L_{K(n)}K(E_{n-1})$ is also an equivalence.  By the previous Proposition, then multiplication by $\beta_n^{p-1}$:
$$[\beta_n^{p-1}] = [v^{p-1}]: (E_n)_{\bigstar} (K(E_{n-1}) \otimes M(p)) \to (E_n)_{\bigstar + \Sdet^{\otimes p-1}} (K(E_{n-1}) \otimes M(p))$$
is an isomorphism.  However, this implies that $\beta_n^{p-1}$ is an isomorphism in $K_*$ and hence an equivalence, since $K_n$ is a module spectrum for $E_n \otimes M(p)$, and hence in the same Bousfield class.

\end{proof}

One may go slightly further.  One of the chromatic redshift conjectures of \cite{ausoni-rognes} (specifically Conjecture 4.4) proposes an equivalence
$$L_{K(n)} K(\Omega_{n-1}) \simeq E_{n},$$
where $\Omega_{n-1}$ is a suitable interpretation of the algebraic closure of the fraction field of $E_{n-1}$.  In particular, this should restrict to a unital $E_\infty$ map $r_n: L_{K(n)} K(E_{n-1}) \to E_{n}$.  When $n=1$, this is a map $L_{K(1)} K(\Q_p) \to KU^{\wedge}_p$ corresponding to a choice of embedding $\Q_p \to \C$.  Let us presume only the existence of such a map $r_n$.  The previous results indicate the commutativity of the solid part of the following diagram:
$$\xymatrix{
L_{K(n)} \Sigma^\infty K(\Z_p, n+1)_+ \ar[d] \ar[r]^-{B \varphi_{n-1}} &  L_{K(n)} \Sigma^\infty B\GL_1(E_{n-1})_+ \ar[r]^-i  & L_{K(n)} K(E_{n-1}) \ar@{.>}[d]^-{r_n} \\
R_{n} \ar[r]^-{\simeq} & E_{n}^{h\SGn} \ar[ur] \ar[r] & E_{n}
}$$
Now, the path along the lower left is the map $\varphi_{n}$.  The path along the upper right (including the conjectural redshift map $r_n$) is an $E_\infty$ map, and thus by Theorem \ref{e_infty_maps_thm}, differs from $\varphi_n$ by at worst an Adams operation.  So up to a suitable re-embedding of $E_{n}^{h\SGn}$ into $E_{n}$, the diagram must commute if $r_n$ is to exist.  This provides a falsifiable condition upon any candidate redshift map $r_n$: it must give a factorisation of the homotopy fixed point inclusion $E_{n}^{h\SGn} \to E_{n}$.

\section{Questions}

This section is purely speculative, and is intended as a collection of vaguely posed questions which the reader is invited to either clarify or (more likely) disprove.

Perhaps the most evident thing lacking in this paper is a geometric definition of the cohomology theory $R_n$.  Despite all of the analogies with K-theory given above, we do not have a description of $R_n^*(X)$ akin to the Grothendieck group of (some generalisation of) vector bundles over $X$.  The discussion of orientations on $p$-adic $(n-1)$-gerbes indicates a possible direction with this question, but it is not apparent to the author what the corresponding analogue of higher rank vector bundles should be (although the work of Michael Murray and others on bundle gerbe modules \cite{murray, bcmms} may point a way forward).

A related deficiency is the fact that we have described a $K(n)$-local analogue of the \emph{image} of the J-homomorphism, but not precisely the homomorphism itself.  We do have a monochromatic formulation of the homomorphism, but it would be ideal to extend this to a geometric construction defined on $R_n$, not the Moore-Postnikov factorisation $Q$.  Classically, one can think of the J-homomorphism as built from either the functions $O(m) \to \Omega^m S^m$, or (in families, after delooping) the function that assigns to a vector bundle the associated sphere bundle.  Neither of these has an obvious analogue in our construction.  Such a description would be enlightening.  In connection with the first question this raises the obvious question: does an $n$-bundle gerbe have a geometrically defined ``sphere" bundle, where $\Sdet$ is our replacement for the spherical fibre?  

This, in turn, raises the question of whether there is a good unstable description of the map $\rho_n: \Sdet \to L_{K(n)} \Sigma^\infty K(\Z_p, n+1)_+$ analogous to the inclusion $\C P^1 \to \C P^\infty$.  It follows from \cite{bousfield_telescope} that $\Sdet$ is the $K(n)$-localisation of a suspension spectrum, but a more geometric description of this spectrum and map is desirable.

Further, the definition of an $n$-oriented spectrum raises the question of whether it is possible to repeat the whole program of analysing complex-oriented spectra by their formal group laws, but in the $K(n)$-local category, with $K(\Z_p, n+1)$ replacing $\C P^\infty$.  All of our examples, however, have resulted in essentially multiplicative formal groups.  One standout is the spectrum $MX_n$ for which we haven't even the beginnings of a computation of the associated formal group when $n>1$.  We would like to believe that it plays a universal role analogous to $MU$ in the complex-oriented case, but do not have any evidence to back this up.  

Lastly, we have no examples in hand of $n$-oriented spectra whose associated formal group law is \emph{additive} (i.e., the analogue of singular homology).  Does such a theory exist?  If so, and if it could be made part of an Atiyah-Hirzebruch spectral sequence for $K(n)$-local theories, the rather convoluted construction of the formal group law in Theorem \ref{orient_thm} (requiring the entirety of section 4) could be replaced with a direct analogue of the argument familiar to most homotopy theorists for the proof that $h^*(\C P^\infty) \cong h_*[[x]]$ for complex-oriented theories $h$.  However, when $n=1$, this argument requires singular cohomology, a non-$K(1)$-local theory.  So to mimic this argument, our definition of $n$-oriented spectrum would need to be extended beyond the $K(n)$-local setting to a larger class of spectra.

\bibliography{biblio}

\newcommand{\etalchar}[1]{$^{#1}$}
\providecommand{\bysame}{\leavevmode\hbox to3em{\hrulefill}\thinspace}
\providecommand{\MR}{\relax\ifhmode\unskip\space\fi MR }
\providecommand{\MRhref}[2]{%
  \href{http://www.ams.org/mathscinet-getitem?mr=#1}{#2}
}
\providecommand{\href}[2]{#2}
\begin{thebibliography}{EKMM97}

\bibitem[ABG{\etalchar{+}}08]{abghr}
Matthew Ando, Andrew~J. Blumberg, David~J. Gepner, Michael~J. Hopkins, and
  Charles Rezk, \emph{{Units of ring spectra and Thom spectra}},
  arXiv:0810.4535, 2008.

\bibitem[Ada65]{adams2}
J.~F. Adams, \emph{On the groups {$J(X)$}. {II}}, Topology \textbf{3} (1965),
  137--171. \MR{0198468 (33 \#6626)}

\bibitem[Ada66]{adams}
\bysame, \emph{On the groups {$J(X)$}. {IV}}, Topology \textbf{5} (1966),
  21--71. \MR{0198470 (33 \#6628)}

\bibitem[AHR10]{ahr}
Matthew Ando, Michael~J. Hopkins, and Charles Rezk, \emph{{Multiplicative
  orientatons of KO-theory and the spectrum of topological modular forms}},
  available at \texttt{www.math.uiuc.edu/$\sim$mando/papers/koandtmf.pdf},
  2010.

\bibitem[AR06]{ausoni-rognes}
Christian Ausoni and John Rognes, \emph{{The Chromatic Red-Shift in algebraic
  K-theory}}, Guido's book of conjectures, available at
  \texttt{www.math.osu.edu/$\sim$math-indira/GMFinal.pdf}, 2006.

\bibitem[Aus10]{ausoni}
Christian Ausoni, \emph{On the algebraic {$K$}-theory of the complex
  {$K$}-theory spectrum}, Invent. Math. \textbf{180} (2010), no.~3, 611--668.
  \MR{2609252 (2011g:19009)}

\bibitem[BCM{\etalchar{+}}02]{bcmms}
Peter Bouwknegt, Alan~L. Carey, Varghese Mathai, Michael~K. Murray, and Danny
  Stevenson, \emph{Twisted {$K$}-theory and {$K$}-theory of bundle gerbes},
  Comm. Math. Phys. \textbf{228} (2002), no.~1, 17--45. \MR{1911247
  (2003g:58049)}

\bibitem[BD10]{bd}
Mark Behrens and Daniel~G. Davis, \emph{The homotopy fixed point spectra of
  profinite {G}alois extensions}, Trans. Amer. Math. Soc. \textbf{362} (2010),
  no.~9, 4983--5042. \MR{2645058 (2011e:55016)}

\bibitem[Bou79]{bousfield}
A.~K. Bousfield, \emph{The localization of spectra with respect to homology},
  Topology \textbf{18} (1979), no.~4, 257--281. \MR{551009 (80m:55006)}

\bibitem[Bou01]{bousfield_telescope}
\bysame, \emph{On the telescopic homotopy theory of spaces}, Trans. Amer. Math.
  Soc. \textbf{353} (2001), no.~6, 2391--2426 (electronic). \MR{1814075
  (2001k:55030)}

\bibitem[Dav06]{davis}
Daniel~G. Davis, \emph{Homotopy fixed points for {$L_{K(n)}(E_n\wedge X)$}
  using the continuous action}, J. Pure Appl. Algebra \textbf{206} (2006),
  no.~3, 322--354. \MR{2235364 (2007b:55008)}

\bibitem[Dav09]{davis_iterated}
\bysame, \emph{Iterated homotopy fixed points for the {L}ubin-{T}ate spectrum},
  Topology Appl. \textbf{156} (2009), no.~17, 2881--2898, With an appendix by
  Daniel G. Davis and Ben Wieland. \MR{2556043 (2010j:55009)}

\bibitem[DH04]{dev_hop}
Ethan~S. Devinatz and Michael~J. Hopkins, \emph{Homotopy fixed point spectra
  for closed subgroups of the {M}orava stabilizer groups}, Topology \textbf{43}
  (2004), no.~1, 1--47. \MR{2030586 (2004i:55012)}

\bibitem[EKMM97]{ekmm}
A.~D. Elmendorf, I.~Kriz, M.~A. Mandell, and J.~P. May, \emph{Rings, modules,
  and algebras in stable homotopy theory}, Mathematical Surveys and Monographs,
  vol.~47, American Mathematical Society, Providence, RI, 1997, With an
  appendix by M. Cole. \MR{1417719 (97h:55006)}

\bibitem[EM06]{em}
A.~D. Elmendorf and M.~A. Mandell, \emph{Rings, modules, and algebras in
  infinite loop space theory}, Adv. Math. \textbf{205} (2006), no.~1, 163--228.
  \MR{2254311 (2007g:19001)}

\bibitem[GH04]{gh}
Paul~G. Goerss and Michael~J. Hopkins, \emph{Moduli spaces of commutative ring
  spectra}, Structured ring spectra, London Math. Soc. Lecture Note Ser., vol.
  315, Cambridge Univ. Press, Cambridge, 2004, pp.~151--200. \MR{2125040
  (2006b:55010)}

\bibitem[GH05]{gh2}
Paul~G. Goerss and Michael~J. Hopkins, \emph{Moduli problems for structured
  ring spectra},
  \texttt{www.math.northwestern.edu/$\sim$pgoerss/spectra/obstruct.pdf}, 2005.

\bibitem[GH12]{goerss_henn}
Paul Goerss and Hans-Werner Henn, \emph{{The Brown-Comenetz dual of the
  {$K(2)$}-local sphere at the prime 3}}, arXiv:1212.2836, 2012.

\bibitem[GHM12]{ghmr_rat}
Paul Goerss, Hans-Werner Henn, and Mark Mahowald, \emph{The rational homotopy
  of the {$K(2)$}-local sphere and the chromatic splitting conjecture for the
  prime 3 and level 2}, arXiv:1210.7031, 2012.

\bibitem[GHMR05]{ghmr_resolution}
P.~Goerss, H.-W. Henn, M.~Mahowald, and C.~Rezk, \emph{A resolution of the
  {$K(2)$}-local sphere at the prime 3}, Ann. of Math. (2) \textbf{162} (2005),
  no.~2, 777--822. \MR{2183282 (2006j:55016)}

\bibitem[GHMR12]{ghmr_pic}
Paul Goerss, Hans-Werner Henn, Mark Mahowald, and Charles Rezk, \emph{{On
  Hopkins' Picard groups for the prime 3 and chromatic level 2}},
  arXiv:1210.7033, 2012.

\bibitem[HG94]{gross-hopkins}
M.~J. Hopkins and B.~H. Gross, \emph{The rigid analytic period mapping,
  {L}ubin-{T}ate space, and stable homotopy theory}, Bull. Amer. Math. Soc.
  (N.S.) \textbf{30} (1994), no.~1, 76--86. \MR{1217353 (94k:55009)}

\bibitem[HM03]{hess_mad}
Lars Hesselholt and Ib~Madsen, \emph{On the {$K$}-theory of local fields}, Ann.
  of Math. (2) \textbf{158} (2003), no.~1, 1--113. \MR{1998478 (2004k:19003)}

\bibitem[HMS94]{hms}
Michael~J. Hopkins, Mark Mahowald, and Hal Sadofsky, \emph{Constructions of
  elements in {P}icard groups}, Topology and representation theory ({E}vanston,
  {IL}, 1992), Contemp. Math., vol. 158, Amer. Math. Soc., Providence, RI,
  1994, pp.~89--126. \MR{1263713 (95a:55020)}

\bibitem[Hov04]{hovey_ops}
Mark Hovey, \emph{Operations and co-operations in {M}orava {$E$}-theory},
  Homology Homotopy Appl. \textbf{6} (2004), no.~1, 201--236. \MR{2076002
  (2005f:55003)}

\bibitem[HS98]{hopkins_smith}
Michael~J. Hopkins and Jeffrey~H. Smith, \emph{Nilpotence and stable homotopy
  theory. {II}}, Ann. of Math. (2) \textbf{148} (1998), no.~1, 1--49.
  \MR{1652975 (99h:55009)}

\bibitem[HS99]{hovey-strickland}
Mark Hovey and Neil~P. Strickland, \emph{Morava {$K$}-theories and
  localisation}, Mem. Amer. Math. Soc. \textbf{139} (1999), no.~666, viii+100.
  \MR{1601906 (99b:55017)}

\bibitem[LMSM86]{lewis_may_steinberger}
L.~G. Lewis, Jr., J.~P. May, M.~Steinberger, and J.~E. McClure,
  \emph{Equivariant stable homotopy theory}, Lecture Notes in Mathematics, vol.
  1213, Springer-Verlag, Berlin, 1986, With contributions by J. E. McClure.
  \MR{866482 (88e:55002)}

\bibitem[May77]{may_e_infty}
J.~Peter May, \emph{{$E_{\infty }$} ring spaces and {$E_{\infty }$} ring
  spectra}, Lecture Notes in Mathematics, Vol. 577, Springer-Verlag, Berlin,
  1977, With contributions by Frank Quinn, Nigel Ray, and J{\o}rgen Tornehave.
  \MR{0494077 (58 \#13008)}

\bibitem[May09]{may_good_for}
J.~P. May, \emph{What are {$E_\infty$} ring spaces good for?}, New topological
  contexts for {G}alois theory and algebraic geometry ({BIRS} 2008), Geom.
  Topol. Monogr., vol.~16, Geom. Topol. Publ., Coventry, 2009, pp.~331--365.
  \MR{2544393 (2010h:55014)}

\bibitem[Mit97]{mitchell}
Stephen~A. Mitchell, \emph{Hypercohomology spectra and {T}homason's descent
  theorem}, Algebraic {$K$}-theory ({T}oronto, {ON}, 1996), Fields Inst.
  Commun., vol.~16, Amer. Math. Soc., Providence, RI, 1997, pp.~221--277.
  \MR{1466977 (99f:19002)}

\bibitem[Mor85]{morava}
Jack Morava, \emph{Noetherian localisations of categories of cobordism
  comodules}, Ann. of Math. (2) \textbf{121} (1985), no.~1, 1--39. \MR{782555
  (86g:55004)}

\bibitem[MR81]{mah_ray}
Mark Mahowald and Nigel Ray, \emph{A note on the {T}hom isomorphism}, Proc.
  Amer. Math. Soc. \textbf{82} (1981), no.~2, 307--308. \MR{609673 (82e:55006)}

\bibitem[MRW77]{mrw}
Haynes~R. Miller, Douglas~C. Ravenel, and W.~Stephen Wilson, \emph{Periodic
  phenomena in the {A}dams-{N}ovikov spectral sequence}, Ann. of Math. (2)
  \textbf{106} (1977), no.~3, 469--516. \MR{0458423 (56 \#16626)}

\bibitem[Mur10]{murray}
Michael~K. Murray, \emph{An introduction to bundle gerbes}, The many facets of
  geometry, Oxford Univ. Press, Oxford, 2010, pp.~237--260. \MR{2681698
  (2011h:53026)}

\bibitem[Pet11]{peterson}
Eric Peterson, \emph{{The Morava E-theory of Eilenberg-Mac Lane spaces}},
  arXiv:1109.5989, 2011.

\bibitem[Pet13]{peterson_annular}
\bysame, \emph{Annular decomposition of coalgebraic formal variety spectra},
  \texttt{www.math.berkeley.edu/$\sim$ericp/latex/annular.pdf}, 2013.

\bibitem[Qui71]{quillen}
Daniel Quillen, \emph{The {A}dams conjecture}, Topology \textbf{10} (1971),
  67--80. \MR{0279804 (43 \#5525)}

\bibitem[Rav76]{ravenel_algebras}
Douglas~C. Ravenel, \emph{The structure of {M}orava stabilizer algebras},
  Invent. Math. \textbf{37} (1976), no.~2, 109--120. \MR{0420619 (54 \#8632)}

\bibitem[Rez06]{rezk}
Charles Rezk, \emph{The units of a ring spectrum and a logarithmic cohomology
  operation}, J. Amer. Math. Soc. \textbf{19} (2006), no.~4, 969--1014.
  \MR{2219307 (2007h:55006)}

\bibitem[RW80]{rw}
Douglas~C. Ravenel and W.~Stephen Wilson, \emph{The {M}orava {$K$}-theories of
  {E}ilenberg-{M}ac {L}ane spaces and the {C}onner-{F}loyd conjecture}, Amer.
  J. Math. \textbf{102} (1980), no.~4, 691--748. \MR{584466 (81i:55005)}

\bibitem[Seg73]{segal}
Graeme Segal, \emph{The stable homotopy of complex projective space}, Quart. J.
  Math. Oxford Ser. (2) \textbf{24} (1973), 1--5. \MR{0319183 (47 \#7729)}

\bibitem[Sna79]{snaith}
Victor~P. Snaith, \emph{Algebraic cobordism and {$K$}-theory}, Mem. Amer. Math.
  Soc. \textbf{21} (1979), no.~221, vii+152. \MR{539791 (80k:57060)}

\bibitem[S{\O}09]{spitz_ost}
Markus Spitzweck and Paul~Arne {\O}stv{\ae}r, \emph{The {B}ott inverted
  infinite projective space is homotopy algebraic {$K$}-theory}, Bull. Lond.
  Math. Soc. \textbf{41} (2009), no.~2, 281--292. \MR{2496504 (2010f:19005)}

\bibitem[Str00]{strickland_gh}
N.~P. Strickland, \emph{Gross-{H}opkins duality}, Topology \textbf{39} (2000),
  no.~5, 1021--1033. \MR{1763961 (2001d:55006)}

\bibitem[SW11]{sati_wes}
Hisham Sati and Craig Westerland, \emph{Twisted {M}orava {K}-theory and
  {E}-theory}, arXiv:1109.3867, 2011.

\bibitem[Tho85]{thomason}
R.~W. Thomason, \emph{Algebraic {$K$}-theory and \'etale cohomology}, Ann. Sci.
  \'Ecole Norm. Sup. (4) \textbf{18} (1985), no.~3, 437--552. \MR{826102
  (87k:14016)}

\end{thebibliography}

\end{document}